\documentclass[a4paper;11pt]{amsart}

\usepackage{mathrsfs}\usepackage{graphicx}
\usepackage{tikz-cd}
\usepackage[arrow,matrix]{xy}
\usepackage{amsmath,amssymb,amscd,bbm,amsthm,mathrsfs}
\usepackage{mathtools} 
\usetikzlibrary{calc}

\usetikzlibrary{shapes.geometric}

\usepackage{graphicx}

\usepackage{tkz-euclide}
\usepackage{graphicx}
\usepackage[T1]{fontenc}
\usepackage{wesa}
\usetikzlibrary{patterns}

\usepackage{color,xcolor}
\usepackage{graphicx}
\usepackage{manfnt}
\newtheorem{thm}{Theorem}[section]
\newtheorem{lem}{Lemma}[section]
\newtheorem{definition}{Definition}[section]
\newtheorem{cor}{Corollary}[section]
\newtheorem{prop}{Proposition}[section]

\newtheorem{rem}{Remark}[section]

\theoremstyle{definition}

\newcommand{\supp}{\operatorname{supp}}

\usepackage[
    a4paper,      
    left=20mm,    
    right=20mm,   
    top=20mm,     
    bottom=20mm,  
    includehead,  
    includefoot   
]{geometry}

\usepackage{hyperref}
\hypersetup{
    colorlinks=true,    
    linkcolor=blue,     
    urlcolor=green,      
    citecolor=cyan,    
}

\usepackage{yhmath}
\usepackage{comment}
 
\begin{document}
\numberwithin{equation}{section}

\newtheorem{defn}[thm]{Definition}

 \title[Twisted multiparameter singular integrals]{Twisted Multiparameter singular integrals---real variable methods and applications, I}
\author {Zunwei Fu, Ji Li, Chong-Wei Liang, Wei Wang and Qingyan Wu}

\begin{abstract}
In this paper, we introduce a class of twisted multiparameter singular integrals on $\mathbb{R}^{2m}$, motivated by the Cauchy--Szeg\H{o} projections and the solving operators for $\bar{\partial}_b$ on a broad family of quadratic surfaces of higher codimension in $\mathbb{C}^n$. These surfaces are represented as suitable quotients of products of Heisenberg groups, a framework illustrated by Stein (Notices Amer. Math. Soc., 1998). While classical multiparameter product and flag theories are well-developed, Nagel, Ricci, and Stein observed a critical limitation: the class of product operators is not closed under passage to a quotient subgroup. To handle the geometric reduction that models these quotient structures, we take the first step in developing an adapted real-variable theory. We achieve this by introducing twisted tube systems and tube maximal functions, establishing a reproducing formula, Littlewood--Paley theory, a Journ\'e-type covering lemma, and atomic decompositions. As particular examples, we obtain twisted Fourier multipliers---which emerge as novel, direction-sensitive, and anisotropic phase-shift converters with potential applications in signal and image processing.
\end{abstract}
\subjclass[2020]{Primary 42B20, 42B25, 42B30; Secondary 43A80, 42B15, 32A25}

\keywords{twisted multiparameter singular integrals, quotient groups, \(\bar\partial\)-Neumann problem, Siegel domains}

 \maketitle
 \section{Introduction and Statement of main results}\label{sec:1}
This is the first in a series of forthcoming papers on a framework of singular integral operators and real-variable methods associated with a twisted multiparameter structure motivated by the Cauchy--Szeg\H{o} projection on certain fundamental Siegel domains and the associated quotient subgroups (as indicated by Stein in \cite{St98} and studied by Nagel--Ricci--Stein \cite{NRS}). This work lays the foundation for a broader theory of twisted singular integral operators and paves the way for study of the Kohn–Laplacian on Shilov boundaries of certain fundamental Siegel domains.

\subsection{Background}
 One-parameter harmonic analysis, anchored by the classical Calder\'on--Zygmund theory, serves as a cornerstone of modern analysis. It provides the essential machinery for establishing $L^p$
  estimates for singular integral operators, playing a vital role in elliptic regularity theory for partial differential equations. In the context of complex analysis, this framework has been instrumental in characterizing the boundary behavior of holomorphic functions on strictly pseudoconvex domains, where the geometry is governed by a single parameter scaling structure.

As remarked by E.M. Stein in his seminal survey \cite{St98}, ``\textit{multiparameter analysis could well turn out to be of great interest in questions related to several complex variables.}''
The product theory on $\mathbb{R}^n$, one of the simplest dilation models among the multiparameter frameworks, began with Zygmund’s study of the strong maximal function, continued with Marcinkiewicz’s multiplier theorem, and has since branched into many directions. Among the achievements are an appropriate Littlewood--Paley theory and numerous properties of product Calder\'on--Zygmund operators. See, for example, the pioneering works of S.-Y. A. Chang, R. Fefferman, Gundy, Journ\'e,  Pipher and Stein (see \cite{CF80,CF85, FeffS, Feff-S,  GuS, J, KM, P,St98}). The prototypical example of a product Calder\'on--Zygmund operator is the multiple Hilbert transform
\begin{align*}
    Tf(x_1,\ldots,x_n)={\rm p.v.} \int_{\mathbb{R}^n}
\frac{f(x_1-y_1,\ldots,x_n-y_n)}{y_1\cdots y_n}\,dy_1\cdots dy_n.
\end{align*}
More generally, product kernels satisfy differential inequalities and cancellation conditions analogous to those of ${\rm p.v.}\,1/(x_1\cdots x_n)$. A key ingredient in this setting is Journ\'e's covering lemma \cite{J,P}, which provides a systematic replacement of general open sets by rectangles with controlled geometry.
As a fundamental example beyond $\mathbb{R}^n$, harmonic analysis on the Shilov boundaries of general tensor product Siegel domains is inherently multiparameter and plays a central role in understanding the boundary behavior of holomorphic functions on these domains \cite{NS04}.

Another family of excellent examples of multiparameter singular integrals  beyond the product theory arises from the $\bar{\partial}$-Neumann problem (in the model case corresponding to the Heisenberg group) in the work of Müller, Ricci and Stein \cite{MRS}, which is now the well-known multiparameter flag structures and has been intensively studied in recent decades \cite{CCLLO,HLLW,HLS,NRS,NRSW1,NRSW2}. One direct way (\cite{MRS,St2000}) of understanding 
these implicit flag structure is that the singular integral kernel $K(z,t)$ on $\mathbb{H}^n$ can be 
linked to a product kernel $K^{\#}(z,t,s)$ defined on $\mathbb{H}^n\times\mathbb{R}$ via the projection
\begin{align*}
    K(z,t)=\int_{\mathbb{R}}
K^{\#}(z,t-s,s)\,ds,\quad 
\forall(z,t)\in\mathbb{H}^n.
\end{align*}
Nagel--Ricci--Stein \cite{NRS} introduced a more general definition of flag singular kernels, 
and further developed the properties of a class of operators given by convolution with 
these flag kernels.  We will recall the full definition of their flag singular integrals on Euclidean space later in 
Section \ref{sec:2.1}. Some typical flag type convolution kernel on $\mathbb{R}^2$ is of the form
$${1\over x (x+iy)},\qquad (x,y)\in\mathbb{R}^2\ {\rm\ and\ \ } i^2=-1.$$

 Beyond product and flag structures, significant motivation arises from the study of Cauchy--Szeg\H{o} projections and solving operators for $\bar{\partial}_b$ on a wide class of quadratic surfaces of higher codimension in $\mathbb C^n$. These structures are naturally modeled by appropriate quotients of products of Heisenberg groups \cite[Section~$4$]{NRS}. 
 However, this geometric reduction poses analytic challenges. As Nagel--Ricci--Stein observed \cite{NRS}: ``\textit{while the theory of product kernels is satisfactory in many respects, one defect is that the class of product operators is not closed under passage to a quotient subgroup}''. To see this, consider the triple Hilbert transform on $\mathbb{R}^3$, and the operator that arises when passing to the quotient subgroup $\mathbb{R}^2$ via the mapping $(x, y, z)\mapsto (x-y, x-z)$. This process amounts to integrating the kernel $1/(xyz)$ over cosets of the line spanned by the vector $(1,1,1)$. The result is not a product kernel, which would be singular along two lines in $\mathbb{R}^2$, but rather a sum of kernels which are singular along a total of three lines.''

Indeed, the explicit kernel on the quotient subgroup $\mathbb{R}^2$ is 
an object that falls outside the framework of product kernels, and notably differs from the class of multiparameter flag kernels intensively studied in recent decades \cite{HLLW,HLS,NRS,NRSW1,NRSW2}.

Motivated by the quotient structure in \cite{NRS} and the survey paper by Stein \cite{St2000}, the purpose of the study of this series of papers is to build a real-variable theory for operators modeled by the quotient structure. Our model in this paper is the quotient of Euclidean spaces. As the first part of the series, we concentrate on the geometry of the underlying space induced by the quotient structure, the formulation of twisted singular integral operator, the Littlewood--Paley theory associated with the twisted multiparameter structure and the atomic decomposition.  Apart from the theoretical part, we also build a bridge between twisted Fourier multiplier, the multiplier associated with the quotient structure, and phase-shift converter (we refer to \cite{CFGW,FGLWY} for the standard one-parameter phase-shift converter).

\subsection{Statement of main results}

\subsubsection{Twisted singular integrals}
We propose a unified framework for twisted multiparameter singular integrals in the setting of homogeneous Lie groups (see for example Folland--Stein \cite{FS}). This algebraic setting provides the natural geometric model for these operators; results on general manifolds follow via the transference principle (see the discussion below).

Consider a homogeneous Lie group $\mathbb{G}$ with homogeneous dimension $Q$. The group is equipped with a family of automorphic dilations $\{\delta_r\}_{r>0}$ and a homogeneous quasi-norm $\|\cdot\|$. We assume the local geometry is defined by a basis of left-invariant vector fields $Z$, homogeneous with respect to these dilations.

To formulate the cancellation conditions, we require a notion of test functions adapted to this geometry.
\begin{definition}[Normalized bump functions]~\\
A function $\phi \in C^\infty(\mathbb{G})$ is called a \textit{normalized bump function} if:
\begin{enumerate}
    \item It is supported in the unit ball defined by the quasi-norm: $\operatorname{supp}(\phi) \subset \{x \in \mathbb{G} : \|x\| \leq 1\}$.
    \item Its derivatives are uniformly bounded: for sufficiently many multi-indices $\alpha$, $\|X^\alpha \phi\|_{L^\infty(\mathbb{G})} \leq 1$.
\end{enumerate}
\end{definition}
\noindent While the precise number of required derivatives depends on the specific order of the distribution, the definition is essentially independent of this choice for the purpose of describing the cancellation structure.

\begin{definition}[Twisted singular integrals on homogeneous Lie groups]\label{def:twisted_group}~\\
Let $\mathbb{G}$ be a homogeneous Lie group isomorphic to $\mathbb{C}^{n_1+n_2+n_3}\times\mathbb{R}^{2}$ equipped with a family of automorphic dilations $\{\delta_r\}_{r>0}$ and a homogeneous dimension $Q$. Let the singular set $\Sigma$ be the union of closed normal subgroups $S_1$ and $S_2$. We assume that each $S_i$ is a {\it homogeneous subgroup} (invariant under the dilations) with homogeneous dimension $Q_i$.

For each $i$, let $d((\mathbf{z},\mathbf{t}),S_i)$ denote the  distance from $(\mathbf{z},\mathbf{t})$ to the subgroup $S_i$. Assume that the (complex) vector field $Z$ is the union of the (complex) vector fields $Z^{(1)}$, $Z^{(2)}$ and $Z^{(3)}$, in which $Z^{(3)}$ is the field that acts on both $d((\mathbf{z},\mathbf{t}),S_1)$ and $d((\mathbf{z},\mathbf{t}),S_2)$. A datum $(K,\Sigma,Z)$ is call a {\it $(2,3)$-twisted datum} on $\mathbb{G}$ if $K$ is a distribution on $\mathbb{G}$ (defined away from $\Sigma$) such that it satisfies the following size and regularity condition:

For every left-invariant differential operator $\mathbf X^\alpha$ on $\mathbb{G}$, there exists a constant $C_\alpha$ such that for all $(\mathbf{z},\mathbf{t}) \in \mathbb{G} \setminus \Sigma$:
\begin{align}\label{eq:group_size}
    |\mathbf X^\alpha K((\mathbf{z},\mathbf{t}))|
   &=\left| (X^{(1)})^{\alpha_1}(X^{2})^{\alpha_2}(X^{(3)})^{\alpha_3}K((\mathbf{z},\mathbf{t}))\right|\notag\\ &\leq C_\alpha    \sum^{n_3}_{k=0} \sum^{\alpha_3}_{\gamma=0}\Bigg[\frac{1}{d((\mathbf{z}, \mathbf{t}),\,S_1)^{Q_2+(|\alpha_1|+|\alpha_3|)+(\gamma-|\alpha_3|)+2k }} \\
   &
   \hskip3cm\times\frac{1}{d((\mathbf{z}, \mathbf{t}),\,S_2)^{Q_1+({|\alpha_2|+|\alpha_3|})-\gamma+(2n_3-2k)}}\Bigg],\notag
\end{align}
where $X^{(j)}$ is the differential on the field $Z^{(j)}$ for each $1\leq j\leq 3$.

If (\ref{eq:group_size}) hold, then we call $K$ a {\it twisted singular integral kernel} on $\mathbb{G}$.
\end{definition}

\begin{rem}
    Definition \ref{def:twisted_group} can also be extended to the case that $\mathbb{G}$ is isomorphic to $\mathbb{C}^{n_1+\cdots+n_m}\times\mathbb{R}^{m-1}$ and to the case that $(K,\Sigma,Z)$ is a $(\boldsymbol{s},V)$-twisted datum. For simplicity, we only formulate the cleanest case. See Section \ref{sec:2} for the example of the $(2,3)$-datum and its structure.
\end{rem}

The structural complexity of the size condition \eqref{eq:group_size}—specifically the summation over pairs of subgroups—is not arbitrary; it arises naturally as the projection of a simpler structure. We posit that the ambient group $\mathbb{G}$ can be realized as a quotient $\mathbb{G} \cong \tilde{\mathbb{G}} / \mathcal{H}$ of a higher-dimensional {\it lifting group} $\tilde{\mathbb{G}}$, which is equipped with a standard multiparameter product structure.
The twisted kernel $K$ is the push-forward (fiber integral) of a product kernel $\tilde{K}$ defined on $\tilde{\mathbb{G}}$:
\[
K(x) = \int_{\mathcal{H}} \tilde{K}(\tilde{x} \cdot h) \, dh, \quad \text{where } \pi(\tilde{x}) = x \text{ and } \pi: \tilde{\mathbb{G}} \to \mathbb{G} \text{ is the projection.}
\]
Geometrically, the singular sets $S_i$ in $\mathbb{G}$ are the images of the coordinate singular varieties in $\tilde{\mathbb{G}}$. The integration along the fiber $\mathcal{H}$ couples the independent singularities of $\tilde{K}$, transforming the pure product bounds on the lifting space into the additive stratified bounds seen in \eqref{eq:group_size}.
Since the lifted operator $\tilde{T}$ associated with $\tilde{K}$ falls within the scope of established product multiparameter theory, it is bounded on $L^p(\tilde{\mathbb{G}})$. The Transference Principle (Coifman and Weiss \cite{CW}) then implies that the projected operator $Tf = f * K$ is bounded on $L^p(\mathbb{G})$ for $1 < p < \infty$.

In the pure Euclidean case, Definition \ref{def:twisted_group} will degenerate to the following:
\begin{definition}[Twisted singular integrals on Euclidean spaces]\label{def:twisted_Euclidean}~\\ 
Let $\mathbb{R}^M$ be the homogeneous Lie group and let $\boldsymbol{s}\geq2$ be an integer. For each $1\leq i\leq \boldsymbol{s}$, let $\mathbb{L}_i\subset\mathbb{R}^M$ be an affine plane containing $\mathbf{0}$ and $Q_i$ be the (homogeneous) dimension of $\mathbb{L}_i$. Define the singular set $\Sigma$ to be the union of affine planes $\mathbb{L}_i$'s, that is
\[
\Sigma = \bigcup^{\boldsymbol{s}}_{i=1}\mathbb{L}_i \subset \mathbb{R}^M.
\]

For each $1\leq i\leq \boldsymbol{s}$, let $d(\mathbf{x},\mathbb{L}_i)$ denotes the  distance from $\mathbf x$ to the affine plane $\mathbb{L}_i$. Let $\deg_{\mathbb{L}^\perp_i}(\alpha)$ represent the homogeneity order of the differential operator relative to the induced dilations on the quotient space $\mathbb{R}^M/\mathbb{L}_i$.  A datum $(K,\Sigma,Z)$ is call a {\it $(\boldsymbol{s},2)$-twisted datum} on $\mathbb{R}^M$ if $Z$ is a vector field and $K$ is a distribution on $\mathbb{R}^M$ (defined away from $\Sigma$) such that it satisfies the following size and regularity condition:

For every left-invariant differential operator $\mathbf X^\alpha$ on $\mathbb{R}^M$, there exists a constant $C_\alpha$ such that for all $x \in \mathbb{R}^M \setminus \Sigma$:
\begin{align}\label{eq:Euclidean_size}
    |\mathbf X^\alpha K(\mathbf x)| &\leq C_\alpha  \sum_{\substack{1\leq i_1,i_2\leq\boldsymbol{s} \\i_1\neq i_2}}\Bigg[ \frac{1}{{d(\mathbf{x},\mathbb{L}_{i_1})^{Q_{i_2}+\deg_{\mathbb{L}^\perp_{i_1}}(\alpha)}}}\cdot\frac{1}{{d(\mathbf{x},\mathbb{L}_{i_2})^{Q_{i_1} +\deg_{\mathbb{L}^\perp_{i_2}}(\alpha)}}}\\
&\hskip3cm+\sum^{\deg_{\mathbb{L}^\perp_{i_2}}(\alpha)}_{\gamma=0}\frac{1}{{d(\mathbf{x},\mathbb{L}_{i_1})^{Q_{i_2} +\deg_{\mathbb{L}^\perp_{i_1}}(\alpha)-\gamma}}}\cdot\frac{1}{{d(\mathbf{x},\mathbb{L}_{i_2})^{Q_{i_1} +\deg_{\mathbb{L}^\perp_{i_2}}(\alpha)+\gamma-\deg_{\mathbb{L}^\perp_{i_2}}(\alpha)}}}\Bigg]\notag.
\end{align}

If (\ref{eq:Euclidean_size}) hold, then we call $K$ a {\it twisted singular integral kernel} on $\mathbb{R}^M$.
\end{definition}
\begin{rem}
    Definition \ref{def:twisted_Euclidean} can also be extended to the case that $(K,\Sigma,Z)$ is a $(\boldsymbol{s},V)$-twisted datum for $V\geq3$. For a better interpretation of the example, we only formulate the $(\boldsymbol{s},2)$ case. Our example for the {\it twisted singular integral kernel} on $\mathbb{R}^M$ is the $(3,2)$-twisted datum. (see Section \ref{sec:2} for the discussion).
\end{rem}

In this paper, we will focus on 
the twisted multiparameter singular integral operator $T$ on $\mathbb{R}^m\times \mathbb{R}^m$, whose kernel $K(x,y)$ is related to the singularity on $\Sigma = \{x=0\} \cup \{y=0\} \cup \{x=y\}$ and satisfies differential size estimates dominated by the sum of product-type singularities associated with the pairwise intersections of these hyperplanes. Specifically, for all multi-indices $\alpha, \beta$, the kernel satisfies:
\begin{align}\label{eq:twisted_size_intro}
|\partial_x^\alpha \partial_y^\beta K(x,y)| &\lesssim \frac{1}{|x|^{m+|\alpha|} |y|^{m+|\beta|}} +\sum^\alpha_{\gamma_1=0} \frac{1}{|x-y|^{m+|\alpha|+|\beta|-\gamma_1} \cdot |x|^{m+\gamma_1}}\\ &\qquad+\sum^\beta_{\gamma_2=0} \frac{1}{|x-y|^{m+|\alpha|+|\beta|-\gamma_2} \cdot |y|^{m+\gamma_2}}\notag.
\end{align}
which captures the standard product behavior near the origin and the twisted behavior near the diagonal.

We will initiate a reproducing formula, a Littlewood--Paley theory for square functions and area functions, a family of bump functions, and a Journ\'e-type covering lemma associated to the structure \eqref{eq:twisted_size_intro}.
This includes: a thorough investigation of the dyadic tube (including oblique rectangles) system on $\mathbb{R}^m\times \mathbb{R}^m$; a construction of twisted Schwartz functions that induce the Calder\'on reproducing formula and the Lusin area function; a Journ\'e-type covering lemma that characterizes open sets and the associated maximal dyadic tubes; 
a construction of bump functions with cancellation condition associated with the twisted multiparameter structure, and the twisted Hardy space defined via the Lusin area function together with its equivalent characterization by decomposition into bump functions.

\subsubsection{Quotient structures and maximal function}
Let the lifting space be the product
$\mathbb{R}^m\times \mathbb{R}^m\times\mathbb{R}^m$ of three Euclidean spaces. To pass  objects on the lifting  space  to  ones on the $\mathbb{R}^m\times \mathbb{R}^m$, we use
the   projection $ \pi: \mathbb{R}^m\times\mathbb{ R} ^m \times\mathbb{R}^m\rightarrow\mathbb{ R} ^m \times\mathbb{R}^m$ defined by
\begin{equation}\label{eq:pi}\begin{split}
  \pi( x_1, x_2, x_3) = (x_1+ x_3, x_2+ x_3 ).
\end{split} \end{equation} 
   The fiber of the  projection $\pi$ over the point $(  x_1, x_2)\in \mathbb{R}^{2 m}$  is the space
 \begin{equation}\label{eq:fiber}
    \pi^{-1}( x_1, x_2)=\left\{( x_1-u, x_2- u, u):\, u\in \mathbb{R}^m\right\}.
 \end{equation}

For an   $L^1 $-function   $F$ on $ \mathbb{R}^{3 m}$, we define the {\it push-forward function} $\pi_*F$ on $ \mathbb{R}^{2 m}$
simply to be the integral of $F$ along the fiber as
 \begin{equation}\label{eq:transfer-F}
  \left ( \pi_*F\right)( x_1, x_2):=\int_{  \mathbb{R}^{  m} }F( x_1-u, x_2- u, u)du.
 \end{equation}

A natural ball is the Cartesian product of three balls in Euclidean spaces $\mathbb{R}^m$. Thus a natural ball in the projecting space $\mathbb{R}^{2m}$ is the image of such a
  product under the projection   $\pi$. Motivated by this, we introduce the notion of a tube $ T(\mathbf{x},\mathbf{r})$
for $\mathbf{x}\in {\mathbb R}^{2m}$ and $  \mathbf{r} :=\left({r_1} ,
          {r_2} ,r_3 \right)\in \mathbb{  R}^3_+$ (see Section \ref{sec:3} for the definition). It plays the role of a ball  for ${\mathbb R}^{2m}$,  and has the feature of tri-parameters, although the second step
          of the  group  ${\mathbb R}^{2m}$ is only bi-parameter. 
Define the {\it tube maximal function}   as
\begin{equation*}
   M_{tube}  (f)(\mathbf{x})=\sup_{\mathbf{r}\in \mathbb{R}^3_+} \frac 1{| T(\mathbf{x},\mathbf{r})|}\int_{  T(\mathbf{x},\mathbf{r})}|f (\mathbf{y})|d\mathbf{y}.
\end{equation*}
The first result is the mapping property of the tube maximal function.
\begin{thm}\label{thm:maximal} For $1<p<\infty$,    tube  maximal function  $M_{tube}  $ is bounded from $L^p({\mathbb R}^{2m})$ to $L^p({\mathbb R}^{2m})$.
 \end{thm}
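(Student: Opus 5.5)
The tube $T(\mathbf{x},\mathbf{r})$ is not defined before the statement. We take it to be the image under $\pi$ of a product of three balls centred at a lift of $\mathbf{x}$:
\[
T(\mathbf{x},\mathbf{r})=\pi\big(B(x_1-u,r_1)\times B(x_2-u,r_2)\times B(u,r_3)\big)=\{(y_1+w,y_2+w): |y_1-x_1|<r_1,\ |y_2-x_2|<r_2,\ |w|<r_3\},
\]
which does not depend on the choice of $u$. The plan is to dominate $M_{tube}$ pointwise by a composition of finitely many bounded maximal operators, namely Hardy--Littlewood maximal functions along fixed $m$-dimensional directions in $\mathbb{R}^{2m}$. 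Each of these is bounded on $L^p(\mathbb{R}^{2m})$ for $1<p<\infty$ by Fubini and the classical theorem after a linear change of variables. This is a Zygmund/strong-maximal-function argument adapted to three non-orthogonal "coordinate directions":
\[
D_1=\{(v,0)\},\qquad D_2=\{(0,v)\},\qquad D_3=\{(v,v)\}.
\]

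The first step is to show that $T(\mathbf{x},\mathbf{r})$ is, up to constants, a Minkowski sum $\mathbf{x}+E_1+E_2+E_3$, where $E_j$ is the ball of radius $r_j$ in $D_j$ (with the obvious identification for $E_3$). Its measure is then comparable to that of a product-type set. If, say, $r_3\le\min(r_1,r_2)$, then $T$ is comparable to the rectangle $B(x_1,r_1)\times B(x_2,r_2)$. In general one of the three radii is not the largest, and by symmetry the set is comparable to a parallelepiped spanned by two of the three directions. Concretely, $T$ is contained in $B(x_1,r_1+r_3)\times B(x_2,r_2+r_3)$. In the regime where $r_3$ dominates, say $r_3\ge r_1\ge r_2$, $T$ is comparable to $\{\mathbf{x}+(a,0)+(w,w): |a|\lesssim r_1,\ |w|\lesssim r_3\}$, of measure about $(r_1r_3)^m$. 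So in every case $T$ is comparable, with two-sided containment up to dimensional constants, to a "rectangle" $\mathbf{x}+E_i'+E_j'$ with $i\neq j$ and $D_i\oplus D_j=\mathbb{R}^{2m}$.

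The second step: for a fixed pair $(i,j)$, the linear map $A_{ij}:(s,t)\mapsto s\,e_{D_i}+t\,e_{D_j}$ is a fixed invertible map. It carries product balls $B(0,\rho_1)\times B(0,\rho_2)$ to the sets $E_i'+E_j'$. Averages over these sets are therefore dominated by the strong (bi-parameter) maximal function conjugated by $A_{ij}$, which is bounded on $L^p$ by iterating one-parameter Hardy--Littlewood maximal functions in $s$ and then in $t$. Since there are only three pairs,
\[
M_{tube}f\lesssim\sum_{i<j}M_{ij}f,
\]
where $M_{ij}f=M_s^{(A_{ij})}M_t^{(A_{ij})}|f|$. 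This gives $L^p$-boundedness for $1<p<\infty$.

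The main obstacle I expect is the geometric comparison in the first step. One must show that every tube, in all orderings of $r_1,r_2,r_3$, is sandwiched between two parallelepipeds of this form with comparable volumes, uniformly in $\mathbf{r}$. The subtle regime is when $r_3$ is intermediate, e.g. $r_1\ge r_3\ge r_2$. I would handle it by writing $(y_1+w,\,y_2+w)=(y_1+w-w',\,0)+(w',\,y_2+w)$ and absorbing terms, so that the tube is comparable to $\mathbf{x}+E_1(r_1)+E_3(r_3)$. I would then check that the volume of the tube is comparable to $r_1^m r_3^m$ by a direct Fubini computation.

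If the paper's definition of $T$ uses a different normalisation, I would instead bound the tube average by a lifted argument. In that case $|f|\circ$ (translation) averaged over the product of three balls in $\mathbb{R}^{3m}$ is controlled by the tri-parameter strong maximal function on $\mathbb{R}^{3m}$, and transference via $\pi_*$ yields the same $L^p$ conclusion.
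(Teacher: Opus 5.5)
Your proof is correct, but it takes a different route from the paper's.

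\textbf{Your route.} You argue geometrically. Write $E_k(r)$ for the radius-$r$ ball in $D_k$, so that $T(\mathbf{x},\mathbf{r})=\mathbf{x}+E_1(r_1)+E_2(r_2)+E_3(r_3)$. Each $D_k$ lies in the sum of the other two, e.g.\ $(w,w)=(w,0)+(0,w)$ and $(0,b)=(b,b)-(b,0)$. Hence the ball with the smallest radius $r_k$ is absorbed, and for the other two indices
$E_i(r_i)+E_j(r_j)\subset T(\mathbf{x},\mathbf{r})-\mathbf{x}\subset E_i(2r_i)+E_j(2r_j)$.
This settles your ``subtle regime'' in one line, with no Fubini computation needed. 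It also reproduces exactly the paper's case split into the rectangle, $P^{(first)}$ and $P^{(second)}$. You then dominate $M_{tube}$ by the maximum of three linearly conjugated bi-parameter strong maximal functions.

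\textbf{The paper's route.} The paper works instead with the push-forward kernels. Proposition~\ref{prop:tube} compares tube averages with $|f|*\chi_{\mathbf{r}}$. The factorization $|f|*\chi_{\mathbf{r}}=(\chi_{r_1}\otimes\chi_{r_2})*(|f|*_3\chi_{r_3})$ then dominates $M_{tube}$ by a composition of the strong maximal function with a maximal operator in the diagonal variable. Once that comparison is in hand, all orderings of $r_1,r_2,r_3$ are handled by one composed operator, and the argument matches the convolution structure reused later for the reproducing formula and the area function.

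\textbf{What your route buys.} It is more elementary, and it avoids a weak step in the paper's write-up. There the diagonal average $|f|*_3\chi_{r_3}$ is bounded by a full $2m$-dimensional Hardy--Littlewood average by inserting $\chi_{\{u_1=u_2\}}$. That is the indicator of a null set, so the inequality does not hold as written, and $M_{iterated}\le M_s\circ M_{HL}$ fails pointwise in general. The correct intermediate operator is the directional maximal function $\sup_{r>0}|B_r|^{-1}\int_{B_r}|f(x_1-u,x_2-u)|\,du$. It is bounded on $L^p(\mathbb{R}^{2m})$ by Fubini in the coordinates $(x_1-x_2,x_2)$.

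\textbf{Your fallback.} The lifting-and-transference fallback is unnecessary. By Proposition~\ref{prop:tube-pi} the paper's tubes are comparable, in both containment and measure, to your $\pi$-images of product balls. So your main argument applies directly to the paper's $M_{tube}$.
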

 
\subsubsection{Reproducing formula, Littlewood--Paley function and area function}

 Let $j=1,2,3$. For each $L^1$-integrable function $\varphi^{(j)}$ on $\mathbb{R}^m$ and for any $r_j>0$, let 
 \begin{align*}
     \varphi_{r_j}^{
(j)}({x}_j):=r_j^{-m}\varphi^{
(j)}(\delta_{r_j^{-1}}{x}_j )
  \end{align*} 
to be the {\it normalized dilate  of a function} $\varphi^{
(j)}$
on $\mathbb{R}^m$. Via the product structure of the lifting space and the projection map $\pi$, the corresponding {\it normalized dilation function on the projection space} $\mathbb{R}^{2m}$ would be
\begin{align}\label{normaldil}
\varphi_{\mathbf{r} }:= \pi_* \left(\varphi_{r_1}^{
(1)}  \otimes \varphi_{r_2}^{
(2)}\otimes\varphi_{r_3}^{
(3)}\right),
\end{align}
for $\mathbf{r}:=(r_1,\,r_2,\,r_3)\in \mathbb{R}^3_+$; and let the {\it normalized ball characteristic function} on $\mathbb{R}^m$ be
 \begin{equation*}
    \chi_{r_j}^{
(j)}({x}_j ):=\frac 1{\left| {B}(\mathbf{0},\,r_j)\right|}\cdot\chi_{
 {B}(\mathbf{0},\,r_j)}({x}_j ),
  \end{equation*}
where $\chi_{
 {B}(\mathbf{0},\,r_j)}$ denotes the characteristic function of a ball ${B}_{}(\mathbf{0},\,r_j)$. Consider the {\it normalized ball characterization function on the projection space} $\mathbb{R}^{2m}$ defined by 
 \begin{align}\label{ballcha}
     \chi_{\mathbf{r} }:= \pi_*  \left(\chi_{r_1}^{
(1)}  \otimes \chi_{r_2}^{
(2)}\otimes\chi_{r_3}^{
(3)}\right ),\quad\text{for}\,\, \mathbf{r}:=(r_1,\,r_2,\,r_3)\in \mathbb{R}^3_+.
 \end{align}
For 
    $ \varphi_{\mathbf{r} }$ and $\chi_{\mathbf{r} }$ be given by (\ref{normaldil}) and (\ref{ballcha}).
  We establish
the Calder\'on
reproducing formula on $\mathbb{R}^{2m}$.
 \begin{thm}\label{prop:reproducing}
    There are two families of functions $\{\varphi^{(j)}\}^3_{j=1}\subset\mathcal{S}(\mathbb{R}^m)$ and $\{\psi^{(j)}\}^3_{j=1}\subset C^\infty_{0}(\mathbb{R}^m)$ such that each $\varphi^{(j)}$ is of mean zero and each $\psi^{(j)}$ has higher order cancellation property. Besides, for $ f \in    L
^1(\mathbb{R}^{2m})\cap L
^2
( \mathbb{R}^{2m})$, we have
\begin{equation}\label{eq:reproducing} f(\mathbf{x}) =
\int_{\mathbb{R}^3_+} f*\psi_ {\mathbf{r}}*\varphi_ {\mathbf{r}}(\mathbf{x})\, \frac { d\mathbf{r}}{\mathbf{r}},\,\,\text{where}\,\, \frac {d\mathbf{r}}{\mathbf{r}}=\frac
{dr_1}{r_1}\frac {dr_2}{r_2}\frac {dr_3}{r_3}.
 \end{equation}
 \end{thm}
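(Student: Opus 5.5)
The plan is to lift the problem to the product space $\mathbb{R}^m\times\mathbb{R}^m\times\mathbb{R}^m$ through the push-forward $\pi_*$, apply the classical one-parameter Calder\'on reproducing formula in each factor, and then push back down.

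\textbf{Step 1: Fourier transform of the push-forward.} For $F\in L^1(\mathbb{R}^{3m})$ and $(\xi_1,\xi_2)\in\mathbb{R}^{2m}$, the substitution $y_j=x_j-u$ in \eqref{eq:transfer-F} gives
\[
\widehat{\pi_*F}(\xi_1,\xi_2)=\int_{\mathbb{R}^{3m}}F(y_1,y_2,u)\,e^{-2\pi i(\xi_1\cdot(y_1+u)+\xi_2\cdot(y_2+u))}\,dy_1\,dy_2\,du=\widehat F(\xi_1,\xi_2,\xi_1+\xi_2).
\]
Applying this to a tensor product yields
\[
\widehat{\varphi_{\mathbf r}}(\xi_1,\xi_2)=\widehat{\varphi^{(1)}}(r_1\xi_1)\,\widehat{\varphi^{(2)}}(r_2\xi_2)\,\widehat{\varphi^{(3)}}(r_3(\xi_1+\xi_2)),
\]
and the same formula holds for $\psi_{\mathbf r}$. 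Since $\pi_*$ is induced by the group homomorphism $\pi$, it intertwines convolutions, $\pi_*(F*G)=\pi_*F*\pi_*G$; this can also be read off from the Fourier formula. Consequently
\[
\widehat{f*\psi_{\mathbf r}*\varphi_{\mathbf r}}(\xi)=\widehat f(\xi)\prod_{j=1}^3 \widehat{\psi^{(j)}}(r_j\eta_j)\,\widehat{\varphi^{(j)}}(r_j\eta_j),
\]
where $\eta_1=\xi_1$, $\eta_2=\xi_2$ and $\eta_3=\xi_1+\xi_2$.

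\textbf{Step 2: choice of the one-parameter functions.} For each $j$ we choose $\psi^{(j)}\in C_0^\infty(\mathbb{R}^m)$ radial, with vanishing moments up to a high order (so that $\widehat{\psi^{(j)}}$ vanishes to high order at $0$), and with $\widehat{\psi^{(j)}}\not\equiv 0$. We then define $\varphi^{(j)}\in\mathcal{S}(\mathbb{R}^m)$ by
\[
\widehat{\varphi^{(j)}}(\eta)=\frac{\overline{\widehat{\psi^{(j)}}(\eta)}}{\int_0^\infty|\widehat{\psi^{(j)}}(t\eta/|\eta|)|^2\,dt/t}.
\]
The denominator is a positive constant independent of the direction, by radiality. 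With this choice
\[
\int_0^\infty \widehat{\psi^{(j)}}(r\eta)\,\widehat{\varphi^{(j)}}(r\eta)\,\frac{dr}{r}=1\quad\text{for every }\eta\neq0,
\]
and $\widehat{\varphi^{(j)}}(0)=0$, so each $\varphi^{(j)}$ has mean zero. This is the classical Calder\'on construction.

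\textbf{Step 3: integration in $\mathbf r$ and justification.} For almost every $\xi$ the vectors $\eta_1,\eta_2,\eta_3$ are all nonzero; the exceptional set is the union of three $m$-dimensional subspaces, which has measure zero. By Tonelli/Fubini the $\mathbf r$-integral of the Fourier multiplier factorizes into three one-dimensional integrals, each equal to $1$. The real work is in making sense of $\int_{\mathbb{R}^3_+}(\cdots)\,d\mathbf r/\mathbf r$ as an $L^2$-convergent limit. We truncate to $\varepsilon<r_j<1/\varepsilon$ for each $j$. Each truncated one-parameter integral
\[
m_\varepsilon^{(j)}(\eta)=\int_\varepsilon^{1/\varepsilon}\widehat{\psi^{(j)}}(r\eta)\,\widehat{\varphi^{(j)}}(r\eta)\,\frac{dr}{r}
\]
is uniformly bounded, because $\widehat{\psi^{(j)}}\widehat{\varphi^{(j)}}=|\widehat{\psi^{(j)}}|^2/c_j\geq0$, so the truncated integral is dominated by the full one. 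Moreover $m_\varepsilon^{(j)}(\eta)\to1$ pointwise for $\eta\neq0$. Dominated convergence and Plancherel then give
\[
\int_{[\varepsilon,1/\varepsilon]^3} f*\psi_{\mathbf r}*\varphi_{\mathbf r}\,\frac{d\mathbf r}{\mathbf r}\longrightarrow f\quad\text{in }L^2(\mathbb{R}^{2m}).
\]
This is the sense in which \eqref{eq:reproducing} holds. The hypothesis $f\in L^1$ ensures that each $f*\psi_{\mathbf r}*\varphi_{\mathbf r}$ is a well-defined continuous function, so the truncated integrals make sense pointwise. We can also note that, for a.e. $\mathbf x$, the truncated integrals converge along a subsequence.

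The main obstacle I expect is the coupling of the third frequency variable $\eta_3=\xi_1+\xi_2$. In particular, one must check that the degeneracy set $\{\xi_1=0\}\cup\{\xi_2=0\}\cup\{\xi_1=-\xi_2\}$ is Lebesgue-null and that the uniform bounds on $m_\varepsilon^{(j)}$ hold regardless of this coupling; both follow from the non-negativity arranged in Step 2. If a stronger pointwise or Schwartz-class convergence is wanted, I would instead argue on a dense class, such as functions whose Fourier transform is compactly supported away from the three singular subspaces, and extend by density.
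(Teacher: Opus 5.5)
Your proof is correct and has the same skeleton as the paper's. In both, $\widehat{\psi_{\mathbf r}}\,\widehat{\varphi_{\mathbf r}}$ factors along $\xi_1$, $\xi_2$, $\xi_1+\xi_2$, so the $\mathbf r$-integral of the multiplier is a product of three one-parameter Calder\'on identities and equals $1$ off three null subspaces.

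You differ from the paper in the choice of one-parameter pair and in how rigorously convergence is handled.

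The paper fixes an annular bump $\widehat{\phi^{(j)}}$ with $\int_0^\infty\widehat{\phi^{(j)}}(r\xi)\,dr/r=1$. It then takes $\psi^{(j)}=\lambda^m(\triangle_j^{N_j}g)(\lambda\,\cdot)$ with $\lambda$ large, so that $\widehat{\psi^{(j)}}$ does not vanish on the annulus, and sets $\widehat{\varphi^{(j)}}=\widehat{\phi^{(j)}}/\widehat{\psi^{(j)}}$. This makes $\varphi^{(j)}$ band-limited, which suits frequency-localized Littlewood--Paley arguments.

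Your normalized reflection $\widehat{\varphi^{(j)}}=\overline{\widehat{\psi^{(j)}}}/c_j$ is simpler. It gives a compactly supported $\varphi^{(j)}$ with the same vanishing moments as $\psi^{(j)}$, which suits the support-based estimates used later for atoms.

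You also make explicit the sense of convergence, which the paper leaves implicit. The bound $0\le m^{(j)}_\varepsilon\le 1$, dominated convergence and Plancherel give the formula as an $L^2$ limit of truncated integrals. The same argument works for the paper's pair, since there $\widehat{\psi^{(j)}}\widehat{\varphi^{(j)}}=\widehat{\phi^{(j)}}\ge 0$.
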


     For $f\in L^p(\mathbb{R}^{2m})$, we define the {\it Littlewood--Paley square function} of $f$ as
\begin{align}\label{littlewoodpaley}
    g_{\varphi}(f)(\mathbf{x}):=\left(\int_{\mathbb{R}^3_+} \left|f*\varphi_ {\mathbf{r}}(\mathbf{x})\right|^2\, \frac { d\mathbf{r}}{\mathbf{r}}\right)^{\frac{1}{2}},
\end{align}
and $\varphi_j$'s are Schwartz functions on $\mathbb{R}^m$ which has mean value zero for $j=1,2,3.$
\begin{thm}\label{6999}
Let the family of Schwartz functions $\{\varphi_j\}$ be given as in (\ref{littlewoodpaley}) and $1<p<\infty$. For $f\in L^p(\mathbb{R}^{2m})$, we have 
$    \|f\|_{L^p(\mathbb{R}^{2m})}\simeq\|g_{\varphi}(f)\|_{L^p(\mathbb{R}^{2m})}.$
\end{thm}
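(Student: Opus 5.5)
Both directions go through the lifting picture and the projection $\pi$. The first observation is that the push-forward converts convolution on $\mathbb{R}^{3m}$ into convolution on $\mathbb{R}^{2m}$. On the Fourier side, $\pi_*$ corresponds to restricting to the image of the dual map $(\xi_1,\xi_2)\mapsto(\xi_1,\xi_2,\xi_1+\xi_2)$. Concretely,
\[
\widehat{\varphi_{\mathbf r}}(\xi_1,\xi_2)=\widehat{\varphi^{(1)}}(r_1\xi_1)\,\widehat{\varphi^{(2)}}(r_2\xi_2)\,\widehat{\varphi^{(3)}}(r_3(\xi_1+\xi_2)).
\]
So $f\mapsto f*\varphi_{\mathbf r}$ is a tri-parameter product multiplier restricted to a twisted set of frequency directions.

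\textbf{Upper bound $\|g_\varphi f\|_p\lesssim\|f\|_p$.} The $L^2$ case comes from Plancherel. By the formula above,
\[
\int_{\mathbb{R}^3_+}|\widehat{\varphi_{\mathbf r}}(\xi)|^2\frac{d\mathbf r}{\mathbf r}=\prod_{j} c_j,
\]
where each factor $c_j=\int_0^\infty|\widehat{\varphi^{(j)}}(r\eta_j)|^2\,dr/r$ is finite and depends only on the direction of $\eta_j$. Here $\eta_1=\xi_1$, $\eta_2=\xi_2$, $\eta_3=\xi_1+\xi_2$, and finiteness uses mean zero and Schwartz decay. If $\varphi^{(j)}$ is radial, each $c_j$ is a constant $c_{\varphi^{(j)}}$, giving $\|g_\varphi f\|_2=c\|f\|_2$.

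For $p\neq2$ I would use vector-valued singular integrals in the product setting. View $f\mapsto\{f*\varphi_{\mathbf r}\}_{\mathbf r}$ as an operator into $L^2(\mathbb{R}^3_+,d\mathbf r/\mathbf r)$. Its kernel is $\pi_*$ of a Hilbert-space-valued product kernel on $\mathbb{R}^{3m}$, namely $\{\varphi^{(1)}_{r_1}\otimes\varphi^{(2)}_{r_2}\otimes\varphi^{(3)}_{r_3}\}$.

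The cleanest route is transference, as in the paper's discussion of Coifman--Weiss. The operator $f*\pi_*\Phi$ on $\mathbb{R}^{2m}$ is the transfer of convolution by $\Phi$ on $\mathbb{R}^{3m}$ acting through the action $u\cdot(x_1,x_2)=(x_1+u_1+u_3,\,x_2+u_2+u_3)$. This is an action of the amenable group $\mathbb{R}^{3m}$ by measure-preserving translations. The lifted operator is the classical tri-parameter product Littlewood--Paley square function, which is bounded on $L^p(\mathbb{R}^{3m};L^2)$ by iterating one-parameter vector-valued theory. Coifman--Weiss transference holds for Hilbert-valued operators, so the upper bound follows for $1<p<\infty$.

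An alternative is to write $g_\varphi(f)$ via $\pi$ and use Minkowski plus the iterated one-parameter square functions along the directions $x_1$, $x_2$ and $(x_1,x_2)\mapsto$ the diagonal. I would keep this as a fallback.

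\textbf{Lower bound $\|f\|_p\lesssim\|g_\varphi f\|_p$.} This follows by duality from the reproducing formula, Theorem~\ref{prop:reproducing}, used with the continuous Calder\'on identity. Choose the partner functions $\psi^{(j)}$ so that
\[
\int_0^\infty\widehat{\psi^{(j)}}(r\eta)\,\widehat{\varphi^{(j)}}(r\eta)\,\frac{dr}{r}=1 .
\]
This is possible since $\varphi^{(j)}$ is fixed; if needed, replace $\psi$ by a multiple of $\varphi$ itself, $\psi=\bar\varphi/c$, when $\varphi$ is radial. Then for $f\in L^2\cap L^p$ and $h\in L^{p'}\cap L^2$,
\[
\langle f,h\rangle=\int_{\mathbb{R}^{2m}}\int_{\mathbb{R}^3_+}(f*\varphi_{\mathbf r})(\mathbf x)\,\overline{(h*\tilde\psi_{\mathbf r})(\mathbf x)}\,\frac{d\mathbf r}{\mathbf r}\,d\mathbf x .
\]
By Cauchy--Schwarz in $\mathbf r$ and H\"older in $\mathbf x$, this is at most $\|g_\varphi f\|_p\,\|g_{\tilde\psi}h\|_{p'}$. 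The upper bound applied to $\psi$ gives $\|g_{\tilde\psi}h\|_{p'}\lesssim\|h\|_{p'}$, and taking the supremum over $h$ yields the lower bound. A density argument then extends both bounds to all of $L^p$.

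\textbf{Main obstacle.} I expect the hard step to be the $L^p$ upper bound for $p\neq2$. The map $\pi$ is not injective, so the three scales interact through the oblique direction $x_1-x_2$, i.e.\ the frequency $\xi_1+\xi_2$. This rules out naive iteration, and an almost-orthogonality argument would have to handle sets that are not ordinary rectangles.

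I would first try transference, which requires checking its hypotheses: $\pi$ is a surjective homomorphism and Lebesgue measure is preserved along the fibers. The other point to check is that the Hilbert-space-valued version of Coifman--Weiss applies, which it does for amenable groups. If that fails, I would instead prove a weighted or vector-valued estimate by dominating with the tube maximal function of Theorem~\ref{thm:maximal}, using a Fefferman--Stein-type inequality for $M_{tube}$ derived the same way.
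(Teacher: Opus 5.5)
Your lower bound is the paper's argument: duality against $h\in L^{p'}$ through the Calder\'on identity, Cauchy--Schwarz in $\mathbf r$, H\"older in $\mathbf x$, and the upper bound applied to the partner function. The upper bound is where you take a genuinely different route.

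\textbf{The paper's route.} The paper does not lift. It uses $f*\varphi_{\mathbf r}=f*_1\varphi^{(1)}_{r_1}*_2\varphi^{(2)}_{r_2}*_3\varphi^{(3)}_{r_3}$ and sets $\mathscr H=f*(\varphi^{(1)}_{r_1}\otimes\varphi^{(2)}_{r_2})$, viewed as an $L^2(\mathbb R^2_+,\frac{dr_1dr_2}{r_1r_2})$-valued function. Then $g_\varphi(f)$ is the Hilbert-valued one-parameter square function of $\mathscr H$ for the diagonal operation $*_3$, and the ordinary bi-parameter Littlewood--Paley inequality finishes the job. This works because $\widehat{\varphi^{(1)}}(r_1\xi_1)$, $\widehat{\varphi^{(2)}}(r_2\xi_2)$ and $\widehat{\varphi^{(3)}}(r_3(\xi_1+\xi_2))$ are commuting one-parameter multipliers. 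Under the measure-preserving change of variables $(x_1,x_2)\mapsto(x_1-x_2,x_2)$, the operation $*_3$ becomes an ordinary convolution in the second block, so Fubini and the Hilbert-valued one-parameter inequality suffice.

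So your ``main obstacle'' is misdiagnosed. The naive iteration you kept as a fallback is exactly the paper's proof. The non-injectivity of $\pi$ plays no role in this $L^p$ bound; the oblique geometry only matters for tubes, the covering lemma and atoms.

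\textbf{Your transference route.} It is also valid, and more conceptual: it mirrors the lifting picture of the introduction and applies verbatim to any push-forward $f\mapsto f*\pi_*\tilde K$ of a bounded (vector-valued) product convolution operator. It does need two checks you skipped. First, you need the Hilbert-valued version of Coifman--Weiss; its proof is verbatim the scalar one. Second, the lifted $L^2(d\mathbf r/\mathbf r)$-valued kernel has norm homogeneous of degree $-m$ in each $u_j$, so it is not in $L^1(\mathbb R^{3m})$. The fix is to transfer the truncated square functions with $r_j\in[\epsilon,\epsilon^{-1}]$, which are uniformly bounded on $\mathbb R^{3m}$, and then let $\epsilon\to0$ by monotone convergence.

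\textbf{A correction in the lower bound.} The sentence ``this is possible since $\varphi^{(j)}$ is fixed'' is false for a general mean-zero Schwartz $\varphi^{(j)}$. If $\widehat{\varphi^{(j)}}$ vanishes identically along some ray (e.g.\ $\varphi^{(j)}=0$), no partner exists, and the lower bound fails already at $p=2$.

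You need either the nondegeneracy condition $c_j(\theta):=\int_0^\infty|\widehat{\varphi^{(j)}}(r\theta)|^2\,\frac{dr}{r}>0$ for all $\theta\in S^{m-1}$, or you must take $\varphi$ to be the function produced in Theorem~\ref{prop:reproducing}. The latter is what the paper's proof tacitly does.

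Under nondegeneracy, $\widehat{\psi^{(j)}}=\overline{\widehat{\varphi^{(j)}}}/c_j(\xi/|\xi|)$ works. But then $\psi^{(j)}$ is no longer Schwartz, so you cannot cite the upper bound for it directly. Instead, note that $h*\tilde\psi_{\mathbf r}=(Th)*\varphi_{\mathbf r}$, where $T$ has multiplier $\prod_j c_j(\eta_j/|\eta_j|)^{-1}$. This is a product of smooth degree-zero (Mihlin) multipliers in $\xi_1$, $\xi_2$ and $\xi_1+\xi_2$, hence bounded on $L^{p'}$. Therefore $\|g_{\tilde\psi}h\|_{p'}=\|g_\varphi(Th)\|_{p'}\lesssim\|h\|_{p'}$.
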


 For $f \in L
^1
(\mathbb{R}^{2m})$, we define
  the {\it Lusin-Littlewood--Paley area function} as
\begin{equation}\label{eq:area-function}
   S_{area,  \varphi}(f)(\mathbf{x})=\left( \int_{  \mathbb{R}^3_+}|f* \varphi_{\mathbf{r} } |^2* \chi_{\mathbf{r} }(\mathbf{x}) \frac {d\mathbf{r}}{\mathbf{r}}\right)^{\frac 12},
\end{equation}
where we take $\varphi_j$'s as given in (\ref{littlewoodpaley}).
\begin{rem}
  For $p=2$, it is easy to see that the $L^2$-norm of {\it Littlewood--Paley function} of $f$ is equal to the  $L^2$-norm of {\it Lusin-Littlewood--Paley area function} of $f$. An immediate result is that \begin{align*}
      \|g_{\varphi}(f)\|_{L^2(\mathbb{R}^{2m})}\simeq\| S_{area,  \varphi}(f)\|_{L^2(\mathbb{R}^{2m})}\simeq\|f\|_{L^2(\mathbb{R}^{2m})}.
  \end{align*}
\end{rem}

We then have the following argument. 
\begin{thm}\label{thm S g equiv}
    For $1 < p < \infty$ and for every $f \in L^p(\mathbb{R}^{2m})$,
    \begin{equation}
        \|S_{area, \varphi}(f)\|_{L^p(\mathbb{R}^{2m})} \simeq\|g_{\varphi}(f)\|_{L^p(\mathbb{R}^{2m})} \simeq \|f\|_{L^p(\mathbb{R}^{2m})},
    \end{equation}
    where the implicit constants depend on $p$ and $m$ only but not on $f$. Moreover, for $p=1$ and for $f \in L^1(\mathbb{R}^{2m})$, if $\|g_{\varphi}(f)\|_{L^1(\mathbb{R}^{2m})} < \infty$, then 
    $\|S_{area, \varphi}(f)\|_{L^1(\mathbb{R}^{2m})} \lesssim \|g_{\varphi}(f)\|_{L^1(\mathbb{R}^{2m})}$; conversely, if $\|S_{area,\varphi}(f)\|_{L^1(\mathbb{R}^{2m})} < \infty$, then 
    $\|g_{\varphi}(f)\|_{L^1(\mathbb{R}^{2m})} \lesssim \|S_{area, \varphi}(f)\|_{L^1(\mathbb{R}^{2m})}$, where the implicit constants depend on $m$ only but not on $f$.
\end{thm}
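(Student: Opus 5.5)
Since $\|g_\varphi(f)\|_{L^p}\simeq\|f\|_{L^p}$ is Theorem \ref{6999}, it remains to compare $S_{area,\varphi}$ with $g_\varphi$ in $L^p$, for $1<p<\infty$ and, under the stated finiteness hypotheses, for $p=1$. The basic tool is the lifting: $\chi_{\mathbf r}$ is the push-forward of a product of normalized ball indicators, so $h*\chi_{\mathbf r}(\mathbf x)$ is an average of $h$ over the tube $T(\mathbf x,\mathbf r)$. Hence for any $h\ge 0$ we have $h*\chi_{\mathbf r}\le M_{tube}h$, and Theorem \ref{thm:maximal} gives $L^p$ bounds for $M_{tube}$.

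\emph{Upper bound $\|S_{area,\varphi}f\|_p\lesssim\|g_\varphi f\|_p$.}
\begin{itemize}
\item For $p\ge 2$: write $\|S f\|_p^2=\|S f^2\|_{p/2}$ and test against $w\ge0$ in $L^{(p/2)'}$. By Fubini and the symmetry of $\chi_{\mathbf r}$,
\[
\int S(f)^2 w=\int\!\!\int |f*\varphi_{\mathbf r}|^2\,(w*\chi_{\mathbf r})\,\frac{d\mathbf r}{\mathbf r}\le \int g_\varphi(f)^2\,M_{tube}w .
\]
Then apply H\"older and Theorem \ref{thm:maximal} with exponent $(p/2)'$. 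For $p=2$ this is just Fubini.
\item For $1\le p<2$: use a good-$\lambda$ or tent-space argument in the spirit of Chang--Fefferman, adapted to tubes. Let $E_\lambda=\{g_\varphi f>\lambda\}$ and $\widetilde E_\lambda=\{M_{tube}\chi_{E_\lambda}>1/2\}$, so $|\widetilde E_\lambda|\lesssim|E_\lambda|$ by the weak-type consequence of Theorem \ref{thm:maximal}.
\item On $\widetilde E_\lambda^c$, control $\int_{\widetilde E_\lambda^c}S(f)^2$ by $\int_{E_\lambda^c}g_\varphi(f)^2$. This uses the geometric fact that if $T(\mathbf x,\mathbf r)$ meets $\widetilde E_\lambda^c$, then a fixed proportion of $T(\mathbf x,\mathbf r)$ lies in $E_\lambda^c$. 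For that we need doubling/engulfing of tubes, namely $T(\mathbf y,\mathbf r)\subset T(\mathbf x,C\mathbf r)$ with comparable measure when $\mathbf y\in T(\mathbf x,\mathbf r)$, which follows by lifting through $\pi$ from the product of three balls.
\item Chebyshev then gives $|\{S f>\lambda\}|\lesssim |E_\lambda|+\lambda^{-2}\int_{E_\lambda^c} g^2$. Integrating in $\lambda^{p-1}d\lambda$ yields the $L^p$ bound for all $p<2$, including $p=1$. The hypothesis $\|g_\varphi f\|_1<\infty$ keeps everything finite.
\end{itemize}

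\emph{Converse $\|g_\varphi f\|_p\lesssim\|S_{area,\varphi}f\|_p$.} The difficulty is that $|f*\varphi_{\mathbf r}(\mathbf x)|^2$ is a pointwise value while $S$ only sees tube averages.
\begin{itemize}
\item First replace $\varphi$ with a family having a sub-mean-value property. Using the reproducing formula of Theorem \ref{prop:reproducing}, write $f*\varphi_{\mathbf r}=\int f*\varphi_{\mathbf s}*\psi_{\mathbf s}*\varphi_{\mathbf r}\,d\mathbf s/\mathbf s$.
\item Lifted to $\mathbb R^{3m}$, the kernel $\psi_{\mathbf s}*\varphi_{\mathbf r}$ is a tensor product of one-parameter almost-orthogonal pieces. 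Each piece has decay $\min(s_j/r_j,r_j/s_j)^{\epsilon}$ and is dominated by dilated Poisson-type bumps.
\item This gives a Plancherel--P\^olya type estimate: for $\mathbf y\in T(\mathbf x,\mathbf r)$, $|f*\varphi_{\mathbf r}(\mathbf x)|$ is controlled by $M_{tube}$ applied to weighted averages of $|f*\varphi_{\mathbf s}|$ over $T(\cdot,\mathbf s)$, summed over $\mathbf s$ with geometric weights.
\item Fefferman--Stein vector-valued inequalities for $M_{tube}$ then give, for $p>1$, $g_\varphi f\lesssim$ a discretized area function. That function is comparable to $S_{area,\varphi}$ in $L^p$, and the case $p>1$ also follows from duality with the first part together with Theorem \ref{6999}.
\item For $p=1$, run the good-$\lambda$ argument symmetrically, with the roles of $g$ and $S$ swapped. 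Use $\{M_{tube}\chi_{\{S>\lambda\}}>1/2\}$ and the Plancherel--P\^olya averaging to pass from tube averages back to points.
\end{itemize}

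\emph{Main obstacle.} The delicate step is the geometry of tubes: the engulfing and doubling properties and the ``fixed proportion'' lemma used in the good-$\lambda$ step. These must be proved via $\pi$ from the product structure, and tubes are not balls of any quasi-metric on $\mathbb R^{2m}$ because they are tri-parameter on a bi-parameter space. I would establish them first, using only that $\pi$ maps $B(x_1,r_1)\times B(x_2,r_2)\times B(x_3,r_3)$ onto $T$ and that $|T|$ is comparable to the measure computed from the fiber integral.
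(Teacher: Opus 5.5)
\paragraph{The gap: the upper bound for $1\le p<2$.}
Your argument for $\|S_{area,\varphi}f\|_{L^p}\lesssim\|g_\varphi f\|_{L^p}$ when $1\le p<2$ does not work. By the symmetry of $\chi_{\mathbf r}$,
\[
\int_{\widetilde E_\lambda^c}S_{area,\varphi}(f)^2=\int_{\mathbb{R}^3_+}\int_{\mathbb{R}^{2m}}|f*\varphi_{\mathbf r}(\mathbf y)|^2\,(\chi_{\widetilde E_\lambda^c}*\chi_{\mathbf r})(\mathbf y)\,d\mathbf y\,\frac{d\mathbf r}{\mathbf r}.
\]
The weight is nonzero at every $\mathbf y\in E_\lambda$ once $\mathbf r$ is large enough that the tube around $\mathbf y$ reaches $\widetilde E_\lambda^c$. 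At such a point, $|f*\varphi_{\mathbf r}(\mathbf y)|^2$ is a piece of $g_\varphi(f)(\mathbf y)^2$ with $\mathbf y\in E_\lambda$, not a piece of $\int_{E_\lambda^c}g_\varphi(f)^2$. Knowing that a fixed proportion of the tube lies in $E_\lambda^c$ only helps if you can bound the value at $\mathbf y$ by averages of $|f*\varphi_{\mathbf s}|$ over points of that tube and nearby scales, and you invoke no such property.

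In fact this step uses nothing about $F(\mathbf y,\mathbf r)=f*\varphi_{\mathbf r}(\mathbf y)$. It would therefore prove $\|SF\|_p\lesssim\|gF\|_p$ for arbitrary $F$ on $\mathbb{R}^{2m}\times\mathbb{R}^3_+$, and that is false for $p<2$. Take $F=\chi_{\{|\mathbf y|<\epsilon\}}\chi_{[1,2]^3}(\mathbf r)$. Then $gF\simeq\chi_{\{|\mathbf x|<\epsilon\}}$, while $SF\simeq\epsilon^m$ on a set of measure $\simeq1$. Hence $\|SF\|_p^p\simeq\epsilon^{mp}\gg\epsilon^{2m}\simeq\|gF\|_p^p$.

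Your alternative route to the converse for $p>2$, via duality with the first part, inherits the same gap. It needs $\|S_{area,\varphi}h\|_{L^{p'}}\lesssim\|h\|_{L^{p'}}$ with $p'<2$.

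\paragraph{What is missing, and where your geometry does work.}
The missing ingredient is the one the paper's sketched proof rests on: a Plancherel--P\'olya inequality. It comes from the discrete Calder\'on reproducing formula together with almost-orthogonality of $\psi_{\mathbf s}*\varphi_{\mathbf r}$; your lifting observation is the right way to get the almost-orthogonality. The inequality bounds the supremum of $|f*\varphi_{\mathbf r}|$ over a dyadic tube (and its tent) by the infimum over the same tube of geometrically weighted sums of
\[
\big[M_{tube}\big(\textstyle\sum_{R'}|f*\varphi_{\mathbf s}(\mathbf z_{R'})|^\delta\chi_{R'}\big)\big]^{1/\delta},
\]
with arbitrary sample points $\mathbf z_{R'}\in R'$. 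Taking $\delta<1$ lets you apply Fefferman--Stein on $L^{p/\delta}(\ell^{2/\delta})$ even at $p=1$. This shows that $S_{area,\varphi}f$ and $g_\varphi f$ are both comparable in $L^p$ to the discrete square function.

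For $1<p<2$ you could instead go through $\|S_{area,\varphi}f\|_p\lesssim\|f\|_p$ and Theorem \ref{6999}. That bound, however, also needs a non-geometric proof, such as a vector-valued singular integral argument or Plancherel--P\'olya.

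Your purely geometric good-$\lambda$ argument is valid in the \emph{other} direction. Let $E=\{S_{area,\varphi}f>\lambda\}$ and $\widetilde E=\{M_{tube}\chi_E>c_0\}$ with $c_0$ small. Proposition \ref{prop:tube} gives $(\chi_{E^c}*\chi_{\mathbf r})(\mathbf x)\ge\frac12$ for every $\mathbf x\in\widetilde E^c$ and every $\mathbf r$. Hence $\int_{\widetilde E^c}g_\varphi(f)^2\le 2\int_{E^c}S_{area,\varphi}(f)^2$, for arbitrary $F$. This yields $\|g_\varphi f\|_p\lesssim\|S_{area,\varphi}f\|_p$ for $p\le2$, including $p=1$, with no Plancherel--P\'olya averaging.

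So your tools sit on the wrong sides. Geometry suffices for $g\lesssim S$ when $p\le2$. The structure of $f*\varphi_{\mathbf r}$ is indispensable for $S\lesssim g$ when $p<2$.
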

\smallskip

\subsubsection{Twisted atom and the equivalence of the twisted Hardy space}
We now introduce the two versions of new  multiparameter Hardy space associated with the corresponding twisted structure.
\smallskip
\begin{defn}[Twisted Hardy space]~\\
The twisted Hardy space $H^1_{tw}(\mathbb{R}^{2m})$ is defined to be the family of functions $f\in L^1(\mathbb{R}^{2m})$ such that $g_{\varphi}(f)\in L^1(\mathbb{R}^{2m})$. The norm of $H^1_{tw}(\mathbb{R}^{2m})$ is defined by $
\|f\|_{H^1_{tw}(\mathbb{R}^{2m})}:=\|g_{\varphi}(f)\|_{L^1(\mathbb{R}^{2m})}.
$
\end{defn}

\begin{defn}[Area function Hardy space]~\\
The {\it area function Hardy space} $H_{area, \varphi}^
1(\mathbb{R}^{2m})$
    is defined to be the
set  of all $f \in L
^1
(\mathbb{R}^{2m})$ such that $   S_{area,  \varphi}(f) \in L
^1
(\mathbb{R}^{2m})$  with the norm
$
   \|f\|_{H_{area,\boldsymbol\varphi}^
1(\mathbb{R}^{2m})}:=\|S_{area,\varphi}(f)\|_{L
^1
(\mathbb{R}^{2m})}
$. 
\end{defn}

As a corollary of Theorem \ref{thm S g equiv}, the twisted Hardy space and the area function Hardy space coincide.
\begin{cor} The two spaces  $H^1_{tw}( \mathbb{R}^{2m}) $ and $ H^1_{area,\varphi}( \mathbb{R}^{2m})$ coincide and they have equivalent norms.
\end{cor}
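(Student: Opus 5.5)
This corollary follows directly from the $p=1$ part of Theorem \ref{thm S g equiv}, so I will only check that the two spaces have the same underlying set and then compare norms. Both $H^1_{tw}(\mathbb{R}^{2m})$ and $H^1_{area,\varphi}(\mathbb{R}^{2m})$ are subspaces of $L^1(\mathbb{R}^{2m})$, and both use the same family $\{\varphi_j\}_{j=1}^3$ of mean-zero Schwartz functions from (\ref{littlewoodpaley}). The comparison therefore reduces to relating $\|g_\varphi(f)\|_{L^1}$ and $\|S_{area,\varphi}(f)\|_{L^1}$ for a fixed $f\in L^1(\mathbb{R}^{2m})$.

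First, take $f\in H^1_{tw}(\mathbb{R}^{2m})$. Then $f\in L^1(\mathbb{R}^{2m})$ and $\|g_{\varphi}(f)\|_{L^1}<\infty$. The first half of the $p=1$ statement in Theorem \ref{thm S g equiv} gives
\[
\|S_{area,\varphi}(f)\|_{L^1(\mathbb{R}^{2m})}\lesssim \|g_{\varphi}(f)\|_{L^1(\mathbb{R}^{2m})}<\infty.
\]
Hence $f\in H^1_{area,\varphi}(\mathbb{R}^{2m})$ and $\|f\|_{H^1_{area,\varphi}}\lesssim\|f\|_{H^1_{tw}}$.

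Conversely, take $f\in H^1_{area,\varphi}(\mathbb{R}^{2m})$. Then $f\in L^1(\mathbb{R}^{2m})$ and $\|S_{area,\varphi}(f)\|_{L^1}<\infty$. The second half of the $p=1$ statement gives
\[
\|g_{\varphi}(f)\|_{L^1(\mathbb{R}^{2m})}\lesssim \|S_{area,\varphi}(f)\|_{L^1(\mathbb{R}^{2m})}<\infty.
\]
Hence $f\in H^1_{tw}(\mathbb{R}^{2m})$ and $\|f\|_{H^1_{tw}}\lesssim\|f\|_{H^1_{area,\varphi}}$.

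The two inclusions show the sets coincide, and the two inequalities give equivalence of norms. The implicit constants depend only on $m$, as stated in Theorem \ref{thm S g equiv}.

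There is no real obstacle here, since all the analysis is contained in Theorem \ref{thm S g equiv}. The one point to watch is that its $p=1$ statements are conditional: each assumes the finiteness of one side. The argument above applies each direction only under the finiteness that the definition of the relevant space already provides, so the conditional form causes no difficulty.
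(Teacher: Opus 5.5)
Your argument is correct and matches the paper, which states this corollary as an immediate consequence of the $p=1$ part of Theorem \ref{thm S g equiv}, applying each direction under the finiteness hypothesis that membership in the relevant space supplies. You also handle correctly the one point needing care, namely that the $p=1$ statements are conditional.
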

\smallskip

The last result of this paper is to introduce the atoms associated with the twisted structure. To obtain a suitable definition of such atoms, one need to further split the tube on $\mathbb{R}^{2m}$ into five types (see Section \ref{sec:3} for the structure of the tubes in each type), and then pair with different cancellation conditions.
\smallskip

\begin{defn}[Atoms of type $\Diamond={\rm I,I\!I, I\!I\!I, I\!V}$ or ${\rm V}$]~\\
    Fix positive integers $N_j>0$ for $j=1,2,3$. Let $\Diamond={\rm I,I\!I, I\!I\!I, I\!V}$ or ${\rm V}$ be the type. An {\it atom of type} $\Diamond$ is a $L^2(\mathbb{R}^{2m})$ function $a_{\Omega^\Diamond}$ such that there is an open set $\Omega^\Diamond$ of $\mathbb{R}^{2m}$ of finite measure and $L^2(\mathbb{R}^{2m})$ functions $a^\Diamond_R$, called {\it particles}, and $b_R$ in $\text{Dom}(\triangle^{N_1}_1\triangle^{N_2}_2\triangle^{N_3}_{twist})$ for all maximal tubes of type $\Diamond$, $R\in m^\Diamond(\Omega^\Diamond)$, such that
    \begin{align*}
        &(A1)\,\,a^\Diamond_R=\triangle^{N_1}_1\triangle^{N_2}_2\triangle^{N_3}_{twist} b^\Diamond_R\quad\text{and}\quad \text{supp}(b_R)\subset R^*,\quad\text{where}\,\,R^*\,\,\text{is the}\,\sigma-\text{enlargement of}\,\,R;\\
        &(A2)\,\,\text{The sum}\,\,\sum_{R\in m^\Diamond(\Omega^\Diamond)}a^\Diamond_R\,\,\text{converges in}\,\,L^2(\mathbb{R}^{2m})\quad\text{and}\,\,\sum_{R\in m^\Diamond(\Omega^\Diamond)}\|a^\Diamond_R\|^2_{L^2(\mathbb{R}^{2m})}\lesssim\frac{1}{|\Omega^\Diamond|};\\
        &(A3)\,\,a_{\Omega^\Diamond}=\sum_{R\in m^\Diamond(\Omega^\Diamond)}a^\Diamond_R\quad\text{and}\quad \|a_{\Omega^\Diamond}\|_{L^2(\mathbb{R}^{2m})}\lesssim\frac{1}{\sqrt{|\Omega^\Diamond|}};  \end{align*}
        Moreover, let $j=1,2,3$, then for all $0\leq k_j\leq N_j$, we have the cancellation property:
        
        1. for $\Diamond={\rm I}$,
\begin{align}
\sum_{R\in m^\Diamond(\Omega^\Diamond)}\ell(I_1)^{-4k_1}\ell(I_2)^{-4k_2}\left\|\triangle^{N_1-k_1}_1\triangle^{N_2-k_2}_2\triangle^{N_3}_{twist} b^\Diamond_R\right\|^2_{L^2(\mathbb{R}^{2m})}\lesssim\frac{1}{|\Omega^\Diamond|},
\end{align}
and
\begin{align}
\sum_{R\in m^\Diamond(\Omega^\Diamond)}\ell(I_1)^{-4k_1}\ell(I_2)^{-4k_2}\ell(I_1)^{-\alpha}\ell(I_2)^{-4N_3+\alpha}\left\|\triangle^{N_1-k_1}_1\triangle^{N_2-k_2}_2 b^\Diamond_R\right\|^2_{L^2(\mathbb{R}^{2m})}\lesssim\frac{1}{|\Omega^\Diamond|}
        \end{align}
for $\alpha\in\{0,1,\ldots, 4N_3\}$; 

\smallskip
        2. for $\Diamond={\rm I\!I, I\!I\!I}$, the cancellation property becomes
  \begin{align}
\sum_{R\in m^\Diamond(\Omega^\Diamond)}\ell(I_1)^{-4k_1}\ell(I_2)^{-4k_3}\left\|\triangle^{N_1-k_1}_1\triangle^{N_2}_2\triangle^{N_3-k_3}_{twist} b^\Diamond_R\right\|^2_{L^2(\mathbb{R}^{2m})}\lesssim\frac{1}{|\Omega^\Diamond|},
        \end{align}      
        and
    \begin{align}
\sum_{R\in m^\Diamond(\Omega^\Diamond)}\ell(I_1)^{-4k_1}\ell(I_1)^{-\alpha}\ell(I_2)^{-4N_2+\alpha}\ell(I_2)^{-4k_3}\left\|\triangle^{N_1-k_1}_1\triangle^{N_3-k_3}_{twist} b^\Diamond_R\right\|^2_{L^2(\mathbb{R}^{2m})}\lesssim\frac{1}{|\Omega^\Diamond|}
        \end{align} 
for all $\alpha\in\{0,1,\ldots, 4N_2\}$;

        \smallskip
 
       3. for $\Diamond={\rm I\!V,V}$,
 \begin{align}
\sum_{R\in m^\Diamond(\Omega^\Diamond)}\ell(I_1)^{-4k_3}\ell(I_2)^{-4k_2}\left\|\triangle^{N_1}_1\triangle^{N_2-k_2}_2\triangle^{N_3-k_3}_{twist} b^\Diamond_R\right\|^2_{L^2(\mathbb{R}^{2m})}\lesssim\frac{1}{|\Omega^\Diamond|},
        \end{align}  
        and
\begin{align}
 \sum_{R\in m^\Diamond(\Omega^\Diamond)}\ell(I_1)^{-\alpha}\ell(I_2)^{-4N_1+\alpha}\ell(I_1)^{-4k_3}\ell(I_2)^{-4k_2}\left\|\triangle^{N_2-k_2}_2\triangle^{N_3-k_3}_{twist} b^\Diamond_R\right\|^2_{L^2(\mathbb{R}^{2m})}\lesssim\frac{1}{|\Omega^\Diamond|}
\end{align}  
for all $\alpha\in\{0,1,\ldots, 4N_1\}$.
\end{defn}
\smallskip

\begin{defn}
We say that $f\in L^1(\mathbb{R}^{2m})$ admits an (twisted) atomic decomposition if the function $f$ can be written as $\sum_{k\in\mathbb{Z}}\sum_{\Diamond={\rm I,I\!I, I\!I\!I, I\!V, V}}\lambda^\Diamond_k\cdot a^\Diamond_k$, where the sum converges in $L^1(\mathbb{R}^{2m})$ and each $a^\Diamond_k$ is an atom of type $\Diamond$ and $\{\lambda_k\}_{k,\Diamond}\in\ell^1$; We write $f\sim\sum_{\substack{k\in\mathbb{Z}\\\Diamond={\rm I,I\!I, I\!I\!I, I\!V, V}}}\lambda^\Diamond_k\cdot a^\Diamond_k$ to indicate that $\sum_{\substack{k\in\mathbb{Z}\\\Diamond={\rm I,I\!I, I\!I\!I, I\!V, V}}}\lambda^\Diamond_k\cdot a^\Diamond_k$ is an atomic decomposition of $f$.
The (twisted) atomic Hardy space $H^1_{Tw,\,atom}(\mathbb{R}^{2m})$ is defined to be all the $L^1(\mathbb{R}^{2m})$ functions $f$ such that $f$ admits an atomic decomposition with the norm
    \begin{align*}        \|f\|_{H^1_{Tw,\,atom}}:=\inf\bigg\{\sum_{k}|\lambda_k|:\,f\sim\sum_{k\in\mathbb{Z}}\sum_{\Diamond=I,I\!\!I,I\!\!I\!\!I,I\!V,V}\lambda^\Diamond_k\cdot a^\Diamond_k\bigg\}.
    \end{align*}
\end{defn}
\smallskip
\begin{thm} \label{thm:S-atom}  The two spaces  $H^1_{Tw,atom}( \mathbb{R}^{2m}) $ and $ H^1_{area,\varphi}( \mathbb{R}^{2m})$ coincide and they have equivalent norms.
\end{thm}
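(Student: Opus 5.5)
The plan follows the classical Chang--Fefferman scheme for the product Hardy space. The ingredients are the reproducing formula of Theorem \ref{prop:reproducing}, the square function bounds of Theorem \ref{thm S g equiv}, and the Journ\'e-type covering lemma for the twisted tubes. There are two inclusions to prove.

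\emph{Atoms are in $H^1_{area,\varphi}$.} It suffices to show $\|S_{area,\varphi}(a)\|_{L^1}\lesssim 1$ uniformly over atoms $a=a_{\Omega^\Diamond}$ of each type $\Diamond$; the general case then follows by summing the atomic decomposition, using sublinearity of $S_{area,\varphi}$ and $L^1$ convergence.

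Fix $\Omega=\Omega^\Diamond$ and form the enlarged set $\widetilde\Omega=\{M_{tube}(\chi_\Omega)>1/2\}$. By Theorem \ref{thm:maximal}, $|\widetilde\Omega|\lesssim|\Omega|$. On $\widetilde\Omega$ (or a further enlargement of it) we use Cauchy--Schwarz together with the $L^2$ bound for $S_{area,\varphi}$ from the Remark and condition (A3):
\[
\int_{\widetilde\Omega}S_{area,\varphi}(a)\le|\widetilde\Omega|^{1/2}\|a\|_{L^2}\lesssim1.
\]
Off $\widetilde\Omega$ we write $a=\sum_R a_R$ and estimate $\int_{\widetilde\Omega^c}S_{area,\varphi}(a_R)$ for each maximal tube $R\in m^\Diamond(\Omega)$. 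The representation $a_R=\triangle_1^{N_1}\triangle_2^{N_2}\triangle_{twist}^{N_3}b_R$ lets us move Laplacians onto $\varphi_{\mathbf r}$, which produces decay factors $(r_j/\ell)^{2N_j}$ when $r_j$ is small. For the directions in which $\mathbf x$ is far from $R^*$, we integrate by parts the other way, using the mixed cancellation norms $\ell(I_1)^{-4k_1}\cdots\|\triangle^{N_1-k_1}\cdots b_R\|$ listed in the definition for the given type. The $\alpha$-indexed conditions are needed precisely because along the twisted direction the side length is interpolated between $\ell(I_1)$ and $\ell(I_2)$.

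This yields
\[
\int_{\widetilde\Omega^c}S_{area,\varphi}(a_R)\lesssim\gamma(R)^{-\delta}\,|R|^{1/2}\,\|\text{(weighted }b_R\text{ norms)}\|_{L^2},
\]
where $\gamma(R)$ is the Journ\'e embeddedness factor of $R$ in $\widetilde\Omega$. We then sum over $R$ by Cauchy--Schwarz and the Journ\'e-type covering lemma, which gives $\sum_R\gamma(R)^{-\delta}|R|\lesssim|\Omega|$; combined with (A2) and the cancellation estimates this gives the bound $1$. The five types have to be treated separately because the roles of $(\triangle_1,\triangle_2,\triangle_{twist})$ are permuted according to which two of the three parameters $r_1,r_2,r_3$ are comparable to the tube's sides. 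Within each type, however, the argument is identical up to relabeling.

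\emph{$H^1_{area,\varphi}$ functions have atomic decompositions.} Let $f\in H^1_{area,\varphi}\cap L^2$; we conclude by density. Apply Theorem \ref{prop:reproducing}, choosing $\psi^{(j)}=\triangle^{N_j}\theta^{(j)}$ with $\theta^{(j)}\in C_0^\infty$, so that the kernels carry explicit Laplacian factors. Discretize $\mathbb R^3_+\times\mathbb R^{2m}$ into dyadic twisted tubes $R$ with associated Carleson boxes. Set
\[
\Omega_k=\{S_{area,\varphi}(f)>2^k\},\qquad \widetilde\Omega_k=\{M_{tube}(\chi_{\Omega_k})>1/2\},
\]
and let $\mathcal B_k$ be the dyadic tubes with $|R\cap\Omega_k|>|R|/2$ and $|R\cap\Omega_{k+1}|\le|R|/2$. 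Split $\mathcal B_k$ according to the type of $R$ and group the tubes by the maximal tube of that type in $\widetilde\Omega_k$ containing them. Then define
\[
b_{\bar R}=\sum_{R\subset\bar R,\,R\in\mathcal B_k}\int_{\text{box}(R)}(f*\varphi_{\mathbf r})\,\theta_{\mathbf r}\,\frac{d\mathbf r}{\mathbf r}
\quad\text{(up to dilation factors)},
\]
so that $a_{\bar R}=\triangle_1^{N_1}\triangle_2^{N_2}\triangle_{twist}^{N_3}b_{\bar R}$. Set $\lambda_k^\Diamond=2^k|\widetilde\Omega_k|$. Condition (A2) and the cancellation conditions follow by duality against $L^2$ test functions, using the $L^2$ square function bound and the estimate
\[
\sum_{R\in\mathcal B_k}\int_{\text{box}(R)}|f*\varphi_{\mathbf r}|^2\lesssim\int_{\widetilde\Omega_k\setminus\Omega_{k+1}}S_{area,\varphi}(f)^2\lesssim 2^{2k}|\Omega_k|.
\]
Finally, $\sum_k|\lambda_k|\lesssim\sum_k 2^k|\Omega_k|\simeq\|S_{area,\varphi}f\|_{L^1}$.

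\emph{Main obstacle.} I expect the hardest step to be the off-$\widetilde\Omega$ estimate in the first inclusion. Twisted tubes are oblique, and the distance from $\mathbf x$ to $R$ in the third (diagonal) direction is governed by a combination of the first two coordinates. The integration-by-parts decay must therefore be matched to the correct type-dependent side lengths, and the Journ\'e lemma must be applied to each type separately. I would first attempt type ${\rm I}$, where the tube is essentially a product rectangle with a twisted correction. There I would lift to $\mathbb R^{3m}$ via $\pi$ and treat $\chi_{\mathbf r}$ and $\varphi_{\mathbf r}$ as push-forwards of product objects, reducing the kernel estimates to product estimates along the fiber.
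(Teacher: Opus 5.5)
Your outline has the same architecture as the paper's proof in both directions.

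For $H^1_{area,\varphi}\subset H^1_{Tw,atom}$, you and the paper both use:
\begin{itemize}
\item the reproducing formula with $\psi^{(j)}$ a power of the Laplacian of a $C^\infty_0$ function;
\item the tent partition and the level sets of $S_{area,\varphi}(f)$;
\item grouping of the selected tubes under maximal tubes of the same type;
\item duality for (A2), (A3) and the cancellation bounds. Your $\lambda_k=2^k|\widetilde\Omega_k|$ is equivalent to the paper's $\lambda^\Diamond_k$.
\end{itemize}
For the converse, both arguments use Cauchy--Schwarz on an enlarged set, cancellation decay off it, and the twisted Journ\'e lemmas.

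The genuine divergence is the slanted types. The paper never reduces types ${\rm I\!I}$--${\rm V}$ to type ${\rm I}$. It works in the original coordinates, using the type-preserving Lemma~\ref{lem:Journe 1} with its two cases. It also needs the molecule decomposition of Subsection~\ref{Molecule} for the pieces where $a_R*\varphi_{\mathbf r}$ is neither localized nor decaying.

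Your sentence that the types are identical up to relabeling replaces all of that, but as written it is only an assertion. In the original coordinates $(\varphi^{(1)}_{r_1}\otimes\varphi^{(2)}_{r_2})*_3\varphi^{(3)}_{r_3}$ is not adapted to a slant tube, as your own ``main obstacle'' paragraph notes.

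It becomes a proof via a change of variables. Take $L(x_1,x_2)=(x_2-x_1,x_2)$. Then $|\det L|=1$ and $L\circ\pi=\pi\circ P$ with $P(a,b,c)=(-a,c,b)$. Consequently:
\begin{itemize}
\item $\varphi_{\mathbf r}\circ L^{-1}=\varphi'_{(r_1,r_3,r_2)}$ for the family $\varphi'=(\varphi^{(1)}(-\,\cdot),\varphi^{(3)},\varphi^{(2)})$, and likewise for $\chi_{\mathbf r}$;
\item $S_{area,\varphi}(f)\circ L^{-1}=S_{area,\varphi'}(f\circ L^{-1})$;
\item the slant tube $I\times_t J$ is mapped to the rectangle $(-I)\times J$;
\item $(\triangle_1,\triangle_2,\triangle_{twist})$ becomes $(\triangle_1,\triangle_{twist},\triangle_2)$, so (\ref{II-1})--(\ref{II-2}) turn into (\ref{I-1})--(\ref{I-2}) with $(N_2,k_2)\leftrightarrow(N_3,k_3)$.
\end{itemize}
The map $(x_1,x_2)\mapsto(x_1,x_1-x_2)$ does the same for types ${\rm I\!V}$ and ${\rm V}$.

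With this, the analytic estimates are needed only for type ${\rm I}$. The covering input is Lemma~\ref{lem:Journe 1} transported by $L$. The oblique computations, including the molecule decomposition, are then inherited from type ${\rm I}$. The paper's route avoids the symmetry argument at the cost of those separate computations.

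Some smaller points need fixing.
\begin{itemize}
\item Moving $\triangle^{N_j}$ onto $\varphi$ gives decay $(\ell/r_j)^{2N_j}$ for $r_j$ large, not $(r_j/\ell)^{2N_j}$ for $r_j$ small.
\item The small-$r$ pieces are killed by support after the paper's reduction to compactly supported $\varphi$ (Lemma~\ref{lemma-decomposition}). You should either invoke that reduction or replace it by Schwartz-tail bounds.
\item In the second inclusion, with the centred $M_{tube}$ and threshold $1/2$, the condition $|R\cap\Omega_k|>|R|/2$ does not put $R$ inside $\widetilde\Omega_k$.
\item Likewise, $|R\cap\Omega_{k+1}|\le|R|/2$ does not give density of $\Omega_{k+1}^c$ in the sections $T(\mathbf x,\mathbf r)$ over the tent, because those sections are not contained in $R$.
\end{itemize}
The paper handles the last two issues by testing $(R')^*$ against $2^{-\sigma m-1}$ and by defining $\widetilde\Omega_k$ with threshold $2^{-3\sigma m-1}$. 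You can equally use the uncentred $M^\Diamond_{tube}$ with a small threshold.
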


To establish the equivalence of norms, we introduce two new ingredients tailored to the twisted setting. The first is a {\it Twisted Covering Lemma} (see Section \ref{sec:4}). Because the singularities occur along both axes and the diagonal, we cannot rely on standard rectangular coverings; instead, we construct a covering that preserves the specific geometric type (I through V) of the underlying tubes. The second ingredient is the {\it Molecule Decomposition Technique} (see Section \ref{Molecule}). This technique is required to handle the geometric mismatch between slanted atoms and standard kernels, allowing us to prove convergence via an annular decomposition of the atom's support. This approach extends those previous closely related developments in flag and product Hardy spaces found in \cite{CCLLO,HLLW, HLS, HLMMY}.

\subsubsection{Subsequent works}
In forthcoming papers, we will establish the full real-variable framework---including non-tangential and radial maximal functions, interpolation and duality---parallel to \cite{FeffS2}, as well as the twisted paraproducts and dyadic structures. We will also extend this theory to homogeneous Lie groups and further study the relative fundamental solutions of the Kohn--Laplacian on the Shilov boundaries of certain Siegel domains. Apart from the theoretical side, we also plan to  apply the theory of the twisted multiparameter singular integral and the real-variable
methods to investigate the twisted Hilbert transform as a novel, anisotropic phase-shift converter for image processing applications.

\subsubsection{Organization of the paper}
 In Section \ref{sec:2}, we introduce background knowledge of the multiparameter harmonic analysis, including the notion of product and flag kernel; we then explicitly verify that the two models discussed previously—the Cauchy--Szeg\H{o} kernel {on the Heisenberg group $\mathbb{H}^n$} (often denoted as $\mathscr{N}$ in the literature) and the twisted Hilbert transform on $\mathbb{R}^2$—satisfy condition  (\ref{eq:group_size}) of Definition \ref{def:twisted_group} and (\ref{eq:Euclidean_size}) of Definition \ref{def:twisted_Euclidean}, respectively.  In Section \ref{sec:3}, we build the tube structure and the twisted Laplacian on $\mathbb{R}^{2m}$ and prove the mapping property of the tube maximal function. Calder\'on reproducing formula is established as well in this section. In Section \ref{sec:4}, as the preparation of the twisted covering lemma, we  classify the tubes structure on $\mathbb{R}^{2m}$ into five categories and establish the new covering lemma. In Section \ref{sec:5}, we demonstrate the various characterizations of  the twisted Hardy space, including the Littlewood--Paley function and area function characterization.  In Section \ref{sec:6}, we combine all the ingredients to show the equivalence of the twisted atomic Hardy space, while in the last section we manifest the connection between twisted Fourier multiplier and the new phase-shift converter.

 \smallskip
 \section{Preliminaries and verification of two fundamental models}\label{sec:2}

\subsection{Twisted structures and multipliers}\label{sec:2.1}
In order to introduce the explicit twisted structures for multipliers, we first recall the well-known facts of the product and flag structures via \cite{Feff-S,GuS,MRS,NRSW1,NRSW2,St2000}

\subsubsection{Product and flag structures: kernels and multipliers}

We first recall the product kernels and multipliers (see for example \cite{Feff-S}).
 \begin{defn}
 A {\it product kernel} on $\mathbb{R}^M$ relative to a given decomposition $\mathbb{R}^M=\mathbb{R}^{m_1}\times \cdots\times \mathbb{R}^{m_n}$ into $n$ homogeneous subspaces with given dilations,
  is a distribution $K$ on $\mathbb{R}^M$ which coincides with a smooth function
away from the coordinate subspaces $\mathbf{x}_j=\mathbf{0}$, and satisfies
\\
(1) (Differential inequalities):    For each multi-index $\alpha:=(\alpha_1 , \cdots, \alpha_n)$,
there is a constant $C_\alpha$ so that
\begin{equation*}
   |\partial_{{x}_1}^{\alpha_1} \cdots  \partial_{{x}_n}^{\alpha_n}K(\mathbf{x} )|\leq   C_\alpha  | {x}_1  |^{-Q_1-|{\alpha_1}| }\cdots   |
   {x}_n  |^{-Q_n- |{\alpha_n}| }
\end{equation*}away from the coordinate subspaces, where $ Q_j$ is  the homogeneous dimension of $\mathbb{R}^{m_j}$ and   $|{x}_j |$ is
a smooth homogeneous norm on $\mathbb{R}^{m_j}$;
 \\
 (2) (Cancellation conditions): These are defined inductively on $n$.
\\(a) For $n=1$, given any normalized bump function $\varphi$ and any
$R>0$, the quantity
\begin{equation*}
   \int K(\mathbf{x}) \varphi(R\mathbf{x}) d\mathbf{x}
\end{equation*}
is bounded independently of $\varphi$ and $ R$;
\\
(b) For $n>1$, given any $j \in\{1, \ldots, n\}$, any normalized bump function  $\varphi$ on $\mathbb{R}^{m_j}$, and any $ R>0$, the distribution
\begin{equation*}
  K_{\varphi, R}({x}_{1} ,  \dots, {x}_{j-1},\, {x}_{j+1},\dots, {x}_{xn})= \int K(\mathbf{x}) \varphi(R{x}_j) d{x}_j
\end{equation*}
is a product kernel on the lower dimensional space which is the product of
the $\mathbb{R}^{m_i}$ with $i \neq j$.
 \end{defn}
 
\begin{defn} A {\it product multiplier} is a bounded function $m(\xi)$ on
$\mathbb{R}^N$ which is $ C^\infty$ away from the coordinate subspaces $\xi_j=0$ and which
satisfies the differential inequalities
  \begin{equation*}
 | \partial_{\xi_1}^{\alpha_1}\cdots\partial_{\xi_n}^{\alpha_n}m(\xi)|\leq  C_\alpha |\xi_1|^{-\alpha_1}\cdots|\xi_n|^{-\alpha_n},
 \end{equation*}
away from the coordinate subspaces, where the variables $\xi$ are dual to the
variables $x$. 
\end{defn}
It is known that the Fourier transform of a product kernel is a product
multiplier and, conversely, the inverse Fourier transform of a product multiplier
is a product kernel \cite[Theorem 2.1.11]{NRS}. We then recall the flag kernels and multipliers (see for example \cite{NRSW1,NRSW2}).

\begin{defn}
 A {\it flag} (or {\it filtration}) in $\mathbb{R}^M$ is a family of subspaces
 \begin{equation*}
   0 =V_0\subset V_1\subset\cdots V_{n-1}\subset V_n= \mathbb{R}^M.
\end{equation*}
For each $j$, let $W_j$ be a complementary subspace of $V_{j-1}$ in $V_j$, that is
 \begin{equation*}
  V_j=V_{j-1}\bigoplus W_j.
\end{equation*}
The family $\{W_j\}$ is called a {\it gradation} associated to the filtration $\{V_j\}$.
\end{defn} 

\begin{defn}
A {\it flag kernel} relative to the {\it flag} $\{V_j\}$ is a distribution $K$ on $\mathbb{R}^M$ which coincides with a smooth function
away from   $\mathbf{x}_n=\mathbf{0}$, and satisfies
\\(1) (Differential inequalities):    For each multi-index $\alpha:=(\alpha_1 , \cdots, \alpha_n)$,
there is a constant $C_\alpha$ so that
\begin{align*}
|\partial_{{x}_1}^{\alpha_1} \cdots  \partial_{{x}_n}^{\alpha_n}K(\mathbf{x} )|\leq   &C_\alpha  (| {x}_1  |+\cdots+ | {x}_n
   |)^{-Q_1-|{\alpha_1}| }\cdots   (| {x}_{n -1}
   |+|{x}_n  |)^{-Q_{n -1}- |{\alpha_{n -1}}| } | {x}_n  |^{-Q_n- |{\alpha_n}| }\notag
\end{align*}for ${x}_n \neq \mathbf{0}$;
 \\
 (2) (Cancellation conditions): These are defined inductively on $n$.
\\ \ (a) For $n=1$, given any normalized bump function $\varphi$ and any
$R>0$, the quantity
\begin{equation*}
   \int K(\mathbf{x}) \varphi(R\mathbf{x}) d\mathbf{x}
\end{equation*}
is bounded independently of $\varphi$ and $ R$;
\\ \ 
(b) Let $W_j$ be a complementary subspace of $ V_{j-1}$ in $ V_{j}$. For $n>1$, given any $j \in\{1, \ldots, n\}$, any normalized bump function  $\varphi$ on $ W_j$, and any $ 
R>0$, the distribution
\begin{equation*}
  K_{\varphi, R}({x}_{1} ,  \dots, {x}_{j-1},\, {x}_{j+1},\dots, {x}_n)= \int K(\mathbf{x}) \varphi(R{x}_j) \,d{x}_j
\end{equation*}
  is a flag kernel on   $\bigoplus_{i\neq j} W_i$, adapted to the flag
\begin{equation*}
   0 \subset W_1\subset \cdots \bigoplus_{i\leq j-1} W_i\subset  \bigoplus_{\substack{i\leq j+1\\ i\neq j}} W_i\subset \cdots\subset  \bigoplus_{ i\neq j } W_i
\end{equation*}
\end{defn}
 
We now introduce the notion of flag multiplier, defined on the $\xi$-space, understood as the dual space of the $x$-space. Let $\{V'_j\}$ be a homogeneous flag on the $\xi$-space. If $\{W'_j\}$ is an associated homogeneous gradation, we introduce homogeneous coordinates $\xi=(\xi_1,\cdots,\xi_n)$ as before.
\begin{defn}
 A {\it flag multiplier} adapted to the flag $\{V_j'\}$ is a bounded
function $m(\xi)$  which is   away from the coordinate subspace $\xi_n=0$ and
which satisfies the following differential inequalities: 
  for $\xi_n\neq 0$, \begin{equation}\label{eq:flag multiplier}
 | \partial_{\xi_1}^{\alpha_1}\cdots\partial_{\xi_n}^{\alpha_n}m(\xi)|\leq  C_\alpha (|\xi_1|+\cdots+|\xi_n|)^{-\alpha_1}\cdots(|\xi_{n-1}| +|\xi_n|)^{-\alpha_{n-1}} 
 |\xi_n|^{-\alpha_n},
 \end{equation}
\end{defn}
It is known that the Fourier transform of a flag kernel is a flag
multiplier and, conversely, the inverse Fourier transform of a product multiplier
is a product kernel \cite[Theorem 2.3.9]{NRS}.

\subsubsection{Structure of twisted multiplier}
 For a product kernel $K\mathbf{(x})$ on $\mathbb{R}^{3m}$, let $m(\xi)$ be corresponding product multiplier. Then  multiplier corresponding to  the kernel $\pi_* K $ is
  \begin{equation*}\begin{split}
    \pi_*m(\xi)&=\int_{\mathbb{R}^{2m}} \pi_* K(\mathbf{x})e^{i\mathbf{x}\cdot\xi}d\mathbf{x}=\int_{\mathbb{R}^{2m}} \int_{\mathbb{R}^{ m}}   K({x}_1 -{u}, 
    {x}_2- {u}, {u}) e^{i{x}_1\cdot\xi_1+  {x}_2\cdot\xi_2}d{u} d\mathbf{x}
    =m(\xi_1, \xi_2,\xi_1+ \xi_2).
 \end{split}\end{equation*}
 Thus $\pi_*m(\xi)$ is singular on three subspaces $\xi_1=0$, $ \xi_2=0$ and $\xi_1+ \xi_2=0$.

The proposition exhibit the structure of the twisted multiplier, which is a sum of three flag multipliers.
 \begin{prop} Let $ m(\xi_1, \xi_2,\xi_3)$ be a product multiplier on $\mathbb{R}^{ m}\times \mathbb{R}^{ m}\times \mathbb{R}^{ m}$. Then
 $ m(\xi_1, \xi_2,\xi_1+ \xi_2)=m_1(\xi)+m_2(\xi)+m_3(\xi)$ with
   \begin{equation*}\begin{split}
   m_1(\xi)=&
m(\xi_1, \xi_2,\xi_1+ \xi_2)\eta\left(  {|\xi_1|^2}/{|\xi_2|^2}\right),\\ m_2(\xi)=&
m(\xi_1, \xi_2,\xi_1+ \xi_2)\eta\left(  {|\xi_2|^2}/{|\xi_1|^2}\right)\\
m_3(\xi)=&
m(\xi_1, \xi_2,\xi_1+ \xi_2)\left(1-\eta\left(  {|\xi_1|^2}/{|\xi_2|^2}\right)-\eta\left( {|\xi_2|^2}/{|\xi_1|^2}\right)\right),
 \end{split} \end{equation*}
   to be flag multipliers adapted to the flags $0 \subset \mathbb{R}_{\xi_1}^{ m} \subset \mathbb{R}^{ m}$, $0 \subset \mathbb{R}_{\xi_2}^{ m} \subset \mathbb{R}^{ m}$, $0 \subset 
   \mathbb{R}_{\xi_1+ \xi_2}^{ m} \subset \mathbb{R}^{ m}$, respectively, where $\eta\in C^\infty_c[0,1/4]$.
\end{prop}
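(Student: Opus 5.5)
The plan is to verify the flag differential inequalities \eqref{eq:flag multiplier} directly for each piece, using only the product estimates for $m$, the chain rule, and the geometry of the supports of the cut-offs. I will take $\eta$ to be smooth, supported in $[0,1/4]$ and $\eta\equiv 1$ on a neighbourhood $[0,c_0]$ of $0$. Without $\eta\equiv1$ near $0$, $m_3$ is not supported away from $\xi_1=0$, and its $\xi_1$-derivatives blow up like $|\xi_1|^{-1}$ there, which no flag adapted to $\mathbb{R}^m_{\xi_1+\xi_2}$ allows. The decomposition $m=m_1+m_2+m_3$ is then a tautology, and the content is the three estimates. Two facts drive everything:
\begin{enumerate}
\item the cut-offs $\eta(|\xi_1|^2/|\xi_2|^2)$ and $\eta(|\xi_2|^2/|\xi_1|^2)$ are homogeneous of degree $0$ and smooth off $\{\xi_1=0\}\cup\{\xi_2=0\}$, so each derivative of them costs a factor $(|\xi_1|+|\xi_2|)^{-1}$ on the region where they are nonconstant, since there $|\xi_1|\sim|\xi_2|$;
\item by the chain rule, $\partial_{\xi_1}$ of $m(\xi_1,\xi_2,\xi_1+\xi_2)$ hits the first and third slots, and $\partial_{\xi_2}$ hits the second and third slots.
\end{enumerate}

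For $m_1$: on its support $|\xi_1|\le|\xi_2|/2$, hence $|\xi_1+\xi_2|\sim|\xi_2|\sim|\xi_1|+|\xi_2|$. Expanding $\partial_{\xi_1}^{a}\partial_{\xi_2}^{b}$ of the composition by Leibniz and the chain rule produces terms bounded by
\[
|\xi_1|^{-a_1}|\xi_2|^{-b_2}|\xi_1+\xi_2|^{-a_3-b_3}, \qquad a_1+a_3=a,\quad b_2+b_3=b .
\]
Each such term is $\lesssim |\xi_1|^{-a}(|\xi_1|+|\xi_2|)^{-b}$, because $|\xi_1+\xi_2|^{-1}\lesssim|\xi_1|^{-1}$ and $|\xi_1+\xi_2|^{-1}\sim|\xi_2|^{-1}$ there. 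Derivatives falling on the cut-off only give the better factors $(|\xi_1|+|\xi_2|)^{-1}$. This is exactly the flag inequality with $\xi_1$ playing the role of the innermost singular variable, i.e.\ adapted to $0\subset\mathbb{R}^m_{\xi_1}\subset\mathbb{R}^{2m}$ in the paper's ordering. Boundedness of $m_1$ is inherited from $m$. The case of $m_2$ is identical with $\xi_1$ and $\xi_2$ swapped.

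For $m_3$: the factor $1-\eta(a)-\eta(1/a)$, with $a=|\xi_1|^2/|\xi_2|^2$, vanishes unless $c_0< a< 1/c_0$, so on the support $|\xi_1|\sim|\xi_2|$. I change to coordinates $(\xi_1,\zeta)$ with $\zeta=\xi_1+\xi_2$, so that $m_3$ becomes $m(\xi_1,\zeta-\xi_1,\zeta)$ times the cut-off. Now $\partial_\zeta$ hits slots two and three, giving $|\xi_2|^{-1}+|\zeta|^{-1}\lesssim|\zeta|^{-1}$, since $|\zeta|\lesssim|\xi_1|+|\xi_2|\sim|\xi_2|$. Meanwhile $\partial_{\xi_1}$ (with $\zeta$ fixed) hits slots one and two only, giving $|\xi_1|^{-1}+|\xi_2|^{-1}\sim(|\xi_1|+|\zeta|)^{-1}$. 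Derivatives of the cut-off again cost $(|\xi_1|+|\zeta|)^{-1}$. Hence
\[
|\partial_{\xi_1}^a\partial_\zeta^b m_3|\lesssim(|\xi_1|+|\zeta|)^{-a}|\zeta|^{-b},
\]
the flag inequality adapted to $0\subset\mathbb{R}^m_{\xi_1+\xi_2}\subset\mathbb{R}^{2m}$.

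The main obstacle is this last step: one must choose the complementary coordinate correctly, $\xi_1$ rather than $\xi_2$, and check that the comparabilities $|\xi_1|\sim|\xi_2|\gtrsim|\zeta|$ hold uniformly on the support. The other point to state carefully is the normalization of $\eta$ near $0$ noted above, since without it the claim for $m_3$ fails.
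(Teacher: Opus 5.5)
Your proposal is correct and follows essentially the paper's route: Leibniz and chain rule, the product bounds for $m$, and the comparabilities $|\xi_1+\xi_2|\sim|\xi_2|\sim|\xi_1|+|\xi_2|$ (resp.\ $|\xi_1|\sim|\xi_2|$) on the supports of the cut-offs. It is more complete than the paper, which treats only $m_1$ and calls the other cases similar: your $(\xi_1,\xi_1+\xi_2)$ computation supplies the $m_3$ case, and your point that $\eta\equiv1$ near $0$ must be assumed is a genuine missing hypothesis, without which the claim for $m_3$ fails (your aside that $\xi_2$ would be the wrong complementary coordinate is not accurate, however---by the $\xi_1\leftrightarrow\xi_2$ symmetry either choice works).
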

 \begin{proof}
 By the product rule, we have 
 \begin{equation*}\begin{split}
  \partial_{\xi_1}^{\alpha_1} \partial_{\xi_2}^{\alpha_2} m_1(\xi)=&\sum_{\substack {k_1+l+p=\alpha_1\\ k_2+l'+p'=\alpha_2}} \partial_{\xi_1}^{k_1}\partial_{\xi_2}^{k_2}\partial_{\xi_3}^{l+l'}m(\xi_1, \xi_2,\xi_1+ \xi_2)\partial_{\xi_1}^{p}  \partial_{\xi_2}^{p'}\left[\eta\left( \frac  {|\xi_1|^2} {|\xi_2|^2}\right)\right], 
 \end{split}  \end{equation*}it follows from the differential inequalities
\eqref{eq:flag multiplier} of flag multiplier
   that 
      \begin{equation*}\begin{split} |\partial_{\xi_1}^{k_1}\partial_{\xi_2}^{k_2}\partial_{\xi_3}^{l+l'}m(\xi_1, \xi_2,\xi_1+ \xi_2)|&\lesssim 
   |\xi_1|^{- k_1 }     |\xi_2|^{- k_2 }     (|\xi_1|+|\xi_2|)^{-(l+l')},\\[6pt]
\left| \partial_{\xi_1}^{p}  \partial_{\xi_2}^{p'}\left[\eta\left( \frac  {|\xi_1|^2} {|\xi_2|^2}\right)\right] \right|  &\lesssim 
   |\xi_1|^{- p }     |\xi_2|^{-p' }.
  \end{split}  \end{equation*}Then the differential inequalities
\eqref{eq:flag multiplier} of flag multiplier for $ m_1 $ follows since  $|\xi_1|+|\xi_2|\simeq |\xi_2|$ on the support of $\eta\left( {|\xi_2|^2}/{|\xi_1|^2}\right)$.   
  The other cases can be proved similarly.
    \end{proof}

\subsection{Examples of the twisted kernels}

In this section we will verify that both the Cauchy--Szeg\H{o} kernel on quotient group and the twisted Hilbert transform on $\mathbb{R}^2$ satisfy Definition \ref{def:twisted_group} and Definition \ref{def:twisted_Euclidean}.

\subsubsection{The Cauchy--Szeg\H{o} kernel on $\mathbb{G}$}

The kernel studied in \cite{NRS,WW} is not a standard one-parameter singular integral but possesses a \textit{product-like} structure with coupled singularities on a homogeneous nilpotent Lie group $\mathbb{G}$. We refer the readers to \cite{WW} for the explicit vector field, the lifting structure and the formulation of the Cauchy--Szeg\H{o} kernel.

In the context of Definition \ref{def:twisted_group}, we identify the singular set $\Sigma$ as the union of two homogeneous subgroups $S_1$ and $S_2$. We associate the distances to these subgroups with the two fundamental control functions (homogeneous quasi-norms) of the Siegel domain geometry:
\begin{align}
    \begin{cases}
  D_1(\mathbf{z}, \mathbf{t}) = |{z}_1|^2 + |{z}_3|^2 - \mathbf{i}t_1, \\
    D_2(\mathbf{z}, \mathbf{t}) = |{z}_2|^2 + |{z}_3|^2 - \mathbf{i}t_2.      
    \end{cases}
\end{align}
Here, the subgroup $S_1$ is defined by the vanishing of the variables, which are involved in $D_1$ and has homogeneoue dimension $2n_2+2$, and similarly for the subgroup $S_2$. The homogeneous distance to the subgroup $S_i$ is equivalent to the square root of the control function, i.e.
\[d((\mathbf{z}, \mathbf{t}),\,S_1)
\simeq |D_1(\mathbf{z}, \mathbf{t})|^{1/2} \quad \text{and} \quad d((\mathbf{z}, \mathbf{t}),\,S_2) \simeq |D_2(\mathbf{z}, \mathbf{t})|^{1/2}.
\]

The kernel $K$ is the finite sum of $\sum^{n_3}_{k=0}T_k$, where $T_k$ is defined by
\begin{align}
    T_k(\mathbf{z}, \mathbf{t}) = c_{n_1,n_2,n_3,k}\frac{1}{D_1(\mathbf{z}, \mathbf{t})^{n_1+k+1}} \cdot \frac{1}{D_2(\mathbf{z}, \mathbf{t})^{n_2+n_3-k+1}}.
\end{align}
To verify the size condition  of Definition \ref{def:twisted_group}, we examine how the differential operators act on these factors. The vector fields on $\mathscr{N}$ decompose into those acting on specific variables:
\begin{itemize}
    \item $Z^{(1)}$: Fields in ${z}_1, t_1$ directions (act only on $D_1$).
    \item $Z^{(2)}$: Fields in ${z}_2, t_2$ directions (act only on $D_2$).
    \item $Z^{(3)}$: Fields in ${z}_3$ direction (act on \textit{both} $D_1$ and $D_2$).
\end{itemize}
In other words, the datum $(K,\Sigma,Z)$ is a $(2,3)$-twisted datum on $\mathbb{G}$, where $Z=Z^{(1)}\cup Z^{(2)}\cup Z^{(3)}$. Now, we verify the size and regularity of the kernel.

\textit{1. Contributions from $Z^{(1)}$ and $Z^{(2)}$:}
Applying a derivative $X^{(1)} \in Z^{(1)}$ to a term $T_k$, we see that
\begin{align*}
    |X^{(1)} T_k| \lesssim \left| \partial_{z_1} \left( D_1^{-(n_1+k+1)} \right) \right| \cdot |D_2|^{-(n_2+n_3-k+1)} 
   \lesssim \frac{1}{|D_1|^{n_1+k+1 + \frac{1}{2}}} \cdot \frac{1}{|D_2|^{n_2+n_3-k+1}},
\end{align*}
where in the last inequality we used the homogeneity relation $|\mathbf{z}_1| \lesssim |D_1|^{1/2}$. Therefore, for any multi-index $\alpha_1$,
\begin{align}\label{CSES1}
     \left |\left(X^{(1)}\right)^{\alpha_1} T_k\right| 
   \lesssim \frac{1}{|D_1|^{n_1+k+1 + \frac{|\alpha_1|}{2}}} \cdot \frac{1}{|D_2|^{n_2+n_3-k+1}}.
\end{align}

Similarly, for any multi-index $\alpha_2$, applying a derivative $X^{(2)} \in Z^{(2)}$ (homogeneous of degree $1$) to a term $T_k$, 
\begin{align}\label{CSES2}      \left|\left(X^{(2)}\right)^{\alpha_2}T_k\right| 
   \lesssim \frac{1}{|D_1|^{n_1+k+1}} \cdot \frac{1}{|D_2|^{n_2+n_3-k+1+\frac{|\alpha_2|}{2}}}.
\end{align}

\textit{2. Contributions from $Z^{(3)}$:}
Applying a derivative $X^{(3)} \in Z^{(3)}$ (e.g., $\partial_{z_{3,j}}$), we have, from the product rule,
\begin{align*}
   X^{(3)} T_k \simeq \left(X^{(3)}\circ\frac{1}{D_1^{n_1+k+1}}\right) \cdot \frac{1}{D_2^{n_2+n_3-k+1}}+ \frac{1}{D_1^{n_1+k+1}} \cdot\left(X^{(3)}\circ \frac{1}{D_2^{n_2+n_3-k+1}}\right),
\end{align*}
which reveals that
\begin{align*}
    |X^{(3)} T_k| \lesssim \frac{1}{|D_1|^{n_1+k+1 + \frac{1}{2}}} \frac{1}{|D_2|^{n_2+n_3-k+1}} + \frac{1}{|D_1|^{n_1+k+1}} \frac{1}{|D_2|^{n_2+n_3-k+1 + \frac{1}{2}}}.
\end{align*}
In general, for each multi-index $\alpha_3$, one has
\begin{align}\label{CSES3}
    \left|\left(X^{(3)}\right)^{\alpha_3} T_k\right| \lesssim \sum^{\alpha_3}_{\gamma=0}\frac{1}{|D_1|^{n_1+k+1+\frac{\gamma}{2} }} \frac{1}{|D_2|^{n_2+n_3-k+1+\frac{|\alpha_3|-\gamma}{2}}}.
\end{align}
From the estimates (\ref{CSES1}), (\ref{CSES2}) and (\ref{CSES3}), the size estimate for the kernel $K$ is
\begin{align}\label{eq:flag_size}
    |\mathbf X^\alpha K(\mathbf{z},\mathbf{t})|   &=\left| \left(X^{(1)}\right)^{\alpha_1} \left(X^{(2)}\right)^{\alpha_2}\left(X^{(3)}\right)^{\alpha_3}K(\mathbf{z},\mathbf{t})\right|\notag\\
    &\lesssim \sum^{n_3}_{k=0} \sum^{\alpha_3}_{\gamma=0}\frac{1}{d((\mathbf{z}, \mathbf{t}),\,S_1)^{2+2n_1+(|\alpha_1|+|\alpha_3|)+(\gamma-|\alpha_3|)+2k }} \frac{1}{d((\mathbf{z}, \mathbf{t}),\,S_2)^{2+2n_2+({|\alpha_2|+|\alpha_3|})-\gamma+(2n_3-2k)}}\notag
\end{align}

\subsubsection{The twisted Hilbert transform on $\mathbb{R}^2$}
We analyze the integral expression for the kernel on the quotient group $\mathbb{R}^2$ from \cite{NRS}. In the context of Definition \ref{def:twisted_group}, the singular set $\Sigma$ is the union of three homogeneous subgroups:
\[
S_1 = \{(0,y) : y \in \mathbb{R}\}, \quad S_2 = \{(x,0) : x \in \mathbb{R}\}, \quad S_3 = \{(x,x) : x \in \mathbb{R}\}.
\]
These correspond to the lines $x=0$, $y=0$, and $x=y$, respectively. The integral expression is:
\begin{align*}
   K(x,y)={\rm p.v.}\int_{\mathbb{R}} \frac{1}{x-z}\,\frac{1}{y-z}\,\frac{dz}{z}.
\end{align*}
To understand its structure, we perform a partial fraction decomposition with respect to the integration variable $z$ and get
\begin{align*}
  \frac{1}{x-z}\,\frac{1}{y-z}\,\frac{1}{z}=  \frac{1}{xy}\cdot\frac{1}{z} - \frac{1}{x(x-y)}\cdot\frac{1}{x-z} + \frac{1}{y(x-y)}\cdot\frac{1}{y-z}.
\end{align*}
The operator is defined as the principal value integral. While the integral of each term $\int_{\mathbb{R}} (z-c)^{-1} dz$ vanishes in the principal value sense due to symmetry, the kernel acts as a distribution. 

However, if we consider the operator acting on specific test functions or restrict the domain, the interaction of the singularities manifests as logarithmic potentials. The structural behavior is captured by:
\begin{align}\label{eq:explicit_log}
    K(x,y) \simeq \frac{C_1}{xy} + C_2 \frac{\ln|x/y|}{x-y}\cdot \frac{1}{\zeta(x,y)},
\end{align}
where $\zeta(x,y)$ is a homogeneous function of degree $0$.

For the purpose of size estimates, we observe that near the diagonal $x \simeq y$, the term $\frac{\ln|x/y|}{x-y} \simeq\frac{1}{x}$, which combines with the other terms to preserve the scaling. Thus,
\begin{align}
    |K(x,y)| \lesssim \frac{1}{|x||y|} + \frac{1}{|x||x-y|} + \frac{1}{|y||x-y|}.
\end{align}
In the notation of Definition \ref{def:twisted_Euclidean}, where the quotient dimension of each subgroup is $Q_i=1$ and $(K,\Sigma,Z)$ is a $(3,2)$-twisted datum, where $Z$ contains $\partial_x$ and $\partial_y$.

By direct calculation, the size condition is the sum of the singularities associated with each stratum:
\begin{align}\label{eq:sum_ineq}
    |\partial_x^\alpha \partial_y^\beta K(x,y)| &\lesssim \frac{1}{|x|^{1+\alpha} |y|^{1+\beta}} +\sum^\alpha_{\gamma_1=0} \frac{1}{|x-y|^{1+\alpha+\beta-\gamma_1} \cdot |x|^{1+\gamma_1}}+\sum^\beta_{\gamma_2=0} \frac{1}{|x-y|^{1+\alpha+\beta-\gamma_2} \cdot |y|^{1+\gamma_2}}\notag\\
    &\simeq \frac{1}{d((x,y),S_1)^{1+\deg_{S^\perp_1}((\alpha,\beta))} d((x,y),S_2)^{1+\deg_{S^\perp_2}((\alpha,\beta))}} \\
&\quad+\sum^{\deg_{S^\perp_1}((\alpha,\beta))}_{\gamma_1=0} \frac{1}{d((x,y),S_3)^{1+\deg_{S^\perp_3}((\alpha,\beta))-\gamma_1} \cdot d((x,y),S_1)^{1+\deg_{S^\perp_1}((\alpha,\beta))+\gamma_1-\deg_{S^\perp_1}((\alpha,\beta))}}\notag\\
&\quad+\sum^{\deg_{S^\perp_2}((\alpha,\beta))}_{\gamma_2=0} \frac{1}{d((x,y),S_3)^{1+\deg_{S^\perp_3}((\alpha,\beta))-\gamma_2} \cdot d((x,y),S_2)^{1+\deg_{S^\perp_2}((\alpha,\beta))+\gamma_2-\deg_{S^\perp_2}((\alpha,\beta))}}.\notag
\end{align}
This additive structure \eqref{eq:sum_ineq} confirms that the singularity is stratified along the union of the homogeneous subgroups defining the \textit{twisted singular configuration}. 

\begin{rem}[Illustration of degree]\label{degreestr}~\\
As a concrete example, since $S_3$ is the diagonal subgroup in $\mathbb{R}^2$, then $\deg_{S^\perp_3}(\alpha)$ simply represents the total homogeneous order of the differential operator $X^\alpha$.
\end{rem}

  \section{Tube structures, tube   maximal functions and reproducing formula }\label{sec:3} 
  We explore the geometric structure induced by the projection (\ref{eq:pi}) in this section.
 \subsection{Tube structures } A natural ball in $
       \mathbb{R}^{ m} \times   \mathbb{R}^{ m}\times   \mathbb{R}^{ m}
$
  is the translate of 
  \begin{equation}\label{eq:product-balls}
  \tilde{{B}} (\mathbf{0} ,\mathbf{r}):={B}_1(\mathbf{0}_1,r_1)\times  {B}_2(\mathbf{0}_2,r_2)\times  {B}_3(\mathbf{0}_3, {r}_3),
  \end{equation}  for $\mathbf{r} :=\left({r_1} ,
          {r_2} ,r_3 \right)\in \mathbb{  R}^3_+ $,    the product of three balls, respectively. A natural ball in
          $ 
       \mathbb{R}^{2 m}  $ is its image  
    under the projection   $\pi$. 
 It is direct  to see that the image of $\tilde{{B}} (\mathbf{0} ,\mathbf{r})$ under $  {\pi}$ is a
 hexagon,
   which is basically the {\it rectangle} $B_a(\mathbf{0}_1)\times B_b(\mathbf{0}_2)\subset\mathbb{R}^{2m}$ or the {\it parallelogram with base on the first direction}:
       \begin{equation*}
          P^{(first)}_{a,b }:=\{(x_1,x_2)\in \mathbb{R}^{2m}: \, |x_1-x_2|<a, |x_2|<b\},
       \end{equation*} 
or the {\it parallelogram with base on the second direction}:       
\begin{align*}
     P^{(second)}_{a,b }:=\{(x_1,x_2)\in \mathbb{R}^{2m}: \,|x_2-x_1|<a,\,|x_1|<b  \}
\end{align*}
for some $a>0  , b>0 $.
 So if $r_1,r_2\geq r_3$, we define
$
     T(\mathbf{0},\mathbf{r}):=B_{r_1}(\mathbf{0}_1)\times B_{r_2}(\mathbf{0}_2)
    $ which is the standard rectangle;\,and if  $r_1,r_3\geq r_2$, we simply define
     $T(\mathbf{0},\mathbf{r}): =   P^{(first)}_{r_1,\,r_3 }$ in Figure 1(a);
  \,and if  $r_2,r_3\geq r_1$, we define
     $
     T(\mathbf{0},\mathbf{r}): =   P^{(second)}_{r_2,\,r_3 }
  $ in Figure 1(b).

\begin{tikzpicture}[
    thick,
    >=stealth,
    axis/.style={->, thin, gray},
    label/.style={font=\small}
]

\begin{scope}[shift={(7,6)}]
    \draw[axis] (-3.0,0) -- (3.0,0);
    \draw[axis] (0,-2.5) -- (0,2.5);
    
    \draw (2.4, 0.4) -- (-1.6, 0.4) -- (-2.4, -0.4) -- (1.6, -0.4) -- cycle;
    
    \draw (2.4, 1.6) -- (1.2, 1.6) -- (-2.4, -1.6) -- (-1.2, -1.6) -- cycle;

    \node[label] at (0, -3) {Figure 1(a): $r_1, r_3 \geqslant r_2$};
\end{scope}

\begin{scope}[shift={(15,6)}]
    \draw[axis] (-3.0,0) -- (3.0,0);
    \draw[axis] (0,-2.5) -- (0,2.5);
    
    \draw (-0.4, 1.6) -- (0.4, 2.4) -- (0.4, -1.6) -- (-0.4, -2.4) -- cycle;
    
    \draw (-1.5, -1.0) -- (1.5, 2.2) -- (1.5, 1.0) -- (-1.5, -2.2) -- cycle;

    \node[label] at (0, -3) {Figure 1(b): $r_2, r_3 \geqslant r_1$};
\end{scope}

\end{tikzpicture}

 For $\mathbf{x}\in \mathbb{R}^{2m}$, set $ T(\mathbf{x},\mathbf{r}):= \mathbf{x}+T(\mathbf{0},\mathbf{r})$.
    Following \cite{CCLLO}, we call $ T(\mathbf{x},\mathbf{r})$ a {\it tube}. Its volume is
  \begin{equation}\label{eq:tube-volume}
    | T(\mathbf{x},\mathbf{r})|\simeq  \left\{   \begin{array}{ll} r_1^m\cdot r_2^m, \qquad & {\rm if}\quad r_1,\,r_2\geq r_3, \\
    r^m_1\cdot r^m_3, \qquad & {\rm if}\quad r_1,\,r_3\geq r_2, \\
  r_2^m \cdot r_3^m,\qquad & {\rm if}\quad r_2,\,r_3\geq r_1.   \end{array} \right.
  \end{equation}   

   With these tubes structures, we have the following geometric proposition, which is straightforward.   
\begin{prop}\label{prop:tube-pi}
For all $\mathbf{r}  \in \mathbb{
R}^3_+$, $ T(\mathbf{x},\mathbf{r}/2)\subset    \pi  (\tilde{{B}} (\mathbf{x} ,\mathbf{r}) )\subset T(\mathbf{x},2\mathbf{r}).$  
  \end{prop}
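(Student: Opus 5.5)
Since $\pi$ is linear and $T(\mathbf{x},\mathbf{r})=\mathbf{x}+T(\mathbf{0},\mathbf{r})$, we first reduce to $\mathbf{x}=\mathbf{0}$. Here the lifted ball $\tilde B(\mathbf{x},\mathbf{r})$ is read as any translate of $\tilde B(\mathbf 0,\mathbf r)$ whose image is centred at $\mathbf{x}$; its image is then $\mathbf{x}+\pi(\tilde B(\mathbf 0,\mathbf r))$. The image can be written exactly as
\[
\pi(\tilde B(\mathbf 0,\mathbf r))=\{(x_1,x_2):\ \exists\, u\in B_3(\mathbf 0,r_3)\ \text{with}\ |x_1-u|<r_1,\ |x_2-u|<r_2\}.
\]
We then split into the three regimes used to define $T(\mathbf 0,\mathbf r)$. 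In each regime we prove the outer inclusion $\pi(\tilde B)\subset T(\mathbf 0,2\mathbf r)$ with the triangle inequality, and the inner inclusion $T(\mathbf 0,\mathbf r/2)\subset\pi(\tilde B)$ by choosing the fiber variable $u$ explicitly.

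\emph{Case $r_1,r_2\ge r_3$.} For the outer inclusion, $|x_1|\le|x_1-u|+|u|<r_1+r_3\le 2r_1$, and likewise $|x_2|<2r_2$. Since $2\mathbf r$ is in the same regime, $T(\mathbf 0,2\mathbf r)=B_{2r_1}\times B_{2r_2}$, so the inclusion holds. For the inner inclusion, given $|x_1|<r_1/2$ and $|x_2|<r_2/2$, take $u=0$.

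\emph{Case $r_1,r_3\ge r_2$.} Here $T(\mathbf 0,\mathbf r)=P^{(first)}$. For the outer inclusion, $|x_2|<r_2+r_3\le 2r_3$, and $|x_1-x_2|\le|x_1-u|+|x_2-u|<r_1+r_2\le 2r_1$. For the inner inclusion, given $|x_1-x_2|<r_1/2$ and $|x_2|<r_3/2$, take $u=x_2$. Then $|u|<r_3$, $|x_2-u|=0<r_2$, and $|x_1-u|<r_1/2<r_1$.

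\emph{Case $r_2,r_3\ge r_1$.} This is symmetric to the previous case: take $u=x_1$, and bound $|x_2-x_1|<r_1+r_2\le 2r_2$ and $|x_1|<r_1+r_3\le 2r_3$.

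The only delicate point is bookkeeping. At the boundary between regimes (for instance $r_1=r_2=r_3$), the definition of $T(\mathbf 0,\mathbf r)$ is ambiguous. We must also check that $\mathbf r/2$ and $2\mathbf r$ fall in the same regime as $\mathbf r$; they do, because the defining inequalities are homogeneous under scaling. At the boundary we check that either choice works. For instance, when $r_1=r_2=r_3$ the rectangle $B_{r_1/2}\times B_{r_2/2}$ lies inside $\pi(\tilde B)$ via $u=0$, and the parallelogram $P^{(first)}_{r_1/2,r_3/2}$ lies inside $\pi(\tilde B)$ via $u=x_2$. The outer bounds above hold regardless of which description is used, since each uses only the inequalities defining the current regime. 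If the paper intends that the endpoints of the tube's edges be non-strict, the same argument goes through with $\le$ in place of $<$. No further obstacle is expected.
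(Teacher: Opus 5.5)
Your proof is correct. The paper gives no argument (it only calls the proposition straightforward), and your case-by-case check is exactly the verification it leaves to the reader: the triangle inequality for the outer inclusion, and the explicit fiber choices $u=0$, $u=x_2$, $u=x_1$ for the inner one. As a side remark, your inner-inclusion argument never uses the regime inequalities or the factor $1/2$, so it in fact shows $T(\mathbf{x},\mathbf{r})\subset\pi(\tilde{B}(\mathbf{x},\mathbf{r}))$.
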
 
  \smallskip
  
 \subsection{Tube maximal function}
This subsection contributes to the mapping property of the tube maximal function. To prove the boundedness result of the maximal function, we involve   the {\it iterated maximal operator}.

  Define  the {\it iterated maximal operator}  by
 \begin{equation*}
    M_{iterated}  (f)(\mathbf{x})=\sup_{\mathbf{r}\in \mathbb{R}^3_+} \left |f* \chi_{\mathbf{r} }(\mathbf{x} )\right|.
 \end{equation*}

 \begin{prop}
       Let $j=1,2,3$. For $ \varphi^{
(j)}\in L^1( \mathbb{R}^{ m})$    and $f\in L^1(\mathbb{R}^{2m}) $, let $\varphi_{\mathbf{r} }$ be given as in (\ref{normaldil}), then
    \begin{equation*}\begin{split}
  f*\varphi_{\mathbf{r} } =     f*_1\varphi_{r_1}^{
(1)} * _{ 2} \varphi_{r_2}^{
(2)}  { *} _3  \varphi_{r_3}^{
(3)}
,\end{split}\end{equation*}
 where the operators $*_1$ and $*_2$ are convolutions with respected to the the first and second variable and $*_3$ is the operation defined by
 $$
 f*_3 \varphi_{r_3}^{
(3)}(\mathbf{x}):=\int_{\mathbb{R}^m}f(x_1-u,\,x_2-u)\cdot\varphi_{r_3}^{
(3)}(u)\,du.
 $$
    \end{prop}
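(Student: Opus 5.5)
The plan is a direct computation: unfold the push-forward defining $\varphi_{\mathbf r}$, insert it into the convolution on $\mathbb{R}^{2m}$, and use Fubini to regroup the integrals so that each factor $\varphi^{(j)}_{r_j}$ acts as one of the three operations $*_1$, $*_2$, $*_3$. Everything is absolutely convergent. Indeed, $\|\pi_*F\|_{L^1(\mathbb{R}^{2m})}\le \|F\|_{L^1(\mathbb{R}^{3m})}$, and the tensor product of the $L^1$ functions $\varphi^{(j)}_{r_j}$ lies in $L^1(\mathbb{R}^{3m})$. Hence $\varphi_{\mathbf r}\in L^1(\mathbb{R}^{2m})$, and $f*\varphi_{\mathbf r}$ is defined a.e. as an $L^1$ function.

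\textbf{Step 1: expand the left-hand side.} By \eqref{eq:transfer-F},
\begin{align*}
f*\varphi_{\mathbf r}(\mathbf x)
&=\int_{\mathbb{R}^{2m}} f(x_1-y_1,x_2-y_2)\int_{\mathbb{R}^m}\varphi^{(1)}_{r_1}(y_1-u)\,\varphi^{(2)}_{r_2}(y_2-u)\,\varphi^{(3)}_{r_3}(u)\,du\,dy_1\,dy_2 .
\end{align*}
Change variables $y_1=v_1+u$ and $y_2=v_2+u$ at fixed $u$. This gives
\begin{align*}
f*\varphi_{\mathbf r}(\mathbf x)
&=\int_{\mathbb{R}^m}\int_{\mathbb{R}^{2m}} f(x_1-v_1-u,\,x_2-v_2-u)\,\varphi^{(1)}_{r_1}(v_1)\,\varphi^{(2)}_{r_2}(v_2)\,\varphi^{(3)}_{r_3}(u)\,dv_1\,dv_2\,du .
\end{align*}
Interchanging the order of integration is justified by Tonelli applied to $|f|$ and the $|\varphi^{(j)}|$. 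The iterated integral of absolute values equals $|f|*\pi_*(|\varphi^{(1)}_{r_1}|\otimes|\varphi^{(2)}_{r_2}|\otimes|\varphi^{(3)}_{r_3}|)$. This is finite a.e. because it is an $L^1$ function, being the convolution of two $L^1$ functions.

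\textbf{Step 2: identify the right-hand side.} Set $g:=f*_1\varphi^{(1)}_{r_1}*_2\varphi^{(2)}_{r_2}$, that is,
\[
g(w_1,w_2)=\int_{\mathbb{R}^{2m}} f(w_1-v_1,w_2-v_2)\,\varphi^{(1)}_{r_1}(v_1)\,\varphi^{(2)}_{r_2}(v_2)\,dv_1\,dv_2,
\]
which lies in $L^1(\mathbb{R}^{2m})$. By the definition of $*_3$,
\[
g*_3\varphi^{(3)}_{r_3}(\mathbf x)=\int_{\mathbb{R}^m} g(x_1-u,x_2-u)\,\varphi^{(3)}_{r_3}(u)\,du .
\]
Substituting the formula for $g$ reproduces exactly the triple integral at the end of Step 1, so the two sides agree a.e.

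\textbf{Step 3: independence of the order of the operations.} The same Fubini argument shows that $*_1$, $*_2$ and $*_3$ commute with one another. Each is a convolution on $\mathbb{R}^{2m}$: $*_1$ with the measure $\varphi^{(1)}_{r_1}\otimes\delta_0$, $*_2$ with $\delta_0\otimes\varphi^{(2)}_{r_2}$, and $*_3$ with the measure $\varphi^{(3)}_{r_3}(u)\,du$ carried by the diagonal $\{(u,u)\}$. Convolution of finite measures on an abelian group is commutative and associative, so the unparenthesised expression in the statement is unambiguous. This viewpoint also gives a one-line proof of the identity itself, since $\varphi_{\mathbf r}$ is exactly the convolution of these three measures.

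I see no real obstacle. The only point needing care is the Fubini justification when the $\varphi^{(j)}$ are merely $L^1$ rather than Schwartz, and the Tonelli bound in Step 1 handles it.
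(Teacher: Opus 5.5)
Your argument is correct, but it works in physical space, while the paper's one-line proof works on the Fourier side. The paper invokes the symbol identity $\widehat{g*_3\varphi^{(3)}}(\xi_1,\xi_2)=\widehat{g}(\xi_1,\xi_2)\,\widehat{\varphi^{(3)}}(\xi_1+\xi_2)$ for $g\in L^1(\mathbb{R}^{2m})$, together with $\widehat{\pi_*F}(\xi_1,\xi_2)=\widehat{F}(\xi_1,\xi_2,\xi_1+\xi_2)$ from Section~\ref{sec:2}. Both sides then have Fourier transform $\widehat{f}(\xi)\,\widehat{\varphi^{(1)}_{r_1}}(\xi_1)\,\widehat{\varphi^{(2)}_{r_2}}(\xi_2)\,\widehat{\varphi^{(3)}_{r_3}}(\xi_1+\xi_2)$, and uniqueness of the Fourier transform on $L^1$ gives the result. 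That route is shorter, and it produces the symbol of $\varphi_{\mathbf r}$ that is reused in the reproducing formula and the $L^2$ Littlewood--Paley estimates. However, it leaves implicit both the Fubini argument behind the symbol identity and the appeal to Fourier injectivity.

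Your change of variables $y_j=v_j+u$, with the Tonelli majorant $|f|*\pi_*(|\varphi^{(1)}_{r_1}|\otimes|\varphi^{(2)}_{r_2}|\otimes|\varphi^{(3)}_{r_3}|)$, is fully elementary and gives the a.e.\ identity directly. Your Step 3 is the paper's fact seen before transforming: the measure $\varphi^{(3)}_{r_3}(u)\,du$ carried by the diagonal $\{(u,u)\}$ has Fourier transform exactly $\widehat{\varphi^{(3)}_{r_3}}(\xi_1+\xi_2)$. It also justifies something the paper uses without comment, namely that $*_1,*_2,*_3$ commute and associate with ordinary convolution on $\mathbb{R}^{2m}$. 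An example is the rearrangement $|f|*\chi_{\mathbf r}=(\chi_{r_1}\otimes\chi_{r_2})*(|f|*_3\chi_{r_3})$ in the proof of Theorem~\ref{thm:maximal}.

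One detail is worth adding in Step 2. The function $g$ is defined only a.e., while $\{(x_1-u,x_2-u):u\in\mathbb{R}^m\}$ is a null set. This is harmless because $(\mathbf x,u)\mapsto(x_1-u,x_2-u,u)$ is measure preserving. Hence for a.e.\ $\mathbf x$ the $u$-section of any null set is null, and $g*_3\varphi^{(3)}_{r_3}$ is well defined as an element of $L^1(\mathbb{R}^{2m})$.
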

\begin{proof}
This proposition follows from the direct computation and the property that
    $$    \widehat{g*_3\varphi^{(3)}}(\xi_1,\,\xi_2)=\widehat{g}(\xi_1,\,\xi_2)\cdot\widehat{\varphi^{(3)}}(\xi_1+\xi_2),\quad\forall g\in L^1(\mathbb{R}^{2m}).
    $$
    The proof is complete.
\end{proof}      

The following proposition implies that  $\chi_{\mathbf{r} } $ is essentially the normalized characteristic function of the tube
 $  T(\mathbf{0},\mathbf{r})$.

  \begin{prop} \label{prop:tube} For $ f \in L^1(\mathbb{R}^{2m})$ and $\mathbf{r}  \in \mathbb{  R}^3_+ $, we have
\begin{equation}\label{mmmmm}
    c_m\cdot|f|* \chi_{\frac 12\mathbf{r} }(\mathbf{x} ) \leq \frac {1}{| T(\mathbf{x},\mathbf{r})|}\int_{  T(\mathbf{x},\mathbf{r})}|f (\mathbf{y})|\,d\mathbf{y}\leq
     C_m\cdot|f|* \chi_{2 \mathbf{r} }(\mathbf{x} ).
\end{equation}
As a consequence, $M_{tube}f(\mathbf{x}) \simeq M_{iterated}  (f)(\mathbf{x})$ for almost every point in $\mathbb{R}^{2m}$.
    \end{prop}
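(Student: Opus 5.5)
The plan is to compute $|f|*\chi_{\mathbf r}$ explicitly as an average over the image of the product ball, and then compare that image with tubes through Proposition \ref{prop:tube-pi}. By the definition (\ref{ballcha}) and the push-forward (\ref{eq:transfer-F}), $\chi_{\mathbf r}$ is the push-forward of the normalized indicator of $\tilde B(\mathbf 0,\mathbf r)$. Hence for any $F\ge0$,
\begin{equation*}
\int_{\mathbb R^{2m}}F\,\pi_*(\mathbb 1_{\tilde B})\,d\mathbf y=\int_{\tilde B}F\circ\pi .
\end{equation*}
It follows that
\begin{equation*}
|f|*\chi_{\mathbf r}(\mathbf x)=\frac{1}{|\tilde B(\mathbf 0,\mathbf r)|}\int_{\tilde B(\mathbf 0,\mathbf r)}|f|\big(\mathbf x-\pi(\mathbf u)\big)\,d\mathbf u,
\end{equation*}
with $|\tilde B(\mathbf 0,\mathbf r)|\simeq (r_1r_2r_3)^m$. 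Equivalently, $\chi_{\mathbf r}(\mathbf y)=|\tilde B|^{-1}\,|\{u_3\in B_3(r_3): \mathbf y-(u_3,u_3)\in B_1(r_1)\times B_2(r_2)\}|$. The density is supported in $\pi(\tilde B(\mathbf 0,\mathbf r))\subset T(\mathbf 0,2\mathbf r)$ by Proposition \ref{prop:tube-pi}. It is bounded above by $|\tilde B|^{-1}\min(r_1,r_2,r_3)^m c_m$, since the fiber set is contained in three balls.

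For the right inequality in (\ref{mmmmm}), I need a lower bound for the density of $\chi_{2\mathbf r}$ on $T(\mathbf x,\mathbf r)$. I will do this in the three cases defining tubes. Take first $r_1,r_2\ge r_3$, so $T(\mathbf 0,\mathbf r)=B_{r_1}\times B_{r_2}$. For $\mathbf y$ in this rectangle, every $u_3$ with $|u_3|<r_3$ satisfies $|y_j-u_3|<r_j+r_3\le 2r_j$. Thus the fiber set has measure $\gtrsim r_3^m$, and the density of $\chi_{2\mathbf r}$ is $\gtrsim r_3^m/(r_1r_2r_3)^m=(r_1r_2)^{-m}\simeq|T|^{-1}$ by (\ref{eq:tube-volume}). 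Now take $r_1,r_3\ge r_2$, so $\mathbf y\in P^{(first)}_{r_1,r_3}$. I choose the fiber variable near $y_2$, namely $|u_3-y_2|<r_2$. Then $|y_2-u_3|<r_2$ and $|u_3|<r_3+r_2\le 2r_3$. The reading of the parallelogram must match the volume formula $r_1^mr_3^m$; under that reading $|y_1-u_3|\le|y_1-y_2|+r_2\le 2r_1$. The density is therefore $\gtrsim r_2^m/(r_1r_2r_3)^m=(r_1r_3)^{-m}$. The third case is symmetric. Integrating $|f|$ against this lower bound over $T(\mathbf x,\mathbf r)$ gives the right inequality.

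For the left inequality, I use the support and upper bound of the density of $\chi_{\mathbf r/2}$. Its support is contained in $\pi(\tilde B(\mathbf 0,\mathbf r/2))$, which lies in $T(\mathbf 0,\mathbf r)$ by Proposition \ref{prop:tube-pi} at a suitable scale (adjusting constants). Its height is $\lesssim\min(r_j)^m/(r_1r_2r_3)^m\simeq|T(\mathbf 0,\mathbf r)|^{-1}$, because the product of the two largest radii matches (\ref{eq:tube-volume}). Hence $|f|*\chi_{\mathbf r/2}(\mathbf x)\lesssim |T(\mathbf x,\mathbf r)|^{-1}\int_{T(\mathbf x,\mathbf r)}|f|$. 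The consequence then follows: take the supremum over $\mathbf r$ and note that $|f*\chi_{\mathbf r}|\le|f|*\chi_{\mathbf r}$. This gives $M_{iterated}f\le M_{iterated}|f|\simeq M_{tube}f$. The reverse direction holds with $f$ replaced by $|f|$, which is how $M_{iterated}$ must be read for the equivalence.

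The main obstacle is the case bookkeeping in the lower bound. Several points need care: matching the parallelogram parameters with the volume formula (\ref{eq:tube-volume}), and handling sign conventions ($x_1-x_2$ versus the fiber direction $(u,u)$). The boundary cases where two radii coincide also need to be checked, so that the factor-$2$ enlargements really suffice. Apart from this, everything reduces to the elementary fiber-measure estimate above.
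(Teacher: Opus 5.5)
Your proposal is correct and follows essentially the same route as the paper. Both compute $\chi_{\mathbf r}$ as the normalized measure of the fiber set $\{u: |u|<r_3,\ |y_1-u|<r_1,\ |y_2-u|<r_2\}$. For the left inequality, both bound this density above by $\min(r_j)^m/(r_1r_2r_3)^m\simeq |T(\mathbf 0,\mathbf r)|^{-1}$, with support in the tube. For the right inequality, both bound the density of $\chi_{2\mathbf r}$ below on $T(\mathbf 0,\mathbf r)$ using the same inclusions: $B(0,r_3)$ in the rectangle case and $B(y_2,r_2)$ in the parallelogram case. Your remark that $M_{tube}f\simeq M_{iterated}$ must be read with $|f|$ inside $M_{iterated}$ is a correct clarification of the paper's looser statement.
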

    \begin{proof}
    By Proposition \ref{prop:tube-pi}, for all $\mathbf{r}\in\mathbb{R}^3_{+}$ and $\mathbf{x}\in\mathbb{R}^{2m}$, $ \pi  (\tilde{{B}} (\mathbf{x} ,\mathbf{r}/2) )\subset T(\mathbf{x},\mathbf{r})\subset \pi  (\tilde{{B}} (\mathbf{x} ,2\mathbf{r}) ).$
        
Firstly, assume that $r_1,\,r_2\geq r_3$. Then $T(\mathbf{x,\,\mathbf{r}})=B_{r_1}(x_1)\times B_{r_2}(x_2)$ and hence the average over the tube $T(\mathbf{x,\,\mathbf{r}})$ is
        \begin{align*}
            \frac 1{| T(\mathbf{x},\mathbf{r})|}\int_{  T(\mathbf{x},\mathbf{r})}|f (\mathbf{y})|d\mathbf{y}\simeq\frac{1}{r^m_1r^m_2}\int_{  \mathbb{R}^{2m}}|f (\mathbf{x}-\mathbf{y})|\cdot\chi_{B_{r_1}({0}_1)\times B_{r_2}({0}_2)}(\mathbf{y})d\mathbf{y}.
        \end{align*}
        On the other hand, for any constant $c>0$, the convolution
        \begin{align*}
            |f|* \chi_{c\cdot\mathbf{r} }(\mathbf{x} ):=\int_{\mathbb{R}^{2m}}|f (\mathbf{x}-\mathbf{y})|\cdot\chi_{c\cdot\mathbf{r} }(\mathbf{y})\,d\mathbf{y},
        \end{align*}
        where the normalized ball characterization function is 
        \begin{align*}
        \chi_{c\cdot\mathbf{r} }(\mathbf{y})=\frac{c^{-3m}}{r^m_1\cdot r^m_2}\left(\frac{1}{r^m_3}\int_{\mathbb{R}^{m}}\chi_{B({0},cr_1)}(u-y_1)\cdot\chi_{B({0},cr_2)}(u-y_2)\cdot\chi_{B({0},cr_3)}(u)\,du\right).   
        \end{align*}
        Since the normalized ball characterization function can be dominated by $$
        \frac{c^{-3m}}{r^m_1\cdot r^m_2}\left(\frac{1}{r^m_3}\left|{B({0},cr_3)}\right|\right)\simeq\frac{c^{-2m}}{r^m_1\cdot r^m_2},
        $$
        and the support is contained in $T(\mathbf{x,\,\mathbf{r}})$ whenever $c=\frac{1}{2}$, then we have
        \begin{align*}
          |f|* \chi_{\frac{1}{2}\cdot\mathbf{r} }(\mathbf{x} )\leq c_m\cdot \frac{1}{r^m_1\cdot r^m_2}\int_{\mathbb{R}^{2m}}|f (\mathbf{x}-\mathbf{y})|\cdot\chi_{T(\mathbf{x,\,\mathbf{r}})}(\mathbf{y})\,d\mathbf{y} \simeq\frac 1{| T(\mathbf{x},\mathbf{r})|}\int_{  T(\mathbf{x},\mathbf{r})}|f (\mathbf{y})|d\mathbf{y}.
        \end{align*}
       Also remark that for all $\mathbf{y}\in T(\mathbf{0,\,\mathbf{r}})$, $B({0},r_3)\subset {B(y_1,2r_1)}\cap{B(y_2,2r_2)}\cap{B(0,2r_3)}$. Hence, for the case of $r_1,r_2\geq r_3$, one has (\ref{mmmmm}).

Secondly, suppose that $r_1,\,r_3\geq r_2$. Then $T(\mathbf{0},\mathbf{r}): =   P^{(first)}_{r_1,\,r_3 }$; and hence the average over the tube $T(\mathbf{x,\,\mathbf{r}})$ is
        \begin{align*}
            \frac 1{| T(\mathbf{x},\mathbf{r})|}\int_{  T(\mathbf{x},\mathbf{r})}|f (\mathbf{y})|d\mathbf{y}\simeq\frac{1}{r^m_1r^m_3}\int_{  \mathbb{R}^{2m}}|f (\mathbf{x}-\mathbf{y})|\cdot\chi_{ P^{(first)}_{r_1,\,r_3 }}(\mathbf{y})\,d\mathbf{y}.
        \end{align*}
        On the other hand, for any constant $c>0$, the convolution
        \begin{align*}
            |f|* \chi_{c\cdot\mathbf{r} }(\mathbf{x} ):=\int_{\mathbb{R}^{2m}}|f (\mathbf{x}-\mathbf{y})|\cdot\chi_{c\cdot\mathbf{r} }(\mathbf{y})\,d\mathbf{y},
        \end{align*}
        where the normalized ball characterization function is 
        \begin{align*}
        \chi_{c\cdot\mathbf{r} }(\mathbf{y})=\frac{c^{-3m}}{r^m_1\cdot r^m_3}\left(\frac{1}{r^m_2}\int_{\mathbb{R}^{m}}\chi_{B({0},cr_1)}(u-y_1)\cdot\chi_{B({y}_2,cr_2)}(u)\cdot\chi_{B({0},cr_3)}(u)\,du\right).   
        \end{align*}
        Since the normalized ball characterization can be dominated by $$
        \frac{c^{-3m}}{r^m_1\cdot r^m_3}\left(\frac{1}{r^m_2}\left|{B(y_2,cr_2)}\right|\right)\simeq\frac{c^{-2m}}{r^m_1\cdot r^m_3},
        $$
        and the support is contained in $P^{(first)}_{r_1,\,r_3}$ whenever $c=\frac{1}{2}$, then we have
        \begin{align*}
          |f|* \chi_{\frac{1}{2}\cdot\mathbf{r} }(\mathbf{x} )\leq c_m\cdot \frac{1}{r^m_1\cdot r^m_3}\int_{\mathbb{R}^{2m}}|f (\mathbf{x}-\mathbf{y})|\cdot\chi_{P^{(first)}_{r_1,\,r_3}}(\mathbf{y})\,d\mathbf{y} \simeq\frac 1{| T(\mathbf{x},\mathbf{r})|}\int_{  T(\mathbf{x},\mathbf{r})}|f (\mathbf{y})|\,d\mathbf{y}.
        \end{align*}
       Also remark that for all $(y_1,\,y_2)\in P^{(first)}_{r_1,\,r_3}$, $B(y_2,r_2)\subset {B(y_1,2r_1)}\cap{B(y_2,2r_2)}\cap{B(0,2r_3)}$. Hence, for the case of $r_1,r_3\geq r_2$, one has (\ref{mmmmm}). Similarly, the proposition holds for $r_2, r_3\geq r_1$ and the proof is completed.
    \end{proof}
    \smallskip
    \subsubsection{$L^p$-boundedness of the tube maximal function: proof of Theorem \ref{thm:maximal}} 
\begin{proof}
    By Proposition \ref{prop:tube}, it is suffices to show that the iterated maximal function is $L^p(\mathbb{R}^{2m})$ bounded for all $1<p<\infty.$ Note that for all $f\in C^\infty_0(\mathbb{R}^{2m})$
    \begin{align*}
    |f|*_{\mathbb{R}^{2m}}\chi_{\mathbf{r} }(\mathbf{x})&=|f|*_{\mathbb{R}^{2m}}\left(\left(\chi_{r_1}\otimes\chi_{r_2}\right)*_{3}\chi_{r_3}\right)(\mathbf{x})=\left(\chi_{r_1}\otimes\chi_{r_2}\right)*_{\mathbb{R}^{2m}}\left(|f|*_{3}\chi_{r_3}\right)(\mathbf{x}),
  \end{align*}
  and for all $\mathbf{x}\in\mathbb{R}^{2m}$
  \begin{align*}
 \left(|f|*_{3}\chi_{r_3}\right)(\mathbf{x})&:=\frac{1}{|B_{r_3}(\mathbf{0})|}\int_{B_{r_3}(\mathbf{0})}   |f|(\mathbf{x}-(u,u)))\,du  \\
 &=\frac{1}{|B_{r_3}(\mathbf{0})\times B_{r_3}(\mathbf{0})|}\int_{B_{r_3}(\mathbf{0})} \int_{B_{r_3}(\mathbf{0})}  |f|(\mathbf{x}-\mathbf{u})\cdot\chi_{\{u_1=u_2\}}(\mathbf{u})\,du_1\,du_2\\
 &\leq\frac{1}{|B_{r_3}(\mathbf{0})\times B_{r_3}(\mathbf{0})|}\int_{B_{r_3}(\mathbf{0})\times B_{r_3}(\mathbf{0})} |f|(\mathbf{x}-\mathbf{u})\,d\mathbf{u}.
  \end{align*}
  Then we have that the iterated maximal function is dominated by the  strong maximal function associated with the standard rectangle on $\mathbb{R}^{2m}$ compose with the Hardy--Littlewood maximal function on $\mathbb{R}^{2m}$, that is 
  $$
  M_{iterated}\leq M_s\circ M_{HL}.
  $$
  Therefore, the $L^p(\mathbb{R}^{2m})$-boundedness of the tube maximal function $M_{tube}$ follows by the mapping property of the strong maximal function $M_s$ and the Hardy--Littlewood maximal function $M_{HL}$.
\end{proof}

 \subsubsection{Calder\'on
reproducing formula: proof of Theorem \ref{prop:reproducing}}

\begin{proof}
    For each $j=1,2,3$ and $r_j>0$, let $\phi^{(j)}\in\mathcal{S}(\mathbb{R}^m)$ be non-negative such that the Fourier support of each $\phi^{(j)}$ is in $\{\xi_j\in\mathbb{R}^m:\,\frac{1}{2}\leq|\xi_j|\leq2\}$ and for all $\xi_j\in\mathbb{R}^m-\{\mathbf{0}\}$,
    \begin{align*}
        \int_{\mathbb{R}_+}\widehat{\phi^{(j)}}(\delta_{r_j}\xi_j)\,\frac{dr_j}{r_j}=1.
    \end{align*}
    Let $N_j\geq1$ and let $\psi^{(j)}\in C^\infty_0(\mathbb{R}^m)$ be the function which has higher order cancellation property up to order $2N_j-1$ such that $\widehat{\psi^{(j)}}$ doesn't vanish on $\{\xi_j\in\mathbb{R}^m:\,\frac{1}{2}\leq|\xi_j|\leq2\}$. Then for all $\xi_j\neq 0$
    \begin{align*}
      \int_{\mathbb{R}_+}\widehat{\psi^{(j)}}(\delta_{r_j}\xi_j)\cdot\widehat{\varphi^{(j)}}(\delta_{r_j}\xi_j)\,\frac{dr_j}{r_j}&:=  \int_{\mathbb{R}_+}\widehat{\psi^{(j)}}(\delta_{r_j}\xi_j)\cdot\left(\frac{\widehat{\phi^{(j)}}}{\widehat{\psi^{(j)}}}\right)(\delta_{r_j}\xi_j)\,\frac{dr_j}{r_j}
      =\int_{\mathbb{R}_+}\widehat{\phi^{(j)}}(\delta_{r_j}\xi_j)\,\frac{dr_j}{r_j}=1,
    \end{align*}
    where $\varphi^{(j)}\in \mathcal{S}(\mathbb{R}^m)$ is the function with the property that $\widehat{\varphi^{(j)}}=\widehat{\phi^{(j)}}/\widehat{\psi^{(j)}}$. Note that $\varphi^{(j)}$ is of mean zero for each $j$.
    
   By taking the Fourier transform of $\psi_{\mathbf{r} }$ and $\varphi_{\mathbf{r} }$ for each $\mathbf{r}\in\mathbb{R}^3_+$ we have
   \begin{align*}
       \widehat{\psi_{\mathbf{r} }}(\xi_1,\,\xi_2)=\widehat{\psi^{(1)}}(\delta_{r_1}\xi_1)\times\widehat{\psi^{(2)}}(\delta_{r_2}\xi_2)\times\widehat{\psi^{(3)}}(\delta_{r_3}(\xi_1+\xi_2)),
   \end{align*}
   and
    \begin{align*}
       \widehat{\varphi_{\mathbf{r} }}(\xi_1,\,\xi_2)=\widehat{\varphi^{(1)}}(\delta_{r_1}\xi_1)\times\widehat{\varphi^{(2)}}(\delta_{r_2}\xi_2)\times\widehat{\varphi^{(3)}}(\delta_{r_3}(\xi_1+\xi_2)),
   \end{align*}
   which implies that for all $\xi_1,\xi_2\neq\mathbf{0}$ and $\xi_1+\xi_2\neq\mathbf{0}$
   \begin{align*}
       \int_{\mathbb{R}^3_+}\widehat{\psi_\mathbf{r}}(\xi_1,\,\xi_2)\times\widehat{\varphi_{\mathbf{r} }}(\xi_1,\,\xi_2)\,\frac { d\mathbf{r}}{\mathbf{r}}&=\prod^2_{j=1}\int_{\mathbb{R}_+}\widehat{\psi^{(j)}}(\delta_{r_j}\xi_j )\times\widehat{\varphi^{(j)}}(\delta_{r_j}\xi_j)\,\frac{dr_j}{r_j}\times\int_{\mathbb{R}_+}\left(\widehat{\psi^{(3)}}\times\widehat{\varphi^{(3)}}\right)(\delta_{r_3}(\xi_1+\xi_2))\frac{dr_3}{r_3}\\
       &=1.
   \end{align*}
   Therefore, for all $f\in L^1(\mathbb{R}^{2m})\cap L^2(\mathbb{R}^{2m})$
   \begin{align*}
       f(\mathbf{x}) =
\int_{\mathbb{R}^3_+} f*\psi_ {\mathbf{r}}*\varphi_ {\mathbf{r}}(\mathbf{x})\, \frac { d\mathbf{r}}{\mathbf{r}},\,\,\text{where}\,\, \frac {d\mathbf{r}}{\mathbf{r}}=\frac
{dr_1}{r_1}\frac {dr_2}{r_2}\frac {dr_3}{r_3}.
   \end{align*}
   For the existence of the function $\psi^{(j)}$. Consider a radial function $g\in C^\infty_0(\mathbb{R}^    m)$ supported on $\{x_j\in\mathbb{R}^m:\,|x_j|<2\}$ with $\widehat{g}(0)=1$ and let $\psi^{(j)}(\cdot):=\lambda^{m}\left(\triangle^{N_j}_jg\right)(\delta_\lambda(\cdot))$ for some large $\lambda>0$, in which $\triangle_j$ denotes the standard Laplacian in $\mathbb{R}^m$.
\end{proof}

\subsubsection{Laplacian on the twisted setting}
 Write $  \mathbf{x}_j=({x}^1_j,\ldots,{x}^m_j)\in\mathbb{R}^{ m}$.
Let $j=1,2$, and define that 
\begin{equation*}
    \triangle_{j}:=  - \sum_{k=1}^{m} \frac{ \partial^2}{\partial (x_{j}^{k})^2}\quad  \text{and}\quad{ \triangle}_{twist}:=  - \sum_{k=1}^{m} \left(\frac \partial {\partial x^k_{1} }+\frac  
    \partial {\partial x^k_{2} }\right)^2.
\end{equation*}

\section{Dyadic rectangles/ tubes on $\mathbb{R}^{2m}$ and the new covering lemma}\label{sec:4}
In Section \ref{sec:3}, we construct the system of tubes on $\mathbb{R}^{2m}$. To develop the twisted covering lemma as well as the atomic decomposition, we need to further classify the tubes into several types (see also Figure 1 (a) and 1 (b)).
\subsection{Miscellaneous types of dyadic rectangles/ tubes}
    There are five types of  dyadic rectangles/tubes.
    
   $\bullet$ A
       {\it type ${\rm I}$  dyadic  rectangle} is the   translate   of the standard  dyadic  rectangle  $[0,2^{j_1})^m\times[0,2^{j_2})^m $ under an element of the lattice
 $
   2^{  {j}_1  } \mathbb{Z}^{m } \times  2^{ j_2   }   \mathbb{Z}^m
$; 

$\bullet$ A   {\it type ${\rm I\!I}$  dyadic  rectangle}  is the  translate    of  the  slant  dyadic  rectangle (based on the first direction)
              \begin{equation}\label{eq:slant1}
     I\times_t J:=    \left\{ \mathbf{x} \in  \mathbb{R}^{2m}:\, \mathbf{x}_1-\mathbf{x}_2\in I,\mathbf{x}_2\in J\right\} ,\quad  
     \end{equation} 
     with $$I=[0,2^{j_1})^m, J=[0,2^{j_2})^m {\rm\ \ \  and\ \ \ }
     j_1\leq j_2
     $$
       under an element of the lattice
  \begin{equation*}
   2^{  {j}_1  } \mathbb{Z}^{m } \times_t  2^{ j_2   }   \mathbb{Z}^m:=\{(\mathbf{n}_1+\mathbf{n}_2, \mathbf{n}_2):\mathbf{n}_1 \in 2^{  {j}_1  } \mathbb{Z}^{m }, \,  \mathbf{n}_2\in  2^{ j_2   }   \mathbb{Z}^m \};    
 \end{equation*}

$\bullet$  A {\it type ${\rm I\!I\!I}$  dyadic  rectangle} is defined to be the same structure of the {\it type ${\rm I\!I}$  dyadic  rectangle} but with the constraint that $j_1>j_2.$

$\bullet$  A {\it type ${\rm I\!V}$  dyadic  rectangle} is the  translate    of  the  slant  dyadic  rectangle (based on the second direction)
              \begin{equation}\label{eq:slant11}
     I\,{}_t\times J:=    \left\{ \mathbf{x} \in  \mathbb{R}^{2m}:\,  \mathbf{x}_1 \in I, \mathbf{x}_2-\mathbf{x}_1 \in J\right\}   
     \end{equation} 
     with $$I=[0,2^{j_1})^m, J=[0,2^{j_2})^m {\rm \ \ \ and \ \ \ }
     j_1\leq j_2
     $$
       under an element of the lattice
  \begin{equation*}
   2^{  {j}_1  } \mathbb{Z}^{m } \,{}_t\times  2^{ j_2   }   \mathbb{Z}^m:=\{(\mathbf{n}_1, \mathbf{n}_1+\mathbf{n}_2):\,\mathbf{n}_1 \in 2^{  {j}_1  } \mathbb{Z}^{m },\,  \mathbf{n}_2\in  2^{ j_2   }   \mathbb{Z}^m \}.    
 \end{equation*} 

 $\bullet$  A {\it type ${\rm V}$  dyadic  rectangle} is defined to be the same structure of the {\it type ${\rm I\!V}$  dyadic  rectangle} but with the constraint that $j_1>j_2.$

  \begin{rem} A {\it  dyadic  rectangle   of scale $\mathbf{j}\in \mathbb{Z}^3$} is a type ${\rm I}$ rectangle as a translate of $[0,2^{j_1})^m\times[0,2^{j_2})^m $ if $j_1, j_2\geq  j_3  $; it is a type ${\rm I\!I}$ / ${\rm I\!I\!I}$ rectangle as a translate of
    $[0,2^{j_1})^m\times_t [0,2^{j_3})^m $ if    $ \ j_1 , j_3 >  j_2$;  it is a type ${\rm I\!V}$ / ${\rm V}$ rectangle as a translate of $[0,2^{j_2})^m_t\times[0,2^{j_3})^m $ if 
         $ \ j_2 , j_3 >  j_1$. Now denote by
  $ \mathfrak{R }_{\mathbf{j}}  
$ the set of  rectangles of scale $\mathbf{j} $
 and by $
   \mathfrak{R }:=\bigcup_{\mathbf{j}\in \mathbb{Z} ^3}\mathfrak{R }_{\mathbf{j}}.
$
  the set of all rectangles.  They constitute a partition $  \mathbb{R}^{2m} $ for each $\mathbf{j}$.
  \end{rem}

Regarding the five different types of rectangles/tubes, we consider the maximal ones in $\Omega$ and the related maximal functions.  
\begin{itemize}
\item \textbf{Standard maximal rectangles}:
For an open set $\Omega$ in $ \mathbb{R}^{2m}$ with finite measure, let $m^{\rm I}_1(\Omega)$ be the class of all dyadic tubes of type ${\rm I}$ (the standard rectangles), $I\times J\subset\Omega$, which are maximal in the direction along the $x$-axis. Dyadic tubes of in $m^{\rm I}_2(\Omega)$ are maximal along the $y$-axis.

    \item 
\textbf{Maximal tubes of type ${\rm I\!I}$}:
For an open set $\Omega$ in $ \mathbb{R}^{2m}$ with finite measure, let $m^{\rm I\!I}_1(\Omega)$ be the class of all dyadic tubes of type ${\rm I\!I}$, $I\times_t J\subset\Omega$, which are maximal in the direction along $x$-axis. Dyadic tubes of type ${\rm I\!I}$ in $m^{\rm I\!I}_2(\Omega)$ are maximal in the direction of $(1,1)$. Consider the tube maximal function with based on the first direction which is defined by
\begin{align*}
    M^{\rm I\!I}_{tube}(f)(\mathbf{x}):=\sup_{\mathbf{x}\in I\times_t J}\frac{1}{|I\times_t J|}\int_{I\times_t J}|f(\mathbf{y})|\,d\mathbf{y},
\end{align*}
for suitable function $f$. Note that the tube maximal function with based on the first direction is dominated by the tube maximal function $M_{tube}$, therefore $ M^{\rm I\!I}_{tube}$ is an $L^p(\mathbb{R}^{2m})$ bounded operator as well.

    \item 
\textbf{Maximal tubes of type ${\rm I\!I\!I}$, ${\rm I\!V}$ and ${\rm V}$}:
similarly, we can define $m^{j}_1(\Omega)$, $m^{j}_2(\Omega)$ and
$M^{j}_{tube}(f)(\mathbf{x})$ for $j= {\rm I\!I\!I}, {\rm I\!V}, {\rm V}$. 
\end{itemize}

\subsection{Covering Lemma adapted to the miscellaneous dyadic rectangles}
We now consider the covering lemmas. The first one is the standard one due to Journ\'e and Pipher \cite{J,P}.
\begin{lem}(Standard covering lemma)\label{lem:Journe standard}~\\
    Let $\Omega$
be an open subset of $ \mathbb{R}^{2m}$ with finite measure and $\kappa>0$. Then for each $I\times J\in m^{\rm I}_2(\Omega)$, there is an $\widehat{I}$ containing $I$ such that

\begin{align}
 \sum_{R= I \times J\in m^{\rm I}_2(\Omega)} |R|  \left(\frac {\ell (I)}{\ell (\widehat{I} )}\right)^\kappa \leq    C     |\Omega|  ,
 \end{align}
 for some constant $C$ independent of $\Omega$. Likewise, for each $I\times J\in m^{I}_1(\Omega)$, there is an $\widehat{J}$ containing $J$ such that
 \begin{align}
\sum_{R= I \times J\in m^{\rm I}_1(\Omega)} |R| \left(\frac {\ell (J)}{\ell (\widehat{J} )}\right)^\kappa \leq    C     |\Omega|  ,
 \end{align}
\end{lem}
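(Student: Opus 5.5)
We follow Journ\'e's original argument in the form streamlined by Pipher. We treat only the family $m^{\rm I}_2(\Omega)$; the statement for $m^{\rm I}_1(\Omega)$ follows by interchanging the roles of the two factors $\mathbb{R}^m_{x_1}$ and $\mathbb{R}^m_{x_2}$.

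First we enlarge $\Omega$. Let $M_s$ be the strong maximal function on $\mathbb{R}^{m}\times\mathbb{R}^m$, and set
\[
\widetilde\Omega:=\{\mathbf{x}:\,M_s(\chi_\Omega)(\mathbf{x})>1/2\}.
\]
By the $L^2$ boundedness of $M_s$ (used in the proof of Theorem \ref{thm:maximal}), $|\widetilde\Omega|\le C|\Omega|$. For $R=I\times J\in m^{\rm I}_2(\Omega)$, let $\widehat I$ be the largest dyadic cube containing $I$ such that $\widehat I\times J\subset\widetilde\Omega$. This $\widehat I$ is finite because $|\widetilde\Omega|<\infty$.

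The main step is to group the rectangles by the ratio $\ell(\widehat I)/\ell(I)=2^k$ with $k\ge0$. We will show
\[
\sum_{\substack{R\in m^{\rm I}_2(\Omega)\\ \ell(\widehat I)=2^k\ell(I)}}|R|\lesssim k\,2^{k}\,|\Omega|
\]
(or any bound of the form $2^{k\epsilon}$ times a polynomial in $k$, times $|\Omega|$). Multiplying by $2^{-k\kappa}$ and summing over $k$ then gives the lemma.

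To prove this bound, fix $k$ and fix the side length $\ell(I)=2^{i}$. Because each $R$ is maximal in the $x_2$-direction, the intervals $J$ belonging to rectangles with the same $I$ are pairwise disjoint. For a fixed horizontal scale we therefore have
\[
\sum_J|I\times J|=\Bigl|\bigcup_J I\times J\Bigr|.
\]
Next, maximality of $\widehat I$ means that the dyadic parent $\widehat I^{(1)}$ satisfies $\widehat I^{(1)}\times J\not\subset\widetilde\Omega$. Hence some point of $\widehat I^{(1)}\times J$ has $M_s\chi_\Omega\le1/2$, so $|(\widehat I^{(1)}\times J)\cap\Omega^c|\ge\frac12|\widehat I^{(1)}\times J|$.

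Now slice in $x_2$, following Pipher. For each fixed $x_2$, the cubes $I$ with $\ell(I)=2^i$ and $I\times\{x_2\}\subset R\subset\Omega$ form a one-parameter family. Their $2^{k+1}$-fold enlargements meet $\Omega^c$ in a set of proportion at least $1/2$. Summing over the $k+1$ relevant scales and applying the one-dimensional (in $x_1$) Hardy--Littlewood maximal theorem to the slices $\Omega_{x_2}$ bounds the total measure by $C\,k\,2^{k}|\widetilde\Omega_{x_2}|$. Integrating in $x_2$ and using $|\widetilde\Omega|\lesssim|\Omega|$ completes the step.

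We expect the main obstacle to be controlling the overlap across different scales $\ell(I)$ within a fixed $k$. Rectangles with distinct $I$ can overlap heavily, so the disjointness at a single horizontal scale does not transfer directly. Our first attempt will be Journ\'e's trick: for fixed $x_2$ and fixed $k$, the horizontal sides $I$ of rectangles passing through $(x_1,x_2)$ have enlargements $\widehat I$ that lie in $\widetilde\Omega$ but whose parents do not. This confines the admissible scales at each point to a window of length about $k$, so the overlap is at most $O(k)$, and that factor is absorbed by $2^{-k\kappa}$.
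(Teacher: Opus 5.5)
Your setup is sound: the enlargement $\widetilde\Omega$, your choice of $\widehat I$, the half-density consequence $|(\widehat I^{(1)}\times J)\cap\Omega^c|\ge\frac12|\widehat I^{(1)}\times J|$, and the disjointness of the $J$'s for a fixed $I$ are all correct. The counting step, however, has a genuine gap, in two places.

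First, the target bound is too weak. From $\sum_{\ell(\widehat I)=2^k\ell(I)}|R|\lesssim k2^k|\Omega|$ you only get $\sum_k k\,2^{k(1-\kappa)}|\Omega|$, which diverges for $0<\kappa\le 1$. A bound $2^{k\epsilon}\,{\rm poly}(k)$ only handles $\kappa>\epsilon$. Since the lemma is asserted for every $\kappa>0$, you need growth polynomial in $k$; the true bound is $\lesssim (k+1)|\Omega|$.

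Second, your mechanism cannot deliver even the weaker bound.
\begin{itemize}
\item The half-density inequality is an average over $x_2\in J$. It says nothing about an individual slice $\mathbb{R}^m\times\{x_2\}$, so the slice-wise maximal argument in $x_1$ has no input.
\item The claimed $O(k)$ pointwise overlap at fixed $k$ is false. For $m=1$, let $\Omega$ be a thin open neighbourhood of the staircase $\bigcup_{j=0}^{N}[0,2^j)\times[0,2^{N-j})$. Each step $[0,2^j)\times[0,2^{N-j})$ lies in $m^{\rm I}_2(\Omega)$. Because $\Omega$ essentially lies in the hyperbolic region $\{x_1x_2\le 2^N\}$, an elementary computation gives $M_s\chi_\Omega\le\frac12$ wherever $x_1x_2\ge C2^N$. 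Hence $\ell(\widehat I)/\ell(I)\le C$ for every step, so $\gtrsim N$ rectangles share one $k=O(1)$, yet all of them contain $[0,1)^2$.
\end{itemize}

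The missing idea is Pipher's proof of Journ\'e's lemma, which the paper reproduces in the proof of Lemma \ref{lem:Journe 1}. You sum over $I$ instead of counting overlaps at points, work in the $x_2$ variable, and telescope in scale. Write $I^{(k)}$ for the $k$-th dyadic ancestor and set $E_I=\bigcup\{J:\ I\times J\subset\Omega\}\subset\mathbb{R}^m_{x_2}$; these sets decrease as $I$ grows.
\begin{itemize}
\item If $\widehat I=I^{(k)}$, then $I^{(k+1)}\times(J\cap E_{I^{(k+1)}})\subset (I^{(k+1)}\times J)\cap\Omega$. Your half-density bound then gives $|J\cap E_{I^{(k+1)}}|\le\frac12|J|$.
\item Since $J\subset E_I$, it follows that $J\subset\{M_{x_2}\chi_{E_I\setminus E_{I^{(k+1)}}}\ge\frac12\}$, where $M_{x_2}$ is the Hardy--Littlewood maximal function in $x_2$.
\item The $J$'s with a fixed $I$ are disjoint, so the weak $(1,1)$ bound in $x_2$ gives $\sum_J|J|\lesssim|E_I\setminus E_{I^{(k+1)}}|$.
\item The sets $I\times(E_I\setminus E_{I^{(1)}})$ are pairwise disjoint subsets of $\Omega$, and $\sum_{I:\,I^{(j)}=I'}|I|=|I'|$. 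Telescoping therefore gives $\sum_I|I|\,|E_I\setminus E_{I^{(k+1)}}|\le(k+1)|\Omega|$.
\end{itemize}
This yields $\sum_{\ell(\widehat I)=2^k\ell(I)}|R|\lesssim(k+1)|\Omega|$, and summing against $2^{-k\kappa}$ works for every $\kappa>0$.
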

\smallskip

\subsubsection{Twisted covering lemma}
The following two covering lemmas are new, which allows us to cover an open set with the rectangles with the same type. We highlight that the {\it type-keeping feature} in these covering lemmas is needed when verifying the atomic decomposition (\ref{second}).
\smallskip
\begin{lem}[Twisted covering lemma of tubes on type ${\rm I\!I}$ and ${\rm I\!I\!I}$]~\label{lem:Journe 1}\\
    Let $\Omega$
be an open subset of $ \mathbb{R}^{2m}$ with finite measure and $\kappa>0$. Then for each $I\times_t J\in m^{\rm I\!I}_2(\Omega)$, there is an $\widehat{I}$ containing $I$ such that

\begin{align}\label{Journe 1 I hat}
 \sum_{R= I \times_t J\in m^{\rm I\!I}_2(\Omega)} |R|  \left(\frac {\ell (I)}{\ell (\widehat{I} )}\right)^\kappa \leq    C     |\Omega|  ,
 \end{align}
 for some constant $C$ independent of $\Omega$. Likewise, for each $I\times_t J\in m^{\rm I\!I}_1(\Omega)$, there is an $\widehat{J}$ containing $J$ such that
 \begin{align}\label{Journe 1 J hat}
\sum_{R= I \times_t J\in m^{\rm I\!I}_1(\Omega)} |R| \left(\frac {\ell (J)}{\ell (\widehat{J} )}\right)^\kappa \leq    C     |\Omega|  ,
 \end{align}

 We also have the similar version for tubes in type ${\rm I\!I\!I}$.
\end{lem}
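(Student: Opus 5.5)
The plan is to reduce the twisted covering lemma to the standard Journé--Pipher covering lemma (Lemma \ref{lem:Journe standard}) through the linear change of variables
\[
\Phi(\mathbf{x}_1,\mathbf{x}_2):=(\mathbf{x}_1-\mathbf{x}_2,\,\mathbf{x}_2),
\]
which has Jacobian determinant $1$. Under $\Phi$, the slant dyadic rectangle $I\times_t J=\{\mathbf{x}_1-\mathbf{x}_2\in I,\ \mathbf{x}_2\in J\}$ is mapped exactly onto the standard rectangle $I\times J$. Moreover, the lattice $2^{j_1}\mathbb{Z}^m\times_t 2^{j_2}\mathbb{Z}^m$ is mapped onto $2^{j_1}\mathbb{Z}^m\times 2^{j_2}\mathbb{Z}^m$, since $(\mathbf{n}_1+\mathbf{n}_2,\mathbf{n}_2)\mapsto(\mathbf{n}_1,\mathbf{n}_2)$. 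Hence $\Phi$ gives a bijection between dyadic tubes of type ${\rm I\!I}\cup{\rm I\!I\!I}$ and standard dyadic rectangles of type ${\rm I}$, preserving inclusion, side lengths $\ell(I)$ and $\ell(J)$, and Lebesgue measure. Set $\widetilde\Omega:=\Phi(\Omega)$. It is open and $|\widetilde\Omega|=|\Omega|$.

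The first step is to check how maximality transforms. Enlarging $I\times_t J$ in the $x$-axis direction means enlarging $I$ with $J$ fixed. This corresponds to enlarging $I$ in $I\times J$, i.e.\ maximality along the first coordinate of $\widetilde\Omega$. Enlarging along the direction $(1,1)$ means enlarging $J$ with $I$ fixed, since moving along $(1,1)$ keeps $\mathbf{x}_1-\mathbf{x}_2$ fixed. This corresponds to maximality along the second coordinate. So I expect
\[
\Phi\bigl(m^{\rm I\!I}_2(\Omega)\bigr)=m^{\rm I}_2(\widetilde\Omega)\quad\text{and}\quad \Phi\bigl(m^{\rm I\!I}_1(\Omega)\bigr)=m^{\rm I}_1(\widetilde\Omega),
\]
restricted to those rectangles satisfying the scale constraint $j_1\le j_2$ (type ${\rm I\!I}$) or $j_1>j_2$ (type ${\rm I\!I\!I}$).

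The main obstacle is this scale constraint. A tube that is maximal among type ${\rm I\!I}$ tubes need not be maximal among all standard rectangles in $\widetilde\Omega$, because the enlargement might violate $j_1\le j_2$. Consider enlarging $J$ (direction $(1,1)$): this increases $j_2$, so $j_1\le j_2$ is preserved, and maximality in $m^{\rm I\!I}_2$ coincides with maximality in $m^{\rm I}_2(\widetilde\Omega)$. The family is then a subfamily of $m^{\rm I}_2(\widetilde\Omega)$. Choosing $\widehat I$ as in Lemma \ref{lem:Journe standard} for $\widetilde\Omega$ and dropping terms gives \eqref{Journe 1 I hat}.

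For $m^{\rm I\!I}_1$, enlarging $I$ may break $j_1\le j_2$. Here I would argue as follows. If $R=I\times_t J$ is maximal in $I$ among type ${\rm I\!I}$ tubes, let $I'\supseteq I$ be the maximal dyadic cube with $I'\times J\subset\widetilde\Omega$. Then either $I'=I$, or $\ell(I)=\ell(J)$. In the latter case, the rectangles with $\ell(I)=\ell(J)$ are dyadic cubes in $\widetilde\Omega$. I would handle them separately: I expect, though I am not sure, that for these one can take $\widehat J$ to be the maximal dyadic cube $\widehat Q\supseteq I\times J$ contained in $\{M_s\chi_{\widetilde\Omega}>1/2\}$ and bound their contribution by the one-parameter packing $\sum|Q|(\ell(Q)/\ell(\widehat Q))^\kappa\lesssim|\Omega|$, using the boundedness of the strong maximal function $M_s$. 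Alternatively, one can simply rerun Journé's proof, choosing $\widehat J$ maximal with $|I\times\widehat J\cap\widetilde\Omega|>\frac12|I\times\widehat J|$. That proof only uses maximality of $J$ relative to $\widetilde\Omega$ in the $J$-direction, together with $L^2$ bounds for $M_s$, and both survive the restriction to $j_1\le j_2$.

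The type ${\rm I\!I\!I}$ case follows identically, with the roles of the constraints swapped. Pulling back via $\Phi^{-1}$ then yields both estimates with the same constant $C$.
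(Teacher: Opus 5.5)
Your proof is correct, but it takes a different route from the paper. The paper never changes variables. It reruns Pipher's proof of Journ\'e's lemma directly in slant coordinates: the sets $E_I(\Omega)$ and $A_{I,k}$, the inclusion $A_{I,k}\subseteq\{M\chi_{E_I\setminus E_{(I)_k}}>\frac12\}$, and Pipher's Proposition~B. It treats the $\widehat J$ bound as a direct consequence of the standard lemma and works only on the $\widehat I$ bound, where enlarging $I$ can break $\ell(I)\le\ell(J)$. There it splits $m^{\rm I\!I}_2(\Omega)$ into two cases. In Case~1, $I$ is enlarged only below the scale of $J$. In Case~2, $I$ and $J$ are enlarged together through the slant cubes $(I)_{k_0+l}\times_t (J)_l$, and the contribution is controlled by the maximal slant cubes of the enlarged set.

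Your unimodular shear $\Phi$ shows that the lemma is Journ\'e's lemma for $\Phi(\Omega)$ plus bookkeeping of the constraint $j_1\le j_2$. That bookkeeping reverses which estimate is nontrivial. Since $\Phi(m^{\rm I\!I}_2(\Omega))\subseteq m^{\rm I}_2(\Phi(\Omega))$, the $\widehat I$ bound is immediate. Only the cubes with $\ell(I)=\ell(J)$ in $m^{\rm I\!I}_1(\Omega)$ need extra care, and only if maximality is taken among type ${\rm I\!I}$ tubes.

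The step you were unsure about is true, and you do not need $M_s$ for it. Let $\widehat Q$ be the maximal dyadic cube of $\mathbb{R}^{2m}$ inside $\Phi(\Omega)$ containing $I\times J$, and let $\widehat J$ be its second factor. The generation-$k$ subcubes of a fixed $\widehat Q$ have total measure $|\widehat Q|$, and the maximal cubes are disjoint. So this contribution is at most $\sum_k 2^{-k\kappa}\sum_{\widehat Q}|\widehat Q|\le C_\kappa|\Omega|$. For type ${\rm I\!I\!I}$ the exceptional family in $m^{\rm I\!I\!I}_2$ is $\ell(I)=2\ell(J)$, which is again a nested one-parameter grid, so the same argument applies. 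One small correction to your alternative remark: Journ\'e's argument for $m_1$ uses maximality in the $I$-direction (disjointness of the $I$'s for fixed $J$), not maximality in $J$.

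What your route buys is brevity, rigor resting entirely on the classical lemma, and the observation that the covering lemma has no genuinely twisted content. Types ${\rm I\!V}$ and ${\rm V}$ follow in the same way via $(\mathbf{x}_1,\mathbf{x}_2)\mapsto(\mathbf{x}_1,\mathbf{x}_2-\mathbf{x}_1)$. The paper's route aims instead at a specific type-adapted $\widehat I$, the ``type-keeping feature'' it invokes later for type ${\rm I\!I}$ atoms. Your $\widehat I$ is Journ\'e's choice for $\Phi(\Omega)$, so it may make $\widehat I\times_t J$ of type ${\rm I\!I\!I}$ with no slant-cube structure. The lemma as stated only asserts existence of some $\widehat I$, so this does not affect your proof.
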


\begin{lem}[Twisted covering lemma of tubes on type ${\rm I\!V}$ and ${\rm V}$]\label{lem:Journe 2}~\\
    Let $\Omega$
be an open subset of $ \mathbb{R}^{2m}$ with finite measure and $\kappa>0$. Then for each $I \,_t\times J\in m^{\rm I\!V}_2(\Omega)$, there is an $\widehat{I}$ containing $I$ such that

\begin{align}
 \sum_{R= I\,_t \times J\in m^{\rm I\!V}_2(\Omega)} |R| \cdot \left(\frac {\ell (I)}{\ell (\widehat{I} )}\right)^\kappa \leq    C     |\Omega|  ,
 \end{align}
 for some constant $C$ independent of $\Omega$. Likewise, for each $I\,_t\times J\in m^{\rm I\!V}_1(\Omega)$, there is an $\widehat{J}$ containing $J$ such that
 \begin{align}
\sum_{R= I\,_t \times J\in m^{\rm I\!V}_1(\Omega)} |R|\cdot\left(\frac {\ell (J)}{\ell (\widehat{J} )}\right)^\kappa \leq    C     |\Omega|.
 \end{align}

  We also have the similar version for tubes in type ${\rm V}$.

\end{lem}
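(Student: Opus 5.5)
We reduce Lemma \ref{lem:Journe 2} to the standard Journé covering lemma (Lemma \ref{lem:Journe standard}) through a measure-preserving linear change of variables, as one would for Lemma \ref{lem:Journe 1}. Consider the shear
\[
A:\mathbb{R}^{2m}\to\mathbb{R}^{2m},\qquad A(x_1,x_2)=(x_1,\,x_2-x_1).
\]
It is linear with determinant $1$, so $|A(E)|=|E|$ for every measurable $E$, and $A$ maps open sets to open sets. By the definition \eqref{eq:slant11},
\[
A\bigl(I\,{}_t\times J\bigr)=I\times J .
\]
The lattice $2^{j_1}\mathbb{Z}^m\,{}_t\times 2^{j_2}\mathbb{Z}^m$ is sent to the product lattice $2^{j_1}\mathbb{Z}^m\times 2^{j_2}\mathbb{Z}^m$. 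Hence $A$ gives a bijection between dyadic tubes of type ${\rm I\!V}$ (scales with $j_1\le j_2$) and standard dyadic rectangles $I\times J$ with $\ell(I)\le\ell(J)$. For type ${\rm V}$ the same holds with $\ell(I)>\ell(J)$. The side lengths $\ell(I)$ and $\ell(J)$ are preserved.

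Set $\Omega':=A(\Omega)$, which is open with $|\Omega'|=|\Omega|<\infty$. I will check that "maximal in the direction of the $x_1$-axis" for $I\,{}_t\times J$ (enlarging $I$ with $J$ fixed) corresponds exactly to "maximal in the $x_1$-direction" for $I\times J\subset\Omega'$. Likewise, "maximal along the direction $(1,1)$" (enlarging $J$, which in the original coordinates stretches along the $x_2$-fibre over $x_1\in I$) corresponds to maximality in the $x_2$-direction.

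The only subtlety is the type constraint $j_1\le j_2$. A type-${\rm I\!V}$ tube maximal within type ${\rm I\!V}$ might not be maximal among all standard rectangles in $\Omega'$, because the parent enlargement would violate $j_1\le j_2$ and leave the class. I would handle this by defining the maximal class within the restricted family
\[
\mathcal{F}=\{I\times J:\ \ell(I)\le \ell(J)\}.
\]
I then rerun Journé's argument rather than quoting it verbatim. Concretely, for $R=I\times J\in m_2(\Omega')$, take
\[
\widehat I:=\text{the largest dyadic cube }I'\supseteq I \text{ with } I'\times J\subset \widetilde\Omega',\qquad \widetilde\Omega':=\{M_s\chi_{\Omega'}>1/2\}.
\]
This choice does not involve the constraint at all. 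The standard proof then goes through: fix the $J$-scale, use the one-parameter disjointness of maximal $J$'s over each fibre, and apply the $L^2$ bound of the strong maximal function $M_s$ to get $|\widetilde\Omega'|\lesssim|\Omega'|$. What remains is to show that maximality inside $\mathcal F$ still gives the disjointness the counting argument needs. For fixed $I$, the $J$'s with $I\times J$ maximal in the $x_2$-direction are disjoint; the constraint only forbids shrinking $J$ below $\ell(I)$, never enlarging it, so this maximality is unaffected. In the other direction the constraint can only stop $I$ from growing, which is harmless because $\widehat I$ is defined independently of it.

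This fibrewise disjointness check under the type constraint is the main obstacle, and it is where the type-keeping feature is proved. Pulling back by $A^{-1}$ then yields both inequalities of Lemma \ref{lem:Journe 2}, since $|R|$, $\ell(I)$, $\ell(J)$, $\ell(\widehat I)$, $\ell(\widehat J)$ and $|\Omega|$ are all invariant. Type ${\rm V}$ is handled identically with $\mathcal F=\{\ell(I)>\ell(J)\}$, using the same shear.
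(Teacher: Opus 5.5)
Your argument is correct, but it is not the paper's route. The paper proves Lemma~\ref{lem:Journe 2} only by pointing to its proof of Lemma~\ref{lem:Journe 1}. That proof stays in slant coordinates and disposes of the $\widehat J$ inequality by quoting the standard Journ\'e lemma. It then spends its effort on the $\widehat I$ inequality, running Pipher's $E_I$/$A_{I,k}$ argument with a Case~1/Case~2 split so that the enlargement is type-keeping. In Case~1, $\widehat I$ is forced to satisfy $|\widehat I|<|J|$. In Case~2 one grows along slant cubes, $\widehat I=(I)_{k_0+l}$ with $(I)_{k_0+l}\times_t (J)_l$ a slant cube, and counts through the maximal slant cubes of $\widetilde\Omega$.

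You instead straighten the tubes with the unimodular shear. You note that within-type maximality in $J$ coincides with unrestricted maximality, so the $\widehat I$ inequality is the standard lemma restricted to a subfamily. You rerun the standard argument only for the $\widehat J$ inequality.

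Your route is shorter, and it confronts a point the paper passes over. Under the within-type reading of maximality, the constraint $\ell(I)\le\ell(J)$ can block growth of $I$. So $m^{\rm I\!V}_1(\Omega)$ contains vacuously maximal cubes and is not a subfamily of the unrestricted $m_1(\Omega')$; quoting the standard lemma is then not literally enough. What the paper's route buys is the specific type-keeping $\widehat I$ that Section~\ref{sec:6} later relies on, namely the choices of $\widehat I_1$ in Cases~1 and~2 for type ${\rm I\!I}$ atoms. Your unconstrained $\widehat I$ may be longer than $J$ and does not provide this, although the lemma as stated only asks for some $\widehat I\supseteq I$.

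A few things to tidy:

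\begin{enumerate}
\item What you call the main obstacle is immediate. Maximal elements of any family of dyadic cubes are pairwise disjoint. So for fixed $J$, the $I$'s with $I\times J$ in the image of $m^{\rm I\!V}_1(\Omega)$ are disjoint: a strictly larger $I'\supset I$ with $\ell(I')\le\ell(J)$ and $I'\times J\subset\Omega'$ would contradict maximality.
\item Your stated reason for that direction (``$\widehat I$ is defined independently'') is beside the point, because there it is $\widehat J$ that is constructed.
\item Given this disjointness, Pipher's proof applies verbatim, and it is a cleaner justification than an appeal to $|\widetilde\Omega'|\lesssim|\Omega'|$. Set $F_J:=\bigcup\{I: I\times J\subset\Omega'\}$, and let $\widehat J$ be the largest $J'\supseteq J$ with $|(I\times J')\cap\Omega'|>\frac12|I\times J'|$. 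Then the $I$'s with $\widehat J=(J)_{k-1}$ lie in $\{M\chi_{F_J\setminus F_{(J)_k}}>1/4\}$, and the sets $(F_J\setminus F_{(J)_1})\times J$ are pairwise disjoint subsets of $\Omega'$.
\item Your direction labels are swapped in the original coordinates. For $I\,{}_t\times J=\{x_1\in I,\ x_2-x_1\in J\}$, enlarging $I$ moves along $(1,1)$ and enlarging $J$ moves along the $x_2$-axis. This is harmless, since only ``maximal in $I$'' versus ``maximal in $J$'' is used.
\end{enumerate}
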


To end this section, we demonstrate the proof of Lemma \ref{lem:Journe 1}. Lemma \ref{lem:Journe 2} can be obtained by using the similar geometric argument. In the twisted case, there would be {\it oblique flag rectangles} that come into the structure (see \cite{CCLLO} for the oblique flag rectangle structures).
\begin{proof}
In the type of ${\rm I\!I}$, the tubes have the property that  
$|I|\leq |J|.$ From the standard Journ\'e's covering lemma, it suffices to prove \eqref{Journe 1 I hat}. 

For any dyadic interval $I$, we define $$E_{I}(\Omega)=\bigcup\{\mathfrak Q \subseteq\mathbb{R}^{m}:\ I\times_t \mathfrak Q\ \text{ is a {\it slant flag dyadic rectangle}, } I\times_t \mathfrak Q\subseteq\Omega\},$$
where the {\it slant flag dyadic rectangle} $I\times_t \mathfrak Q$ means that the volume of the dyadic cube $I$ is dominated by the volume of the dyadic cube $\mathfrak Q$. For simplicity, we may denote $E_I(\Omega)$ by $E_I$. For all $k\in\mathbb{N}$, we also define that 
$$A_{I,k}=\bigcup\{\mathfrak Q\subseteq\mathbb{R}^m:
I\times_t \mathfrak Q\in m^{\rm I\!I}_{2}(\Omega),\,\hat{I}=(I)_{k-1}\},$$
where $\hat{I}$ is the largest dyadic cube containing $I$ such that $|(\hat{I}\times_t R)\cap\Omega|>\frac{1}{2}|\hat{I}\times_t \mathfrak Q|$ and $(I)_{k-1}=2^{k-1}I$.

We claim that 
if $I$ is a dyadic interval, then for all $k\in\mathbb{N}$
\begin{align}\label{claim AIk}
    A_{I,k}\subseteq\{x:M\chi_{E_{I}\setminus E_{(I)_{k}}}(x)>\frac{1}{2}\}.
    \end{align}
To acquire this claim, we observe that for any $x\in A_{I,k}$, there exists a dyadic cube $\mathfrak Q$ satisfying $I\times_t \mathfrak Q\in m_{2}^{\rm I\!I}(\Omega),\hat{I}=(I)_{k-1}$ and $x\in \mathfrak Q$. We establish $\mathfrak Q\in E_{I}$ and $I\times_t E_{I}\subseteq\Omega$ from the definition of $E_{I}$; besides, by the definition of $\hat{I}=(I)_{k-1}$, we have
\[
\left|\left((I)_{k}\times_t \mathfrak Q\right)\cap\left( (I)_{k}\times_t E_{(I)_{k}}\right)\right|<\left|\left((I)_{k}\times_t \mathfrak Q\right)\cap\Omega\right|\leq\frac{1}{2}|(I)_{k}\times_t \mathfrak Q|.
\] This means that $|\mathfrak Q\cap E_{(I)_{k}}|<\frac{1}{2}|\mathfrak Q|$ and so $|\mathfrak Q\cap(E_{I}\setminus E_{(I)_{k}})|>\frac{1}{2}|\mathfrak Q|$, which implies 
that 
\eqref{claim AIk} holds.   

Next we argue that 
\begin{align}\label{claim sum EI without E2I}
\sum_{\substack{I\subset\mathbb{R}^m,\\\text{dyadic}}}|E_{I}\setminus E_{2I}|\cdot|I|\simeq|\Omega|.
\end{align}

For any $x\in\Omega$, there exists a dyadic slant of tube $I\times_t \mathfrak Q\subseteq\Omega$ such that $x\in I\times_t \mathfrak Q$ since $\Omega$ is open, and thus $x\in I\times_t E_{I}$. In light of the inclusion that $E_{I}\supseteq E_{2I}\supseteq\cdots\supseteq E_{(I)_{k}}$, there must be a largest $k\in\mathbb{N}$ such that $x\in I\times_t E_{(I)_{k}}$, which leads to $x\in (I)_{k}\times_t E_{(I)_{k}}$.

From the definition of $E_{I}$, one has $\bigcup(I\times_t(E_{I}\setminus E_{2I}))\subseteq\Omega$, and therefore $\Omega=\bigcup(I\times_t(E_{I}\setminus E_{2I}))$. Remark that if $I$ and $J$ are disjoint, then $I\times_t(E_{I}\setminus E_{2I})$ and $J\times_t(E_{J}\setminus E_{2J})$ are also disjoint; on the other hand, if $I$ and $J$ are not disjoint, then there must exist a $k\geq 0$ such that $J=(I)_{k}$ (assume that $|I|\leq|J|$). If $k=0$, then $I=J$. If $k\geq 1$, then $E_{J}=E_{(I)_{k}}\subseteq E_{(I)_{1}}$, so $E_{I}\setminus E_{2I}$ and $E_{J}\setminus E_{2J}$ are disjoint. Hence, \eqref{claim sum EI without E2I} is established.

Furthermore, there exists a constant $C$ depending only on $\kappa$ such that for any open bounded $\Omega\subseteq\mathbb{R}^{m}$,
\begin{align}\label{claim EI sum}
    \sum_{\substack{I\subset\mathbb{R}^m,\\\text{dyadic}}}|I|\left[\sum^\infty_{k=1}\left(\frac{|I|}{|(I)_{k}|}\right)^\kappa|E_{I}\setminus E_{(I)_{k}}|\right]\leq C|\Omega|,
\end{align}
for the argument of ($\ref{claim EI sum}$), see, for example, Proposition $B$ in \cite{P}.

We now split the dyadic tubes $R=I\times_t J\in m^{\rm I\!I}_2(\Omega)$ into two cases. Let $k_0$ be the non-negative integer such that $2^{k_0}|I|=|J|$.

$\bullet$ Case $1$:  $R=I\times_t J\in m^{\rm I\!I}_2(\Omega)$ and
$
|(I)_{k_0}\times_t J\cap\Omega|\leq\frac{1}{2}|(I)_{k_0}\times_t J|.$

We can enlarge $I$ to $\hat{I}=\hat{I}(J)$, which is the longest dyadic interval containing $I$ such that
\begin{align*}
 |\hat{I}|<|J|\quad\text{and}\quad   |\hat{I}\times_t J\cap\Omega|>\frac{1}{2}|\hat{I}\times_t J|.
\end{align*}
It follows that
\[
\Bigg|\bigcup_{I\times_t J\in m^{\rm I\!I}_2(\Omega),\text{ Case 1}}\hat{I}\times_t J\Bigg|\leq \Bigg|\bigcup_{I\times J\in m^{\rm I\!I}_2(\Omega)}\hat{I}\times_t J\Bigg|\leq|\widetilde{\Omega}|\leq  C|\Omega|,
\]
where $\widetilde{\Omega}=\{x:M_{tube}(\chi_{\Omega})(x)>\frac{1}{2}\}$ denotes the enlargement of the open set $\Omega$. Then 
\begin{align*}
\sum_{\substack{R\in m^{\rm I\!I}_2(\Omega),\\\text{ Case 1}}}|R|\cdot\left(\frac{|I|}{|\hat{I}|}\right)^\kappa 
&\leq \sum_{\substack{I\subset\mathbb{R}^m,\\\text{dyadic}}}\sum_{k\geq1}\sum_{\substack{J:I\times_t J\in m^{\rm I\!I}_2(\Omega),\\\hat{I}=(I)_{k-1},\\|\hat{I}|\le|J|}}|I||J|\left(\frac{|I|}{|(I)_{k-1}|}\right)^\kappa \\
&= \sum_{\substack{I\subset\mathbb{R}^m,\\\text{dyadic}}}|I|\sum_{k\geq1}2^{-k\kappa}\sum_{\substack{J:I\times_t J\in m^{\rm I\!I}_2(\Omega),\\\hat{I}=(I)_{k-1},\\|\hat{I}|\le|J|}}|J|.
\end{align*}
Since all the $J$'s in $\{J:I\times_t J\in m^{\rm I\!I}_2(\Omega),\hat{I}=(I)_{k-1}\}$ are disjoint,
we have $$\sum_{\substack{J:I\times_t J\in m^{\rm I\!I}_2(\Omega),\\\hat{I}=(I)_{k-1},\\|\hat{I}|\le|J|}}|J|=|A_{I,k},|$$ 
which leads to
\begin{align*}
\sum_{\substack{R\in m^{\rm I\!I}_2(\Omega),\\\text{ Case 1}}}|R|\left(\frac{|I|}{|\hat{I}|}\right)^\kappa 
&\leq \sum_{\substack{I\subset\mathbb{R}^m,\\\text{dyadic}}}|I|\sum_{k\geq1}2^{-k\kappa}|A_{I,k}| 
\leq C\sum_{\substack{I\subset\mathbb{R}^m,\\\text{dyadic}}}|I|\sum_{k\geq1}2^{-k\kappa}|E_{I}\setminus E_{(I)_{k}}| 
\leq C|\Omega|.
\end{align*}

$\bullet$ Case $2$:  $R=I\times_t J\in m^{\rm I\!I}_2(\Omega)$ and
$
|(I)_{k_0}\times_t J\cap\Omega|>\frac{1}{2}|(I)_{k_0}\times_t J|.$

We will further enlarge 
$(I)_{k_0}\times_t J$ to
$$  (I)_{k_0+\ell}\times_t (J)_{\ell},\quad\ell=1,2,\ldots $$
and let $\hat{I}=(I)_{k_0+l}$ with the largest $l$ such that
$$|(I)_{k_0+l}\times_t(J)_l\cap\Omega|>\frac{1}{2}|(I)_{k_0+l}\times_t(J)_l|.$$
It follows that
\[
\Bigg|\bigcup_{\substack{I\times_t J\in m^{\rm I\!I}_2(\Omega),\\\text{ Case 2}}}\hat{I}\times_t J\Bigg|
\leq 
\Bigg|\bigcup_{\substack{I\times_t J\in m^{\rm I\!I}_2(\Omega),\\\text{ Case 2}}}(I)_{k_0+l}\times_t (J)_l\Bigg|\leq|\widetilde{\Omega}|
\leq C|\Omega|
\]
and
\begin{align*}
    \sum_{\substack{R\in m^{\rm I\!I}_2(\Omega),\\\text{ Case 2}}}|R|\left(\frac{|I|}{|\hat{I}|}\right)^\kappa &=\sum_{\substack{I:\exists J,\\  I\times_t J\subset\Omega}} \sum_{\substack{J:I\times_t J\in m^{\rm I\!I}_2(\Omega),\\\text{ Case 2}}}\left(\frac{|I|}{|\hat{I}|}\right)^\kappa |I||J|\\
    &=\sum_{\substack{I:\exists J,\\  I\times_t J\subset\Omega}} \sum_{k_0\ge 1}\sum_{l\ge 1} \sum_{\substack{J:I\times_t J\in m^{\rm I\!I}_2(\Omega),\\ |J|=2^{k_0}|I|,\\ \hat{I}(J)=(I)_{k_0+l}}}\left(\frac{|I|}{|\hat{I}|}\right)^\kappa |I||J|\\
    &=\sum_{k_0\ge 1}\sum_{l\ge 1}  2^{-(k_o+l)\kappa}\sum_{\substack{I:\exists J,\\  I\times_t J\subset\Omega}} \sum_{k_0\ge 1}\sum_{\substack{J:I\times_t J\in m^{\rm I\!I}_2(\Omega),\\ |J|=2^{k_0}|I|,\\ \hat{I}(J)=(I)_{k_0+l}}}|I||J|.
\end{align*}

Let $m(\tilde\Omega)$ be the set of the largest slant dyadic cubes contained in $\tilde\Omega$. If $\hat{I}(J)=(I)_{k_0+l}$, then $(I)_{k_0}\times_t J\subset\tilde\Omega$ is a dyadic tube and $((I)_{k_0}\times_t J)_l\in m(\tilde\Omega)$ by the defining condition of $\hat{I}$. Then
$$\sum_{\substack{J:I\times_t J\in m^{\rm I\!I}_2(\Omega),\\ |J|=2^{k_0}|I|,\\ \hat{I}(J)=(I)_{k_0+l}}}|I||J|\lesssim\sum_{Q\in m(\tilde{\Omega})}\sum_{\substack{I\times_t J:\\ Q=((I)_{k_0}\times_t J)_l}}|I\times J|.$$

For any slant dyadic cube $Q\subset\mathbb{R}^{2m}$ and $n_1,n_2\ge 0$, since there are $2^{n_1+n_2}$ dyadic rectangles $I\times_t J$ satisfying $Q=(I)_{n_1}\times_t (J)_{n_2}$, and all the rectangles have the same volume $|I\times_t J|=2^{-(n_1+n_2)}|Q|$, we then have
$$|Q|=\sum_{Q=(I)_{n_1}\times_t (J)_{n_2}}|I\times J|,$$
which implies that
\begin{align*}
    \sum_{\substack{R\in m^{\rm I\!I}_2(\Omega),\\\text{ Case 2}}}|R|\left(\frac{|I|}{|\hat{I}|}\right)^\kappa 
    &\lesssim\sum_{k_0\ge 1}\sum_{l\ge 1}  2^{-(k_o+l)\kappa}
    \sum_{Q\in m(\tilde\Omega)}
    \sum_{\substack{I,J:\\Q=(I)_{k_0+l}\times_t J_l}}|I||J|\\
    &\le\sum_{k_0\ge 1}\sum_{l\ge 1}  2^{-(k_o+l)\kappa}
    \sum_{Q\in m(\tilde\Omega)}|Q|\\
    &\le \sum_{k_0\ge 1}\sum_{l\ge 1}  2^{-(k_o+l)\kappa}\cdot C|\Omega|,
\end{align*}
in which the last inequality follows from the fact that
\[\sum_{Q\in m(\tilde{\Omega})}|Q|\leq|\tilde{\Omega}|\le C|\Omega|.\] Therefore, 
$$   \sum_{\substack{R\in m^{\rm I\!I}_2(\Omega),\\\text{ Case 2}}}|R|\left(\frac{|I|}{|\hat{I}|}\right)^\kappa \le C|\Omega|.$$
Combining these two cases above, we have \eqref{Journe 1 I hat} and the proof is complete.
\end{proof}

\section{Hardy  spaces: Littlewood--Paley theory and atomic decomposition}\label{sec:5}
For the convenience of the readers, we recall  the defintion of the {\it Littlewood--Paley function} of $f$
\begin{align*}\label{littlewoodpaley}
    g_{\varphi}(f)(\mathbf{x}):=\left(\int_{\mathbb{R}^3_+} \left|f*\varphi_ {\mathbf{r}}(\mathbf{x})\right|^2\, \frac { d\mathbf{r}}{\mathbf{r}}\right)^{\frac{1}{2}},\,\,\text{where}\,\, \frac {d\mathbf{r}}{\mathbf{r}}=\frac
{dr_1}{r_1}\frac {dr_2}{r_2}\frac {dr_3}{r_3};
\end{align*}
and $\varphi_j's$ are Schwartz functions on $\mathbb{R}^m$ which has mean value zero for $j=1,2,3.$
\subsubsection{$L^p$-equivalence of the Littlewood--Paley function: proof of Theorem \ref{6999}}

\begin{proof}
Let $f\in L^p(\mathbb{R}^{2m})$. We first show that the $L^p$-norm of the  {\it Littlewood--Paley function} of $f$ is dominated by the $L^p$-norm of the function $f$.  Consider the function $\mathscr{H}$ defined on $\mathbb{R}^{2m}$ which is given by
\begin{align*}
   \mathscr{H}(\mathbf{x}):=f*\left(\varphi^{(1)}_{r_1}\otimes\varphi^{(2)}_{r_2}\right)(\mathbf{x})
\end{align*}
and the $L^2$-space $L^2(\mathbb{R}^{2}_+,\,\frac{dr_1dr_2}{r_1r_2})$. It is direct to see that for almost all $\mathbf{x}\in\mathbb{R}^{2m}$, $\mathscr{H}(\mathbf{x})\in L^2(\mathbb{R}^{2}_+,\,\frac{dr_1dr_2}{r_1r_2})$ and
\begin{align*}
    \|g_{\varphi}(f)\|^p_{L^p(\mathbb{R}^{2m})}&:=\int_{\mathbb{R}^{2m}}\left(\int_{\mathbb{R}^{3}_+}\left|f*\left(\varphi^{(1)}_{r_1}\otimes\varphi^{(2)}_{r_2}*_3\varphi^{(3)}_{r_3}\right)(\mathbf{x})\right|
    ^2\frac { d\mathbf{r}}{\mathbf{r}}\right)^{\frac{p}{2}}\,d\mathbf{x}\\
    &=\int_{\mathbb{R}^{2m}}\left(\int_{\mathbb{R}_+}\left\|\left(\mathscr{H}*_3\varphi^{(3)}_{r_3}\right)(\mathbf{x})\right\|_{L^2(\mathbb{R}^{2}_+,\,\frac{dr_1dr_2}{r_1r_2})}
    ^2\frac { dr_3}{r_3}\right)^{\frac{p}{2}}\,d\mathbf{x}.
\end{align*}
By the vector-valued Littlewood--Paley inequality, we have 
\begin{align*}
    \|g_{\varphi}(f)\|^p_{L^p(\mathbb{R}^{2m})}\lesssim\int_{\mathbb{R}^{2m}}\left\|\mathscr{H}(\mathbf{x})\right\|_{L^2(\mathbb{R}^{2}_+,\,\frac{dr_1dr_2}{r_1r_2})}
    ^p\,d\mathbf{x},
\end{align*}
which leads to $$ \|g_{\varphi}(f)\|^p_{L^p(\mathbb{R}^{2m})}\lesssim  \|f\|^p_{L^p(\mathbb{R}^{2m})}$$ if one applies the Littlewood--Paley inequality to $\mathcal{H}$. Now, we show the other direction by duality. Let $\|h\|_{L^q(\mathbb{R}^{2m})}=1$, then by the reproducing formula
\begin{align*}
    \langle f,\,h\rangle&=\int_{\mathbb{R}^3_+}\langle f*\psi_ {\mathbf{r}}*\varphi_ {\mathbf{r}},\,h\rangle\, \frac { d\mathbf{r}}{\mathbf{r}}=\int_{\mathbb{R}^{2m}\times\mathbb{R}^3_+}f*\psi_ {\mathbf{r}}*\varphi_ {\mathbf{r}}(\mathbf{x})\cdot\ h(\mathbf{x})\, \frac { d\mathbf{r}}{\mathbf{r}}\,d\mathbf{x}=\int_{\mathbb{R}^{2m}\times\mathbb{R}^3_+}f*\varphi_ {\mathbf{r}}(\mathbf{y})\cdot\ h*\widetilde{\psi_ {\mathbf{r}}}(\mathbf{y})\, \frac { d\mathbf{r}}{\mathbf{r}}\,d\mathbf{y},
\end{align*}
where $\widetilde{\psi_ {\mathbf{r}}}(\cdot):=\psi_ {\mathbf{r}}(-\cdot)$. By the Cauchy--Schwartz and the H\"older inequality respectively, we have
\begin{align*}
\left|\langle f,\,h\rangle\right|&\leq\|g_{\varphi}(f)\|_{L^p(\mathbb{R}^{2m})}\cdot\|g_{\widetilde{\psi}}(h)\|_{L^q(\mathbb{R}^{2m})}\lesssim\|g_{\varphi}(f)\|_{L^p(\mathbb{R}^{2m})},
\end{align*}
where the last inequality follows from the $L^p$-boundedness of the {\it Littlewood--Paley function} of $g$. So, the theorem is completed by the duality of the $L^p(\mathbb{R}^{2m})$ space.
\end{proof}
\smallskip

\subsubsection{Non-tangential region, Lusin area function and proof of Theorem \ref{thm S g equiv}}
 Next, we recall the non-tangential region and the Lusin area function adapted to the twisted setting.
 For $\mathbf{x} \in \mathbb{R}^{2m}$,
   the {\it nontangential region} $\Gamma (\mathbf{x})$ is defined by
  $\{(\mathbf{x}',\mathbf{r})\in \mathbb{R}^{2m} \times\mathbb{R}_+^3:\,\mathbf{x}'\in T(\mathbf{x}, \mathbf{r}) \}
 $ and 
  the {\it Lusin-Littlewood--Paley area function} is given by
\begin{align*}
   S_{area,  \varphi}(f)(\mathbf{x})=\left( \int_{  \mathbb{R}^3_+}|f* \varphi_{\mathbf{r} } |^2* \chi_{\mathbf{r} }(\mathbf{x}) \frac {  
    d\mathbf{r}}{\mathbf{r}}\right)^{\frac 12},
\end{align*}
for  $f \in L
^1
(\mathbb{R}^{2m})$.

We then have the Theorem \ref{thm S g equiv}. Again, for the convenience of the readers, we recall it here. 
\begin{thm}
    For $1 < p < \infty$ and for every $f \in L^p(\mathbb{R}^{2m})$,
    \begin{equation}
        \|S_{area, \varphi}(f)\|_{L^p(\mathbb{R}^{2m})} \simeq\|g_{\varphi}(f)\|_{L^p(\mathbb{R}^{2m})} \simeq \|f\|_{L^p(\mathbb{R}^{2m})},
    \end{equation}
    where the implicit constants depend on $p$ and $m$ only but not on $f$. Moreover, for $p=1$ and for $f \in L^1(\mathbb{R}^{2m})$, if $\|g_{\varphi}(f)\|_{L^1(\mathbb{R}^{2m})} < \infty$, then 
    $\|S_{area,\varphi}(f)\|_{L^1(\mathbb{R}^{2m})} \lesssim \|g_{\varphi}(f)\|_{L^1(\mathbb{R}^{2m})}$; conversely, if $\|S_{area, \varphi}(f)\|_{L^1(\mathbb{R}^{2m})} < \infty$, then 
    $\|g_{\varphi}(f)\|_{L^1(\mathbb{R}^{2m})} \lesssim \|S_{area,\varphi}(f)\|_{L^1(\mathbb{R}^{2m})}$, where the implicit constants depend on $m$ only but not on $f$.
\end{thm}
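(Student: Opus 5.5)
Since Theorem \ref{6999} already gives $\|g_\varphi(f)\|_{L^p}\simeq\|f\|_{L^p}$ for $1<p<\infty$, what remains for $p\in(1,\infty)$ is to compare $S_{area,\varphi}(f)$ with $f$ (or with $g_\varphi(f)$). I would lift everything to $\mathbb{R}^{3m}$, where the structure is a genuine three-parameter product.

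\textbf{Upper bound for $p\ge2$.} Put $F_{\mathbf r}:=|f*\varphi_{\mathbf r}|^2$. Then
\[
\|S_{area,\varphi}(f)\|_{L^p}^2=\Big\|\int_{\mathbb{R}^3_+}F_{\mathbf r}*\chi_{\mathbf r}\,\tfrac{d\mathbf r}{\mathbf r}\Big\|_{L^{p/2}}.
\]
Dualize against $h\ge0$ with $\|h\|_{L^{(p/2)'}}=1$. Moving the convolution onto $h$ bounds this by
\[
\int_{\mathbb{R}^{2m}}\Big(\int_{\mathbb{R}^3_+}F_{\mathbf r}(\mathbf y)\,\tfrac{d\mathbf r}{\mathbf r}\Big)\sup_{\mathbf r}\big(h*\widetilde{\chi_{\mathbf r}}\big)(\mathbf y)\,d\mathbf y\le\int g_\varphi(f)^2\,M_{iterated}(h).
\]
Proposition \ref{prop:tube} identifies $M_{iterated}$ with $M_{tube}$, and Theorem \ref{thm:maximal} gives its $L^{(p/2)'}$ boundedness. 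H\"older then yields $\|S_{area,\varphi}(f)\|_{L^p}\lesssim\|g_\varphi(f)\|_{L^p}$. The case $p=2$ is the equality in the Remark, since $\int\chi_{\mathbf r}=1$.

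\textbf{Upper bound for $1<p<2$.} Here I would view $S_{area,\varphi}$ as a vector-valued operator $f\mapsto\{f*\varphi_{\mathbf r}(\mathbf x-\mathbf y)\}_{(\mathbf y,\mathbf r)}$ with values in $L^2(\chi_{\mathbf r}(\mathbf y)\,d\mathbf y\,\frac{d\mathbf r}{\mathbf r})$. Through the push-forward, $\varphi_{\mathbf r}*\chi_{\mathbf r}$-type kernels are images under $\pi_*$ of tensor products on $\mathbb{R}^{3m}$. So I would establish boundedness of the lifted operator as a product (three-parameter) vector-valued Calder\'on--Zygmund operator on $L^p(\mathbb{R}^{3m})$, either by iterating one-parameter vector-valued Littlewood--Paley theory in each variable as in the proof of Theorem \ref{6999}, or via the Journ\'e/Fefferman product theory. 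The Coifman--Weiss transference, exactly as described after Definition \ref{def:twisted_group}, then moves this to $\mathbb{R}^{2m}$.

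\textbf{Lower bound.} For $\|f\|_{L^p}\lesssim\|S_{area,\varphi}(f)\|_{L^p}$ I would use the reproducing formula of Theorem \ref{prop:reproducing}, choosing $\psi^{(j)}$ compactly supported so that $\psi_{\mathbf r}$ is supported in a fixed dilate $T(\mathbf 0,C\mathbf r)$ with $|\psi_{\mathbf r}|\lesssim\chi_{C\mathbf r}$. Writing $\langle f,h\rangle=\int\!\!\int f*\varphi_{\mathbf r}(\mathbf y)\,h*\widetilde{\psi_{\mathbf r}}(\mathbf y)\,d\mathbf y\,\frac{d\mathbf r}{\mathbf r}$ and inserting $1=\int\chi_{\mathbf r}(\mathbf x-\mathbf y)\,d\mathbf x$, Cauchy--Schwarz on the tents gives $|\langle f,h\rangle|\le\int S_{area,\varphi}(f)\,S_{area,\psi}(h)$, with $\chi$ replaced by a dilate. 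Since the upper bound applies to $h\in L^{p'}$, this closes the argument. Dilating tube apertures changes $\|S\|_{L^p}$ only by constants; I would prove this by the same maximal argument, using the volume comparison \eqref{eq:tube-volume}.

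\textbf{The case $p=1$.} For $S\lesssim g$, I would use a discretized (dyadic-tube) version and the atomic/Journ\'e machinery of Lemmas \ref{lem:Journe standard}, \ref{lem:Journe 1} and \ref{lem:Journe 2}. Decompose by the level sets $\Omega_k=\{g_\varphi(f)>2^k\}$, form the enlargements $\widetilde\Omega_k$ via $M_{tube}$, and split tubes by type I--V. On $\widetilde\Omega_k^c$, control the tent pieces in $L^2$ and convert to $L^1$ using $|\widetilde\Omega_k|\lesssim|\Omega_k|$. The reverse inequality is symmetric, with $\{S>2^k\}$ in place of $\Omega_k$.

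I expect the main obstacle to be this $p=1$ part, together with the $1<p<2$ vector-valued step. The tubes switch among rectangular and two slanted shapes, so the good-$\lambda$/tent-space arguments need the type-keeping covering lemmas, and one must check that the enlargement $\widetilde\Omega$ still contains all relevant tubes of each type.
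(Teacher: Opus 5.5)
Your argument for $1<p<\infty$ is sound, and it takes a more hands-on route than the paper, which only cites the discrete Calder\'on formula, almost-orthogonality and Plancherel--P\'olya machinery. The three pieces all work:
\begin{itemize}
\item duality against $M_{tube}$ for $p\ge 2$;
\item the lift to a genuine three-parameter product on $\mathbb{R}^{3m}$ plus transference for $1<p<2$, which is equivalent to iterating Hilbert-valued one-parameter area operators along $\mathbb{R}^m\times\{0\}$, $\{0\}\times\mathbb{R}^m$ and the diagonal;
\item the pairing $|\langle f,h\rangle|\le\int S_{area,\varphi}(f)\,S_{area,\widetilde\psi}(h)$ for the lower bound.
\end{itemize}
In the last step no aperture change is needed, since the same $\chi_{\mathbf r}$ appears in both factors.

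The gap is the $p=1$ inequality $\|S_{area,\varphi}(f)\|_{L^1}\lesssim\|g_\varphi(f)\|_{L^1}$. Take $\Omega=\{g_\varphi(f)>\lambda\}$ and $\widetilde\Omega=\{M_{tube}\chi_\Omega>\delta\}$. Your level-set scheme then needs $\int_{\widetilde\Omega^c}S_{area,\varphi}(f)^2\lesssim\int_{\Omega^c}g_\varphi(f)^2$, or its tent-by-tent analogue. But
\[
\int_{\widetilde\Omega^c}S_{area,\varphi}(f)^2\,d\mathbf x=\int_{\mathbb{R}^3_+}\int_{\mathbb{R}^{2m}}|f*\varphi_{\mathbf r}(\mathbf y)|^2\Big(\int_{\widetilde\Omega^c}\chi_{\mathbf r}(\mathbf x-\mathbf y)\,d\mathbf x\Big)\,d\mathbf y\,\frac{d\mathbf r}{\mathbf r}.
\]
Once $|T(\mathbf 0,\mathbf r)|\gg|\widetilde\Omega|$, the inner weight is close to $1$ for every $\mathbf y$, including $\mathbf y\in\Omega$. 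So the left side contains values of $|f*\varphi_{\mathbf r}|^2$ on $\Omega$. A pointwise (vertical) square function evaluated off $\Omega$ says nothing about these values, and no covering lemma changes that.

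What is missing is a sub-mean-value, or Plancherel--P\'olya, inequality. The steps would be:
\begin{itemize}
\item write $f*\varphi_{\mathbf r}=\int f*\varphi_{\mathbf s}*\psi_{\mathbf s}*\varphi_{\mathbf r}\,\frac{d\mathbf s}{\mathbf s}$, in discrete form over dyadic tubes;
\item use almost-orthogonality bounds for $\psi_{\mathbf s}*\varphi_{\mathbf r}=\pi_*\bigl(\bigotimes_j\psi^{(j)}_{s_j}*\varphi^{(j)}_{r_j}\bigr)$;
\item dominate $\sup_{\mathbf y\in R}|f*\varphi_{\mathbf r}(\mathbf y)|$ by $M_{tube}$ of $\theta$-th powers of values at arbitrary points of neighbouring dyadic tubes, with $\theta<1$;
\item close with a Fefferman--Stein inequality for $M_{tube}$ on $L^{1/\theta}(\ell^{2/\theta})$, which holds by iterating maximal functions in the three directions.
\end{itemize}
This is exactly what the paper's cited machinery supplies. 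The type-keeping Journ\'e lemmas play no role in comparing $S$ and $g$; they enter only when $S_{area,\varphi}$ is tested on atoms.

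The two directions are also not symmetric. The reverse inequality $\|g_\varphi(f)\|_{L^1}\lesssim\|S_{area,\varphi}(f)\|_{L^1}$ does follow from your level-set idea, with $\Omega=\{S_{area,\varphi}(f)>\lambda\}$ and $\widetilde\Omega=\{M_{tube}\chi_\Omega>\frac12\}$:
\begin{itemize}
\item For $\mathbf x\notin\widetilde\Omega$, the number $|f*\varphi_{\mathbf r}(\mathbf x)|^2$ is at most $2|T(\mathbf x,\mathbf r/2)|^{-1}\int_{T(\mathbf x,\mathbf r/2)\setminus\Omega}|f*\varphi_{\mathbf r}(\mathbf x)|^2\,d\mathbf z$.
\item By Proposition \ref{prop:tube}, $\chi_{T(\mathbf 0,\mathbf r/2)}/|T(\mathbf 0,\mathbf r/2)|\lesssim\chi_{\mathbf r}$.
\item Together these give $\int_{\widetilde\Omega^c}g_\varphi(f)^2\lesssim\int_{\Omega^c}S_{area,\varphi}(f)^2$.
\end{itemize}
The converse direction has no such mechanism without the Plancherel--P\'olya step.
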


\begin{proof}
    The proof of these equivalences relies on well-established techniques in multiparameter Littlewood-Paley theory. The argument proceeds by employing the continuous and discrete Calderón reproducing formulas to decompose the functions, followed by the application of almost orthogonality estimates. The comparison between the continuous area integral $S_{area, \varphi}$ and the discrete square function $g_{\varphi}$ is achieved via Plancherel--Polya inequalities adapted to this setting.
    
    This general framework originated in the work of \cite{H,HS}. For the specific extensions to multiparameter product and flag structures relevant to our context, we refer the reader to \cite{HLLW,HLL,HLS}. The adaptation of these methods to the present setting requires only standard modifications; therefore, we omit the detailed exposition.
\end{proof}

The rest of the section focuses on
the equivalent characterization of $H^1_{tw}(\mathbb{R}^{2m})$ via twisted atomic decomposition. 

\subsection{Tent space and the twisted atom}
We start with the tent space adapted to the dyadic tubes structure.
\begin{defn}[Tent structure]~\\
For all $\mathbf{j}\in\mathbb{Z}^3$, let $R\in\mathfrak{R}_j$ be a dyadic rectangle and define the tent with base $R$ by 
    \begin{align*}
        \mathcal{T}(R)&:=R\times[2^{j_1},\,2^{j_1+1})\times[2^{j_2},\,2^{j_2+1})\times[2^{j_3},\,2^{j_3+1})
        \subset \mathbb{R}^{2m}\times\mathbb{R}_+^3,
    \end{align*}
    where $\mathbb{R}^3_+:=(\mathbb{R}_+)^3$.
\end{defn}
A geometric fact about the tent is that the family of tents forms a partition of $\mathbb{R}^{2m}\times\mathbb{R}^3_+$.
\begin{prop}\label{tent}
We have the decomposition of the upper-half space $\mathbb{R}^{2m}\times\mathbb{R}_+^3$ into tents.
    $$    \mathbb{R}^{2m}\times\mathbb{R}_+^3=\bigsqcup_{R\in\mathfrak{R}} \mathcal{T}(R).
    $$
\end{prop}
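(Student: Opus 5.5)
The plan is to reduce the statement to two one-dimensional facts: the dyadic scales partition $\mathbb{R}_+$, and, for each fixed scale $\mathbf{j}\in\mathbb{Z}^3$, the family $\mathfrak{R}_{\mathbf{j}}$ partitions $\mathbb{R}^{2m}$. Throughout, $\mathcal{T}(R)$ for $R\in\mathfrak{R}_{\mathbf{j}}$ is read as $R\times\prod_{i=1}^3[2^{j_i},2^{j_i+1})$, indexed by the full scale vector $\mathbf{j}=(j_1,j_2,j_3)$ of $R$ rather than by the side lengths of $R$.

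\textbf{Step 1: the scale is determined by $\mathbf{r}$.} Given $(\mathbf{x},\mathbf{r})\in\mathbb{R}^{2m}\times\mathbb{R}^3_+$, each $r_i>0$ lies in exactly one interval $[2^{j_i},2^{j_i+1})$. This determines a unique $\mathbf{j}\in\mathbb{Z}^3$, and the boxes $\prod_i[2^{j_i},2^{j_i+1})$ with $\mathbf{j}\in\mathbb{Z}^3$ are pairwise disjoint and cover $\mathbb{R}^3_+$.

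\textbf{Step 2: each $\mathbf{j}$ receives exactly one family of rectangles.} I would check that the case distinction in the Remark defining $\mathfrak{R}_{\mathbf{j}}$ (type I if $j_1,j_2\ge j_3$; type II/III if $j_1,j_3>j_2$; type IV/V if $j_2,j_3>j_1$) covers every $\mathbf{j}$. If $j_1,j_2\ge j_3$ fails, then $j_3$ strictly exceeds $\min(j_1,j_2)$.
\begin{itemize}
\item If that minimum is attained uniquely, we land in exactly one of the two slanted cases.
\item The residual tie $j_1=j_2<j_3$ is covered by neither strict inequality.
\end{itemize}
For the tie I would fix a convention, for instance assigning it to the type II family (the subdivision II versus III is then dictated by comparing $j_1$ with $j_3$). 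With this convention every $\mathbf{j}$ carries exactly one family $\mathfrak{R}_{\mathbf{j}}$. Making this bookkeeping airtight is the only delicate point, since the remaining steps are formal.

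\textbf{Step 3: each $\mathfrak{R}_{\mathbf{j}}$ partitions $\mathbb{R}^{2m}$.}
\begin{itemize}
\item For type I this is the usual statement that translates of $[0,2^{j_1})^m\times[0,2^{j_2})^m$ by $2^{j_1}\mathbb{Z}^m\times2^{j_2}\mathbb{Z}^m$ tile $\mathbb{R}^{2m}$.
\item For types II/III, apply the linear bijection $L(\mathbf{x}_1,\mathbf{x}_2)=(\mathbf{x}_1+\mathbf{x}_2,\mathbf{x}_2)$. It maps the standard rectangle $I\times J$ onto $I\times_t J$, because $(\mathbf{y}_1,\mathbf{y}_2)\in L(I\times J)$ if and only if $\mathbf{y}_1-\mathbf{y}_2\in I$ and $\mathbf{y}_2\in J$. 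It also maps the lattice $2^{j_1}\mathbb{Z}^m\times2^{j_2}\mathbb{Z}^m$ onto $2^{j_1}\mathbb{Z}^m\times_t 2^{j_2}\mathbb{Z}^m$. Since bijections preserve disjoint coverings, the type I tiling transfers to a tiling by slant rectangles.
\item For types IV/V the same argument works with $(\mathbf{x}_1,\mathbf{x}_2)\mapsto(\mathbf{x}_1,\mathbf{x}_1+\mathbf{x}_2)$.
\end{itemize}

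\textbf{Conclusion.} For $(\mathbf{x},\mathbf{r})$, Step 1 gives a unique $\mathbf{j}$, and Step 3 gives a unique $R\in\mathfrak{R}_{\mathbf{j}}$ containing $\mathbf{x}$. Hence $(\mathbf{x},\mathbf{r})\in\mathcal{T}(R)$, which proves covering. For disjointness, two tents with different $\mathbf{j}$ have disjoint $\mathbf{r}$-factors by Step 1. Two distinct tents with the same $\mathbf{j}$ come from distinct members of the single family fixed in Step 2, so their bases are disjoint by Step 3. This yields $\mathbb{R}^{2m}\times\mathbb{R}^3_+=\bigsqcup_{R\in\mathfrak{R}}\mathcal{T}(R)$.
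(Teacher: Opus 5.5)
Your argument is correct and is essentially the paper's proof: the scale vector $\mathbf{j}$ is determined uniquely by $\mathbf{r}$, and for fixed $\mathbf{j}$ the family $\mathfrak{R}_{\mathbf{j}}$ tiles $\mathbb{R}^{2m}$, which gives both covering and disjointness. The paper takes the tiling property from the Remark for granted, whereas you justify it via the shears $(\mathbf{x}_1,\mathbf{x}_2)\mapsto(\mathbf{x}_1+\mathbf{x}_2,\mathbf{x}_2)$ and $(\mathbf{x}_1,\mathbf{x}_2)\mapsto(\mathbf{x}_1,\mathbf{x}_1+\mathbf{x}_2)$; you also rightly observe that the Remark's strict inequalities leave the scales $j_1=j_2<j_3$ unassigned, so a convention like yours is genuinely needed for the statement to hold.
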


\begin{proof}
    Let $\mathbf{j},\,\mathbf{j}'\in\mathbb{Z}^3$ be distinct, then for all $R\in\mathfrak{R}_\mathbf{j}$ and $R'\in\mathfrak{R}_{\mathbf{j}'}$, we have $\mathcal{T}(R)\cap \mathcal{T}(R')=\emptyset.$ On the other hand, fix any $\mathbf{j}\in\mathbb{Z}^3$ and let $R\neq\,R'\in \mathfrak{R}_\mathbf{j}$, then $R\cap R'=\emptyset$, since $\mathfrak{R}_\mathbf{j}$ forms a tiling on $\mathbb{R}^{2m}$. Thus, we have the union of all tent is a disjoint union subset of $\mathbb{R}^{2m}\times\mathbb{R}_+^3$.

    Moreover, given any point $(\mathbf{x},\,\mathbf{t})$ in $\mathbb{R}^{2m}\times\mathbb{R}^3_+$. There is an unique $\mathbf{j}\in\mathbb{Z}^3$ such that
    $\mathbf{t}\in[2^{j_1},\,2^{j_1+1})\times[2^{j_2},\,2^{j_2+1})\times[2^{j_3},\,2^{j_3+1})$. Besides, since $\mathfrak{R}_\mathbf{j}$ forms a tiling on $\mathbb{R}^{2m}$, then one can find one $R\in\mathfrak{R}_\mathbf{j}$ such that $\mathbf{x}\in R$, which completes the proof.
\end{proof}
\smallskip

We recall the definition of the twisted atom of each type and the twisted atomic Hardy space. The notion of the {\it enlargement} is needed.

Let $R=c_R+T(0,2^j)$ be a dyadic rectangle, the {\it $\sigma$-enlargement} $R^*$ of the rectangle $R$ is defined to be $R^*:=c_R+T(0,2^{j+\sigma})$
\smallskip

\begin{defn}[Atoms of type $\Diamond={\rm I,I\!I, I\!I\!I, I\!V}$ or ${\rm V}$]~\\
    Fix positive integers $N_j>0$ for $j=1,2,3$. Let $\Diamond={\rm I,I\!I, I\!I\!I, I\!V}$ or ${\rm V}$ be the type. An {\it atom of type} $\Diamond$ is a $L^2(\mathbb{R}^{2m})$ function $a_{\Omega^\Diamond}$ such that there is an open set $\Omega^\Diamond$ of $\mathbb{R}^{2m}$ of finite measure and $L^2(\mathbb{R}^{2m})$ functions $a^\Diamond_R$, called {\it particles}, and $b_R$ in $\text{Dom}(\triangle^{N_1}_1\triangle^{N_2}_2\triangle^{N_3}_{twist})$ for all maximal tubes of type $\Diamond$, $R\in m^\Diamond(\Omega^\Diamond)$, such that
    \begin{align*}
        &(A1)\,\,a^\Diamond_R=\triangle^{N_1}_1\triangle^{N_2}_2\triangle^{N_3}_{twist} b^\Diamond_R\quad\text{and}\quad \text{supp}(b_R)\subset R^*,\quad\text{where}\,\,R^*\,\,\text{is the}\,\sigma-\text{enlargement of}\,\,R;\\
        &(A2)\,\,\text{The sum}\,\,\sum_{R\in m^\Diamond(\Omega^\Diamond)}a^\Diamond_R\,\,\text{converges in}\,\,L^2(\mathbb{R}^{2m})\quad\text{and}\,\,\sum_{R\in m^\Diamond(\Omega^\Diamond)}\|a^\Diamond_R\|^2_{L^2(\mathbb{R}^{2m})}\lesssim\frac{1}{|\Omega^\Diamond|};\\
        &(A3)\,\,a_{\Omega^\Diamond}=\sum_{R\in m^\Diamond(\Omega^\Diamond)}a^\Diamond_R\quad\text{and}\quad \|a_{\Omega^\Diamond}\|_{L^2(\mathbb{R}^{2m})}\lesssim\frac{1}{\sqrt{|\Omega^\Diamond|}};  \end{align*}
        Moreover, let $j=1,2,3$, then for all $0\leq k_j\leq N_j$, we have the cancellation property:
        
        1. for $\Diamond={\rm I}$,
\begin{align}\label{I-1}
\sum_{R\in m^\Diamond(\Omega^\Diamond)}\ell(I_1)^{-4k_1}\ell(I_2)^{-4k_2}\left\|\triangle^{N_1-k_1}_1\triangle^{N_2-k_2}_2\triangle^{N_3}_{twist} b^\Diamond_R\right\|^2_{L^2(\mathbb{R}^{2m})}\lesssim\frac{1}{|\Omega^\Diamond|},
\end{align}
and
\begin{align}\label{I-2}
\sum_{R\in m^\Diamond(\Omega^\Diamond)}\ell(I_1)^{-4k_1}\ell(I_2)^{-4k_2}\ell(I_1)^{-\alpha}\ell(I_2)^{-4N_3+\alpha}\left\|\triangle^{N_1-k_1}_1\triangle^{N_2-k_2}_2 b^\Diamond_R\right\|^2_{L^2(\mathbb{R}^{2m})}\lesssim\frac{1}{|\Omega^\Diamond|}
        \end{align}
for $\alpha\in\{0,1,\cdots, 4N_3\}$; 

\smallskip
        2. for $\Diamond={\rm I\!I, I\!I\!I}$, the cancellation property becomes
  \begin{align}\label{II-1}
\sum_{R\in m^\Diamond(\Omega^\Diamond)}\ell(I_1)^{-4k_1}\ell(I_2)^{-4k_3}\left\|\triangle^{N_1-k_1}_1\triangle^{N_2}_2\triangle^{N_3-k_3}_{twist} b^\Diamond_R\right\|^2_{L^2(\mathbb{R}^{2m})}\lesssim\frac{1}{|\Omega^\Diamond|},
        \end{align}      
        and
    \begin{align}\label{II-2}
\sum_{R\in m^\Diamond(\Omega^\Diamond)}\ell(I_1)^{-4k_1}\ell(I_1)^{-\alpha}\ell(I_2)^{-4N_2+\alpha}\ell(I_2)^{-4k_3}\left\|\triangle^{N_1-k_1}_1\triangle^{N_3-k_3}_{twist} b^\Diamond_R\right\|^2_{L^2(\mathbb{R}^{2m})}\lesssim\frac{1}{|\Omega^\Diamond|}
        \end{align} 
for all $\alpha\in\{0,1,\cdots, 4N_2\}$;

        \smallskip
 
       3. for $\Diamond={\rm I\!V,V}$,
 \begin{align}\label{IV-1}
\sum_{R\in m^\Diamond(\Omega^\Diamond)}\ell(I_1)^{-4k_3}\ell(I_2)^{-4k_2}\left\|\triangle^{N_1}_1\triangle^{N_2-k_2}_2\triangle^{N_3-k_3}_{twist} b^\Diamond_R\right\|^2_{L^2(\mathbb{R}^{2m})}\lesssim\frac{1}{|\Omega^\Diamond|},
        \end{align}  
        and
\begin{align}\label{IV-2}
 \sum_{R\in m^\Diamond(\Omega^\Diamond)}\ell(I_1)^{-\alpha}\ell(I_2)^{-4N_1+\alpha}\ell(I_1)^{-4k_3}\ell(I_2)^{-4k_2}\left\|\triangle^{N_2-k_2}_2\triangle^{N_3-k_3}_{twist} b^\Diamond_R\right\|^2_{L^2(\mathbb{R}^{2m})}\lesssim\frac{1}{|\Omega^\Diamond|}
\end{align}  
for all $\alpha\in\{0,1,\cdots, 4N_1\}$.
\end{defn}
\smallskip

\subsection{From the area function to the atomic decomposition} Now, we prove that the area function Hardy space is contained in the twisted atomic Hardy space, that is the inclusion $ H^1_{area,\varphi}( \mathbb{R}^{2m})\subset H^1_{Tw,atom}( \mathbb{R}^{2m})$.
\begin{proof}
    Let $f\in L^2(\mathbb{R}^{2m})\cap H^1_{area}( \mathbb{R}^{2m})$ be given and $k\in\mathbb{Z}$. Define the sets\begin{eqnarray*}
   \Omega_k&:=&\{\mathbf{x}\in \mathbb{R}^{2m}: S_{area,\,\varphi}(f)(\mathbf{x})>2^k \},\\
    \mathcal{R}^*_k&:=&\Big\{R'\in\mathfrak{R }:\
       |(R')^*\cap\Omega_{k+1}|\leq\frac{|(R')^*|}{2^{\sigma m+1}}<|(R')^*\cap \Omega_{k}| \Big\},\\
    \widetilde{\Omega}_k&:=&\Big\{\mathbf{x}\in \mathbb{R}^{2m}:{M}_{tube}(\chi_{\Omega_k})(\mathbf{x})>\frac{1}{2^{3\sigma m+1}}\Big\},
\end{eqnarray*}
where ${M}_{tube}$ denotes the tube maximal function and $(R')^*$ is the $\sigma$-enlargement of the dyadic rectangle $R'$. Note that the $L^2$-boundedness of ${M}_{tube}$ and $S_{area,\,\varphi}$ gives that the measure of the open set $ \widetilde{\Omega}_k$ is of finite measure; and, from definition,  $\Omega_{k+1}\subset\Omega_k$ for all $k\in\mathbb Z$.

We first claim that with the notion of enlargement of the dyadic rectangles, one can have that for all dyadic rectangles $R'\in\mathcal{R}^*_k$, the $\sigma$-enlargement $(R')^*$ is contained in the set $\widetilde{\Omega}_k$ for all $k\in\mathbb{Z}.$ To see this, suppose that $R'=\mathbf{x}+T(\mathbf{0},\,2^{\mathbf{j}})$. Then the $\sigma$-enlargement $(R')^*$ is $\mathbf{x}+T(\mathbf{0},\,2^{\mathbf{j}+\sigma})$ and hence $$
(R')^*\subset\mathbf{y}+T(\mathbf{0},\,2^{\mathbf{j}+2\sigma}),\quad\forall\,\mathbf{y}\in (R')^*.
$$
As a consequence, for all $\mathbf{y}\in (R')^*$ and $R'\in\mathcal{R}^*_k$, 
\begin{align}\label{Typeee}
    M_{tube}(\chi_{\Omega_k})(\mathbf{y})
&\geq\frac{|\Omega_k\cap\left(\mathbf{y}+T(\mathbf{0},\,2^{\mathbf{j}+2\sigma}\right)|}{|\mathbf{y}+T(\mathbf{0},\,2^{\mathbf{j}+2\sigma})|}\notag
    \geq\frac{|\Omega_k\cap (R')^*|}{|\mathbf{y}+T(\mathbf{0},\,2^{\mathbf{j}+2\sigma})|}\notag>\frac{|T(\mathbf{0},\,2^{\mathbf{j}+\sigma})|}{|T(\mathbf{0},\,2^{\mathbf{j}+2\sigma})|}\cdot\frac{1}{2^{\sigma m+1}}\\
    &=\frac{1}{2^{3\sigma m+1}},
\end{align}
which leads to $(R')^*\subset\widetilde{\Omega}_k$ for all $R'\in\mathcal{R}^*_k$ and the claim is completed. 
\smallskip

Return to the atomic decomposition. Let $\Diamond={\rm I,I\!I, I\!I\!I, I\!V}$ or ${\rm V}$ be the type and define that
\[\widetilde{\Omega}^\Diamond_k:=\Big\{\mathbf{x}\in \mathbb{R}^{2m}:{M}^{\Diamond}_{tube}(\chi_{\Omega_k})(\mathbf{x})>\frac{1}{2^{3\sigma m+1}}\Big\},\quad\forall\,k\in\mathbb{Z},
\]
then by the reproducing formula (\ref{eq:reproducing}) and the structure of tent, that is Proposition \ref{tent}, we have
\begin{align*}
    f(\mathbf{x})&= 
\int_{\mathbb{R}^3_+} f*\psi_ {\mathbf{r}}*\varphi_ {\mathbf{r}}(\mathbf{x})\, \frac { d\mathbf{r}}{\mathbf{r}}
=\sum_{R'\in\mathfrak
{R}} 
\int_{\mathcal{T}(R')} \psi_ {\mathbf{r}}(\mathbf{x}-\mathbf{y})\cdot f*\varphi_ {\mathbf{r}}(\mathbf{y})\,d\mathbf{y} \,\frac { d\mathbf{r}}{\mathbf{r}}\\
&=\sum_{k\in\mathbb{Z}}\sum_{R'\in\mathcal
{R}^*_k} 
\int_{\mathcal{T}(R')} \psi_ {\mathbf{r}}(\mathbf{x}-\mathbf{y})\cdot f*\varphi_ {\mathbf{r}}(\mathbf{y})\,d\mathbf{y} \,\frac { d\mathbf{r}}{\mathbf{r}}\\
&=\sum_{k\in\mathbb{Z}}\Bigg(\sum_{\substack{R'\in\mathcal
{R}^*_k,\\R': \text{type}\, {\rm }I }}+\sum_{\substack{R'\in\mathcal
{R}^*_k,\\R': \text{type}\, {\rm I\!I} }}+ \sum_{\substack{R'\in\mathcal
{R}^*_k,\\R': \text{type}\, {\rm I\!I\!I} }}+\sum_{\substack{R'\in\mathcal
{R}^*_k,\\R': \text{type}\, {\rm I\!V} }}+\sum_{\substack{R'\in\mathcal
{R}^*_k,\\R': \text{type}\, {\rm V}}}\Bigg)
\int_{\mathcal{T}(R')} \psi_ {\mathbf{r}}(\mathbf{x}-\mathbf{y})\cdot f*\varphi_ {\mathbf{r}}(\mathbf{y})\,d\mathbf{y} \,\frac { d\mathbf{r}}{\mathbf{r}}\\
&=:\sum_{k\in\mathbb{Z}}\lambda^{\rm I}_k\cdot {a^{\rm I}_k(\mathbf{x})}+\sum_{k\in\mathbb{Z}}\lambda^{\rm I\!I}_k\cdot {a^{\rm I\!I}_k(\mathbf{x})}+\sum_{k\in\mathbb{Z}}\lambda^{\rm I\!I\!I}_k\cdot {a^{\rm I\!I\!I}_k(\mathbf{x})}+\sum_{k\in\mathbb{Z}}\lambda^{\rm I\!V}_k\cdot {a^{\rm I\!V}_k(\mathbf{x})}+\sum_{k\in\mathbb{Z}}\lambda^{\rm V}_k\cdot {a^{V}_k(\mathbf{x})}
,
\end{align*}
where for $\Diamond={\rm I,I\!I, I\!I\!I, I\!V}$ or ${\rm V}$,
$$
\lambda^ \Diamond_k:={\sqrt{|\widetilde{\Omega}^\Diamond_k|}}\cdot\Bigg\|\bigg(\sum_{\substack{R'\in\mathcal{R}^*_k,\\ R': \text{type}\, \Diamond}}\int\left|f*\varphi_ {\mathbf{r}}\right|^2(\cdot) \cdot\chi_{\mathcal{T}(R')}(\cdot,\,\mathbf{r}) \,\frac { d\mathbf{r}}{\mathbf{r}}\bigg)^{\frac{1}{2}}\Bigg\|_{L^2(\mathbb{R}^{2m})},
$$
and
\begin{align}\label{atom}
    a^ \Diamond_k(\mathbf{x})
:=\frac{1}{\lambda^{\Diamond}_k}\cdot\sum_{\substack{R'\in\mathcal{R}^*_k,\\ R': \text{type}\, \Diamond}} 
\int_{\mathcal{T}(R')} \psi_ {\mathbf{r}}(\mathbf{x}-\mathbf{y})\cdot f*\varphi_ {\mathbf{r}}(\mathbf{y})\,d\mathbf{y} \,\frac { d\mathbf{r}}{\mathbf{r}}.\end{align}
Note that by the same argument as in (\ref{Typeee}), we have for all dyadic rectangles $R'\in\mathcal{R}^*_k$ which is of the type $\Diamond$, the $\sigma$-enlargement $R^*$ is contained in the set $\widetilde{\Omega}^\Diamond_k$ for all $k\in\mathbb{Z}$ and $\Diamond={\rm I,I\!I, I\!I\!I, I\!V}$ or ${\rm V}$.

\smallskip
\noindent$\bullet$ {\rm\it\ The\ support\ of\ the\ atom\ of\ each\ type}:
\smallskip
    
Since the function $\psi^{(j)}$ has compact support for each $j=1,2,3$, then the support of the function \begin{align*}
    \int_{\mathcal{T}(R')} \psi_ {\mathbf{r}}(\mathbf{x}-\mathbf{y})\cdot f*\varphi_ {\mathbf{r}}(\mathbf{y})\,d\mathbf{y} \,\frac { d\mathbf{r}}{\mathbf{r}}
\end{align*}
is contained in 
\begin{align*}
    R'+T(0,\,\mathbf{r})\subset (R')^*\subset\widetilde{\Omega}^\Diamond_k.
\end{align*}
Thereby, for all integer $k$ and for each atom of type $\Diamond={\rm I,I\!I, I\!I\!I, I\!V}$ or ${\rm V}$,
\begin{align}\label{supportofatom}
\text{supp}(a^{\Diamond}_k)\subset\bigcup_{\substack{R'\in\mathcal{R}^*_k,\\ R': \text{type}\, \Diamond}}(R')^*\subset\widetilde{\Omega}^\Diamond_k.
\end{align}

\smallskip
\noindent$\bullet$ {\rm\it\  The\ size\ control\ of\ the\ atom}:
\smallskip

Let $\|h\|_{L^2(\mathbb{R}^{2m})}=1$, then for each $k\in\mathbb{Z}$ and for each type $\Diamond={\rm I,I\!I, I\!I\!I, I\!V}$ or ${\rm V}$,
\begin{align*}
    \langle a^{\Diamond}_k,\,h\rangle&=\frac{1}{\lambda^{\Diamond}_k}\sum_{\substack{R'\in\mathcal{R}^*_k,\\ R': \text{type}\, \Diamond}} \int_{\mathbb{R}^{2m}\times\mathcal{T}(R')}\psi_ {\mathbf{r}}(\mathbf{x}-\mathbf{y})\cdot f*\varphi_ {\mathbf{r}}(\mathbf{y})\cdot h(\mathbf{x})\,d\mathbf{x}\, \frac { d\mathbf{r}}{\mathbf{r}}\,d\mathbf{y}\\
    &=\frac{1}{\lambda^{\Diamond}_k}\sum_{\substack{R'\in\mathcal{R}^*_k,\\ R': \text{type}\, \Diamond}} \int_{\mathcal{T}(R')}f*\varphi_ {\mathbf{r}}(\mathbf{y})\cdot h*\widetilde{\psi_ {\mathbf{r}}}(\mathbf{y})\, \frac { d\mathbf{r}}{\mathbf{r}}\,d\mathbf{y},
\end{align*}
where $\widetilde{\psi_ {\mathbf{r}}}(\mathbf{x}):=\psi_ {\mathbf{r}}(-\mathbf{x})$. By H\"older's inequality to the variable $\mathbf{y}$ and the mapping property of Littlewood--Paley function, we have the $L^2$-inner product of $a^\Diamond_k$ and $h$ is bounded above by
\begin{align*}
  &\frac{1}{\lambda^{\Diamond}_k}\int_{\mathbb{R}^{2m}}\Bigg(\sum_{\substack{R'\in\mathcal{R}^*_k,\\ R': \text{type}\, \Diamond}} \int_{\mathbb{R}^3_+}\left|f*\varphi_ {\mathbf{r}}(\mathbf{y})\right|^2\cdot\chi_{\mathcal{T}(R')}(\mathbf{y},\,\mathbf{r})\, \frac { d\mathbf{r}}{\mathbf{r}}\Bigg)^{\frac{1}{2}}\cdot g_{\widetilde{\psi}}(h)(\mathbf{y})\,d\mathbf{y}\notag\\
  &\leq \frac{1}{\lambda^{\Diamond}_k}\ \Bigg\|\bigg(\sum_{\substack{R'\in\mathcal{R}^*_k,\\ R': \text{type}\, \Diamond}}\int\left|f*\varphi_ {\mathbf{r}}\right|^2(\cdot) \cdot\chi_{\mathcal{T}(R')}(\cdot,\,\mathbf{r}) \,\frac { d\mathbf{r}}{\mathbf{r}}\bigg)^{\frac{1}{2}}\Bigg\|_{L^2(\mathbb{R}^{2m})}\left\|g_{\widetilde{\psi}}(h)\right\|_{L^2(\mathbb{R}^{2m})}\notag\\
  &\lesssim\frac{1}{\sqrt{|\widetilde{\Omega}^\Diamond_k}|},
\end{align*}
which implies that for all $k\in\mathbb{Z}$ and for each type $\Diamond={\rm I,I\!I, I\!I\!I, I\!V}$ or ${\rm V}$,
\begin{align}\label{sizeofatom}
    \|a^\Diamond_k\|_{L^2(\mathbb{R}^{2m})}\lesssim\frac{1}{\sqrt{|\widetilde{\Omega}^\Diamond_k}|}.
\end{align}

\noindent$\bullet$ {\it Further decomposition--from an atom to sum of particles:} 
\smallskip

We can further decompose the atom of type $\Diamond$ in the following way. For each dyadic rectangle $R'\in\mathcal{R}^*_k$ which is of type $\Diamond$, the $\sigma$-enlargement of the dyadic rectangle, $(R')^*$, is contained in the set $\widetilde{\Omega}^\Diamond_k$. Define the set $m^\Diamond(\widetilde{\Omega}^\Diamond_k)$ to be the set of maximal tubes which is of the same type $\Diamond$ contained in $\widetilde{\Omega}^\Diamond_k$, then
\begin{align*}
    a^\Diamond_k(\mathbf{x})
&:=\frac{1}{\lambda^\Diamond_k}\cdot\sum_{\substack{R'\in\mathcal{R}^*_k,\\ R': \text{type}\, \Diamond}} 
\int_{\mathcal{T}(R')} \psi_ {\mathbf{r}}(\mathbf{x}-\mathbf{y})\cdot f*\varphi_ {\mathbf{r}}(\mathbf{y})\,d\mathbf{y} \,\frac { d\mathbf{r}}{\mathbf{r}}\\
&=\sum_{R\in m^\Diamond(\widetilde{\Omega}^\Diamond_k)}\Bigg(\frac{1}{\lambda^\Diamond_k}\cdot\sum_{\substack{R'\in\mathcal
{R}^*_k\\ R': \text{type}\, \Diamond \\R'\subset R}} 
\int_{\mathcal{T}(R')} \psi_ {\mathbf{r}}(\mathbf{x}-\mathbf{y})\cdot f*\varphi_ {\mathbf{r}}(\mathbf{y})\,d\mathbf{y} \,\frac { d\mathbf{r}}{\mathbf{r}}\Bigg)=:{\sum_{R\in m^{\Diamond}(\widetilde{\Omega}^\Diamond_k)} \underset{\text{particle}}{a^\Diamond_{k,\,R}(\mathbf{x})}}.
\end{align*}
Then each $a^\Diamond_{k,\,R}$ is supported on the $\sigma$-enlargement of the maximal tube $R$, which is of the type $\Diamond$; and being parallel to (\ref{sizeofatom}), one can also have
\begin{align}\label{sumofl2}
    \sum_{R\in m^\Diamond(\widetilde{\Omega}^\Diamond_k)} \|{a^\Diamond_{k,\,R}}\|^2_{L^2(\mathbb{R}^{2m})}\lesssim\frac{1}{|\widetilde{\Omega}^\Diamond_k|},\quad\text{for all} \,\,k\in\mathbb{Z}.
\end{align}

Besides, due to the explicit construction of the function $\psi$, we can rewrite each particle $a^\Diamond_{k,\,R}$ of type $\Diamond$ as the following: let $j=1,2,3$ and $\ddot{\Psi^{(j)}}$ be the Schwartz function defined on $\mathbb{R}^m$ with $\widehat{\ddot{\Psi^{(j)}}}(0)=0$. Define $\Psi^{(j)}\in \mathcal{S}(\mathbb{R}^m)$ by $\Psi^{(j)}:=\triangle^{N_j}_j\ddot{\Psi^{(j)}}$ and let take $\psi^{(j)}$ to be such that$$
\widehat{\psi^{(j)}}(r_j\cdot\xi_j)=r^{-2N_j}_j\cdot\widehat{\triangle^{N_j}_j\Psi^{(j)}}(r_j\cdot\xi_j),\quad\forall r_j>0,
$$
then by the Plancherel identity
\begin{align*}
\psi_{
\mathbf{r}
}(\mathbf{x}-\mathbf{y})
&=r^{-2N_1}_{1}r^{-2N_2}_{2}r^{-2N_3}_{3}\left[\left((\triangle^{N_1}_1\Psi^{(1)})_{r_1}\otimes(\triangle^{N_2}_2\Psi^{(2)})_{r_2}\right)*_3(\triangle^{N_3}_{3}\Psi^{(3)})_{r_3}\right](\mathbf{x}-\mathbf{y})\\
&=\triangle^{N_1}_1\triangle^{N_2}_2\triangle^{N_3}_{twist}\circ\left(\Psi_{\mathbf{r}}\right)(\mathbf{x}-\mathbf{y}).
\end{align*}
Then for each type $\Diamond$, if we define 
\begin{align*}
b^\Diamond_{k,\,R}(\mathbf{x}):= \frac{1}{\lambda^\Diamond_k}\sum_{\substack{R'\in\mathcal
{R}^*_k\\ R': \text{type}\, \Diamond \\ R'\subset R}} 
\int_{\mathcal{T}(R')} \Psi_ {\mathbf{r}}(\mathbf{x}-\mathbf{y})\cdot f*\varphi_ {\mathbf{r}}(\mathbf{y})\,d\mathbf{y} \,\frac { d\mathbf{r}}{\mathbf{r}}\notag
\end{align*}
and remark that the support of $b^\Diamond_{k,\,R}$ is contained in the $\sigma$-enlargement of the tube $R$, we can have
\begin{align*}
a^\Diamond_{k,\,R}(\mathbf{x})=\triangle^{N_1}_1\triangle^{N_2}_2\triangle^{N_3}_{twist}b^\Diamond_{k,\,R}(\mathbf{x}).
\end{align*}
Moreover, for all $j=1,2,3$, we let $0\leq k_j\leq N_j$. Then
\begin{align*}
\left(\triangle^{k_1}_1\triangle^{k_2}_2\triangle^{k_3}_{twist}b^\Diamond_{k,\,R}\right)(\mathbf x)
&=\frac{1}{\lambda^\Diamond_k}\sum_{\substack{R'\in\mathcal
{R}^*_k\\ R': \text{type}\, \Diamond \\ R'\subset R}} 
\int_{\mathcal{T}(R')} f*\varphi_ {\mathbf{r}}(\mathbf{y}) \ r^{-2k_1}_1r_2^{-2k_2}r_3^{-2k_3}\\
&\qquad\times\bigg[\left(\left(\triangle^{k_1}_1\Psi^{(1)}\right)_{r_1}\otimes\left(\triangle^{k_2}_2\Psi^{(2)}\right)_{r_2}\right)*_3\left(\triangle^{k_3}_{3}\Psi^{(3)}\right)_{r_3} \bigg]\,(\mathbf{x}-\mathbf{y})\,d\mathbf{y} \,\frac { d\mathbf{r}}{\mathbf{r}}.
\end{align*}
Thus, if $\Diamond={\rm I}$, then we have $R=I_1\times I_2$, for some intervals $I_1,I_2$, and $r_1\leq\ell(I_1)$, $r_2\leq\ell(I_2)$ and $r_3\leq\min\{r_1,r_2\}$. So, one can combine this geometric fact with the $L^2$-property of the particle to get (\ref{I-1}) and (\ref{I-2}). Furthermore,  if $\Diamond={\rm I\!I, I\!I\!I}$, then we have $R=I_1\times_t I_2$, for some intervals $I_1,I_2$, and $r_1\leq\ell(I_1)$, $r_3\leq\ell(I_2)$ and $r_2\leq\min\{r_1,r_3\}$; and if $\Diamond={\rm I\!V, V}$, then we have $R=I_1\,{}_t\times I_2$, for some intervals $I_1,I_2$, and $r_3\leq\ell(I_1)$, $r_2\leq\ell(I_2)$ and $r_1\leq\min\{r_2,r_3\}$. Hence we can deduce the cancellation properties (\ref{II-1})
---(\ref{IV-2}).

To be more concise, we now provide the details for verification of (\ref{II-2}), and skip the details for the other cases since they are similar.\\
Let $\boldsymbol{C}:=r^{2k_1}_1\ell(I_1)^{-2k_1}r_2^{2N_2}\left[\ell(I_1)^{-\alpha/2}\ell(I_2)^{-2N_2+\alpha/2}\right]r_3^{2k_3}\ell(I_2)^{-2k_3}$ be the constant and consider 
\begin{align}\label{harrrrrddd}
&\ell(I_1)^{-4k_1}\ell(I_1)^{-\alpha}\ell(I_2)^{-4N_2+\alpha}\ell(I_2)^{-4k_3}\left\|\triangle^{N_1-k_1}_1\triangle^{N_3-k_3}_{twist} b^\Diamond_{k,R}\right\|^2_{L^2(\mathbb{R}^{2m})}\notag\\
&=:\Bigg\|\frac{1}{\lambda^\Diamond_k}\sum_{\substack{R'\in\mathcal
{R}^*_k\\ R': \text{type}\, \Diamond \\ R'\subset R}} 
\int_{\mathcal{T}(R')} f*\varphi_ {\mathbf{r}}(\mathbf{y})\,\cdot\circledS(\cdot-\mathbf{y}) \,d\mathbf{y} \,\frac { d\mathbf{r}}{\mathbf{r}}\Bigg\|^2_{L^2(\mathbb{R}^{2m})},
\end{align}
where $\circledS$ is the function defined to be
\begin{align*}
\boldsymbol{C}\times r^{-2N_1}_1r^{-2N_2}_2r_3^{-2N_3}\bigg[\left(\left(\triangle^{N_1-k_1}_1\Psi^{(1)}\right)_{r_1}\otimes\Psi^{(2)}_{r_2}\right)*_3\left(\triangle^{N_3-k_3}_{3}\Psi^{(3)}\right)_{r_3}\bigg].
\end{align*}
From the construction of $\Psi^{(j)}$, we also have
\begin{align*}
&r^{-2N_1}_1r^{-2N_2}_2r_3^{-2N_3}\bigg[\left(\left(\triangle^{N_1-k_1}_1\Psi^{(1)}\right)_{r_1}\otimes\Psi^{(2)}_{r_2}\right)*_3\left(\triangle^{N_3-k_3}_{3}\Psi^{(3)}\right)_{r_3}\bigg]\notag\\
    &=r^{-2N_1}_1r^{-2N_2}_2r_3^{-2N_3}\bigg[\left(\left( \triangle^{N_1}_1\circ\triangle^{N_1-k_1}_1\ddot{\Psi^{(1)}}\right)_{r_1}\otimes\left(\triangle^{N_2}_2\circ\ddot{\Psi^{(2)}}\right)_{r_2}\right)*_3\left(\triangle^{N_3}_{3}\circ\triangle^{N_3-k_3}_3\ddot{\Psi^{(3)}}\right)_{r_3}\bigg]\\
   &=\triangle^{N_1}_1\triangle^{N_2}_2\triangle^{N_3}_{twist}\bigg[\left(\left( \triangle^{N_1-k_1}_1\ddot{\Psi^{(1)}}\right)_{r_1}\otimes\ddot{\Psi^{(2)}}_{r_2}\right)*_3\left(\triangle^{N_3-k_3}_3\ddot{\Psi^{(3)}}\right)_{r_3}\bigg]\\
   &=:\circledast,
\end{align*}
which implies that $
\circledS=\boldsymbol{C}\cdot\circledast$.
Therefore, to estimate (\ref{harrrrrddd}),
it sufficed to estimate that for all $h\in L^2(\mathbb{R}^{2m})$ with the normalized $L^2$-norm
\begin{align*}
   &\frac{1}{\lambda^\Diamond_k}\sum_{\substack{R'\in\mathcal
{R}^*_k\\ R': \text{type}\, \Diamond \\ R'\subset R}} 
\int_{\mathbb{R}^{2m}\times\mathcal{T}(R')} f*\varphi_ {\mathbf{r}}(\mathbf{y})\,\cdot\circledS(\mathbf{x}-\mathbf{y})\cdot h(\mathbf{x})\,d\mathbf{x} \,d\mathbf{y} \,\frac { d\mathbf{r}}{\mathbf{r}}\\
&=\frac{1}{\lambda^\Diamond_k}\sum_{\substack{R'\in\mathcal
{R}^*_k\\ R': \text{type}\, \Diamond \\ R'\subset R}} 
\int_{\mathcal{T}(R')} f*\varphi_ {\mathbf{r}}(\mathbf{y})\cdot\boldsymbol{C}\cdot \left(h*\widetilde{\circledast}\right)(\mathbf y)\,d\mathbf{y} \,\frac { d\mathbf{r}}{\mathbf{r}},
\end{align*}
where $\widetilde{\circledast}(\cdot):=\circledast(-\cdot)$.\\
For $\Diamond={\rm I\!I, I\!I\!I}$, from the construction, the geometric structure gives that
\begin{align*}
    r_1\leq\ell(I_1),r_3\leq\ell(I_2)\quad\text{and}\quad r_2\leq\min\{\ell(I_1),\ell(I_2)\},
\end{align*} 
and hence $\boldsymbol{C}\leq1$.
By H\"older's inequality to the variable $\mathbf{y}$ and the mapping property of Littlewood--Paley function, (\ref{harrrrrddd}) can be further bounded by
\begin{align*}
  &\frac{1}{\lambda^{\Diamond}_k}\int_{\mathbb{R}^{2m}}\Bigg(\sum_{\substack{R'\in\mathcal{R}^*_k,\\ R': \text{type}\, \Diamond}} \int_{\mathbb{R}^3_+}\left|f*\varphi_ {\mathbf{r}}(\mathbf{y})\right|^2\cdot\boldsymbol{C}^2\cdot\chi_{\mathcal{T}(R')}(\mathbf{y},\,\mathbf{r})\, \frac { d\mathbf{r}}{\mathbf{r}}\Bigg)^{\frac{1}{2}}\cdot g_{\widetilde{\circledast}}(h)(\mathbf{y})\,d\mathbf{y}\notag\\
  &\leq \frac{1}{\lambda^{\Diamond}_k}\ \Bigg\|\bigg(\sum_{\substack{R'\in\mathcal{R}^*_k,\\ R': \text{type}\, \Diamond}}\int\left|f*\varphi_ {\mathbf{r}}\right|^2(\cdot) \cdot\chi_{\mathcal{T}(R')}(\cdot,\,\mathbf r) \,\frac { d\mathbf{r}}{\mathbf{r}}\bigg)^{\frac{1}{2}}\Bigg\|_{L^2(\mathbb{R}^{2m})}\left\|g_{\widetilde{\circledast}}(h)\right\|_{L^2(\mathbb{R}^{2m})}\notag\\
  &\lesssim\frac{1}{\sqrt{|\widetilde{\Omega}^\Diamond_k}|},
\end{align*}
which gives (\ref{II-2}) as desired.
\smallskip

To complete the proof, it remains to verify that for each type $\Diamond={\rm I,I\!I, I\!I\!I, I\!V}$ or ${\rm V}$:
$$
\sum_k|\lambda^\Diamond_k|\lesssim\|S_{area,  \varphi}(f)\|_{L
^1
(\mathbb{R}^{2m})}.
$$
For all dyadic rectangles of type $\Diamond$, $R'\in\mathcal{R}^*_k$, if  $(\mathbf{x},\mathbf{r})\in \mathcal T (R')$, then $T(\mathbf{x},\mathbf{r})\subset (R')^*\subset\widetilde{\Omega}^\Diamond_k$ and $|T(\mathbf{x},\mathbf{r})\cap \Omega_{k+1}|\leq|(R')^*\cap\Omega_{k+1}|<{|(R')^*|}/{2^{\sigma m+1}}$. Therefore,
\begin{align*}
\frac{\left|\left(\widetilde{\Omega}^
    \Diamond_k-\Omega_{k+1}\right)\cap T(\mathbf{x},\,\mathbf{r})\right|}{|T(\mathbf{x},\,\mathbf{r})|}>1-\frac{1}{2^{\sigma m+1}}\cdot\frac{|R^*|}{|T(\mathbf{x},\,\mathbf{r})|}\geq\frac{1}{2},
\end{align*}
which leads to
\begin{align*}
\sum_{\substack{R'\in\mathcal{R}^*_k,\\ R': \text{type}\, \Diamond}}\int_{\mathcal{T}(R')}\left|f*\varphi_ {\mathbf{r}}\right|^2(\mathbf{x})  \,\frac { d\mathbf{r}}{\mathbf{r}}\,d\mathbf{x}
&\leq 2 \cdot\sum_{\substack{R'\in\mathcal{R}^*_k,\\ R': \text{type}\, \Diamond}}\int_{\mathcal{T}(R')}\left|f*\varphi_ {\mathbf{r}}\right|^2(\mathbf{x})\cdot\frac{\left|\left(\widetilde{\Omega}^\Diamond_k-\Omega_{k+1}\right)\cap T(\mathbf{x},\,\mathbf{r})\right|}{|T(\mathbf{x},\,\mathbf{r})|}  \,\frac { d\mathbf{r}}{\mathbf{r}}\,d\mathbf{x}\\
&\leq 2 \cdot\int_{\widetilde{\Omega}^\Diamond_k-\Omega_{k+1}}\int_{\mathbb{R}^{2m}\times\mathbb{R}^3_{+}}\left|f*\varphi_ {\mathbf{r}}\right|^2(\mathbf{x})\cdot\frac{\chi_{T(\mathbf{0},\,\mathbf{r})}(\mathbf{y}-\mathbf{x})}{|T(\mathbf{0},\,\mathbf{r})|}  \,\frac { d\mathbf{r}}{\mathbf{r}}\,d\mathbf{x}\,d\mathbf{y}\\
&\lesssim\int_{\widetilde{\Omega}_k-\Omega_{k+1}}\int_{\mathbb{R}^3_{+}}\left|f*\varphi_ {\mathbf{r}}\right|^2*\chi_\mathbf{r}(\mathbf{y})  \,\frac { d\mathbf{r}}{\mathbf{r}}\,d\mathbf{y}\\
&=\int_{\widetilde{\Omega}_k-\Omega_{k+1}}S^2_{area,  \varphi}(f)(\mathbf{y})\,d\mathbf{y}.
\end{align*}
Then by the layer-cake formula, we have
\begin{align*}
\sum_k|\lambda^\Diamond_k|
&=\sum_k\sqrt{|\widetilde{\Omega}^\Diamond_k|}\cdot\Bigg(\sum_{\substack{R'\in\mathcal{R}^*_k,\\ R': \text{type}\, \Diamond}}\int_{\mathcal{T}(R')}\left|f*\varphi_ {\mathbf{r}}\right|^2(\mathbf{x})  \,\frac { d\mathbf{r}}{\mathbf{r}}\,d\mathbf{x}\Bigg)^{\frac{1}{2}}
\\
&\lesssim\sum_k\sqrt{|\widetilde{\Omega}_k|}\cdot\left(\int_{\widetilde{\Omega}_k-\Omega_{k+1}}S^2_{area, \varphi}(f)(\mathbf{y})\,d\mathbf{y}\right)^{1/2}\\
&\lesssim\sum_k{|\widetilde{\Omega}_k|}\cdot2^k\\
&\lesssim\|S_{area,  \varphi}(f)\|_{L^1(\mathbb{R}^{2m})}.
\end{align*}
The proof is complete.
\end{proof}

\smallskip

\section{Characterization of the atomic Hardy space: the inclusion $H^1_{Tw,atom}( \mathbb{R}^{2m}) \subset H^1_{area,\varphi}( \mathbb{R}^{2m})$    }\label{sec:6}  
This section contributes to the other direction of the equivalence of the Hardy space. We begin with the Littlewood--Paley inequality.
\begin{lem}[Littlewood--Paley inequality]~\\
    For any $L^2(\mathbb{R}^{2m})$-integrable function $f$,
    \begin{align*}
        \int_{0}^{+\infty }
 \left\| \left(f(\cdot,y_2)*_1\varphi^{(1)}_{ {r}_1 }\right)(y_1) \right\|^2_{L^2(\mathbb{R}^{ m},\,dy_1)}   \frac {d{r_1}}{{r}_1} & \lesssim  \left\| f(\mathbf{y}) \right\|^2_{L^2(\mathbb{R}^{ m} ,\,dy_1)};
    \end{align*}
    and
    \begin{align*}
        \int_{0}^{+\infty }
\left \| \left(f(y_1,\cdot)*_2\varphi^{(2)}_{ {r}_2 }\right)(y_2) \right\|^2_{L^2(\mathbb{R}^{ m},\,dy_2 )}   \frac {d{r_2}}{{r}_2} & \lesssim\left  \| f(\mathbf{y}) \right\|^2_{L^2(\mathbb{R}^{ m},\,dy_2 )} 
    \end{align*}
   and  
    \begin{align*}
        \int_{0}^{+\infty }
 \left\| f*_3\varphi^{(3)}_{ {r}_3 } \right\|^2_{L^2(\mathbb{R}^{2m} )}   \frac { d {r_3}}{ {r}_3} & \lesssim  \left\| f \right\|^2_{L^2(\mathbb{R}^{2m} )}.
    \end{align*}
\end{lem}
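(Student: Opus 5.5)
The natural tool is Plancherel's theorem, applied one variable (or one direction) at a time. The only property of the $\varphi^{(j)}$ we need is the one-parameter Calder\'on-type bound
\[
\sup_{\xi\in\mathbb{R}^m\setminus\{0\}}\int_0^{\infty}\bigl|\widehat{\varphi^{(j)}}(\delta_{r}\xi)\bigr|^2\,\frac{dr}{r}\le C_j<\infty .
\]
This holds because $\varphi^{(j)}\in\mathcal{S}(\mathbb{R}^m)$ has mean zero. Then $|\widehat{\varphi^{(j)}}(\eta)|\lesssim\min\{|\eta|,|\eta|^{-1}\}$, and after the change of variables $s=r|\xi|$ the integral becomes $\int_0^\infty \min\{s,s^{-1}\}^2\,ds/s<\infty$, uniformly in $\xi$. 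For the functions built in the proof of Theorem~\ref{prop:reproducing} this is even more direct: their Fourier transforms are supported in annuli, so the integral is bounded by a constant.

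\textbf{First two inequalities.} Fix $y_2$ (respectively $y_1$) and apply Plancherel in $y_1\in\mathbb{R}^m$ alone. Since $\widehat{f(\cdot,y_2)*_1\varphi^{(1)}_{r_1}}(\xi_1)=\widehat{f(\cdot,y_2)}(\xi_1)\,\widehat{\varphi^{(1)}}(\delta_{r_1}\xi_1)$, Tonelli gives
\[
\int_0^\infty\|f(\cdot,y_2)*_1\varphi^{(1)}_{r_1}\|_{L^2(dy_1)}^2\frac{dr_1}{r_1}=\int_{\mathbb{R}^m}|\widehat{f(\cdot,y_2)}(\xi_1)|^2\int_0^\infty|\widehat{\varphi^{(1)}}(\delta_{r_1}\xi_1)|^2\frac{dr_1}{r_1}\,d\xi_1 .
\]
The inner integral is at most $C_1$, so the whole expression is at most $C_1\|f(\cdot,y_2)\|^2_{L^2(dy_1)}$. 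Here we need $f(\cdot,y_2)\in L^2(\mathbb{R}^m)$, which holds for a.e.\ $y_2$ by Fubini. The second inequality is identical with the roles of the variables exchanged.

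\textbf{Third inequality.} Use the Fourier identity already recorded in the proof of the proposition on $*_3$, namely $\widehat{g*_3\varphi^{(3)}_{r_3}}(\xi_1,\xi_2)=\widehat g(\xi_1,\xi_2)\,\widehat{\varphi^{(3)}}(\delta_{r_3}(\xi_1+\xi_2))$. Apply Plancherel on $\mathbb{R}^{2m}$ and Tonelli:
\[
\int_0^\infty\|f*_3\varphi^{(3)}_{r_3}\|_{L^2}^2\frac{dr_3}{r_3}=\int_{\mathbb{R}^{2m}}|\widehat f(\xi)|^2\int_0^\infty|\widehat{\varphi^{(3)}}(\delta_{r_3}(\xi_1+\xi_2))|^2\frac{dr_3}{r_3}\,d\xi .
\]
For $\xi_1+\xi_2\neq0$ the inner integral is at most $C_3$. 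The exceptional set $\{\xi_1+\xi_2=0\}$ is a null set, so it does not contribute. Hence the left side is at most $C_3\|f\|_{L^2}^2$.

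\textbf{Main obstacle.} The computations are routine; the only delicate point is justification. The identity for $*_3$ was stated for $L^1$ functions, so I would first prove all three inequalities for $f\in L^1\cap L^2$ (or Schwartz $f$) and then extend by density. The extension works because each left-hand side is the square of an $L^2\bigl(\mathbb{R}^{2m}\times\mathbb{R}_+,\,d\mathbf{x}\,dr/r\bigr)$-norm of a linear operator, which we have just shown is bounded on a dense subspace. Alternatively, one can observe that $f\mapsto f*_3\varphi^{(3)}_{r_3}$ is, after the linear change of variables $(x_1,x_2)\mapsto(x_1,x_2-x_1)$, an ordinary convolution in the $x_1$ variable, which reduces the third inequality to the first.
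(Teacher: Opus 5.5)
Your proposal is correct and follows the paper's proof. The paper also applies Plancherel in $y_1$ (resp.\ $y_2$) with Tonelli for the first two bounds, and Plancherel on $\mathbb{R}^{2m}$ with the multiplier $\widehat{\varphi^{(3)}}(r_3(\xi_1+\xi_2))$ for the third, closing each with a uniform bound on $\int_0^\infty|\widehat{\varphi^{(j)}}(r\xi)|^2\,\frac{dr}{r}$. You go further than the paper by justifying that uniform bound from the mean-zero Schwartz hypothesis and by handling the a.e.\ and density issues, which the paper leaves implicit.
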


\begin{proof}
    By Plancheral identity, we have
    \begin{align*}
              \int_{0}^{+\infty }
 \| \left(f(\cdot,y_2)*_1\varphi^{(1)}_{ {r}_1 }\right)(y_1) \|^2_{L^2(\mathbb{R}^{ m},\,dy_1)}   \frac {d{r_1}}{{r}_1}&=  \int_{0}^{+\infty }
 \| \widehat{f}(\xi_1,y_2)\cdot\widehat{\varphi^{(1)}_{ {r}_1 }}(\xi_1) \|^2_{L^2(\mathbb{R}^{ m},\,d\xi_1)}   \frac {d{r_1}}{{r}_1}\\
 &=\int_{\mathbb{R}^m}
\left|\widehat{f}(\xi_1,y_2)\right|^2\int^\infty_0|\widehat{\varphi^{(1)}}(r_1\xi_1)|^2\frac{dr_1}{r_1}\,d\xi_1\\
&\lesssim \| f(\mathbf{y}) \|^2_{L^2(\mathbb{R}^{ m} ,\,dy_1)}.
    \end{align*}
    Besides, if we take the Fourier transform to the space variable $\mathbf{y}$, then we have
    \begin{align*}
        \int_{0}^{+\infty }
 \| f*_3\varphi^{(3)}_{ {r}_3 } \|^2_{L^2(\mathbb{R}^{2m} )}   \frac { d {r_3}}{ {r}_3}&= \int_{0}^{+\infty }
 \| \widehat{f}(\xi)\cdot\widehat{\varphi^{(3)}}(r_3(\xi_1+\xi_2)) \|^2_{L^2(\mathbb{R}^{2m} )}   \frac { d {r_3}}{ {r}_3}\\
 &= \int_{\mathbb{R}^{2m}}\left|\widehat{f}(\xi)\right|^2\int_{0}^{+\infty }
 \left| \widehat{\varphi^{(3)}}(r_3(\xi_1+\xi_2)) \right|^2  \frac { d {r_3}}{ {r}_3}\,d\xi\\
 & \lesssim  \| f \|^2_{L^2(\mathbb{R}^{2m} )}.
    \end{align*}
The proof is complete.
\end{proof}

To prove the main theorem, we establish the following auxiliary result on the decomposition of the non-compact 
Schwartz functions $\varphi$ into building blocks. These blocks possess compact support, $C^\infty$ smoothness, and crucially, they inherit the higher-order cancellation moments from the wavelets.

\begin{lem}\label{lemma-decomposition}
Fix the dimension $d$, a decay parameter $\gamma > g$, a dilation parameter $\overline{C} > 1$, and a moment integer $M \ge 0$.
Let $\varphi \in \mathcal{S}(\mathbb{R}^d)$ be a Schwartz function such that:
\begin{equation}\label{eqn:psi_moments}
    \int_{\mathbb{R}^d} x^\beta \varphi(x) \, dx = 0 \quad \text{for all } |\beta| \le M.
\end{equation}
For any scale $j \in \mathbb{Z}$, define the dilated function $\varphi_j(x) := 2^{-jd} \varphi(2^{-j}x)$.

Then, there exists a sequence of functions  $\{ \varphi_{\ell, j} \}_{\ell=0}^\infty$ such that we have the pointwise and $L^q$ decomposition:
\begin{equation}\label{eqn:decomp_simple}
    \varphi_j(x) = \sum_{\ell=0}^\infty (\overline{C} 2^\ell)^{-\gamma} \varphi_{\ell, j}(x).
\end{equation}
The function $\varphi_{\ell, j}$ satisfy the following properties, where $R_{\ell, j} := \overline{C} 2^\ell 2^j$ denotes the support radius:

\begin{itemize}
    \item[(i)] {\rm (Compact support)}
    $$ \supp \varphi_{\ell, j} \subset B(0, 2 R_{\ell, j}). $$

    \item[(ii)] {\rm (Size and smoothness)} For every multi-index $\mu$, there exists a constant $C_{\mu} > 0$ (independent of $j, \ell$) such that:
    $$ |\partial^\mu \varphi_{\ell, j}(x)| \leq C_{\mu} (R_{\ell, j})^{-|\mu|} \frac{1}{|B(0, R_{\ell, j})|}. $$

    \item[(iii)] {\rm (Higher order cancellation)} For all $|\beta| \le M$:
    $$ \int_{\mathbb{R}^d} x^\beta \varphi_{\ell, j}(x) \, dx = 0. $$
\end{itemize}
\end{lem}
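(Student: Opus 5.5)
By scaling, it suffices to treat $j=0$ and then set $\varphi_{\ell,j}(x):=2^{-jd}\varphi_{\ell,0}(2^{-j}x)$. Indeed, $R_{\ell,j}=2^jR_{\ell,0}$, so properties (i)--(iii) for $\varphi_{\ell,0}$ transfer directly to $\varphi_{\ell,j}$, with the same constants $C_\mu$. The case $j=0$ rests on a smooth dyadic partition of unity combined with a moment-correction procedure of Uchiyama/Frazier--Jawerth type. Take radial $\theta_0\in C_c^\infty(B(0,2\overline{C}))$ with $\theta_0\equiv1$ on $B(0,\overline{C})$. For $\ell\ge1$ put $\theta_\ell(x)=\theta_0(2^{-\ell}x)-\theta_0(2^{-\ell+1}x)$, which is supported in the annulus $\overline{C}2^{\ell-1}\le|x|\le \overline{C}2^{\ell+1}$. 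Then $\sum_\ell\theta_\ell=1$, and $|\partial^\mu\theta_\ell|\lesssim (\overline{C}2^\ell)^{-|\mu|}$. Set $g_\ell:=\theta_\ell\varphi$. Because $\varphi$ is Schwartz, on the support of $\theta_\ell$ we have $|\partial^\mu g_\ell|\lesssim_{\mu,N}(\overline{C}2^\ell)^{-N}$ for every $N$. Hence $(\overline{C}2^\ell)^{\gamma}g_\ell$ already satisfies (i) and (ii) with room to spare, since we may choose $N\ge\gamma+d+|\mu|$. What it lacks is (iii).

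To restore the moments, fix a dual basis. Choose $\{\eta_\beta\}_{|\beta|\le M}\subset C_c^\infty(B(0,1))$ with $\int x^\alpha\eta_\beta=\delta_{\alpha\beta}$, which is possible by linear algebra on a finite-dimensional space. Set $\eta_\beta^{(\ell)}(x):=R_\ell^{-d-|\beta|}\eta_\beta(x/R_\ell)$ with $R_\ell=\overline{C}2^\ell$, so that $\int x^\alpha\eta^{(\ell)}_\beta=\delta_{\alpha\beta}$. Let $m_\beta(\ell):=\int x^\beta g_\ell$, and define the tail sums $N_\beta(\ell):=\sum_{k\ge\ell}m_\beta(k)$. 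The vanishing moments \eqref{eqn:psi_moments} give $N_\beta(0)=0$ and $N_\beta(\ell)=-\sum_{k<\ell}m_\beta(k)$. Moreover, the rapid decay of $\varphi$ forces $|N_\beta(\ell)|\lesssim_N R_\ell^{-N}$. Now define
\[
\tilde g_\ell:=g_\ell+\sum_{|\beta|\le M}\big(N_\beta(\ell+1)\eta^{(\ell+1)}_\beta-N_\beta(\ell)\eta^{(\ell)}_\beta\big).
\]
The sum over $\ell$ telescopes. The $\ell=0$ boundary term vanishes because $N_\beta(0)=0$, and $N_\beta(L)\eta^{(L)}_\beta\to0$ uniformly since $N_\beta(L)R_L^{-d-|\beta|}\to0$. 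Therefore $\sum_\ell\tilde g_\ell=\varphi$ pointwise, and also in $L^q$ because the terms decay geometrically. The moment of order $\alpha$ of $\tilde g_\ell$ equals $m_\alpha(\ell)+N_\alpha(\ell+1)-N_\alpha(\ell)=0$, which gives (iii). For the support, $\operatorname{supp}\tilde g_\ell\subset B(0,2R_{\ell+1})=B(0,4R_\ell)$. This is a factor-of-two loss relative to (i), so I would fix it by shrinking $\theta_0$ and the $\eta_\beta$ so that all supports lie in $B(0,R_\ell)$ rather than $B(0,2R_\ell)$. Finally, put $\varphi_{\ell,0}:=R_\ell^{\gamma}\tilde g_\ell$.

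It remains to check (ii). The correction terms satisfy
\[
|\partial^\mu N_\beta(\ell)\eta^{(\ell)}_\beta|\lesssim R_\ell^{-N}R_\ell^{-d-|\beta|-|\mu|}.
\]
After multiplying by $R_\ell^\gamma$ and taking $N>\gamma$, this is $\lesssim R_\ell^{-|\mu|}|B(0,R_\ell)|^{-1}$, uniformly in $\ell$. The same estimate holds for $g_\ell$, so (ii) follows. (The hypothesis $\gamma>g$ in the statement appears to be a typo; any finite $\gamma$ works here because $\varphi$ is Schwartz.)

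The main obstacle is bookkeeping the moment-correction tails. One must ensure that the decay $|N_\beta(\ell)|\lesssim R_\ell^{-N}$ holds, which follows from $|x^\beta\varphi(x)|\lesssim_N\langle x\rangle^{-N-d-1}$ integrated over $|x|\gtrsim R_\ell$. One must also verify the uniformity of the constants $C_\mu$ in both $\ell$ and $j$. Everything else is routine.
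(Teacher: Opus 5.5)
Your proposal is correct and is essentially the paper's proof: the same dyadic cutoff pieces $\theta_\ell\varphi$ (the paper's $\Lambda_\ell=(h_\ell-h_{\ell-1})\varphi$), the same scaled dual basis to the monomials, and the same telescoping moment correction (your $-N_\beta(\ell)$ is exactly the paper's partial sum $S_{\ell-1,\beta}$), followed by $\varphi_{\ell,0}=R_\ell^{\gamma}\tilde g_\ell$ and rescaling to general $j$. One small remark: there is no factor-of-two support loss to repair, because $\eta^{(\ell+1)}_\beta$ is supported in $B(0,R_{\ell+1})=B(0,2R_\ell)$, so $\operatorname{supp}\tilde g_\ell\subset B(0,2R_\ell)$ already holds, as in the paper.
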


\begin{proof}
By the rescaling argument, it suffices to prove the lemma for the base case $j=0$.

Let $\varphi \in \mathcal{S}(\mathbb R^d)$ with vanishing moments up to order $M$.Take $R_\ell = \overline{C} 2^\ell$ and let $\{h_\ell\}$ be the smooth cut-off functions such that $\supp h_\ell \subset B(0, R_\ell)$ and $h_\ell(x) = 1$ for $x \in B(0, R_\ell/4)$.
Define the pieces $\Lambda_\ell(x)$ by
$$ \Lambda_0 = h_0 \varphi, \quad\text{and}\quad \Lambda_\ell = (h_\ell - h_{\ell-1})\cdot\varphi,$$ where $\ell\geq 1$.
Note that for each $\ell\geq1$, $\supp \Lambda_\ell \subset \{ x : R_\ell/8 \le |x| \le R_\ell \}$.

Let $\beta$ be a multi-index with $|\beta| \le M$. The $\beta$-moment of the $\ell$-th piece is defined by
$$ m_{\ell, \beta} := \int_{\mathbb R^d} x^\beta \Lambda_\ell(x) \, dx, $$
and, by the vanishing moment condition of $\varphi$ , $\sum_{\ell=0}^\infty m_{\ell, \beta}= 0$.
We also define the partial sum $S_{\ell, \beta}$ by
$$ S_{\ell, \beta} := \sum_{k=0}^\ell m_{k, \beta} = - \sum_{k=\ell+1}^\infty m_{k, \beta}. $$
For $\ell \ge 1$, the integration of $m_{\ell, \beta}$ is over the annulus $A_\ell \simeq B(0, R_\ell) \setminus B(0, R_{\ell-1})$; and on this domain, $|x| \simeq R_\ell$, which leads to
\begin{align*}
    |m_{\ell, \beta}| \le \int_{A_\ell} |x|^{|\beta|} |\varphi(x)| \, dx 
    &\le C_N \int_{R_{\ell-1} \le |x| \le R_\ell} |x|^{|\beta|} (1+|x|)^{-N} \, dx \\
    &\le C'_N R_\ell^{|\beta| - N + d}.
\end{align*}
Since $R_\ell = \overline{C} 2^\ell$, the term $R_\ell^{|\beta|-N+d}$ decays like $(2^{|\beta|-N+d})^\ell$. By choosing $N$ sufficiently large (specifically $N > d + M + \gamma$), we ensure rapid exponential decay; and the tail sum $S_{\ell, \beta}$ is dominated by the first term in the summation (geometric series dominance):
\begin{equation}\label{eq:S_decay}
    |S_{\ell, \beta}| \le \sum_{k=\ell+1}^\infty |m_{k, \beta}| \lesssim \sum_{k=\ell+1}^\infty R_k^{|\beta|-N+d} \lesssim R_\ell^{|\beta|-N+d}.
\end{equation}

We now construct functions $\theta_{\ell, \beta}$ supported in $B(0, R_\ell)$ that are biorthogonal to polynomials and satisfy specific derivative bounds.
Let $\mathcal{P}_M$ be the space of polynomials on $\mathbb R^d$ of degree at most $M$.
There exists a set of smooth functions $\{\Theta_\nu\}_{|\nu|\le M}$ supported in the unit ball $B(0,1)$ that are biorthogonal to the monomial basis $\{x^\beta\}_{|\beta|\le M}$, that is 
$$ \int_{B(0,1)} z^\beta \Theta_\nu(z) \, dz = \delta_{\beta, \nu}. $$
Now, define the scaled functions for scale $R_\ell$ by
$ \theta_{\ell, \nu}(x) := R_\ell^{-d - |\nu|} \Theta_\nu \left( {x}/{R_\ell} \right).$ 
Since $\supp \Theta_\nu \subset B(0,1)$, we have $\supp \theta_{\ell, \nu} \subset B(0, R_\ell)$.
For each $|\beta|\le M$, calculate the moments of these scaled functions against any monomial $x^\beta$, one has that
\begin{align}\label{8877}
    \int_{\mathbb R^d} x^\beta \theta_{\ell, \nu}(x) \, dx= R_\ell^{|\beta| - |\nu|} \delta_{\beta, \nu}.
\end{align}
In particular, if $\beta \neq \nu$, (\ref{8877}) is $0$, while if $\beta = \nu$, the scaling factor in (\ref{8877}) is $R_\ell^{0} = 1$. Thus,
\begin{equation}\label{eq:theta_ortho}
    \int_{\mathbb R^d} x^\beta \theta_{\ell, \nu}(x) \, dx = \delta_{\beta, \nu}.
\end{equation}
For any multi-index $\mu$, we also have
$$ \partial^\mu_x \left[ \Theta_\nu \left( \frac{x}{R_\ell} \right) \right] = R_\ell^{-|\mu|} (\partial^\mu \Theta_\nu) \left( \frac{x}{R_\ell} \right), $$
which implies that
\begin{equation}\label{eq:theta_deriv}
    |\partial^\mu \theta_{\ell, \nu}(x)| = R_\ell^{-d-|\nu|} | \partial^\mu \Theta_\nu(\cdot)| R_\ell^{-|\mu|} \lesssim R_\ell^{-d-|\nu|-|\mu|}.
\end{equation}

We now construct $\widetilde{\Lambda}_\ell$ with vanishing moments up to order $M$. Define $S_{-1, \beta} = 0$. We construct the corrected building block:
$$ \widetilde{\Lambda}_\ell(x) := \Lambda_\ell(x) + \sum_{|\nu|\le M} S_{\ell-1, \nu} \theta_{\ell, \nu}(x) - \sum_{|\nu|\le M} S_{\ell, \nu} \theta_{\ell+1, \nu}(x). $$

Let $|\beta| \le M$. A straightforward computation shows that 
$$ \int x^\beta \widetilde{\Lambda}_\ell = m_{\ell, \beta} + S_{\ell-1, \beta} - S_{\ell, \beta},$$
and thus, from the definition,
$$ \int x^\beta \widetilde{\Lambda}_\ell = S_{\ell, \beta} - S_{\ell, \beta} = 0. $$

Now it suffices to verify that $\varphi_{\ell, 0} = (\overline{C} 2^\ell)^\gamma \widetilde{\Lambda}_\ell$ satisfies the size and smoothness condition (ii).
We need to show for every $\mu$, it holds that
$$ |\partial^\mu \varphi_{\ell, 0}(x)| \le C_\mu R_\ell^{-|\mu|} \frac{1}{|B(0, R_\ell)|} \simeq R_\ell^{-|\mu|-d};$$
equivalently, we need to show that
$$ |\partial^\mu \widetilde{\Lambda}_\ell(x)| \lesssim (\overline{C} 2^\ell)^{-\gamma} R_\ell^{-|\mu|-d}. $$
Using the decay of $\varphi$ with exponent $K$, where $K$ is arbitrary large, 
$$ |\partial^\mu \Lambda_\ell(x)| \le C R_\ell^{-|\mu|} (1+R_\ell)^{-K} \simeq R_\ell^{-|\mu|-d} \cdot R_\ell^{d-K}. $$
Since $R_\ell = \overline{C} 2^\ell$, we need to require $R_\ell^{d-K} \lesssim (\overline{C} 2^\ell)^{-\gamma}$. It turns out that we need the requirement that $d-K \le -\gamma$, or $K \ge d + \gamma$. This restriction on $K$ can be achieved due to that the function $\varphi $ is a Schwartz function.\\
Consider the  correction term $Q_{\ell}(x) = \sum_\nu S_{\ell, \nu} \theta_{\ell+1, \nu}(x)$.
Using the bound for $S_{\ell, \nu}$ from  \eqref{eq:S_decay} (with parameter $K$) and the bound for $\theta$ from \eqref{eq:theta_deriv}, 
\begin{align*}
    |\partial^\mu Q_\ell(x)| \le \sum_{|\nu|\le M} |S_{\ell, \nu}| \cdot |\partial^\mu \theta_{\ell+1, \nu}(x)| &\lesssim \sum_{|\nu|\le M} R_\ell^{|\nu| - K + d} \cdot (R_{\ell+1})^{-d-|\nu|-|\mu|} 
    \simeq R_\ell^{-K - |\mu|}.
\end{align*}
Rewriting this in the form of the normalization $R_\ell^{-|\mu|-d}$, we have
$ |\partial^\mu Q_\ell(x)| \simeq R_\ell^{-|\mu|-d} \cdot R_\ell^{d-K}. $ Again, we have $R_\ell^{d-K} \lesssim (\overline{C} 2^\ell)^{-\gamma}$,  provide that we choose $K \ge d+\gamma$.

Both $\Lambda_\ell$ and the correction terms decay sufficiently fast in $\ell$ to absorb the scaling factor $(\overline{C} 2^\ell)^\gamma$ while maintaining the required derivative bounds scaled by $R_\ell$. Thus, $\varphi_{\ell, 0}$ is the desired function.
For the general case $\varphi_j$, we simply define $\varphi_{\ell, j}(x) := 2^{-jd} \varphi_{\ell, 0}(2^{-j}x)$. The properties (support, size, derivatives, and cancellation) scale naturally to satisfy the lemma statement.
\end{proof}

\subsection{From the atomic decomposition to the area function: proof of Theorem \ref{thm:S-atom} }

\begin{proof}
To prove this inclusion, it suffices to show that the atom of each type is in the area function Hardy space. Besides, from Lemma \ref{lemma-decomposition}, we can assume that the function $\varphi$ in the definition of $  S_{area,\varphi}$ is compactly supported.

Let $\Diamond$ be the type of tubes and let $\Omega^\Diamond\subset\mathbb{R}^{2m}$ be an open set. Suppose that  $a_{\Omega^\Diamond}$ is an atom of type $\Diamond$ supported on $\Omega^\Diamond$ such that $$
a_{\Omega^\Diamond}=\sum_{R\in m^\Diamond(\Omega^\Diamond)}a_R,
$$
where $m^\Diamond(\Omega^\Diamond)$ denotes the set of all maximal tube of type $\Diamond$ contained in the open set $\Omega^\Diamond$. 

$ \clubsuit$ We first consider the case that $\Diamond=I$. For each $R=I_1\times I_2\in m^{\rm I}(\Omega^{\rm I})$, define $\widehat{I}_1\subset\mathbb{R}^m$ to be the maximal dyadic interval such that $I_1\subset\widehat{I}_1$ and
\begin{align*}
    {\widehat{I}_1\times I_2\subset\widetilde{\Omega^{\rm I}}:=\bigg\{\mathbf{x}\in\mathbb{R}^{2m}:\,M^I_{tube}(\chi_{\Omega^{\rm I}})(\mathbf{x})>\frac{1}{2}\bigg\}.}
\end{align*}
Next, define $\widehat{I}_2\subset\mathbb{R}^m$ to be the maximal dyadic interval such that $I_2\subset\widehat{I}_2$ and
\begin{align*}
  {\widehat{I}_1\times \widehat{I}_2\subset\widetilde{\widetilde{\Omega^{\rm I}}}:=\bigg\{\mathbf{x}\in\mathbb{R}^{2m}:\,M^I_{tube}(\chi_{\widetilde{\Omega^{\rm I}}})(\mathbf{x})>\frac{1}{2}\bigg\}.}
\end{align*}
To complete the inclusion, it suffices to show that the $L^1$-norm of $  S_{area,\varphi}(a_{\Omega^{\rm I}})$ is uniformly bounded. Note that if we let {$\widehat{R}$ be the $100$-fold dilation of $ \widehat{I}_1\times \widehat{I}_2$ concentric with $ \widehat{I}_1\times \widehat{I}_2$,} then 
\begin{align}\label{ESTI}
    \|S_{area,\varphi}(a_{\Omega^{\rm I}})\|_{L^1(\mathbb{R}^{2m})}&\leq\int_{\left(\cup_{R\in m^{\rm I}(\Omega^{\rm I})}\widehat{R}\right)}S_{area,\varphi}(a_{\Omega^{\rm I}})(\mathbf{x})\,d\mathbf{x}+\int_{\left(\cup_{R\in m^{\rm I}(\Omega^{\rm I})}\widehat{R}\right)^c}S_{area,\varphi}(a_{\Omega^{\rm I}})(\mathbf{x})\,d\mathbf{x}\notag\\    &\lesssim\left|\left(\cup_{R\in m^{\rm I}(\Omega^{\rm I})}\widehat{R}\right)\right|^{\frac{1}{2}}\cdot\|a_{\Omega^{\rm I}}\|_{L^2}+\sum_{R\in m^{\rm I}(\Omega^{\rm I})}\int_{\left(\cup_{R\in m^{\rm I}(\Omega^{\rm I})}\widehat{R}\right)^c}S_{area,\varphi}(a_R)(\mathbf{x})\,d\mathbf{x}\notag\\    &\leq\left|\left(\cup_{R\in m^{\rm I}(\Omega^{\rm I})}\widehat{R}\right)\right|^{\frac{1}{2}}\cdot\|{a_{\Omega^{\rm I}}}\|_{L^2}+\sum_{R\in m^{\rm I}(\Omega^{\rm I})}\int_{\left(\widehat{R}\right)^c}S_{area,\varphi}(a_R)(\mathbf{x})\,d\mathbf{x}\notag\\
    &\lesssim 1+\sum_{\substack{R\in m^{\rm I}(\Omega^{\rm I})\\R=I_1\times I_2 }}\int_{\left(\widehat{R}\right)^c}S_{area,\,\varphi}(a_R)(\mathbf{x})\,d\mathbf{x},
\end{align}
where the last inequality follows by the $L^2$-boundedness property of the tube maximal function and the atom of type ${\rm I}$.

The tube $R=I_1 \times I_2$ is the standard dyadic rectangle  in $\mathbb{R}^{m}\times\mathbb{R}^m$. Then in this case
\begin{align}\label{EST1-1}
  &\sum_{\substack{R\in m^{\rm I}(\Omega^{\rm I})\\R=I_1\times I_2 }}\int_{\left(\widehat{R}\right)^c}S_{area,\,\varphi}(a_R)(\mathbf{x})\,d\mathbf{x}\notag\\ &\leq \sum_{\substack{R\in m^{\rm I}(\Omega^{\rm I})\\R=I_1\times I_2 }} \int_{(100\widehat{I_1})^c\times(100I_2)}S_{area,\,\varphi}(a_R)(\mathbf{x})\,d\mathbf{x}+\sum_{\substack{R\in m^{\rm I}(\Omega^{\rm I})\\R=I_1\times I_2 }} \int_{(100\widehat{I_1})^c\times (100I_2)^c}S_{area,\,\varphi}(a_R)(\mathbf{x})\,d\mathbf{x}\notag\\
&\quad+\sum_{\substack{R\in m^{\rm I}(\Omega^{\rm I})\\R=I_1\times I_2 }}\int_{(100{I_1})^c\times(100\widehat{I_2})^c}\ S_{area,\,\varphi}(a_R)(\mathbf{x})\,d\mathbf{x}+\sum_{\substack{R\in m^{\rm I}(\Omega^{\rm I})\\R=I_1\times I_2 }}\int_{(100{I_1})\times(100\widehat{I_2})^c} S_{area,\,\varphi}(a_R)(\mathbf{x})\,d\mathbf{x}\notag\\
  &=:\sum_{\substack{R\in m^{\rm I}(\Omega^{\rm I})\\R=I_1\times I_2 }}(\mathcal{A})+\sum_{\substack{R\in m^{\rm I}(\Omega^{\rm I})\\R=I_1\times I_2 }}(\mathcal{B})+\sum_{\substack{R\in m^{\rm I}(\Omega^{\rm I})\\R=I_1\times I_2 }}(\mathcal{C})+\sum_{\substack{R\in m^{\rm I}(\Omega^{\rm I})\\R=I_1\times I_2 }}(\mathcal{D}).
\end{align}
By symmetry, we only need to estimate the case $(\mathcal{C})$ and the case $(\mathcal{D})$. We start with the case $(\mathcal{D})$.

$\bullet$ {\it Case $(\mathcal{D})$}:
 By applying H\"older's inequality to the first variable $x_1$, we have
 \begin{align}\label{Foundfactori}
     (\mathcal{D})&\lesssim |I_1|^{\frac{1}{2}}\int_{(100\widehat{I_2})^c}\left(\int_{100 I_1}S^2_{area,\,\varphi}(a_R)(\mathbf{x})\,dx_1\right)^{\frac{1}{2}}\,d{x_2}\notag\\
     &=|I_1|^{\frac{1}{2}}\int_{(100\widehat{I_2})^c}\left(\int_{ \mathbb{R}^{2m}\times \mathbb{R}^3_+}\int_{ 100I_1}|a_R* \varphi_{\mathbf{r} } |^2(\mathbf{y})\cdot \chi_{\mathbf{r} }(\mathbf{x}-\mathbf{y}) \,dx_1\,\frac {  
    d\mathbf{r}}{\mathbf{r}}\,d\mathbf{y}\right)^{\frac{1}{2}}\,d{x_2}.
 \end{align}
By integrating the variable $x_1$ with respect to the cone structure $ \chi_{\mathbf{r} }$ and apply Littlewood--Paley inequality to eliminate $\varphi^{(1)}_{r_1}$, 
 \begin{align*}
&\int_{ \mathbb{R}^{2m}\times \mathbb{R}^3_+}\int_{ 100I_1}|a_R* \varphi_{\mathbf{r} } |^2(\mathbf{y})\cdot \chi_{\mathbf{r} }(\mathbf{x}-\mathbf{y}) \,dx_1\,\frac {  
    d\mathbf{r}}{\mathbf{r}}\,d\mathbf{y}\\
&\lesssim\int_{ \mathbb{R}^{m}\times \mathbb{R}^2_+}\left(\chi_{B_{r_2}(0)}*\chi_{B_{r_3}(0)}\right
)(x_2-y_2)\cdot
\left\|a_R*_2 \varphi^{(2)}_{{r}_2 }*_3\varphi^{(3)}_{r_3}(\mathbf{y})\right\|^2_{L^2(\mathbb{R}^m,dy_1)}\,\frac { dr_2dr_3}{r^{m+1}_2r^{m+1}_3}\,d{y}_2\\
&=:\widetilde{(\mathcal{D}1)}+\widetilde{(\mathcal{D}2)}+\widetilde{(\mathcal{D}3)},
    \end{align*} in which 
\begin{align*}
\widetilde{(\mathcal{D}1)}:=\int_{ \mathbb{R}^{m}}\int_{\mathbb{R}_+}\int^\infty_{\frac{|x_2-c_{I_2}|}{4}}\left(\chi_{B_{r_2}(0)}*\chi_{B_{r_3}(0)}\right
)(x_2-y_2)\cdot
\left\|a_R*_2 \varphi^{(2)}_{{r}_2 }*_3\varphi^{(3)}_{r_3}(\mathbf{y})\right\|^2_{L^2(\mathbb{R}^m,dy_1)}\,\frac { dr_2dr_3}{r^{m+1}_2r^{m+1}_3}\,d{y}_2,
 \end{align*}
 \begin{align*}
\widetilde{(\mathcal{D}2)}:=\int_{ \mathbb{R}^{m}}\int_{\mathbb{R}_+}\int^{\frac{|x_2-c_{I_2}|}{4}}_{\ell_2}\left(\chi_{B_{r_2}(0)}*\chi_{B_{r_3}(0)}\right
)(x_2-y_2)\cdot
\left\|a_R*_2 \varphi^{(2)}_{{r}_2 }*_3\varphi^{(3)}_{r_3}(\mathbf{y})\right\|^2_{L^2(\mathbb{R}^m,dy_1)}\,\frac { dr_2dr_3}{r^{m+1}_2r^{m+1}_3}\,d{y}_2,
\end{align*}
and 
\begin{align*}
\widetilde{(\mathcal{D}3)}:=\int_{\mathbb{R}^{m}}\int_{\mathbb{R}_+}\int^{\ell_2 }_0\left(\chi_{B_{r_2}(0)}*\chi_{B_{r_3}(0)}\right
)(x_2-y_2)\cdot
\left\|a_R*_2 \varphi^{(2)}_{{r}_2 }*_3\varphi^{(3)}_{r_3}(\mathbf{y})\right\|^2_{L^2(\mathbb{R}^m,dy_1)}\,\frac { dr_2dr_3}{r^{m+1}_2r^{m+1}_3}\,d{y}_2.
\end{align*}
Therefore, from (\ref{Foundfactori}) and the definition of $\widetilde{(\mathcal{D}1)}$, $\widetilde{(\mathcal{D}2)}$ and $\widetilde{(\mathcal{D}3)}$,
 \begin{align*}
     (\mathcal{D})&\lesssim |I_1|^{\frac{1}{2}}\int_{(100\widehat{I_2})^c}\left(\int_{100 I_1}S^2_{area,\,\varphi}(a_R)(\mathbf{x})\,dx_1\right)^{\frac{1}{2}}\,d{x_2}\notag\\
     &\lesssim|I_1|^{\frac{1}{2}}\int_{(100\widehat{I_2})^c}\left(\widetilde{(\mathcal{D}1)}\right)^{\frac{1}{2}}\,d{x_2}+|I_1|^{\frac{1}{2}}\int_{(100\widehat{I_2})^c}\left(\widetilde{(\mathcal{D}2)}\right)^{\frac{1}{2}}\,d{x_2}+|I_1|^{\frac{1}{2}}\int_{(100\widehat{I_2})^c}\left(\widetilde{(\mathcal{D}3)}\right)^{\frac{1}{2}}\,d{x_2}.
 \end{align*}
\smallskip

$\blacksquare$ {\it Estimate on the term $\widetilde{(\mathcal{D}1)}$}:
Applying the trivial estimate to the convolution $\chi_{B_{r_2}(0)}*\chi_{B_{r_3}(0)}$ and the Littlewood--Paley inequality to eliminate, we have the term $\widetilde{(\mathcal{D}1)}$ is dominated by
\begin{align*}
&\int_{\mathbb{R}_+}\int^\infty_{\frac{|x_2-c_{I_2}|}{4}}\left[\int_{\mathbb{R}^{2m}}\left|a_R*_2 \varphi^{(2)}_{{r}_2 }*_3\varphi^{(3)}_{r_3}(\mathbf{y})\right|^2\,d\mathbf{y}\right]\,\frac{dr_2}{r^{m+1}_2}\frac { dr_3}{r_3}
\lesssim\int^\infty_{\frac{|x_2-c_{I_2}|}{4}}\left\|a_R*_2\varphi^{(2)}_{r_2}\right\|^2_{L^2(\mathbb{R}^{2m})}\,\frac{ dr_2}{r^{m+1}_2}. 
\end{align*}
On the other hand, by applying Young's convolution inequality, 
  \begin{align*}
\widetilde{(\mathcal{D}1)}
&\lesssim\int^\infty_{\frac{|x_2-c_{I_2}|}{4}}r_2^{-4N_2}\left\|\left({ {\triangle}_1^{ N_1 } {\triangle}_{twist} ^{ N_{3}}b_{R}}\right)\right\|^2_{L^2(\mathbb{R}^{2m})} \frac {dr_2}{r^{m+1}_2}
\simeq |x_2-c_{I_2}|^{-4N_2-m}\left\|\left({ {\triangle}_1^{ N_1 } {\triangle}_{twist} ^{ N_{3}}b_{R}}\right)\right\|^2_{L^2(\mathbb{R}^{2m})}. 
\end{align*}
To continue, let $E_j:=\delta_{2^j}\left(100I_2\right)-\delta_{2^{j-1}}\left(100I_2\right)$, then there is $j_0\in\mathbb{N}$ such that \begin{align}\label{dyadicdecomp}
(100\widehat{I_2})^c=\bigcup_{j\geq j_0}E_j\quad\text{and}\quad2^{j_0}={\ell(\widehat{I_2})\over\ell(I_2)},
\end{align}
and thus the dyadic decomposition of the domain $(100
\widehat{I_2})^c$ gives that
\begin{align*}
  |I_1|^{\frac{1}{2}}\int_{(100\widehat{I_2})^c}\left(\widetilde{(\mathcal{D}1)}\right)^{\frac{1}{2}}\,d{x_2}
    &\lesssim |I_1|^{\frac{1}{2}}\sum_{j\geq j_0}\int_{E_j}\left(  |x_2-c_{I_2}|^{-4N_2-m}\left\|{\triangle}_1^{ N_1 } {\triangle}_{twist} ^{ N_{3}}b_{R}\right\|^2_{L^2(\mathbb{R}^{2m})}    \right)^{\frac{1}{2}}\,d{x_2}\notag\\
  & \simeq \bigg({\ell(\widehat{I_2})\over\ell(I_2)}\bigg)^{\frac{m}{2}-2N_2}\cdot|R|^{\frac{1}{2}} \cdot\left\|{ {\triangle}_1^{ N_1 } {\triangle}_{twist} ^{ N_{3}}b_{R}}\right\|_{L^2(\mathbb{R}^{2m})}\cdot\ell^{-2N_2}_2. 
\end{align*}

$\blacksquare$ {\it Estimate on the term $\widetilde{(\mathcal{D}3)}$}:
$\widetilde{(\mathcal{D}3)}$ can be further decompose as
\begin{align*}
&\int_{\mathbb{R}^{m}}\int^{\ell_2 }_0\int^{\ell_2 }_0 \left(\chi_{B_{r_2}(0)}*\chi_{B_{r_3}(0)}\right
)(x_2-y_2)\cdot
\left\|a_R*_2 \varphi^{(2)}_{{r}_2 }*_3\varphi^{(3)}_{r_3}(\mathbf{y})\right\|^2_{L^2(\mathbb{R}^m,dy_1)}\,\frac { dr_2dr_3}{r^{m+1}_2r^{m+1}_3}\,d{y}_2\\
&\quad+\int_{\mathbb{R}^{m}} \int^\infty_{\ell_2 } \int^{\ell_2 }_0 \left(\chi_{B_{r_2}(0)}*\chi_{B_{r_3}(0)}\right
)(x_2-y_2)\cdot
\left\|a_R*_2 \varphi^{(2)}_{{r}_2 }*_3\varphi^{(3)}_{r_3}(\mathbf{y})\right\|^2_{L^2(\mathbb{R}^m,dy_1)}\,\frac { dr_2dr_3}{r^{m+1}_2r^{m+1}_3}\,d{y}_2\\
&=:\widetilde{(\mathcal{D}31)}+\widetilde{(\mathcal{D}32)}.
\end{align*}
\begin{itemize}
{\item \it Estimate on $\widetilde{(\mathcal{D}31)}$}:
\end{itemize}  
We analysis the convolution $a_R*_2 \varphi^{(2)}_{{r}_2 }*_3\varphi^{(3)}_{r_3}(\mathbf{y})$.
Since that
\begin{align*}
  \left| a_R*_2 \varphi^{(2)}_{{r}_2 }*_3\varphi^{(3)}_{r_3}(\mathbf{y})\right|&=\left|\int_{\mathbb{R}^m} a_R*_3\varphi^{(3)}_{r_3}(y_1,y_2-v)\cdot  \varphi^{(2)}_{{r}_2 }(v)\,dv\right|,
\end{align*}
then the support condition of the function $\varphi$ implies that for all $0\leq r_2,r_3\leq \ell_2$, we have $ |y_2-v-c_{I_2}|\leq\frac{3\ell_2}{2}$; and  the cone structure gives that 
 $|x_2-y_2|<2\ell_2$, then we have \begin{align*}
r_2\geq|v|&\geq|x_2-c_{I_2}|-|v+c_{I_2}-y_2|-|y_2-x_2|
     \geq|x_2-c_{I_2}|-\frac{7\ell_2}{2},
 \end{align*} 
 which contradicts to $x_2\in(100\widehat{I_2})^c$ and hence the term $\widetilde{(\mathcal{D}31)}$ has no contribution.
\begin{itemize}
      {\item \it Estimate on $\widetilde{(\mathcal{D}32)}$}:
   \end{itemize}
We further decompose $\widetilde{(\mathcal{D}32)}$ into
\begin{align*}
&\int_{\mathbb{R}^{m}}\int^\infty_{\frac{|x_2-c_{I_2}|}{4}} \int^{\ell_2 }_0 \left(\chi_{B_{r_2}(0)}*\chi_{B_{r_3}(0)}\right
)(x_2-y_2)\cdot
\left\|a_R*_2 \varphi^{(2)}_{{r}_2 }*_3\varphi^{(3)}_{r_3}(\mathbf{y})\right\|^2_{L^2(\mathbb{R}^m,dy_1)}\,\frac { dr_2dr_3}{r^{m+1}_2r^{m+1}_3}\,d{y}_2\\
&\quad+\int_{\mathbb{R}^{m}}\int^{\frac{|x_2-c_{I_2}|}{4}}_{\ell_2 } \int^{\ell_2 }_0 \left(\chi_{B_{r_2}(0)}*\chi_{B_{r_3}(0)}\right
)(x_2-y_2)\cdot
\left\|a_R*_2 \varphi^{(2)}_{{r}_2 }*_3\varphi^{(3)}_{r_3}(\mathbf{y})\right\|^2_{L^2(\mathbb{R}^m,dy_1)}\,\frac { dr_2dr_3}{r^{m+1}_2r^{m+1}_3}\,d{y}_2\\
&=:I(\widetilde{(\mathcal{D}32)})+II(\widetilde{(\mathcal{D}32)}).
\end{align*}
For the term $I(\widetilde{(\mathcal{D}32)})$, by applying the trivial estimate to the convolution $\chi_{B_{r_2}(0)}*\chi_{B_{r_3}(0)}$, Littlewood--Paley's inequality to eliminate $\varphi^{(2)}_{{r}_2 }$ and Young's convolution inequality, this term is dominated by
\begin{align*}
\int^\infty_{\frac{|x_2-c_{I_2}|}{4}}  \left\|a_R*_3 \varphi^{(3)}_{{r}_3 }\right\|^2_{L^2(\mathbb{R}^{2m})}\,\frac { dr_3}{r^{m+1}_3}\lesssim|x_2-c_{I_2}|^{-(m+4N_3)}\left\|\triangle^{N_1}_1\triangle^{N_2}_2b_{R}\right\|^2_{L^2(\mathbb{R}^{2m})}.
\end{align*}
Similar to the estimate on $\widetilde{(\mathcal{D}1)}$, by adapting the dyadic decomposition (\ref{dyadicdecomp}) to $(100\widehat{I_2})$, we have
   \begin{align*}
    &|I_1|^{\frac{1}{2}}\int_{(100\widehat{I_2})^c}\left(I(\widetilde{(\mathcal{D}32)})\right)^{\frac{1}{2}}\,d{x_2}\lesssim \bigg({\ell(\widehat{I_2})\over\ell(I_2)}\bigg)^{\frac{m}{2}-2N_3}\cdot|R|^{\frac{1}{2}}\cdot \ell^{-2N_3}_2\left\|\triangle^{N_1}_1\triangle^{N_2}_2b_{R}\right\|_{L^2(\mathbb{R}^{2m})}.
\end{align*}

For the term $II(\widetilde{(\mathcal{D}32)})$, similar to $\widetilde{(\mathcal{D}31)}$, 
by analyzing the support condition from cone and the functions $\varphi^{(2)}_{{r}_2 }$ and $\varphi^{(3)}_{{r}_3}$, this term would vanish.
\smallskip

 $\blacksquare$ {\it Estimate on the term $\widetilde{(\mathcal{D}2)}$}: We  split this term into $\widetilde{(\mathcal{D}21)}$, $\widetilde{(\mathcal{D}22)}$ and $\widetilde{(\mathcal{D}23)}$.
\begin{align*}
\widetilde{(\mathcal{D}2)}&=\int_{\mathbb{R}^{m}}\int_{\mathbb{R}_+}\int^{\frac{|x_2-c_{I_2}|}{4}}_{\ell_2}\left(\chi_{B_{r_2}(0)}*\chi_{B_{r_3}(0)}\right
)(x_2-y_2)\cdot
\left\|a_R*_2 \varphi^{(2)}_{{r}_2 }*_3\varphi^{(3)}_{r_3}(\mathbf{y})\right\|^2_{L^2(\mathbb{R}^m,dy_1)}\,\frac { dr_2dr_3}{r^{m+1}_2r^{m+1}_3}\,d{y}_2\\
&=\int_{ \mathbb{R}^{m}}\int^{\ell_2}_0\int^{\frac{|x_2-c_{I_2}|}{4}}_{\ell_2}\left(\chi_{B_{r_2}(0)}*\chi_{B_{r_3}(0)}\right
)(x_2-y_2)\cdot
\left\|a_R*_2 \varphi^{(2)}_{{r}_2 }*_3\varphi^{(3)}_{r_3}(\mathbf{y})\right\|^2_{L^2(\mathbb{R}^m,dy_1)}\,\frac { dr_2dr_3}{r^{m+1}_2r^{m+1}_3}\,d{y}_2\\
&\quad+\int_{ \mathbb{R}^{m}}\int^{\frac{|x_2-c_{I_2}|}{4}}_{\ell_2}\int^{\frac{|x_2-c_{I_2}|}{4}}_{\ell_2}\left(\chi_{B_{r_2}(0)}*\chi_{B_{r_3}(0)}\right
)(x_2-y_2)\cdot
\left\|a_R*_2 \varphi^{(2)}_{{r}_2 }*_3\varphi^{(3)}_{r_3}(\mathbf{y})\right\|^2_{L^2(\mathbb{R}^m,dy_1)}\,\frac { dr_2dr_3}{r^{m+1}_2r^{m+1}_3}\,d{y}_2\\
&\quad+\int_{ \mathbb{R}^{m}}\int^\infty_{\frac{|x_2-c_{I_2}|}{4} }\int^{\frac{|x_2-c_{I_2}|}{4}}_{\ell_2}\left(\chi_{B_{r_2}(0)}*\chi_{B_{r_3}(0)}\right
)(x_2-y_2)\cdot
\left\|a_R*_2 \varphi^{(2)}_{{r}_2 }*_3\varphi^{(3)}_{r_3}(\mathbf{y})\right\|^2_{L^2(\mathbb{R}^m,dy_1)}\,\frac { dr_2dr_3}{r^{m+1}_2r^{m+1}_3}\,d{y}_2\\
&=:\widetilde{(\mathcal{D}21)}+\widetilde{(\mathcal{D}22)}+\widetilde{(\mathcal{D}23)}.
\end{align*}
By investigating the support condition given by cone structure and the functions $\varphi^{(2)}_{{r}_2 }$ and $\varphi^{(3)}_{{r}_3}$, one can show that $\widetilde{(\mathcal{D}21)}=0$; besides, being parallel to the estimate on $I(\widetilde{(\mathcal{D}32)})$, we can verify that
\begin{align*}
    &|I_1|^{\frac{1}{2}}\int_{(100\widehat{I_2})^c}\left(\widetilde{(\mathcal{D}23)}\right)^{\frac{1}{2}}\,d{x_2}\lesssim \bigg({\ell(\widehat{I_2})\over\ell(I_2)}\bigg)^{\frac{m}{2}-2N_2}\cdot|R|^{\frac{1}{2}}\cdot \ell^{-2N_2}_2\left\|\triangle^{N_1}_1\triangle^{N_2}_2b_{R}\right\|^2_{L^2(\mathbb{R}^{2m})}.
\end{align*}

Now, it remains to estimate the term $\widetilde{(\mathcal{D}22)}$. We will excavate the pointwise estimate given by the functions $\varphi^{(2)}_{{r}_2 }$ and $\varphi^{(3)}_{{r}_3}$.
    Since  $\ell_2\leq r_2,\,r_3\leq\frac{|x_2-c_{I_2}|}{4}$ and 
   \begin{align*}
a_R*_2 \varphi^{(2)}_{{r}_2 }*_3\varphi^{(3)}_{r_3}(\mathbf{y})=r^{-2N_2}_2
\int_{\mathbb{R}^m}\left(\triangle^{N_1}_1\triangle^{N_3}_3b_R*_3\varphi^{(3)}_{r_3}\right)(y_1,y_2-v)\cdot (\triangle^{N_2}_2\varphi^{(2)})_{r_2}(v)\,dv,
   \end{align*}
then we have $|y_2-v-c_{I_2}|\leq\frac{\ell_2}{2}+r_3$ and $|v|\leq\frac{|x_2-c_{I_2}|}{4}$. Besides, the cone structure gives that $|x_2-y_2|\leq \frac{|x_2-c_{I_2}|}{2}$. Therefore, we have
\begin{align*}
|v|&\geq |x_2-c_{I_2}|-|y_2-x_2|-|v-y_2+c_{I_2}|
       \geq\frac{6}{25}|x_2-c_{I_2}|,
 \end{align*}   
 where the last inequality follows from the fact that
$|x_2-c_{I_2}|\geq50\cdot\ell_2$, and hence we have $|v|\simeq |x_2-c_{I_2}|$. As a result, one has that
\begin{align*}
&\left|a_R*_2 \varphi^{(2)}_{{r}_2 }*_3\varphi^{(3)}_{r_3}(\mathbf{y})\right|^2\lesssim\frac{r^{2(M_2-m {-2N_2})}_2}{\left(r_2+|x_2-c_{I_2}|\right)^{2M_2}}\left(\int_{|v|\simeq|x_2-c_{I_2}|}\left|\triangle^{N_1}_1\triangle^{N_3}_3b_R*_3\varphi^{(3)}_{r_3}(y_1,y_2-v)\right| \,dv\right)^2\\
&=\frac{r^{2(M_2-m{-2N_2})}_2}{\left(r_2+|x_2-c_{I_2}|\right)^{2M_2}}\cdot r^{-4N_3}_3\left(\int_{|v|\simeq|x_2-c_{I_2}|}\left|\triangle^{N_1}_1 b_R*_3\left(\triangle^{N_3}_3\varphi^{(3)}\right)_{r_3}(y_1,y_2-v)\right| \,dv\right)^2\\
&=\frac{r^{2(M_2-m{-2N_2})}_2}{\left(r_2+|x_2-c_{I_2}|\right)^{2M_2}}\cdot r^{-4N_3}_3\left(\int_{|v|\simeq|x_2-c_{I_2}|}\left|\int_{\mathbb{R}^m}\triangle^{N_1}_1 b_R(y_1-z,y_2-v-z)\cdot\left(\triangle^{N_3}_3\varphi^{(3)}\right)_{r_3}(z)\,dz\right| \,dv\right)^2,
\end{align*}
for some $M_2> m$, which will be decided later.

Next, we investigate the support condition of $z$. Since that $|v|\simeq|x_2-c_{I_2}|$, $|y_2-v-z-c_{I_2}|\leq\frac{\ell_2}{2}$ and the cone structure gives that $|x_2-y_2|\leq\frac{|x_2-c_{I_2}|}{2}$, we then have
\begin{align*}
|z|&\geq|x_2-c_{I_2}|-|y_2-v-z-c_{I_2}|-|y_2-x_2|-|v|\geq\frac{6}{25}|x_2-c_{I_2}|,
\end{align*}
which leads to $|z|\simeq |x_2-c_{I_2}|$. Therefore, the Poisson type upper bound and the cancellation (the corresponding  decay) lead to
\begin{align*}
\left|a_R*_2 \varphi^{(2)}_{{r}_2 }*_3\varphi^{(3)}_{r_3}(\mathbf{y})\right|^2&\lesssim \frac{r^{2(M_2-m{-2N_2})}_2}{\left(r_2+|x_2-c_{I_2}|\right)^{2M_2}}\cdot \frac{r^{2(M_3-2N_3)}_3}{\left(r_3+|x_2-c_{I_2}|\right)^{2m+2M_3}}\\
&\quad\times\left(\int_{\substack{|v|\simeq|x_2-c_{I_2}|\\|z|\simeq|x_2-c_{I_2}|}}\left|\triangle^{N_1}_1b_R(y_1-z,y_2-v-z)\right|\,dz \,dv\right)^2,
\end{align*}
for some $M_3>0$. Thus, by plug the pointwise estimate, trivial estimate to the cone structure $\chi_{B_{r_2}(0)}*\chi_{B_{r_3}(0)}$ and then using Minkowski's inequality, we have
\begin{align*}
\widetilde{(\mathcal{D}22)}&\lesssim\int^{\frac{|x_2-c_{I_2}|}{4}}_{\ell_2}\int^{\frac{|x_2-c_{I_2}|}{4}}_{\ell_2} \left(\frac{r^{2(M_2-m{-2N_2})}_2}{\left(r_2+|x_2-c_{I_2}|\right)^{2M_2}}\cdot \frac{r^{2M_3-4N_3}_3}{\left(r_3+|x_2-c_{I_2}|\right)^{2m+2M_3}}\right)\\
&\qquad\times\Bigg[\int_{\substack{|v|\simeq|x_2-c_{I_2}|\\|z|\simeq|x_2-c_{I_2}|}}\left\|\triangle^{N_1}_1b_R\right\|_{L^2(\mathbb{R}^{2m})}\,dv\,dz\Bigg]^2\,\frac{ dr_2dr_3}{r_2r^{m+1}_3}\\
&\simeq \int^{\frac{|x_2-c_{I_2}|}{4}}_{\ell_2}\int^{\frac{|x_2-c_{I_2}|}{4}}_{\ell_2}  \left(\frac{r^{2(M_2-m-2N_2)}_2}{\left(r_2+|x_2-c_{I_2}|\right)^{2M_2}}\cdot \frac{r^{2M_3-4N_3}_3}{\left(r_3+|x_2-c_{I_2}|\right)^{2m+2M_3}}\right)\\
&\qquad\times\left\|\triangle^{N_1}_1b_R\right\|^2_{L^2(\mathbb{R}^{2m})}|x_2-c_{I_2}|^{4m}\,\frac{ dr_2dr_3}{r_2r^{m+1}_3}\\
&{\lesssim\int^{\frac{|x_2-c_{I_2}|}{4}}_{\ell_2}\int^{\frac{|x_2-c_{I_2}|}{4}}_{\ell_2} \left(\frac{\ell_2^{2(M_2-m-2N_2)}}{\left(\ell_2+|x_2-c_{I_2}|\right)^{2M_2}}\cdot \frac{\ell_3^{2M_3-4N_3}}{\left(\ell_2+|x_2-c_{I_2}|\right)^{2m+2M_3}}\right)}\\
&{\qquad\times\left\|\triangle^{N_1}_1b_R\right\|^2_{L^2(\mathbb{R}^{2m})}|x_2-c_{I_2}|^{4m}\,\frac{ dr_2dr_3}{r_2r^{m+1}_3}},
\end{align*}
where { we choose $M_2,M_3$ such that $0<M_2-m<2N_2$ and
$m<M_3<2N_3.$}

 To wrap up,  by our choice of $M_2$ and $M_3$ above, we obtain that $M_2+M_3>2m$. Then by adapting the dyadic decomposition (\ref{dyadicdecomp}), we obtain that 
 \begin{align*}
    &|I_1|^{\frac{1}{2}}\int_{(100\widehat{I_2})^c}\left(\widetilde{(\mathcal{D}22)}\right)^{\frac{1}{2}}\,d{x_2}\notag\\
    &\lesssim|I_1|^{\frac{1}{2}} {\ell_2^{-2N_2}\ell_2^{-2N_3}}\left\|\triangle^{N_1}_1b_R\right\|_{L^2(\mathbb{R}^{2m})}\sum_{j\geq j_0}{(2^j\ell_2)^{3m}}\Bigg[\frac{\ell^{2(M_2-m)}_2}{\left(2^j\ell_2\right)^{2M_2}}\times \frac{\ell^{2M_3-m}_2}{\left(2^j\ell_2\right)^{2m+2M_3}}\Bigg]^{\frac{1}{2}}\notag\\
      &=|R|^{\frac{1}{2}}\cdot\left({\ell^{-2N_2}_2\ell^{-2N_3}_2}\left\|\triangle^{N_1}_1b_R\right\|_{L^2(\mathbb{R}^{2m})}\right)\sum_{j\geq j_0}{2^{j(2m-M_2-M_3)}}\notag\\
        &=|R|^{\frac{1}{2}}\cdot\left({\ell^{-2N_2}_2\ell^{-2N_3}_2}\left\|\triangle^{N_1}_1b_R\right\|_{L^2(\mathbb{R}^{2m})}\right)\cdot \bigg({\ell(\widehat{I_2})\over\ell(I_2)}\bigg)^{-(M_2+M_3-2m)}.
\end{align*}
As a consequence, by combining the covering Lemma for type ${\rm I}$ and the property of the atoms, we complete the estimate on $(\mathcal{D})$. 
\medskip

$\bullet$ {\it Case $(\mathcal{C})$}:
 To estimate the term $(\mathcal{C})$, we first apply the dyadic decompose to the integral area and then applying H\"older's inequality to get 
\begin{align*}
(\mathcal{C})&= \int_{(100{I_1})^c\times (100\widehat{I_2})^c}\ S_{area,\,\varphi}(a_R)(\mathbf{x})\,d\mathbf{x}
\leq\sum^\infty_{i=6}\sum^\infty_{j=j_0}\int_{\substack{|x_1-c_{I_1}|\simeq2^i\ell_1\\|x_2-c_{I_2}|\simeq2^j\ell_2}}\ S_{area,\,\varphi}(a_R)(\mathbf{x})\,d\mathbf{x}\notag\\
&\lesssim |R|^{\frac{1}{2}}\sum^\infty_{i=6}\sum^\infty_{j=j_0}2^{\frac{(i+j)m}{2}}\cdot\left(\int_{\substack{|x_1-c_{I_1}|\simeq2^i\ell_1\\|x_2-c_{I_2}|\simeq2^j\ell_2}}\left|S_{area,\,\varphi}(a_R)(\mathbf{x})\right|^2\,d\mathbf{x}\right)^{\frac{1}{2}},
\end{align*}
where $j_0\in\mathbb{N}$ is such that $2^{j_0m}|I_2|\simeq{|\widehat{I_2}|}{}$.
 Besides, the $L^2$-norm of $S_{area,\,\varphi}(a_R)(\mathbf{x})$ over the dyadic region can be further decomposed as
 \begin{align*}
&\int_{\substack{|x_1-c_{I_1}|\simeq2^i\ell_1\\|x_2-c_{I_2}|\simeq2^j\ell_2}}\left|S_{area,\,\varphi}(a_R)(\mathbf{x})\right|^2\,d\mathbf{x}\\
&=\int_{\substack{|x_1-c_{I_1}|\simeq2^i\ell_1\\|x_2-c_{I_2}|\simeq2^j\ell_2}}\int_{ \mathbb{R}^{2m}}\left(\int^{\ell_1}_0\int^
     {\ell_2}_0+\int^{\ell_1}_0\int^\infty_{\ell_2}+\int^\infty_{\ell_1}\int^
{\ell_2}_0+\int^\infty_{\ell_1}\int^\infty_
{\ell_2}\right)\int^\infty_0 \\
&\quad\quad|a_R* \varphi_{\mathbf{r} } |^2(\mathbf{y})\cdot\chi_{\mathbf{r} }(\mathbf{x}-\mathbf{y})\,\frac{dr_3 dr_2dr_1}{r_3r_2r_1}\,d\mathbf{y}\,d\mathbf{x}\\ 
    &=:(\mathcal{C}1)+{(\mathcal{C}2)+(\mathcal{C}3)}+(\mathcal{C}4).
 \end{align*}
\smallskip
$\blacksquare$ {\it Estimate on the term $(\mathcal{C}1)$}:

To estimate this term, we need to further split the range of $r_3$. 
 \begin{align*}
     (\mathcal{C}1) &=\int_{\substack{|x_1-c_{I_1}|\simeq2^i\ell_1\\|x_2-c_{I_2}|\simeq2^j\ell_2}}\int_{ \mathbb{R}^{2m}}\int^{\ell_1}_0\int^
     {\ell_2}_0\left(\int^{{{\ell_2}}}_0+ \int^{\frac{1}{4}\max\{|x_1-c_{I_1}|,\,|x_2-c_{I_2}|\} }_{{\ell_2}}+\int^\infty_{\frac{1}{4}\max\{|x_1-c_{I_1}|,\,|x_2-c_{I_2}|\}}\right)\\
     &\quad\quad|a_R* \varphi_{\mathbf{r} } |^2(\mathbf{y})\cdot\chi_{\mathbf{r} }(\mathbf{x}-\mathbf{y})\,\frac{ dr_3dr_2dr_1}{r_3r_2r_1}\,d\mathbf{y}\,d\mathbf{x}\\
    &=:(\mathcal{C}11)+(\mathcal{C}12)+(\mathcal{C}13).
 \end{align*}
 \begin{itemize}
     \item {\it Estimate on $(\mathcal{C}11)$}:
 \end{itemize}
 In this case, we have
 \begin{align*}
    (\mathcal{C}11)&=\int_{\substack{|x_1-c_{I_1}|\simeq2^i\ell_1\\|x_2-c_{I_2}|\simeq2^j\ell_2}}\int_{ \mathbb{R}^{2m}}\int^{\ell_1}_0\int^
     {\ell_2}_0\int^{{\ell_2}}_0 \left|\int_{\mathbb{R}^{{2m}}}\left(a_R*_3\varphi^{(3)}_{r_3}\right)(\mathbf{w})\left(\varphi^{(1)}_{r_1}\otimes\varphi^{(2)}_{r_2}\right) (y_1-w_1,y_2-w_2) \,d\mathbf{w}\right|^2\\
     &\quad\times\left[\int_{\mathbb{R}^{m}} \left(\chi_{B_{r_1}(0)}\otimes\chi_{B_{r_2}(0)}\right)(\mathbf{x}-\mathbf{y}-(z,z))\cdot\chi_{B_{r_3}(0)}(z)\,dz \right]\,\frac {  
    dr_3dr_2dr_1}{r^{m+1}_3r^{m+1}_2r^{m+1}_1}\,d\mathbf{y}\,d\mathbf{x}.
 \end{align*}
 
 We investigate the support condition given by the centralized support of $\varphi^{(2)}_2$. The support condition of $\left(a_R*_3\varphi^{(3)}_{r_3}\right)$ and $\varphi^{(2)}_{r_2}$ reveals that
 \begin{align*}
     |c_{I_2}-w_2|\leq \frac{3\ell_2}{2},\, |y_2-w_2|\leq \ell_2\implies|y_2-c_{I_2}|\leq\frac{5\ell_2}{2}.
 \end{align*}
 On the other hand, the support condition inherited from the cone structure gives that
 $|x_2-y_2|\leq 2\ell_2$. Then 
 \begin{align*}
     \ell_2\geq|y_2-w_2|&\geq|x_2-c_{I_2}|-   |c_{I_2}-w_2|-|y_2-x_2|
     \geq |x_2-c_{I_2}|-\frac{7\ell_2}{2}.
 \end{align*}
 which contradicts to the position of $x_2$. Therefore, the term $(\mathcal{C}11)$ vanishes.

\begin{itemize}
     \item {\it Estimate on $(\mathcal{C}12)$}:
 \end{itemize}
 Now, we turn to estimate term $(\mathcal{C}12)$, which is equal to
\begin{align*}
     &\int_{\substack{|x_1-c_{I_1}|\simeq2^i\ell_1\\|x_2-c_{I_2}|\simeq2^j\ell_2}}\int_{ \mathbb{R}^{2m}}\int^{\ell_1}_0\int^
     {\ell_2}_0\int^{\frac{1}{4}\max\{|x_1-c_{I_1}|,\,|x_2-c_{I_2}|\} }_{{\ell_2}} \left|\left(a_R*_3\varphi^{(3)}_{r_3}\right)(\mathbf{w})\left(\varphi^{(1)}_{r_1}\otimes\varphi^{(2)}_{r_2}\right) (y_1-w_1,y_2-w_2) \,d\mathbf{w}\right|^2\\
     &\quad\times\left[\int_{\mathbb{R}^{m}} \left(\chi_{B_{r_1}(0)}\otimes\chi_{B_{r_2}(0)}\right)(\mathbf{x}-\mathbf{y}-(z,z))\cdot\chi_{B_{r_3}(0)}(z)\,dz \right]\,\frac {  
    dr_3dr_2dr_1}{r^{m+1}_3r^{m+1}_2r^{m+1}_1}\,d\mathbf{y}\,d\mathbf{x}.
\end{align*}

Suppose that $|x_2-c_{I_2}|\geq|x_1-c_{I_1}|$, then being parallel to $(\mathcal{C}11)$, we can conclude that $(\mathcal{C}12)=0$. Now, assume that $|x_2-c_{I_2}|<|x_1-c_{I_1}|$. We now investigate the support condition given by the first variable.
By the cone structure, we have
$|x_1-y_1|\leq \ell_1+\frac{|x_1-c_{I_1}|}{4}$; moreover, the centralized condition gives that $|w_1-c_{I_1}|\leq \frac{\ell_1}{2}+\frac{|x_1-c_{I_1}|}{4}$. Therefore, we have that 
\begin{align*}
\ell_1\geq|y_1-w_1|&\geq|x_1-c_{I_1}|- |x_1-y_1|-|w_1-c_{I_1}|
   \geq \frac{|x_1-c_{I_1}|}{2}-\frac{3\ell_1}{2},
\end{align*}
combine with the facts that $\ell_1\leq\frac{|x_1-c_{I_1}|}{50}$,
 which is obviously a contradiction. So, in either case, $(\mathcal{C}12)=0$.
 
\begin{itemize}
     \item {\it Estimate on $(\mathcal{C}13)$}:
 \end{itemize}
For the term $(\mathcal{C}13)$. we interchange the integral and apply Littlewood--Paley inequality to eliminate the functions $\varphi^{(1)}_{r_1}$ and $\varphi^{(2)}_{r_2}$, and thus $(\mathcal{C}13)$ is dominated by
\begin{align*}
    \int^\infty_{\frac{1}{4}\max\{2^i\ell_1,\,2^j\ell_2\}}\left\|a_R*_3\varphi^{(3)}_{r_3} \right\|^2_{L^2(\mathbb{R}^{2m})}\,\frac{dr_3}{r_3},
\end{align*}
   which leads to
   \begin{align*}
    (\mathcal{C}13)&\lesssim \int^\infty_{\frac{1}{4}\max\{2^i\ell_1,\,2^j\ell_2\}} \left\|\left({ {\triangle}_1^{ N_1 } \triangle^{N_2}_{2}b_{R}}\right)\right\|^2_{L^2(\mathbb{R}^{m})}\,\frac{dr_3}{r^{4N_3+1}_3}
    \simeq \max\{2^i\ell_1,\,2^j\ell_2\}^{-4N_3}\left\|\left({ {\triangle}_1^{ N_1 } \triangle^{N_2}_{2}b_{R}}\right)\right\|^2_{L^2(\mathbb{R}^{2m})}.
   \end{align*}
   To wrap up, if we take $N_3=m$, then we have
   \begin{align*}
       |R|^{\frac{1}{2}}\sum^\infty_{i=6}\sum^\infty_{j=j_0}2^{\frac{(i+j)m}{2}}\cdot(\mathcal{C}13)^{\frac{1}{2}}
        &\lesssim |R|^{\frac{1}{2}}\sum^\infty_{i=6}\sum^\infty_{j=j_0}2^{\frac{(i+j)m}{2}}\cdot(2^i\ell_1)^{-N_3}\cdot(2^j\ell_2)^{-N_3}\left\|\left({ {\triangle}_1^{ N_1 } \triangle^{N_2}_{2}b_{R}}\right)\right\|_{L^2(\mathbb{R}^{2m})}\\
       &\leq |R|^{\frac{1}{2}}\cdot(\ell_1\ell_2)^{-N_3}\left\|\left({ {\triangle}_1^{ N_1 } \triangle^{N_2}_{2}b_{R}}\right)\right\|_{L^2(\mathbb{R}^{2m})}\cdot\bigg({\ell(\widehat{I_2})\over\ell(I_2)}\bigg)^{-\frac{m}{2}}.
   \end{align*}
\smallskip

$\blacksquare$ {\it Estimate on the terms $(\mathcal{C}2)$ and $(\mathcal{C}3)$}:

For the terms $(\mathcal{C}2)$ and $(\mathcal{C}3)$. We only need to, by symmetry, estimate $(\mathcal{C}2)$. By further splitting the range of $r_2$, we have that
\begin{align*}
(\mathcal{C}2)
&=\int_{\substack{|x_1-c_{I_1}|\simeq2^i\ell_1\\|x_2-c_{I_2}|\simeq2^j\ell_2}}\int_{ \mathbb{R}^{2m}}\int^{\ell_1}_0\left(\int^{\frac{|x_2-c_{I_2}|}{4}}_{\ell_2}+\int^\infty_{\frac{|x_2-c_{I_2}|}{4}}\right)\int^\infty_0 |a_R* \varphi_{\mathbf{r} } |^2(\mathbf{y})\cdot\chi_{\mathbf{r} }(\mathbf{x}-\mathbf{y})\,\frac{ dr_3dr_2dr_1}{r_3r_2r_1}\,d\mathbf{y}\,d\mathbf{x}\\ 
  &=:(\mathcal{C}21)+(\mathcal{C}22).
\end{align*}

\begin{itemize}
     \item {\it Estimate on $(\mathcal{C}21)$}:
 \end{itemize}
By further decompose with respect to $r_3$, one has that
\begin{align*}
(\mathcal{C}21)&=\int_{\substack{|x_1-c_{I_1}|\simeq2^i\ell_1\\|x_2-c_{I_2}|\simeq2^j\ell_2}}\int_{ \mathbb{R}^{2m}}\int^{\ell_1}_0\int^{\frac{|x_2-c_{I_2}|}{4}}_{\ell_2}\left(\int^{\max\big\{\frac{|x_1-c_{I_1}|}{4},\frac{|x_2-c_{I_2}|}{8}\big\}}_0+\int^\infty_{\max\big\{\frac{|x_1-c_{I_1}|}{4},\frac{|x_2-c_{I_2}|}{8}\big\}}\right)\\
&\quad\quad|a_R* \varphi_{\mathbf{r} } |^2(\mathbf{y})\cdot\chi_{\mathbf{r} }(\mathbf{x}-\mathbf{y})\,\frac{ dr_3dr_2dr_1}{r_3r_2r_1}\,d\mathbf{y}\,d\mathbf{x}\\
&=I(\mathcal{C}21)+II(\mathcal{C}21).
\end{align*}
We first estimate the term $I(\mathcal{C}21)$. Similar to the term $(\mathcal{C}12)$, one can verify that this term would vanish.
 
Next, we estimate the term $II(\mathcal{C}21)$. Base on the similar method as presented in estimation of $(\mathcal{C}13)$, 
\begin{align*}
       |R|^{\frac{1}{2}}\sum^\infty_{i=6}\sum^\infty_{j=j_0}2^{\frac{(i+j)m}{2}}\cdot\left(II(\mathcal{C}21)\right)^{\frac{1}{2}}
       &\simeq  |R|^{\frac{1}{2}}\cdot\left(\ell_1\ell_2\right)^{-N_3}\left\|\triangle^{N_1}_1\triangle^{N_2}_{2}b_R\right\|_{L^2(\mathbb{R}^{2m})}\cdot\bigg({\ell(\widehat{I_2})\over\ell(I_2)}\bigg)^{-(N_3-\frac{m}{2})},
   \end{align*}
  provided that $2N_3>m$. 
\smallskip
\
\begin{itemize}
     \item {\it Estimate on $(\mathcal{C}22)$}:
 \end{itemize}
 By further decompose with respect to $r_3$, one has that
\begin{align*}
(\mathcal{C}22)&=\int_{\substack{|x_1-c_{I_1}|\simeq2^i\ell_1\\|x_2-c_{I_2}|\simeq2^j\ell_2}}\int_{ \mathbb{R}^{2m}}\int^{\ell_1}_0\int^\infty_{\frac{|x_2-c_{I_2}|}{4}}\left(\int^{\frac{|x_1-c_{I_1}|}{4}}_0+\int^\infty_{\frac{|x_1-c_{I_1}|}{4}}\right)\\
&\qquad|a_R* \varphi_{\mathbf{r} } |^2(\mathbf{y})\cdot\chi_{\mathbf{r} }(\mathbf{x}-\mathbf{y})\,\frac{ dr_3dr_2dr_1}{r_3r_2r_1}\,d\mathbf{y}\,d\mathbf{x}\\
&=I(\mathcal{C}22)+II(\mathcal{C}22).
\end{align*}
Similar to the estimate of $(\mathcal{C}12)$, we can investigate the support condition given by the first variable and conclude that the term $I(\mathcal{C}22)$ would be equal to zero. As for the estimate of the term $II(\mathcal{C}22)$, interchanging the integral and apply Littlewood--Paley's inequality to eliminate $\varphi^{(1)}_{r_1}$, we have $II(\mathcal{C}22)$ is bounded above by
\begin{align*}
 \int^\infty_{\frac{2^j\ell_2}{4}}\int^\infty_{\frac{2^i\ell_1}{4}} \left\|a_R*_2\varphi^{(2)}_{r_2}*_3 \varphi^{(3)}_{{r}_3 } \right\|^2_{L^2(\mathbb{R}^{2m})}\,\frac {dr_3dr_2}{r_3r_2}.
\end{align*}
To continue, by applying Young's convolution inequality, we know that
\begin{align*}
 II(\mathcal{C}22)\lesssim\int^\infty_{\frac{2^j\ell_2}{4}}\int^\infty_{\frac{2^i\ell_1}{4}} \left\|\triangle^{N_1}_1b_R\right\|^2_{L^2(\mathbb{R}^{2m})}\,\frac {dr_3dr_2}{r^{4N_3+1}_3r^{4N_2+1}_2},
\end{align*}
and hence to wrap up, take $N_2=N_3=m$, we have
\begin{align*}
       |R|^{\frac{1}{2}}\sum^\infty_{i=6}\sum^\infty_{j=j_0}2^{\frac{(i+j)m}{2}}\cdot\left(II(\mathcal{C}22)\right)^{\frac{1}{2}}
       &\lesssim  |R|^{\frac{1}{2}}\sum^\infty_{i=6}\sum^\infty_{j=j_0}2^{{i(\frac{m}{2}-N_2)}} 2^{{j(\frac{m}{2}-N_3)}} \left(\ell_1\ell_2\right)^{-2m}\left\|\triangle^{N_1}_1b_R\right\|_{L^2(\mathbb{R}^{2m})}\\
       &\simeq  |R|^{\frac{1}{2}}\cdot\left(\ell^{-2N_2}_1\ell^{-2N_2}_2\right)\left\|\triangle^{N_1}_1b_R\right\|_{L^2(\mathbb{R}^{2m})}\cdot\bigg({\ell(\widehat{I_2})\over\ell(I_2)}\bigg)^{-\frac{m}{2}}.
   \end{align*}
  So, we complete the estimate on $(\mathcal{C}2)$.
\smallskip

$\blacksquare$ {\it Estimate on the term $(\mathcal{C}4)$}:

 Consider that the following decomposition
 \begin{align*}
(\mathcal{C}4)&=\int_{\substack{|x_1-c_{I_1}|\simeq2^i\ell_1\\|x_2-c_{I_2}|\simeq2^j\ell_2}}\int_{ \mathbb{R}^{2m}}\Bigg(\int^\infty_{\frac{|x_1-c_{I_1}|}{4}}\int^\infty_
     {\frac{|x_2-c_{I_2}|}{4}}+\int^\infty_{\frac{|x_1-c_{I_1}|}{4}}\int^{\frac{|x_2-c_{I_2}|}{4}}_{\ell_2}+\int^{\frac{|x_1-c_{I_1}|}{4}}_{\ell_1}\int^\infty_
     {\frac{|x_2-c_{I_2}|}{4}}\\
     &\quad+\int^{\frac{|x_1-c_{I_1}|}{4}}_{\ell_1}\int^
     {\frac{|x_2-c_{I_2}|}{4}}_{\ell_2} \Bigg)\int^\infty_0|a_R* \varphi_{\mathbf{r} } |^2(\mathbf{y})\cdot\chi_{\mathbf{r} }(\mathbf{x}-\mathbf{y})\,\frac{ dr_3dr_2dr_1}{r_3r_2r_1}\,d\mathbf{y}\,d\mathbf{x}\\
    &=:(\mathcal{C}41)+(\mathcal{C}42)+(\mathcal{C}43)+(\mathcal{C}44).
 \end{align*}
 
\begin{itemize}
     \item {\it Estimate on $(\mathcal{C}41)$}:
 \end{itemize}
 By interchanging the integral and applying Littlewood--Paley inequality to eliminate the function $\varphi^{(3)}_{r_3}$, 
 \begin{align*}
     (\mathcal{C}41)
&\lesssim\int^\infty_{\frac{2^i\ell_1}{4}}\int^\infty_
     {\frac{2^j\ell_2}{4}}\left\|a_R*\left(\varphi^{(1)}_{r_1 }\otimes \varphi^{(2)}_{r_2 }\right)  \right\|^2_{L^2(\mathbb{R}^{2m})}\,\frac{dr_2dr_1}{r_2r_1}.
 \end{align*}
 Then if we apply the cancellation property from $\triangle^{N_1}_{1}$ and  $\triangle^{N_2}_{2}$ and
 take $N_1=N_2=m$, then 
  \begin{align*}
&|R|^{\frac{1}{2}}\sum^\infty_{i=6}\sum^\infty_{j=j_0}2^{\frac{(i+j)m}{2}}\cdot\left((\mathcal{C}41)\right)^{\frac{1}{2}}\simeq|R|^{\frac{1}{2}}\cdot \ell_1^{-2N_1}\ell_2^{-2N_2}\cdot\left\|\triangle^{N_3}_{twist}b_R\right\|_{L^2(\mathbb{R}^{2m})}\cdot\bigg({\ell(\widehat{I_2})\over\ell(I_2)}\bigg)^{-\frac{3m}{2}}.
   \end{align*}

\begin{itemize}
\item {\it Estimate on $(\mathcal{C}42)$}:
\end{itemize}
 We further split this term $(\mathcal{C}42)$.
 \begin{align*}
(\mathcal{C}42)&=\int_{\substack{|x_1-c_{I_1}|\simeq2^i\ell_1\\|x_2-c_{I_2}|\simeq2^j\ell_2}}\int_{ \mathbb{R}^{2m}}\int^\infty_{\frac{|x_1-c_{I_1}|}{4}}\int^{\frac{|x_2-c_{I_2}|}{4}}_{\ell_2}\left(\int^{\ell_2}_0+\int^{\frac{|x_2-c_{I_2}|}{4} }_{\ell_2}+\int^\infty_{\frac{|x_2-c_{I_2}|}{4}}\right) \\
&\qquad|a_R* \varphi_{\mathbf{r} } |^2(\mathbf{y})\cdot\chi_{\mathbf{r} }(\mathbf{x}-\mathbf{y})\,\frac{ dr_3dr_2dr_1}{r_3r_2r_1}\,d\mathbf{y}\,d\mathbf{x}\\
&=:I(\mathcal{C}42)+II(\mathcal{C}42)+III(\mathcal{C}42).
 \end{align*}
By investigating the second variable in $I(\mathcal{C}42)$, we then conclude that this term vanishes. For the estimate of $III(\mathcal{C}42)$, we can use the similar idea in estimating $(C41)$ to deduce that
\begin{align*}
    III(\mathcal{C}42)\lesssim \left(2^i\ell_1\right)^{-4N_1}\left(2^j\ell_2\right)^{-4N_2}\left\|{\triangle}_{twist}^{ N_3 } b_{R}\right\|^2_{L^2(\mathbb{R}^{2m})},
\end{align*}
and hence to wrap up, take $N_1=N_2=m$ and one has
\begin{align*}
|R|^{\frac{1}{2}}\sum^\infty_{i=6}\sum^\infty_{j=j_0}2^{\frac{(i+j)m}{2}}\cdot\left(III(\mathcal{C}42)\right)^{\frac{1}{2}}
\lesssim|R|^{\frac{1}{2}}\cdot\ell_1^{-2N_1}\ell_2^{-2N_2}\left\|{\triangle}_{twist}^{ N_3 } b_{R}\right\|_{L^2(\mathbb{R}^{2m})}\cdot\bigg({\ell(\widehat{I_2})\over\ell(I_2)}\bigg)^{-\frac{3m}{2}}.
\end{align*}

Now we estimate the term $II(\mathcal{C}42)$. We first interchanging of integral, use the cancellation property of $\triangle^{N_1}_1$, and apply Young's convolution inequality to eliminate the factor $\varphi^{(1)}_{r_1}$, then
\begin{align}\label{tttttttttttt1}
II(\mathcal{C}42)&\lesssim\int_{|x_2-c_{I_2}|\simeq2^j\ell_2}\int_{ \mathbb{R}^{m}}\int^{\frac{|x_2-c_{I_2}|}{4}}_{\ell_2}\int^{\frac{|x_2-c_{I_2}|}{4} }_{\ell_2}\int^\infty_{\frac{2^i\ell_1}{4}}\left\|a_R*\varphi_{\mathbf{r}}(y_1,\,y_2)\right\|^2_{L^2(\mathbb{R}^{m},\,d{y}_1)}\,\frac{dr_1}{r_1}\notag\\
  &\qquad\times\left(\chi_{B_{r_2}(0)}*\chi_{B_{r_3}(0)}\right)(x_2-y_2)\,\frac {  
    dr_3dr_2}{r^{m+1}_3r^{m+1}_2}\,d{y}_2\,dx_2\notag\\
      &\lesssim\int_{|x_2-c_{I_2}|\simeq2^j\ell_2}\int_{ \mathbb{R}^{m}}\int^{\frac{|x_2-c_{I_2}|}{4}}_{\ell_2}\int^{\frac{|x_2-c_{I_2}|}{4} }_{\ell_2}\Bigg[\int^\infty_{\frac{2^i\ell_1}{4}}\left\|\left({ {\triangle}_2^{ N_2 } \triangle^{N_3}_{twist}b_{R}}*_2\varphi^{(2)}_{r_2}*_3\varphi^{(3)}_{r_3}\right)(\mathbf{y})\right\|^2_{L^2(\mathbb{R}^{m},\,d{y}_1)}\notag\\
      &\qquad\frac{dr_1}{r^{1+4N_1}_1}\Bigg]\times\left(\chi_{B_{r_2}(0)}*\chi_{B_{r_3}(0)}\right)(x_2-y_2)\,\frac {  
    dr_3dr_2}{r^{m+1}_3r^{m+1}_2}\,d{y}_2\,dx_2\notag\\
    &\simeq(2^i\ell_1)^{-4N_1}\int_{|x_2-c_{I_2}|\simeq2^j\ell_2}\int_{ \mathbb{R}^{2m}}\int^{\frac{|x_2-c_{I_2}|}{4}}_{\ell_2}\int^{\frac{|x_2-c_{I_2}|}{4} }_{\ell_2}\left| \left({ {\triangle}_2^{ N_2 } \triangle^{N_3}_{twist}b_{R}}*_2\varphi^{(2)}_{r_2}*_3\varphi^{(3)}_{r_3}\right)(\mathbf{y}) \right|^2\notag\\
      &\qquad\times\left(\chi_{B_{r_2}(0)}*\chi_{B_{r_3}(0)}\right)(x_2-y_2)\,\frac {  
    dr_2dr_3}{r^{m+1}_2r^{m+1}_3}\,d\mathbf{y}\,dx_2.
\end{align}
To estimate (\ref{tttttttttttt1}), we will utilize the support condition from the cone and dig out the pointwise estimate. Since
\begin{align*}
    \left| \left({ {\triangle}_2^{ N_2 } \triangle^{N_3}_{twist}b_{R}}*_2\varphi^{(2)}_{r_2}*_3\varphi^{(3)}_{r_3}\right)(\mathbf{y}) \right|^2=\left|\int_{\mathbb{R}^m} \left( {\triangle}_2^{ N_2 } \triangle^{N_3}_{twist}b_{R}*_3\varphi^{(3)}_{r_3}\right)(y_1,y_2-v)\cdot\varphi^{(2)}_{r_2}(v)\,dv\right|^2,
\end{align*}
we obtain that $|y_2-v-c_{I_2}|\leq \frac{\ell_2}{2}+ \frac{|x_2-c_{I_2}|}{4}$ and $|v|\leq\frac{|x_2-c_{I_2}|}{4}$; and from the cone structure in (\ref{tttttttttttt1}), we have that $|x_2-y_2|\leq \frac{|x_2-c_{I_2}|}{2}$, which leads to
\begin{align*}
    |v|&\geq |x_2-c_{I_2}|-|y_2-v-c_{I_2}|-|y_2-x_2|
    \geq \frac{|x_2-c_{I_2}|}{4}-\frac{\ell_2}{2}
    \geq\frac{6|x_2-c_{I_2}|}{25},
\end{align*}
where the last inequality can be deduced from the support condition of $x_2$, and hence $|v|\simeq|x_2-c_{I_2}|$. Thus, we can deduce that 
\begin{align*}
    &\left| \left({ {\triangle}_2^{ N_2 } \triangle^{N_3}_{twist}b_{R}}*_2\varphi^{(2)}_{r_2}*_3\varphi^{(3)}_{r_3}\right)(\mathbf{y}) \right|^2\\&=r^{-4N_2}_2\left|\int_{\mathbb{R}^m} \left( \triangle^{N_3}_{twist}b_{R}*_3\varphi^{(3)}_{r_3}\right)(y_1,y_2-v)\cdot\left({\triangle}_2^{ N_2 } \varphi^{(2)}\right)_{r_2}(v)\,dv\right|^2\\
    &\lesssim\frac{r^{2M_2-4N_2}_2}{\left(r_2+|x_2-c_{I_2}|\right)^{2m+2M_2}}\cdot \left(\int_{|v|\simeq|x_2-c_{I_2}|} \Big|\left( \triangle^{N_3}_{twist}b_{R}*_3\varphi^{(3)}_{r_3}\right)(y_1,y_2-v)\Big|\,dv\right)^2
\end{align*}
for some $M_2>0$. So, $II(\mathcal{C}42)$ is bounded above by
\begin{align*}
      &(2^i\ell_1)^{-4N_1}\int_{|x_2-c_{I_2}|\simeq2^j\ell_2}\int_{ \mathbb{R}^{2m}}\int^{\frac{|x_2-c_{I_2}|}{4}}_{\ell_2}\int^{\frac{|x_2-c_{I_2}|}{4} }_{\ell_2}\frac{r^{2M_2-4N_2}_2}{\left(r_2+|x_2-c_{I_2}|\right)^{2m+2M_2}}\\
&\times\left(\int_{|v|\simeq|x_2-c_{I_2}|} \Big|\left( \triangle^{N_3}_{twist}b_{R}*_3\varphi^{(3)}_{r_3}\right)(y_1,y_2-v)\Big|\,dv\right)^2\cdot\left(\chi_{B_{r_2}(0)}*\chi_{B_{r_3}(0)}\right)(x_2-y_2)\,\frac {  
    dr_2dr_3}{r^{m+1}_2r^{m+1}_3}\,d\mathbf{y}\,dx_2.
\end{align*}
Next, we will explore the Poisson type estimate given by $\varphi^{(3)}_{r_3}$. Consider that
\begin{align*}
     &\left( \triangle^{N_3}_{twist}b_{R}*_3\varphi^{(3)}_{r_3}\right)(y_1,y_2-v)=r^{-2N_3}_3\cdot\int_{\mathbb{R}^m}b_{R}(y_1-z,y_2-v-z)\cdot\left(\triangle^{N_3}_{3}\varphi^{(3)}\right)_{r_3}(z)\,dz.
\end{align*}
Since  $|v|\leq\frac{|x_2-c_{I_2}|}{4}$, $|y_2-v-z-c_{I_2}|\leq\frac{\ell_2}{2}$ and the cone structure gives that $|x_2-y_2|\leq\frac{|x_2-c_{I_2}|}{2}$, we then have
\begin{align*}
    \frac{1}{4}|x_2-c_{I_2}|\geq|z|&\geq|x_2-c_{I_2}|-|y_2-v-z-c_{I_2}|-|y_2-x_2|-|v|\geq\frac{1}{4}|x_2-c_{I_2}|-\frac{\ell_2}{2},
\end{align*}
which leads to $|z|\simeq |x_2-c_{I_2}|$. Then the Poisson type bound gives that
\begin{align*}
     &\left|  \triangle^{N_3}_{twist}b_{R}*_3\varphi^{(3)}_{r_3}\right|(y_1,y_2-v)
     \lesssim\frac{r^{M_3-2N_3}_3}{\left(r_3+|x_2-c_{I_2}|\right)^{m+M_3}}\int_{\mathbb{R}^m}\left|b_{R}(y_1-z,y_2-v-z)\right|\,dz,
\end{align*}
 where $M_3$ is a positive integer, which further implies that the term $II(\mathcal{C}42)$ is dominated by
 \begin{align*}
&(2^i\ell_1)^{-4N_1}\int_{|x_2-c_{I_2}|\simeq2^j\ell_2}\int_{ \mathbb{R}^{2m}}\int^{\frac{|x_2-c_{I_2}|}{4}}_{\ell_2}\int^{\frac{|x_2-c_{I_2}|}{4} }_{\ell_2}\frac{r^{2M_2-4N_2}_2}{\left(r_2+|x_2-c_{I_2}|\right)^{2m+2M_2}}\cdot\frac{r^{2M_3-4N_3}_3}{\left(r_3+|x_2-c_{I_2}|\right)^{2m+2M_3}}\\
&\times\left[\int_{|v|\simeq|x_2-c_{I_2}|} \int_{|z|\simeq|x_2-c_{I_2}|} \left| b_{R}(y_1-z,y_2-v-z)\right|\,dz\,dv\right]^2\cdot\left(\chi_{B_{r_2}(0)}*\chi_{B_{r_3}(0)}\right)(x_2-y_2)\,\frac {  
    dr_2dr_3}{r^{m+1}_2r^{m+1}_3}\,d\mathbf{y}\,dx_2.
\end{align*}
To continue, by Minkowski's inequality and H\"older's inequality, one has
\begin{align*}
    &\int_{\mathbb{R}^m}\left[ \int_{|v|\simeq|x_2-c_{I_2}|} \int_{|z|\simeq|x_2-c_{I_2}|} \left| b_{R}(y_1-z,y_2-v-z)\right|\,dz\,dv\right]^2\,dy_1\\
    &\leq\left[ \int_{|z|\simeq|x_2-c_{I_2}|} \int_{|v|\simeq|x_2-c_{I_2}|}\left(\int_{\mathbb{R}^m}\left| b_{R}(y_1-z,y_2-v-z)\right|^2\,dy_1\right)^{\frac{1}{2}}\,dv\,dz\right]^2\\
     &\lesssim\left[ \int_{|z|\simeq|x_2-c_{I_2}|} \left(\int_{\mathbb{R}^m}\int_{\mathbb{R}^m}\left|b_{R}(y_1-z,y_2-v-z)\right|^2\,dy_1\,dv\right)^{\frac{1}{2}}\,dz\right]^2\cdot|x_2-c_{I_2}|^{m}\\
       &\simeq |x_2-c_{I_2}|^{3m}\cdot\left\|b_{R}\right\|^2_{L^2(\mathbb{R}^{2m})}.
\end{align*}
Therefore, if $M_2-2N_2<0$ and $M_3-2N_3<0$, the term $II(\mathcal{C}42)$ can be controlled by
 \begin{align*}
&(2^i\ell_1)^{-4N_1}\int_{|x_2-c_{I_2}|\simeq2^j\ell_2}\int_{ \mathbb{R}^{m}}\int^{\frac{|x_2-c_{I_2}|}{4}}_{\ell_2}\int^{\frac{|x_2-c_{I_2}|}{4} }_{\ell_2}\frac{r^{2M_2-4N_2}_2}{\left(r_2+|x_2-c_{I_2}|\right)^{2m+2M_2}}\cdot\frac{r^{2M_3-4N_3}_3}{\left(r_3+|x_2-c_{I_2}|\right)^{2m+2M_3}}\\
&\qquad\times|x_2-c_{I_2}|^{3m}\cdot\left\|b_{R}\right\|^2_{L^2(\mathbb{R}^{2m})}\cdot\left(\chi_{B_{r_2}(0)}*\chi_{B_{r_3}(0)}\right)(x_2-y_2)\,\frac {  
    dr_3dr_2}{r^{m+1}_3r^{m+1}_2}\,dy_2\,dx_2\\
&\lesssim(2^i\ell_1)^{-4N_1}\int_{|x_2-c_{I_2}|\simeq2^j\ell_2}\int^{\frac{|x_2-c_{I_2}|}{4}}_{\ell_2}\int^{\frac{|x_2-c_{I_2}|}{4} }_{\ell_2}\frac{r^{2M_2-4N_2}_2}{\left(r_2+|x_2-c_{I_2}|\right)^{2m+2M_2}}\cdot\frac{r^{2M_3-4N_3}_3}{\left(r_3+|x_2-c_{I_2}|\right)^{2m+2M_3}}\\
&\qquad\times|x_2-c_{I_2}|^{3m}\cdot\left\| b_{R}\right\|^2_{L^2(\mathbb{R}^{2m})}\,\frac {  dr_3dr_2}{r_3r_2}\,dx_2\\
&\simeq(2^i\ell_1)^{-4N_1}\left(2^j\ell_2\right)^{4m}\left\|b_{R}\right\|^2_{L^2(\mathbb{R}^{2m})}\times\Bigg[\int^{\frac{2^j\ell_2}{4}}_{\ell_2} \frac{r^{2M_2-4N_2}_2}{\left(r_2+2^j\ell_2\right)^{2m+2M_2}} \,\frac{dr_2}{r_2}\times\int^{\frac{2^j\ell_2}{4} }_{\ell_2}\frac{r^{2M_3-4N_3}_3}{\left(r_3+2^j\ell_2\right)^{2m+2M_3}}\frac{dr_3}{r_3}\Bigg]\\
&\lesssim(2^i\ell_1)^{-4N_1}\left(2^j\ell_2\right)^{4m}\left\|b_{R}\right\|^2_{L^2(\mathbb{R}^{2m})}\times\Bigg[ \frac{\ell^{2M_2-4N_2}_2}{\left(2^j\ell_2\right)^{2m+2M_2}} \times\frac{\ell^{2M_3-4N_3}_2}{\left(2^j\ell_2\right)^{2m+2M_3}}\Bigg]\\
&\simeq(2^i)^{-4N_1}\cdot(2^j)^{-2M_2-2M_3}\cdot\ell_1^{-4N_1}\ell^{-4N_2-4N_3}_2\left\|b_{R}\right\|^2_{L^2(\mathbb{R}^{2m})}.
\end{align*}
Finally, if we choose $\frac{m}{2}<2N_1$ and $\frac{m}{2}<M_2+M_3$, then we have
\begin{align*}
       |R|^{\frac{1}{2}}\sum^\infty_{i=6}\sum^\infty_{j=j_0}2^{\frac{(i+j)m}{2}}\cdot\left(II(\mathcal{C}42)\right)^{\frac{1}{2}}
       &\lesssim |R|^{\frac{1}{2}}\sum^\infty_{i=6}\sum^\infty_{j=j_0}2^{i(\frac{m}{2}-2N_1)}\cdot2^{j(\frac{m}{2}-M_2-M_3)}\cdot\ell_1^{-2N_1}\ell^{-2N_2-2N_3}_2\left\|b_{R}\right\|_{L^2(\mathbb{R}^{2m})}\\
       &\simeq|R|^{\frac{1}{2}}\cdot\ell_1^{-2N_1}\ell^{-2N_2-2N_3}_2\left\|b_{R}\right\|_{L^2(\mathbb{R}^{2m})}\cdot \bigg({\ell(\widehat{I_2})\over\ell(I_2)}\bigg)^{(\frac{m}{2}-M_2-M_3)}.
   \end{align*}

  \begin{itemize}
     \item {\it Estimate on $(\mathcal{C}43)$}:
 \end{itemize}
 Now, we estimate the term $(\mathcal{C}43)$. Decompose this term as the following
   \begin{align*}
     &\int_{\substack{|x_1-c_{I_1}|\simeq2^i\ell_1\\|x_2-c_{I_2}|\simeq2^j\ell_2}}\int_{ \mathbb{R}^{2m}}\int^{\frac{|x_1-c_{I_1}|}{4}}_{\ell_1}\int^\infty_
     {\frac{|x_2-c_{I_2}|}{4}}
     \Bigg(\int^{\frac{|x_1-c_{I_1}|}{8}}_0+\int^\infty_{\frac{|x_1-c_{I_1}|}{8}}\Bigg)\\
     &\qquad|a_R* \varphi_{\mathbf{r} } |^2(\mathbf{y})\cdot\chi_{\mathbf{r} }(\mathbf{x}-\mathbf{y})\,\frac{ dr_3dr_2dr_1}{r_3r_2r_1}\,d\mathbf{y}\,d\mathbf{x}\\
    &=:{I(\mathcal{C}43)+II(\mathcal{C}43)}.
 \end{align*}
 For $I(\mathcal{C}43)$, one can examine the support condition given by the first variable and conclude that this term will eventually vanish; while, the term $II(\mathcal{C}43)$ can be easily obtained by following the similar method used in estimating the term 
$III(\mathcal{C}42)$. 
\smallskip
 \begin{itemize}
     \item {\it Estimate on $(\mathcal{C}44)$}
 \end{itemize}
 
We consider the following decomposition
 \begin{align*}
(\mathcal{C}44)
    &=\int_{\substack{|x_1-c_{I_1}|\simeq2^i\ell_1\\|x_2-c_{I_2}|\simeq2^j\ell_2}}\int_{ \mathbb{R}^{2m}}\int^{\frac{|x_1-c_{I_1}|}{4}}_{\ell_1}\int^
     {\frac{|x_2-c_{I_2}|}{4}}_{\ell_2} \bigg[\int^{\ell_1}_0+ \int^{\max\big\{\frac{|x_1-c_{I_1}|}{8},\frac{|x_2-c_{I_2}|}{4} \big\}}_{\ell_1}\\
     &\qquad+\int^\infty_{\max\big\{\frac{|x_1-c_{I_1}|}{8},\frac{|x_2-c_{I_2}|}{4} \big\}}\bigg]\quad |a_R* \varphi_{\mathbf{r} } |^2(\mathbf{y})\cdot\chi_{\mathbf{r} }(\mathbf{x}-\mathbf{y})\,\frac{ dr_3dr_2dr_1}{r_3r_2r_1}\,d\mathbf{y}\,d\mathbf{x}\\&=:I(\mathcal{C}44)+II(\mathcal{C}44)+III(\mathcal{C}44).
 \end{align*}
 For the term $I(\mathcal{C}44)$, observe that
 \begin{align*}
     I(\mathcal{C}44)&=\int_{\substack{|x_1-c_{I_1}|\simeq2^i\ell_1\\|x_2-c_{I_2}|\simeq2^j\ell_2}}\int_{ \mathbb{R}^{2m}}\int^{\frac{|x_1-c_{I_1}|}{4}}_{\ell_1}\int^
     {\frac{|x_2-c_{I_2}|}{4}}_{\ell_2} \int^{\ell_1}_0 \left|\int_{\mathbb{R}^{{2m}}}\left(a_R*_3\varphi^{(3)}_{r_3}\right)(\mathbf{w})\left(\varphi^{(1)}_{r_1}\otimes\varphi^{(2)}_{r_2}\right) (\mathbf{y}-\mathbf{w}) \,d\mathbf{w}\right|^2\\
     &\qquad\times\left(\chi_{B_{r_1}(0)}\otimes\chi_{B_{r_2}(0)}\right)*_3\chi_{B_{r_3}(0)}(\mathbf{x}-\mathbf{y})\,\frac {  
    dr_3dr_2dr_1}{r^{m+1}_3r^{m+1}_2r^{m+1}_1}\,d\mathbf{y}\,d\mathbf{x}.
 \end{align*}
 If we investigate the support condition given by the first variable, then we have
\begin{align*}
\frac{|x_1-c_{I_1}|}{4}\geq|y_1-w_1|&\geq|x_1-c_{I_1}|- |x_1-y_1|-|w_1-c_{I_1}|
\geq\frac{3|x_1-c_{I_1}|}{4}-\frac{5\ell_1}{2},
\end{align*}
which contradicts to the support condition of $x_1$, and thus the term $I(\mathcal{C}44)$ vanishes.
\smallskip

For the term $II(C44)$, without loss of generality, we deal with the case that \begin{align}\label{constrain}
\frac{|x_1-c_{I_1}|}{8}\leq\frac{|x_2-c_{I_2}|}{4}.\end{align}
In order to derive the covering decay, we need to dig out the pointwise estimate from the function $\varphi$, remark that
\begin{align*}
|a_R* \varphi_{\mathbf{r} } |(\mathbf{y})
&=r^{-2N_2}_2r^{-2N_3}_3\left|\triangle^{N_1}_1b_R* \left(\varphi^{(1)}_{r_1}\otimes(\triangle^{N_2}_2\varphi^{(2)})_{r_2}\right)*_3(\triangle^{N_3}_3\varphi^{(3)})_{{r_3}} \right|(\mathbf{y})\\
&=  r^{-2N_2}_2r^{-2N_3}_3 \left|\int_{\mathbb{R}^{{2m}}}\left(\triangle^{N_1}_1b_R*_3(\triangle^{N_3}_3\varphi^{(3)})_{{r_3}}\right)(\mathbf{w})\left(\varphi^{(1)}_{r_1}\otimes(\triangle^{N_2}_2\varphi^{(2)})_{r_2}\right) (y_1-w_1,y_2-w_2) \,d\mathbf{w}\right|.
\end{align*}
  We investigate the support condition given by the second variable.
By the cone structure and (\ref{constrain}), we have
$|x_2-y_2|\leq \frac{|x_2-c_{I_2}|}{2}$; moreover, the support condition of $(\triangle^{N_3}_3\varphi^{(3)})_{{r_3}}$ gives that $|w_2-c_{I_2}|\leq \frac{\ell_2}{2}+\frac{|x_2-c_{I_2}|}{4}$. Therefore, we have
\begin{align*}
\frac{|x_2-c_{I_2}|}{4}\geq|y_2-w_2|&\geq|x_2-c_{I_2}|- |x_2-y_2|-|w_2-c_{I_2}|
   \geq\frac{6|x_2-c_{I_2}|}{25},
\end{align*} 
that is $|y_2-w_2|\simeq |x_2-c_{I_2}|$.
Next, since that
\begin{align*}
\left(\triangle^{N_1}_1b_R*_3(\triangle^{N_3}_3\varphi^{(3)})_{{r_3}}\right)(\mathbf{w})=\int_{\mathbb{R}^m} \triangle^{N_1}_1b_R(w_1-z,w_2-z)\cdot(\triangle^{N_3}_3\varphi^{(3)})_{{r_3}}(z)\,dz,
\end{align*}
then by investigating the support condition and the fact that
$\frac{|x_2-c_{I_2}|}{4}\geq |z|\geq|x_2-c_{I_2}|-|z+c_{I_2}-w_2|-|w_2-y_2|-|y_2-x_2|$, one can deduce $|z|\simeq|x_2-c_{I_2}|$. Thus by the pointwise estimate of Poisson type bound, $|a_R* \varphi_{\mathbf{r} } |(\mathbf{y})$ can be dominated by
\begin{align*}
&\int_{|w_1-y_1|\leq{|x_1-c_{I_1}|}}\bigg[\int_{\substack{|w_2-y_2|\simeq|x_2-c_{I_2}|\\|z|\simeq|x_2-c_{I_2}|}}\left|\triangle^{N_1}_1b_R(w_1-z,w_2-z)\right|\,dw_2\\
&\qquad\times\left|\varphi^{(1)}_{r_1}(y_1-w_1) \right|\,dz\bigg]\,dw_1\cdot\frac{r^{M_2-2N_2}_2}{(r_2+|x_2-c_{I_2}|)^{m+M_2}}\cdot\frac{r^{M_3-2N_3}_3}{(r_3+|x_2-c_{I_2}|)^{m+M_3}}\\
&\lesssim\int_{|z|\simeq|x_2-c_{I_2}|}\mathcal{M}_1\Bigg[\int_{{|w_2-y_2|\simeq|x_2-c_{I_2}|}}\left|\triangle^{N_1}_1b_R(\cdot,w_2-z)\right|\,dw_2\Bigg](y_1-z)\,dz\\
&\qquad\times\frac{r^{M_2-2N_2}_2}{(r_2+|x_2-c_{I_2}|)^{m+M_2}}\cdot\frac{r^{M_3-2N_3}_3}{(r_3+|x_2-c_{I_2}|)^{m+M_3}},
\end{align*}
in which $\mathcal{M}_1$ denotes the Hardy--Littlewood maximal function in the first coordinate. Then, by applying H\"older's inequality, integrating the cone structure $\chi_{\mathbf{r}}$ against $y_2$ and the trivial estimate of $\chi_{B_{r_1}(0)}*\chi_{B_{r_3}(0)}$, we have 
\begin{align*}
II(\mathcal{C}44)&\lesssim\int_{\substack{|x_1-c_{I_1}|\simeq2^i\ell_1\\|x_2-c_{I_2}|\simeq2^j\ell_2\\|x_1-c_{I_1}|\lesssim|x_2-c_{I_2}|}}\int_{\mathbb{R}^{m}}\int^{\frac{|x_1-c_{I_1}|}{4}}_{\ell_1}\int^
     {\frac{|x_2-c_{I_2}|}{4}}_{\ell_2} \int^{\frac{|x_2-c_{I_2}|}{4}}_{\ell_1}|x_2-c_{I_2}|^m\Bigg[\int_{|z|\simeq|x_2-c_{I_2}|}\\
&\qquad\bigg(\mathcal{M}_1\bigg[\int_{\mathbb{R}^m}\bigg|\triangle^{N_1}_1b_R(\cdot,w_2)\bigg|\,dw_2\bigg](y_1-z)\bigg)^2\,dz\Bigg]\frac{r^{2M_2-4N_2}_2}{(r_2+|x_2-c_{I_2}|)^{2m+2M_2}}\\
     &\qquad\times\frac{r^{2M_3-4N_3}_3}{(r_3+|x_2-c_{I_2}|)^{2m+2M_3}}\,\frac {  
    dr_3dr_2dr_1}{r_3 r_2r^{m+1}_1}\,dy_1\,d\mathbf{x}.
\end{align*}
To continue, by interchange of integral and $L^2$-boundedness of the Hardy--Littlewood maximal function, we have the term $II(\mathcal{C}44)$ is bounded above by
\begin{align*}
&\|\triangle^{N_1}_1b_R\|^2_{L^2(\mathbb{R}^{2m})}\cdot\ell^m_2\cdot\int_{\substack{|x_1-c_{I_1}|\simeq2^i\ell_1\\|x_2-c_{I_2}|\simeq2^j\ell_2\\|x_1-c_{I_1}|\lesssim|x_2-c_{I_2}|}}\int^{\frac{|x_1-c_{I_1}|}{4}}_{\ell_1}\int^
     {\frac{|x_2-c_{I_2}|}{4}}_{\ell_2} \int^{\frac{|x_2-c_{I_2}|}{4}}_{\ell_1}\frac{r^{2M_2-4N_2}_2}{(r_2+|x_2-c_{I_2}|)^{2m+2M_2}}\\
     &\qquad\times\frac{r^{2M_3-4N_3}_3}{(r_3+|x_2-c_{I_2}|)^{2m+2M_3}}\cdot |x_2-c_{I_2}|^{2m}\,\frac {  
    dr_3dr_2dr_1}{r_3r_2r^{m+1}_1}\,d\mathbf{x} \\&\lesssim\|\triangle^{N_1}_1b_R\|^2_{L^2(\mathbb{R}^{2m})}\cdot\ell^m_2\cdot\int_{\substack{|x_1-c_{I_1}|\simeq2^i\ell_1\\|x_2-c_{I_2}|\simeq2^j\ell_2\\|x_1-c_{I_1}|\lesssim|x_2-c_{I_2}|}}\ell^{-m}_1\cdot\frac{\ell^{2M_2-4N_2}_2}{|x_2-c_{I_2}|^{2M_2}}\cdot\frac{\ell^{2M_3-4N_3}_1}{|x_2-c_{I_2}|^{2m+2M_3}}\,d\mathbf{x}\\
&\lesssim\|\triangle^{N_1}_1b_R\|^2_{L^2(\mathbb{R}^{2m})}\cdot\ell^m_2\cdot\int_{\substack{|x_1-c_{I_1}|\simeq2^i\ell_1\\|x_2-c_{I_2}|\simeq2^j\ell_2}}\ell^{-m}_1\cdot\frac{\ell^{2M_2-4N_2}_2}{|x_2-c_{I_2}|^{2M_2}}\cdot\left[\frac{\ell^{2M_3-4N_3}_1}{|x_2-c_{I_2}|^{2m}}\cdot\frac{1}{|x_1-c_{I_1}|^{2M_3}}\right]\,d\mathbf{x}\\
&\simeq\ell_2^{-4N_2}\ell^{-4N_3}_1\|\triangle^{N_1}_1b_R\|^2_{L^2(\mathbb{R}^{2m})}\cdot2^{i(m-2M_3)}2^{-j(m+2M_2)},  
\end{align*}
provided that $2M_2-4N_2,2M_3-4N_3<0$. To wrap up, we take $M_3>m$, then we have
\begin{align*}
       &|R|^{\frac{1}{2}}\sum^\infty_{i=6}\sum^\infty_{j=j_0}2^{\frac{(i+j)m}{2}}\cdot\left(II(C44)\right)^{\frac{1}{2}}\\
       &\lesssim |R|^{\frac{1}{2}}\sum^\infty_{i=6}\sum^\infty_{j=j_0}2^{\frac{(i+j)m}{2}}\cdot\ell_2^{-2N_2}\ell^{-2N_3}_1\|\triangle^{N_1}_1b_R\|_{L^2(\mathbb{R}^{2m})}\cdot2^{i(\frac{m}{2}-M_3)}2^{-j(\frac{m}{2}+M_2)}\\
         &\simeq|R|^{\frac{1}{2}}\cdot\ell_2^{-2N_2}\ell^{-2N_3}_1\|\triangle^{N_1}_1b_R\|_{L^2(\mathbb{R}^{2m})}\cdot\bigg( {\ell(\widehat{I_2})\over \ell(I_2)}\bigg)^{-M_2}.
   \end{align*}
\smallskip

For the term $III(\mathcal{C}44)$, based on the similar idea as in dominating $III(\mathcal{C}42)$, one can show
\begin{align*}
    III(\mathcal{C}44)\lesssim \max\bigg\{\frac{2^i\ell_1}{8},\frac{2^j\ell_2}{4}\bigg\}^{-4N_3}\left\|\triangle^{N_1}_1\triangle^{N_2}_{2}b_R\right\|^2_{L^2(\mathbb{R}^{2m})},
\end{align*}
and hence to wrap up, take $2N_3>m$,
\begin{align*}
       &|R|^{\frac{1}{2}}\sum^\infty_{i=6}\sum^\infty_{j=j_0}2^{\frac{(i+j)m}{2}}\cdot\left(III(\mathcal{C}44)\right)^{\frac{1}{2}}
       \lesssim |R|^{\frac{1}{2}}\cdot\left(\ell_1\ell_2\right)^{-N_3}\left\|\triangle^{N_1}_1\triangle^{N_2}_{2}b_R\right\|_{L^2(\mathbb{R}^{2m})}\cdot\bigg( {\ell(\widehat{I_2})\over \ell(I_2)}\bigg)^{-(N_3-\frac{m}{2})}.
   \end{align*}
Therefore, by combining the estimate in each term and the property of the atom, we finish the proof for the atom in the first type.
\medskip

$ \clubsuit$ We next consider the case that $\Diamond={\rm I\!I}$. For each tube $R=I_1\times_t I_2\in m^{\rm I\!I}(\Omega^{\rm I\!I})$ of type ${\rm I\!I}$, we recall that $|I_1|\leq|I_2|$. Let $k_0\in\mathbb{N}_0$ such that
\[\frac{|I_2|}{|I_1|}=2^{k_0 m}.\]
To utilize the covering Lemma, Lemma \ref{lem:Journe 1}, we have two cases. 

$\bullet$ Case $1$:
\[
|(I_1)_{k_0}\times_t I_2\cap\Omega^{\rm I\!I}|\leq\frac{1}{2}|(I)_{k_0}\times_t I_2|.
\] 
We define $\widehat{I}_1\subset\mathbb{R}^m$ to be the maximal dyadic interval such that $I_1\subset\widehat{I}_1$ and
\begin{align*}
    {\widehat{I}_1\times_t I_2\subset\widetilde{\Omega^{\rm I\!I}}:=\bigg\{\mathbf{x}\in\mathbb{R}^{2m}:\,M^{\rm I\! I}_{tube}(\chi_{\Omega^{\rm I\!I}})(\mathbf{x})>\frac{1}{2}\bigg\}.}
\end{align*}
Then we have $|\widehat{I}_1|<|I_2|$ and hence the tube $\widehat{I}_1\times_t I_2$ is in the set $m^{\rm I\!I}_1(\widetilde{\Omega^{\rm I\!I}})$.
Next, define $\widehat{I}_2\subset\mathbb{R}^m$ to be the maximal dyadic interval such that $I_2\subset\widehat{I}_2$ and
\begin{align*}
  {\widehat{I}_1\times_t \widehat{I}_2\subset\widetilde{\widetilde{\Omega^{\rm I\!I}}}:=\bigg\{\mathbf{x}\in\mathbb{R}^{2m}:\,M^{\rm I\! I}_{tube}(\chi_{\widetilde{\Omega^{\rm I\!I}}})(\mathbf{x})>\frac{1}{2}\bigg\}.}
\end{align*}

$\bullet$ Case $2$:
\[
|(I_1)_{k_0}\times_t I_2\cap\Omega^{\rm I\!I}|>\frac{1}{2}|(I_1)_{k_0}\times_t I_2|.
\] 
We define $\widehat{I}_1\subset\mathbb{R}^m$ to be the maximal dyadic interval such that $I_1\subset\widehat{I}_1=(I_1)_{k_0+l}$ and
\begin{align*}
{\left((I_1)_{k_0}\times_t I_2\right)_{l}}=   {(I_1)_{k_0+l}\times_t (I_2)_{l}\subset\widetilde{\Omega^{\rm I\!I}}:=\bigg\{\mathbf{x}\in\mathbb{R}^{2m}:\,M^{\rm I\! I}_{tube}(\chi_{\Omega^{\rm I\!I}})(\mathbf{x})>\frac{1}{2}\bigg\}.}
\end{align*}
Then we have $|\widehat{I}_1|=|(I_2)_l|$ and hence the slant cube $\widehat{I}_1\times_t (I_2)_l$ is in the set $m^{\rm I\!I}_1(\widetilde{\Omega^{\rm I\!I}})$.
Next, define $\widehat{I}_2\subset\mathbb{R}^m$ to be the maximal dyadic interval such that $(I_2)_l\subset\widehat{I}_2$ and
\begin{align*}
  {\widehat{I}_1\times_t \widehat{I}_2\subset\widetilde{\widetilde{\Omega^{\rm I\!I}}}:=\bigg\{\mathbf{x}\in\mathbb{R}^{2m}:\,M^{\rm I\! I}_{tube}(\chi_{\widetilde{\Omega^{\rm I\!I}}})(\mathbf{x})>\frac{1}{2}\bigg\}.}
\end{align*}
\smallskip
To complete the inclusion, it suffices to show that the $L^1$-norm of $  S_{area,\varphi}(a_{\Omega^{\rm I\!I}})$ is uniformly bounded. Note that if we let {$\widehat{R}$ be the $100$-fold dilation of $ \widehat{I}_1\times_t \widehat{I}_2$ concentric with $ \widehat{I}_1\times_t \widehat{I}_2$,} then we have, similar to (\ref{ESTI}),
\begin{align*}
    \|S_{area,\varphi}(a_{\Omega^{\rm I\!I}})\|_{L^1(\mathbb{R}^{2m})}
    \leq 1+\sum_{\substack{R\in m^{\rm I\!I}(\Omega^{\rm I\!I})\\R=I_1\times_t I_2 }}\int_{\left(\widehat{R}\right)^c}S_{area,\,\varphi}(a_R)(\mathbf{x})\,d\mathbf{x}.
\end{align*}
 It remains to estimate 
\[
\sum_{\substack{R\in m^{\rm I\!I}(\Omega^{\rm I\!I})\\R=I_1\times_t I_2 }}\int_{\left(\widehat{R}\right)^c}S_{area,\,\varphi}(a_R)(\mathbf{x})\,d\mathbf{x}=\Bigg(\sum_{\substack{R\in m^{\rm I\!I}(\Omega^{\rm I\!I})\\R=I_1\times_t I_2\\\text{Case} 1 }} +\sum_{\substack{R\in m^{\rm I\!I}(\Omega^{\rm I\!I})\\R=I_1\times_t I_2 \\\text{Case} 2}}\Bigg)\int_{\left(\widehat{R}\right)^c}S_{area,\,\varphi}(a_R)(\mathbf{x})\,d\mathbf{x}.
\]
For simplicity, we only estimate the second term
\begin{align}\label{second}
\sum_{\substack{R\in m^{\rm I\!I}(\Omega^{\rm I\!I})\\R=I_1\times_t I_2 \\\text{Case} 2}}\int_{\left(\widehat{R}\right)^c}S_{area,\,\varphi}(a_R)(\mathbf{x})\,d\mathbf{x};
\end{align}
the estimate of the first term can be  achieved by the similar manner.

Note that (\ref{second}) can be dominated by
\begin{align*}
   & \sum_{\substack{R\in m^{\rm I\!I}(\Omega^{\rm I\!I})\\R=I_1\times_t I_2 \\\text{Case} 2}} \int_{(100\widehat{I_1})^c\times_t (100I_2)}S_{area,\,\varphi}(a_R)(\mathbf{x})\,d\mathbf{x}+\sum_{\substack{R\in m^{\rm I\!I}(\Omega^{\rm I\!I})\\R=I_1\times_t I_2\\\text{Case} 2 }} \int_{(100\widehat{I_1})^c\times_t (100I_2)^c}S_{area,\,\varphi}(a_R)(\mathbf{x})\,d\mathbf{x}\\
  &\quad+\sum_{\substack{R\in m^{\rm I\!I}(\Omega^{\rm I\!I})\\R=I_1\times_t I_2 \\\text{Case} 2}}\int_{(100{I_1})^c\times_t (100\widehat{I_2})^c}\ S_{area,\,\varphi}(a_R)(\mathbf{x})\,d\mathbf{x}+\sum_{\substack{R\in m^{\rm I\!I}(\Omega^{\rm I\!I})\\R=I_1\times_t I_2 \\\text{Case} 2}}\int_{(100{I_1})\times_t (100\widehat{I_2})^c} S_{area,\,\varphi}(a_R)(\mathbf{x})\,d\mathbf{x}\\
  &=:\sum_{\substack{R\in m^{\rm I\!I}(\Omega^{\rm I\!I})\\R=I_1\times_t I_2\\\text{Case} 2 }}(\mathfrak{A})+\sum_{\substack{R\in m^{\rm I\!I}(\Omega^{\rm I\!I})\\R=I_1\times_t I_2 \\\text{Case} 2}}(\mathfrak{B})+\sum_{\substack{R\in m^{\rm I\!I}(\Omega^{\rm I\!I})\\R=I_1\times_t I_2\\\text{Case} 2 }}(\mathfrak{C})+\sum_{\substack{R\in m^{\rm I\!I}(\Omega^{\rm I\!I})\\R=I_1\times_t I_2\\\text{Case} 2 }}(\mathfrak{D}).
\end{align*}
Based on the covering lemma, which preserves the tube structure in each case, we only need to estimate the case $({\mathfrak{C}})$ and the case $({\mathfrak{D}})$. Due to the twisted phenomenon, the new ingredient for the estimate on $(\mathfrak{C})$ is the {\it molecule decomposition}.
\smallskip

$\bullet$ {\it Case $({\mathfrak{C}})$}:
 To estimate the term $(\mathfrak{C})$, we first apply the dyadic decomposition to $(100{I_1})^c\times_t (100\widehat{I_2})^c$ and then adopt H\"older's inequality to get 
\begin{align*}
(\mathfrak{C})&= \int_{(100{I_1})^c\times_t (100\widehat{I_2})^c}\ S_{area,\,\varphi}(a_R)(\mathbf{x})\,d\mathbf{x}
\leq\sum^\infty_{i=6}\sum^\infty_{j=j_0+l}\int_{\substack{|x_1-x_2-c_{I_1}|\simeq2^i\ell_1\\|x_2-c_{I_2}|\simeq2^j\ell_2}}\ S_{area,\,\varphi}(a_R)(\mathbf{x})\,d\mathbf{x}\notag\\
&\leq |R|^{\frac{1}{2}}\sum^\infty_{i=6}\sum^\infty_{j=j_0+l}2^{\frac{(i+j)m}{2}}\cdot\left(\int_{\substack{|x_1-x_2-c_{I_1}|\simeq2^i\ell_1\\|x_2-c_{I_2}|\simeq2^j\ell_2}}\left|S_{area,\,\varphi}(a_R)(\mathbf{x})\right|^2\,d\mathbf{x}\right)^{\frac{1}{2}},
\end{align*}
where $j_0\in\mathbb{N}$ is such that $2^{j_0m}|(I_2)_l|\simeq{|\widehat{I_2}|}$.
 Besides, the $L^2$-norm of $S_{area,\,\varphi}(a_R)(\mathbf{x})$ over the dyadic region can be further decomposed as
 \begin{align*}
&\int_{\substack{|x_1-x_2-c_{I_1}|\simeq2^i\ell_1\\|x_2-c_{I_2}|\simeq2^j\ell_2}}\left|S_{area,\,\varphi}(a_R)(\mathbf{x})\right|^2\,d\mathbf{x}\\
&=\int_{\substack{|x_1-x_2-c_{I_1}|\simeq2^i\ell_1\\|x_2-c_{I_2}|\simeq2^j\ell_2}}\int_{ \mathbb{R}^{2m}}\left(\int^{\ell_1}_0\int^
     {\ell_2}_0+\int^{\ell_1}_0\int^\infty_{\ell_2}+\int^\infty_{\ell_1}\int^
{\ell_2}_0+\int^\infty_{\ell_1}\int^\infty_
{\ell_2}\right)\int^\infty_0 \\
&\quad|a_R* \varphi_{\mathbf{r} } |^2(\mathbf{y})\cdot\chi_\mathbf{r}(\mathbf{x}-\mathbf{y})\,\frac {  
    dr_2dr_3dr_1}{r_2r_3r_1}\,d\mathbf{y}\,d\mathbf{x}\\ 
    &=:(\mathfrak{C}1)+{(\mathfrak{C}2)+(\mathfrak{C}3)}+(\mathfrak{C}4).
 \end{align*}
\smallskip
$\blacksquare$ {\it Estimate on the term $(\mathfrak{C}1)$}:

 To estimate this term, we need to further split the range of $r_2$. 
 \begin{align*}
     (\mathfrak{C}1) &=\int_{\substack{|x_1-x_2-c_{I_1}|\simeq2^i\ell_1\\|x_2-c_{I_2}|\simeq2^j\ell_2}}\int_{ \mathbb{R}^{2m}}\int^{\ell_1}_0\int^
     {\ell_2}_0\bigg(\int^{{{\ell_2}}}_0 +\int^{\frac{1}{4}\max\{|x_1-x_2-c_{I_1}|,\,|x_2-c_{I_2}|\} }_{{\ell_2}} \\
     &\quad+\int^\infty_{\frac{1}{4}\max\{|x_1-x_2-c_{I_1}|,\,|x_2-c_{I_2}|\} } \bigg)|a_R* \varphi_{\mathbf{r} } |^2(\mathbf{y})\cdot\chi_\mathbf{r}(\mathbf{x}-\mathbf{y})\,\frac {  
    dr_2dr_3dr_1}{r_2r_3r_1}\,d\mathbf{y}\,d\mathbf{x}\\
    &=:(\mathfrak{C}11)+(\mathfrak{C}12)+(\mathfrak{C}13).
 \end{align*}
 \begin{itemize}
     \item {\it Estimate on $(\mathfrak{C}11)$}:
 \end{itemize}
By checking the support condition given by the second variable, one can conclude that the term $(\mathfrak{C}11)$ vanishes.

\begin{itemize}
     \item {\it Estimate on $(\mathfrak{C}12)$}:
 \end{itemize}
 Now, we turn to estimate term $(\mathfrak{C}12)$, which is equal to
\begin{align*}
     &\int_{\substack{|x_1-x_2-c_{I_1}|\simeq2^i\ell_1\\|x_2-c_{I_2}|\simeq2^j\ell_2}}\int_{ \mathbb{R}^{2m}}\int^{\ell_1}_0\int^
     {\ell_2}_0\int^{\frac{1}{4}\max\{|x_1-x_2-c_{I_1}|,\,|x_2-c_{I_2}|\} }_{{\ell_2}} \left|a_R*\varphi_{\mathbf{r}}\right|^2(\mathbf{y})\cdot\chi_\mathbf{r}(\mathbf{x}-\mathbf{y})\,\frac {  
    dr_2 dr_3dr_1}{r_2r_3r_1}\,d\mathbf{y}\,d\mathbf{x}.
\end{align*}
Suppose that $|x_2-c_{I_2}|\geq|x_1-x_2-c_{I_1}|$. Then by investigating the support condition given by the second variable, this term would vanish. 

Assume that $|x_2-c_{I_2}|<|x_1-x_2-c_{I_1}|$. We now investigate the support condition given by the first variable.
By the cone structure, we have
$|x_1-y_1|\leq \ell_1+\ell_2$; moreover, since
\begin{align*}
a_R*\varphi_{\mathbf{r}}(\mathbf{y})=\int_{\mathbb{R}^{{2m}}}\left(a_R*_3\varphi^{(3)}_{r_3}\right)(\mathbf{w})\left(\varphi^{(1)}_{r_1}\otimes\varphi^{(2)}_{r_2}\right) (y_1-w_1,y_2-w_2) \,d\mathbf{w},
\end{align*}
then we have $|y_1-w_1|\leq\ell_1$ and $|w_1-w_2-c_{I_1}|\leq \frac{\ell_1}{2}$. Therefore, one can deduce that 
\begin{align*}
\ell_1\geq|y_1-w_1|&\geq|x_1-x_2-c_{I_1}|- |x_1-y_1|-|x_2-c_{I_2}|-|w_1-w_2-c_{I_1}|-|w_2-c_{I_2}|\\
   &\geq |x_1-x_2-c_{I_1}|-|x_2-c_{I_2}|-\frac{3\ell_1}{2}-\frac{5\ell_2}{2},
\end{align*}
combining with the facts that $\ell_1\leq\frac{|x_1-x_2-c_{I_1}|}{50}$ and $\ell_2\leq\frac{|x_2-c_{I_2}|}{50}$, which leads to \begin{align}\label{D6666}
    |x_2-c_{I_2}|\leq|x_1-x_2-c_{I_1}|\leq \frac{21}{19}|x_2-c_{I_2}|.
\end{align}
On the other hand, by investigating the support condition given by the second variable together with (\ref{D6666}), we have that
\begin{align*}
 \frac{107}{380}|x_2-c_{I_2}|\geq|y_2-w_2|&\geq|x_2-c_{I_2}|- |w_2-c_{I_2}|-|x_2-y_2|\\
   &\geq |x_2-c_{I_2}|-\frac{3\ell_2}{2}-\frac{|x_1-x_2-c_{I_1}|}{4}-\ell_2\\
   &\geq\frac{254}{380}|x_2-c_{I_2}|,
\end{align*}
 which is obviously a contradiction. So, in either case, $(\mathfrak{C}12)=0$.

\begin{itemize}
     \item {\it Estimate on $(\mathfrak{C}13)$}:
 \end{itemize}

We apply the Young's convolution inequality and Littlewood--Paley inequality to eliminate $\varphi^{(1)}_{r_1}$ and $\varphi^{(3)}_{r_3}$, and thus $(\mathfrak{C}13)$ is dominated by
\begin{align*}
\int_{\mathbb{R}^m}\int^\infty_{\frac{1}{4}\max\{2^i\ell_1,\,2^j\ell_2\}}\int_{ \mathbb{R}^{m}}|a_R*_2\varphi^{(2)}_{r_2} |^2(y_1,y_2)\,dy_2\,\frac{dr_2}{r_2}\,dy_1,
\end{align*}
and we also have
 \begin{align*}
\left\|a_R*_2\varphi^{(2)}_{r_2}(y_1,\,y_2)\right\|^2_{L^2(\mathbb{R}^{m},\,d{y}_2)}\lesssim r_2^{-4N_2}\left\|\left({ {\triangle}_1^{ N_1 } \triangle^{N_3}_{twist}b_{R}}\right)(\mathbf{y})\right\|^2_{L^2(\mathbb{R}^{m},\,d{y}_2)},
   \end{align*}
   which leads to
   \begin{align*}
       (\mathfrak{C}13)&\lesssim \max\{2^i\ell_1,\,2^j\ell_2\}^{-4N_2}\left\|\left({ {\triangle}_1^{ N_1 } \triangle^{N_3}_{twist}b_{R}}\right)(\mathbf{y})\right\|^2_{L^2(\mathbb{R}^{2m},\,d\mathbf{y})}.
   \end{align*}
   To wrap up, if we take $N_2=m$, then we have
   \begin{align*}
       &|R|^{\frac{1}{2}}\sum^\infty_{i=6}\sum^\infty_{j=j_0+l}2^{\frac{(i+j)m}{2}}\cdot(\mathfrak{C}13)^{\frac{1}{2}}\\
       &\lesssim |R|^{\frac{1}{2}}\sum^\infty_{i=6}\sum^\infty_{j=j_0+l}2^{\frac{(i+j)m}{2}}\cdot\max\{2^i\ell_1,\,2^j\ell_2\}^{-2N_2}\left\|\left({ {\triangle}_1^{ N_1 } \triangle^{N_3}_{twist}b_{R}}\right)(\mathbf{y})\right\|_{L^2(\mathbb{R}^{2m},\,d\mathbf{y})}\\
       &\leq |R|^{\frac{1}{2}}\cdot\ell_1^{-N_2}\ell_2^{-N_2}\left\|\left({ {\triangle}_1^{ N_1 } \triangle^{N_3}_{twist}b_{R}}\right)(\mathbf{y})\right\|_{L^2(\mathbb{R}^{2m},\,d\mathbf{y})}\cdot\bigg( {\ell(\widehat{I_2})\over \ell(I_2)}\bigg)^{-\frac{m}{2}}.
   \end{align*}
 \smallskip

$\blacksquare$ {\it Estimate on the term $(\mathfrak{C}2)$}:

We now estimate the term $(\mathfrak{C}2)$. By further splitting the range of $r_3$, we have that
\begin{align*}
     (\mathfrak{C}2)
&=\int_{\substack{|x_1-x_2-c_{I_1}|\simeq2^i\ell_1\\|x_2-c_{I_2}|\simeq2^j\ell_2}}\int_{ \mathbb{R}^{2m}}\int^{\ell_1}_0\left(\int^{\frac{|x_2-c_{I_2}|}{4}}_{\ell_2}+\int^\infty_{\frac{|x_2-c_{I_2}|}{4}}\right)\int^\infty_0|a_R* \varphi_{\mathbf{r} } |^2(\mathbf{y})\cdot\chi_\mathbf{r}(\mathbf{x}-\mathbf{y})\,\frac {  
dr_2    dr_3dr_1}{r_2r_3r_1}\,d\mathbf{y}\,d\mathbf{x}\\ 
  &=:(\mathfrak{C}21)+{(\mathfrak{C}22)}.
\end{align*}

\begin{itemize}
     \item {\it Estimate on $(\mathfrak{C}21)$}:
 \end{itemize}
Remark that
 \begin{align*}
 a_R*\varphi_{\mathbf{r}}(\mathbf{y})=  \int_{\mathbb{R}^{{2m}}}\left(a_R*_3\varphi^{(3)}_{r_3}\right)(\mathbf{w})\left(\varphi^{(1)}_{r_1}\otimes\varphi^{(2)}_{r_2}\right) (y_1-w_1,y_2-w_2) \,d\mathbf{w}.
 \end{align*}
Then if we investigate the support condition given by the first variable, then we have
\begin{align*}
\ell_1&\geq|y_1-w_1|\geq|\left(x_1-x_2-c_{I_1}\right)+\left(x_2-c_{I_2}\right)|- |x_1-y_1|-|w_1-w_2-c_{I_1}|-|w_2-c_{I_2}|\\
&\geq |\left(x_1-x_2-c_{I_1}\right)+\left(x_2-c_{I_2}\right)|-\frac{|x_2-c_{I_2}|}{2}-\frac{3\ell_1}{2}-\frac{\ell_2}{2},
\end{align*}
which implies the condition that reveals the position of $x_2$, that is
\begin{align}\label{similar}
    \frac{95}{151}|x_1-x_2-c_{I_1}|\leq|x_2-c_{I_2}|\leq \frac{104}{45}|x_1-x_2-c_{I_1}|.
\end{align}
Next, if we investigate the support condition given by the second variable, then we have
\begin{align*}
r_2\geq|y_2-w_2|&\geq|x_2-c_{I_2}|- |w_2-c_{I_2}|-|x_2-y_2|\\
   &\geq |x_2-c_{I_2}|-r_2-\frac{|x_2-c_{I_2}|}{2}-\frac{\ell_2}{2}\\
   &\geq\frac{931}{3020}|x_1-x_2-c_{I_1}|-r_2,
\end{align*}
that is $r_2\geq\frac{931}{6040}|x_1-x_2-c_{I_1}|$.
As a consequence, 
\begin{align*}
    (\mathfrak{C}21)&=\int_{\substack{|x_1-x_2-c_{I_1}|\simeq2^i\ell_1\\|x_2-c_{I_2}|\simeq2^j\ell_2}}\int_{ \mathbb{R}^{2m}}\int^{\ell_1}_0\int^{\frac{|x_2-c_{I_2}|}{4}}_{\ell_2}\int^\infty_{\frac{930}{6040}|x_1-x_2-c_{I_1}|}|a_R* \varphi_{\mathbf{r} } |^2(\mathbf{y})\cdot\chi_\mathbf{r}(\mathbf{x}-\mathbf{y})\,\frac {  
    dr_2dr_3dr_1}{r_2r_3r_1}\,d\mathbf{y}\,d\mathbf{x}.
\end{align*}
Similar to the case $(\mathfrak{C}13)$, we have that
\begin{align*}
(\mathfrak{C}21)&\lesssim\left(2^i\ell_1\right)^{-4N_2}\left\|\triangle^{N_1}_1\triangle^{N_3}_{twist}b_R\right\|^2_{L^2(\mathbb{R}^{2m})},
\end{align*}
and hence to wrap up,
if we take $N_2=m$ and combine with (\ref{similar}), then we have
   \begin{align*}
       &|R|^{\frac{1}{2}}\sum^\infty_{i=6}\sum^\infty_{j=j_0+l}2^{\frac{(i+j)m}{2}}\cdot\left((\mathfrak{C}21\right)^{\frac{1}{2}}\\
       &\lesssim |R|^{\frac{1}{2}}\sum^\infty_{i=6}\sum^\infty_{j=j_0+l}2^{\frac{(i+j)m}{2}}\cdot(2^i\ell_1)^{-N_2}\cdot(2^j\ell_2)^{-N_2}\left\|\left({ {\triangle}_1^{ N_1 } \triangle^{N_3}_{twist}b_{R}}\right)\right\|_{L^2(\mathbb{R}^{2m})}\\
       &\leq |R|^{\frac{1}{2}}\cdot\ell_1^{-N_2}\ell_2^{-N_2}\left\|\left({ {\triangle}_1^{ N_1 } \triangle^{N_3}_{twist}b_{R}}\right)\right\|_{L^2(\mathbb{R}^{2m})}\cdot\bigg( {\ell(\widehat{I_2})\over \ell(I_2)}\bigg)^{-\frac{m}{2}}.
   \end{align*}
\smallskip

\begin{itemize}
     \item {\it Estimate on $(\mathfrak{C}22)$}:
 \end{itemize}
 To estimate this term, we further split it into
 \begin{align*}
(\mathfrak{C}22)&:=\int_{\substack{|x_1-x_2-c_{I_1}|\simeq2^i\ell_1\\|x_2-c_{I_2}|\simeq2^j\ell_2}}\int_{ \mathbb{R}^{2m}}\int^{\ell_1}_0\int^\infty_{\frac{|x_2-c_{I_2}|}{4}}\Bigg(\int^{\ell_1}_{0}+\int^{\frac{|x_1-x_2-c_{I_1}|}{8}}_{\ell_1}+\int^\infty_{\frac{|x_1-x_2-c_{I_1}|}{8}}\Bigg) \\
&\quad\quad|a_R* \varphi_{\mathbf{r} } |^2(\mathbf{y})\cdot\chi_{\mathbf{r}}(\mathbf{x}-\mathbf{y})\,\frac {  
    dr_2dr_3dr_1}{r_2r_3r_1}\,d\mathbf{y}\,d\mathbf{x}\\
    &=:{\underset{\text{molecule~decomposition}}{I(\mathfrak{C}22)+II(\mathfrak{C}22)}}+III(\mathfrak{C}22).
 \end{align*}
We will use the {\it molecule decomposition technique} to estimate $I(\mathfrak{C}22)$ and $II(\mathfrak{C}22)$. For the simplicity, we demonstrate this technique on the estimate of the term $II(\mathfrak{C}43)$ (see the forthcoming subsection for detail and comment). 

For the estimate of the term $III(\mathfrak{C}22)$, interchanging the integral and apply Littlewood--Paley's inequality to eliminate $\varphi^{(1)}_{r_1}$, we have $III(\mathfrak{C}22)$ is bounded above by
\begin{align*}
 \int^\infty_{\frac{2^j\ell_2}{4}}\int^\infty_{\frac{2^i\ell_1}{4}} \left\|a_R*_2\varphi^{(2)}_{r_2}*_3 \varphi^{(3)}_{{r}_3 } \right\|^2_{L^2(\mathbb{R}^{2m})}\,\frac {dr_3dr_2}{r_3r_2}.
\end{align*}
To continue, by applying Young's convolution inequality, we know that
\begin{align*}
III(\mathfrak{C}22)\lesssim\int^\infty_{\frac{2^j\ell_2}{4}}\int^\infty_{\frac{2^i\ell_1}{4}} \left\|\triangle^{N_1}_1b_R\right\|^2_{L^2(\mathbb{R}^{2m})}\,\frac {dr_3dr_2}{r^{4N_3+1}_3r^{4N_2+1}_2},
\end{align*}
and hence to wrap up, take $N_2=N_3=m$, we have
\begin{align*}
       |R|^{\frac{1}{2}}\sum^\infty_{i=6}\sum^\infty_{j=(j_0+l)}2^{\frac{(i+j)m}{2}}\cdot\left(III(\mathfrak{C}22)\right)^{\frac{1}{2}}
       &\lesssim  |R|^{\frac{1}{2}}\sum^\infty_{i=6}\sum^\infty_{j=(j_0+l)}2^{{i(\frac{m}{2}-N_2)}} 2^{{j(\frac{m}{2}-N_3)}} \left(\ell_1\ell_2\right)^{-2m}\left\|\triangle^{N_1}_1b_R\right\|_{L^2(\mathbb{R}^{2m})}\\
       &\simeq  |R|^{\frac{1}{2}}\cdot\left(\ell^{-2N_2}_1\ell^{-2N_2}_2\right)\left\|\triangle^{N_1}_1b_R\right\|_{L^2(\mathbb{R}^{2m})}\cdot\bigg( {\ell(\widehat{I_2})\over \ell(I_2)}\bigg)^{-\frac{m}{2}}.
   \end{align*}
\smallskip
$\blacksquare$ {\it Estimate on the term $(\mathfrak{C}3)$}: We decompose this term into
$(\mathfrak{C}31)$ and $(\mathfrak{C}32)$.
\begin{align*}
(\mathfrak{C}3)&=\int_{\substack{|x_1-x_2-c_{I_1}|\simeq2^i\ell_1\\|x_2-c_{I_2}|\simeq2^j\ell_2}}\int_{ \mathbb{R}^{2m}}\left(\int^\infty_{\frac{|x_1-x_2-c_{I_1}|}{4}}+\int^{\frac{|x_1-x_2-c_{I_1}|}{4}}_{\ell_1}\right)\int^
{\ell_2}_0\int^\infty_0 \\
&\quad\quad|a_R* \varphi_{\mathbf{r} } |^2(\mathbf{y})\cdot\chi_{\mathbf{r}}(\mathbf{x}-\mathbf{y})\,\frac {  
    dr_2dr_3dr_1}{r_2r_3r_1}\,d\mathbf{y}\,d\mathbf{x}\\ 
    &=:(\mathfrak{C}31)+(\mathfrak{C}32).
\end{align*}

\begin{itemize}
     \item {\it Estimate on $(\mathfrak{C}31)$}:
 \end{itemize}
 We further decompose the range of $r_2$ and get that
 \begin{align*}
(\mathfrak{C}31)&=\int_{\substack{|x_1-x_2-c_{I_1}|\simeq2^i\ell_1\\|x_2-c_{I_2}|\simeq2^j\ell_2}}\int_{ \mathbb{R}^{2m}}\int^\infty_{\frac{|x_1-x_2-c_{I_1}|}{4}}\int^
{\ell_2}_0\left(\int^\infty_{\frac{|x_2-c_{I_2}|}{4}}+\int^{\frac{|x_2-c_{I_2}|}{4}}_0\right)\\
&\qquad|a_R* \varphi_{\mathbf{r} } |^2(\mathbf{y})\cdot\chi_{\mathbf{r}}(\mathbf{x}-\mathbf{y})\,\frac {  
    dr_2dr_3dr_1}{r_2r_3r_1}\,d\mathbf{y}\,d\mathbf{x}\\ &=:I(\mathfrak{C}31)+II(\mathfrak{C}31).
 \end{align*}
 For the term $I(\mathfrak{C}31)$, we apply the trivial estimate to the cone structure, Littlewood--Paley inequality to eliminate $\varphi^{(3)}_{r_3}$ and eventually, following the similar estimate as $III(\mathfrak{C}22)$, we have
 \begin{align}\label{IC31}
 I(\mathfrak{C}31)
   &\lesssim  \int_{\substack{|x_1-x_2-c_{I_1}|\simeq2^i\ell_1\\|x_2-c_{I_2}|\simeq2^j\ell_2}}\int^\infty_{\frac{|x_1-x_2-c_{I_1}|}{4}}\int^\infty_{\frac{|x_2-c_{I_2}|}{4}}\left\|\triangle^{N_3}_{twist}b_R\right\|^2_{L^2(\mathbb{R}^{2m})}\frac {  
   dr_2}{r^{4N_2+m+1}_2}\,\frac{dr_1}{r^{4N_1+m+1}_1}\,d\mathbf{x}\notag\\ 
   &\simeq(2^i\ell_1)^{-4N_1}\cdot(2^j\ell_2)^{-4N_2}\left\|\triangle^{N_3}_{twist}b_R\right\|^2_{L^2(\mathbb{R}^{2m})}.
 \end{align}
 If we take $N_1=N_2=m$, then combines with (\ref{IC31})
   \begin{align*}
       &|R|^{\frac{1}{2}}\sum^\infty_{i=6}\sum^\infty_{j=j_0+l}2^{\frac{(i+j)m}{2}}\cdot\left(I(C31)\right)^{\frac{1}{2}}
       \lesssim |R|^{\frac{1}{2}}\cdot\ell_1^{-2N_1}\cdot\ell_2^{-2N_2}\left\|\triangle^{N_3}_{twist}b_R\right\|_{L^2(\mathbb{R}^{2m})}\cdot\bigg( {\ell(\widehat{I_2})\over \ell(I_2)}\bigg)^{-\frac{3m}{2}}.
   \end{align*}
\smallskip
For the term $II(\mathfrak{C}31)$, we investigate the support condition given by the second variable. Then one will get a contradiction to the support condition of $x_2$ and thereby the term $II(\mathfrak{C}31)$ vanishes.
\smallskip

\begin{itemize}
     \item {\it Estimate on $(\mathfrak{C}32)$}:
 \end{itemize}
  We further decompose the range of $r_2$ and get that
 \begin{align*}
(\mathfrak{C}32)&=\int_{\substack{|x_1-x_2-c_{I_1}|\simeq2^i\ell_1\\|x_2-c_{I_2}|\simeq2^j\ell_2}}\int_{ \mathbb{R}^{2m}}\int^{\frac{|x_1-x_2-c_{I_1}|}{4}}_{\ell_1}\int^
{\ell_2}_0\left(\int^\infty_{\max\big\{\frac{|x_1-x_2-c_{I_1}|}{100},\frac{|x_2-c_{I_2}|}{4} \big\}}+\int^{\max\big\{\frac{|x_1-x_2-c_{I_1}|}{100},\frac{|x_2-c_{I_2}|}{4} \big\}}_0\right)\\
&\qquad |a_R* \varphi_{\mathbf{r} } |^2(\mathbf{y})\cdot\chi_\mathbf{r}(\mathbf{x}-\mathbf{y})\,\frac {  
    dr_2dr_3dr_1}{r_2r_3r_1}\,d\mathbf{y}\,d\mathbf{x}\\  
    &=:I(\mathfrak{C}32)+II(\mathfrak{C}32).
 \end{align*}
 We only need to estimate the term $I(\mathfrak{C}32)$. Since that the second term $II(\mathfrak{C}32)$ would vanish. To be more specific, if $$
 {\max\bigg\{\frac{|x_1-x_2-c_{I_1}|}{100},\frac{|x_2-c_{I_2}|}{4} \bigg\}}=\frac{|x_2-c_{I_2}|}{4},
 $$
 then by adapting the as the same argument in $II(\mathfrak{C}31)$, one can verify that $II(\mathfrak{C}32)=0$; and if 
$$
 {\max\bigg\{\frac{|x_1-x_2-c_{I_1}|}{100},\frac{|x_2-c_{I_2}|}{4} \bigg\}}=\frac{|x_1-x_2-c_{I_1}|}{100},
 $$
 then we have that
 $$
 |x_2-c_{I_2}|\leq \frac{|x_1-x_2-c_{I_1}|}{25}
 $$
 and hence if we expand the convolution $a_R* \varphi_{\mathbf{r} }$ and investigate the support condition given by $\varphi^{(1)}_{r_1}$, then we can derive that
 $$
 \frac{1}{4}|x_1-x_2-c_{I_1}|\geq\frac{69}{100}|x_1-x_2-c_{I_1}|,
 $$
 which is a contradiction. Hence, in either case, there is no contribution in this term. 
 
 Now, we estimate the term $I(\mathfrak{C}32)$. By first interchanging the integral to integrate against $\mathbf{x}$ and Littlewood--Paley inequality to eliminate $\varphi^{(1)}_{r_1}$ and $\varphi^{(3)}_{r_3}$, one has that
 \begin{align*}
     I(\mathfrak{C}32)&\lesssim\int_{ \mathbb{R}^{2m}}\int^{\frac{2^i\ell_1}{4}}_{\ell_1}\int^
{\ell_2}_0\int^\infty_{\max\{\frac{2^i\ell_1}{100},\frac{2^j\ell_2}{4} \}}|a_R* \varphi_{\mathbf{r} } |^2(\mathbf{y})\,\frac {  
    dr_2dr_3dr_1}{r_2r_3r_1}\,d\mathbf{y}\\
    &\lesssim\int^\infty_{\max\{\frac{2^i\ell_1}{100},\frac{2^j\ell_2}{4} \}}\left\|a_R*_2 \varphi^{(2)}_{r_2}\right\|^2_{L^2(\mathbb{R}^{2m})}\,\frac {  
   dr_2}{r_2}\\
   &\lesssim\left(2^i\ell_1\right)^{-2N_2}\cdot\left(2^j\ell_2\right)^{-2N_2}\left\|\triangle^{N_1}_1\triangle^{N_3}_{twist}b_R\right\|^2_{L^2(\mathbb{R}^{2m})},
 \end{align*}
 and hence to wrap up, take $N_2=m$ and one has
\begin{align*}
|R|^{\frac{1}{2}}\sum^\infty_{i=6}\sum^\infty_{j=j_0+l}2^{\frac{(i+j)m}{2}}\cdot\left(I(\mathfrak{C}32)\right)^{\frac{1}{2}}
       &\lesssim |R|^{\frac{1}{2}}\sum^\infty_{i=6}\sum^\infty_{j=j_0+l}2^{\frac{(i+j)m}{2}}\cdot\left(2^i\ell_1\right)^{-2N_2}\cdot\left(2^j\ell_2\right)^{-2N_2}\cdot\left\|\triangle^{N_1}_1\triangle^{N_3}_{twist}b_R\right\|_{L^2(\mathbb{R}^{2m})}\\
&\simeq|R|^{\frac{1}{2}}\cdot\ell_1^{-2N_2}\cdot\ell_2^{-2N_2}\cdot\left\|\triangle^{N_1}_1\triangle^{N_3}_{twist}b_R\right\|_{L^2(\mathbb{R}^{2m})}\cdot\bigg( {\ell(\widehat{I_2})\over \ell(I_2)}\bigg)^{\frac{-3m}{2}}.
   \end{align*}
   
\smallskip

$\blacksquare$ {\it Estimate on the term $(\mathfrak{C}4)$}: Consider the following decomposition
 \begin{align*}
(\mathfrak{C}4)&=\int_{\substack{|x_1-x_2-c_{I_1}|\simeq2^i\ell_1\\|x_2-c_{I_2}|\simeq2^j\ell_2}}\int_{ \mathbb{R}^{2m}}\Bigg(\int^\infty_{\frac{|x_1-x_2-c_{I_1}|}{4}}\int^\infty_
     {\frac{|x_2-c_{I_2}|}{4}}+\int^\infty_{\frac{|x_1-x_2-c_{I_1}|}{4}}\int^{\frac{|x_2-c_{I_2}|}{4}}_{\ell_2}+\int^{\frac{|x_1-x_2-c_{I_1}|}{4}}_{\ell_1}\int^\infty_
     {\frac{|x_2-c_{I_2}|}{4}}\\
     &\qquad+\int^{\frac{|x_1-x_2-c_{I_1}|}{4}}_{\ell_1}\int^
     {\frac{|x_2-c_{I_2}|}{4}}_{\ell_2} \Bigg)\int^\infty_0 |a_R* \varphi_{\mathbf{r} } |^2(\mathbf{y})\cdot\chi_\mathbf{r}(\mathbf{x}-\mathbf{y})\,\frac {  
    dr_2dr_3dr_1}{r_2r_3r_1}\,d\mathbf{y}\,d\mathbf{x}\\
    &=:(\mathfrak{C}41)+(\mathfrak{C}42)+(\mathfrak{C}43)+(\mathfrak{C}44).
 \end{align*}
\begin{itemize}
     \item {\it Estimate on $(\mathfrak{C}41)$}:
 \end{itemize}
 By interchanging the integral, integrate against the variable $\mathbf{x}$, Littlewood--Paley inequality to eliminate $\varphi^{(2)}_{r_2}$ and the cancellation property of the atom, we have
 \begin{align*}
(\mathfrak{C}41)\lesssim \left(2^i\ell_1\right)^{-4N_1}\cdot\left(2^j\ell_2\right)^{-4N_2}\cdot\left\|\triangle^{N_3}_{twist}b_R\right\|^2_{L^2(\mathbb{R}^{2m})}.
 \end{align*}
 To wrap up, take $N_1=N_2=m$, it gives that
  \begin{align*}
|R|^{\frac{1}{2}}\sum^\infty_{i=6}\sum^\infty_{j=j_0+l}2^{\frac{(i+j)m}{2}}\cdot\left((\mathfrak{C}41)\right)^{\frac{1}{2}}
&\lesssim|R|^{\frac{1}{2}}\cdot\ell_1^{-2N_1}\cdot\ell_2^{-2N_2}\cdot\left\|\triangle^{N_3}_{twist}b_R\right\|^2_{L^2(\mathbb{R}^{2m})}\cdot \bigg( {\ell(\widehat{I_2})\over \ell(I_2)}\bigg)^{\frac{-3m}{2}}.
   \end{align*}
   
   \begin{itemize}
     \item {\it Estimate on $(\mathfrak{C}42)$}:
 \end{itemize}
 We further split this term $(\mathfrak{C}42)$. Then we have
 \begin{align*}
  (\mathfrak{C}42)&=\int_{\substack{|x_1-x_2-c_{I_1}|\simeq2^i\ell_1\\|x_2-c_{I_2}|\simeq2^j\ell_2}}\int_{ \mathbb{R}^{2m}}\int^\infty_{\frac{|x_1-x_2-c_{I_1}|}{4}}\int^{\frac{|x_2-c_{I_2}|}{4}}_{\ell_2}\left(\int^{\ell_2}_0+\int^{\frac{|x_2-c_{I_2}|}{4} }_{\ell_2}+\int^\infty_{\frac{|x_2-c_{I_2}|}{4}}\right) \\
  &\qquad|a_R* \varphi_{\mathbf{r} } |^2(\mathbf{y})\cdot\chi_\mathbf{r}(\mathbf{x}-\mathbf{y})\,\frac {  
    dr_2dr_3dr_1}{r_2r_3r_1}\,d\mathbf{y}\,d\mathbf{x}\\&=:I(\mathfrak{C}42)+II(\mathfrak{C}42)+III(\mathfrak{C}42).
 \end{align*}
The term $I(\mathfrak{C}42)$ would be equal to zero, which can be verified by investigating the support condition of the second variable.  For the term $III(\mathfrak{C}42)$, by interchanging the integral to integrate against the variable $\mathbf{x}$ and the Littlewood--Paley inequality to eliminate the function $\varphi^{(3)}_{r_3}$, we have
\begin{align*}
III(\mathfrak{C}42)&\lesssim\int^\infty_{\frac{2^i\ell_1}{4}}\int^\infty_{\frac{2^j\ell_2}{4}}\left\|a_R*\left( \varphi^{(1)}_{{r}_1 } \otimes\varphi^{(2)}_{{r}_2}\right)\right\|^2_{L^2(\mathbb{R}^{2m})}\,\frac {  
    dr_2dr_1}{r_2r_1},
\end{align*}
and therefore the cancellation property of the atom will lead to
\begin{align*}
    III(\mathfrak{C}42)\lesssim \left(2^i\ell_1\right)^{-4N_1}\left(2^j\ell_2\right)^{-4N_2}\left\|{\triangle}_{twist}^{ N_3 } b_{R}\right\|^2_{L^2(\mathbb{R}^{2m})},
\end{align*}
Hence, to wrap up, take $N_1=N_2=m$ and one has
\begin{align*}
       |R|^{\frac{1}{2}}\sum^\infty_{i=6}\sum^\infty_{j=j_0+l}2^{\frac{(i+j)m}{2}}\cdot\left(III(\mathfrak{C}42)\right)^{\frac{1}{2}}
       \lesssim |R|^{\frac{1}{2}}\cdot \ell_1^{-2N_1}\cdot\ell_2^{-2N_2}\cdot\left\|{\triangle}_{twist}^{ N_3 } b_{R}\right\|_{L^2(\mathbb{R}^{2m})}\cdot
       \bigg( {\ell(\widehat{I_2})\over \ell(I_2)}\bigg)^{-\frac{3m}{2}}.
   \end{align*}
   
For the term $II(\mathfrak{C}42)$, we first eliminate the factor $\varphi^{(1)}_{r_1}$. By cancellation property of the atom and the Young's convolution inequality,
 \begin{align} \label{S555}
\left\|a_R*\varphi_{\mathbf{r}}(y_1,\,y_2)\right\|^2_{L^2(\mathbb{R}^{m},\,d{y}_1)}\lesssim r_1^{-4N_1}\left\|\left({ {\triangle}_2^{ N_2 } \triangle^{N_3}_{twist}b_{R}}*_2\varphi^{(2)}_{r_2}*_3\varphi^{(3)}_{r_3}\right)(\mathbf{y})\right\|^2_{L^2(\mathbb{R}^{m},\,d{y}_1)},
   \end{align}
and hence by interchanging of integral and (\ref{S555}),
\begin{align}\label{tttttttttttt}
    II(\mathfrak{C}42)
    &\lesssim\int_{|x_2-c_{I_2}|\simeq2^j\ell_2}\int_{ \mathbb{R}^{m}}\int^{\frac{|x_2-c_{I_2}|}{4}}_{\ell_2}\int^{\frac{|x_2-c_{I_2}|}{4} }_{\ell_2}\int^\infty_{\frac{2^i\ell_1}{4}}\left\|a_R*\varphi_{\mathbf{r}}(y_1,\,y_2)\right\|^2_{L^2(\mathbb{R}^{m},\,d{y}_1)}\,\frac{dr_1}{r_1}\notag\\
  &\qquad\times\left(\chi_{B_{r_2}(0)}*\chi_{B_{r_3}(0)}\right)(x_2-y_2)\,\frac {  
    dr_2dr_3}{r^{m+1}_2r^{m+1}_3}\,d{y}_2\,dx_2\notag\\
      &\lesssim\int_{|x_2-c_{I_2}|\simeq2^j\ell_2}\int_{ \mathbb{R}^{m}}\int^{\frac{|x_2-c_{I_2}|}{4}}_{\ell_2}\int^{\frac{|x_2-c_{I_2}|}{4} }_{\ell_2}\Bigg[\int^\infty_{\frac{2^i\ell_1}{4}}\left\|\left({ {\triangle}_2^{ N_2 } \triangle^{N_3}_{twist}b_{R}}*_2\varphi^{(2)}_{r_2}*_3\varphi^{(3)}_{r_3}\right)(\mathbf{y})\right\|^2_{L^2(\mathbb{R}^{m},\,d{y}_1)}\notag\\
      &\qquad\frac{dr_1}{r^{1+4N_1}_1}\Bigg]\times\left(\chi_{B_{r_2}(0)}*\chi_{B_{r_3}(0)}\right)(x_2-y_2)\,\frac {  
    dr_2dr_3}{r^{m+1}_2r^{m+1}_3}\,d{y}_2\,dx_2\notag\\
    &\simeq(2^i\ell_1)^{-4N_1}\int_{|x_2-c_{I_2}|\simeq2^j\ell_2}\int_{ \mathbb{R}^{2m}}\int^{\frac{|x_2-c_{I_2}|}{4}}_{\ell_2}\int^{\frac{|x_2-c_{I_2}|}{4} }_{\ell_2}\left| \left({ {\triangle}_2^{ N_2 } \triangle^{N_3}_{twist}b_{R}}*_2\varphi^{(2)}_{r_2}*_3\varphi^{(3)}_{r_3}\right)(\mathbf{y}) \right|^2\notag\\
      &\qquad\times\left(\chi_{B_{r_2}(0)}*\chi_{B_{r_3}(0)}\right)(x_2-y_2)\,\frac {  
    dr_2dr_3}{r^{m+1}_2r^{m+1}_3}\,d\mathbf{y}\,dx_2.
\end{align}
To estimate (\ref{tttttttttttt}), we will utilize the support condition from the cone and dig out the pointwise estimate. Since we have that
\begin{align*}
      &\left| \left({ {\triangle}_2^{ N_2 } \triangle^{N_3}_{twist}b_{R}}*_2\varphi^{(2)}_{r_2}*_3\varphi^{(3)}_{r_3}\right)(\mathbf{y}) \right|^2\\&=r^{-4N_2}_2\left|\int_{\mathbb{R}^m} \left( \triangle^{N_3}_{twist}b_{R}*_3\varphi^{(3)}_{r_3}\right)(y_1,y_2-v)\cdot\left({\triangle}_2^{ N_2 } \varphi^{(2)}\right)_{r_2}(v)\,dv\right|^2\\
      &=r^{-4N_2}_2\left|\int_{\mathbb{R}^m} \left(  \triangle^{N_3}_{twist}b_{R}*_3\varphi^{(3)}_{r_3}\right)(y_1,y_2-v)\cdot\left({\triangle}_2^{ N_2 } \varphi^{(2)}\right)_{r_2}(v)\,dv\right|^2;
\end{align*}
and from the cone structure in (\ref{tttttttttttt}), we have that $|x_2-y_2|\leq r_2+r_3\leq \frac{|x_2-c_{I_2}|}{2}$; on the other hand, the support condition of $\varphi^{(3)}_{r_3}$ and $\varphi^{(2)}_{r_2}$ give that $|y_2-v-c_{I_2}|\leq \frac{\ell_2}{2}+ \frac{|x_2-c_{I_2}|}{4}$ and $|v|\leq\frac{|x_2-c_{I_2}|}{4}$, respectively. Remark that
\begin{align*}
    |v|&\geq |x_2-c_{I_2}|-|y_2-v-c_{I_2}|-|y_2-x_2|
    \geq \frac{|x_2-c_{I_2}|}{4}-\frac{\ell_2}{2}
    \geq\frac{6|x_2-c_{I_2}|}{25},
\end{align*}
where the last inequality can be deduced from the support condition of $x_2$, and hence $|v|\simeq|x_2-c_{I_2}|$. Thus, we can deduce that
\begin{align*}
    &\left| \left({ {\triangle}_2^{ N_2 } \triangle^{N_3}_{twist}b_{R}}*_2\varphi^{(2)}_{r_2}*_3\varphi^{(3)}_{r_3}\right)(\mathbf{y}) \right|^2\\
&\lesssim\frac{r^{2M_2-4N_2}_2}{\left(r_2+|x_2-c_{I_2}|\right)^{2m+2M_2}}\cdot \left|\int_{|v|\simeq|x_2-c_{I_2}|} \left( \triangle^{N_3}_{twist}b_{R}*_3\varphi^{(3)}_{r_3}\right)(y_1,y_2-v)\,dv\right|^2,
\end{align*}
for some $M_2>0$,
which implies that $II(\mathfrak{C}42)$ is bounded above by
\begin{align*}
      &(2^i\ell_1)^{-4N_1}\int_{|x_2-c_{I_2}|\simeq2^j\ell_2}\int_{ \mathbb{R}^{2m}}\int^{\frac{|x_2-c_{I_2}|}{4}}_{\ell_2}\int^{\frac{|x_2-c_{I_2}|}{4} }_{\ell_2}\frac{r^{2M_2-4N_2}_2}{\left(r_2+|x_2-c_{I_2}|\right)^{2m+2M_2}}\\
&\times\left|\int_{|v|\simeq|x_2-c_{I_2}|} \left( \triangle^{N_3}_{twist}b_{R}*_3\varphi^{(3)}_{r_3}\right)(y_1,y_2-v)\,dv\right|^2\cdot\left(\chi_{B_{r_2}(0)}*\chi_{B_{r_3}(0)}\right)(x_2-y_2)\,\frac {  
    dr_2dr_3}{r^{m+1}_2r^{m+1}_3}\,d\mathbf{y}\,dx_2.
\end{align*}
Next, we will explore the Poisson type estimate given by $\varphi^{(3)}_{r_3}$. Consider that
\begin{align*}
     &\left( \triangle^{N_3}_{twist}b_{R}*_3\varphi^{(3)}_{r_3}\right)(y_1,y_2-v)=r^{-2N_3}_3\cdot\int_{\mathbb{R}^m}b_{R}(y_1-z,y_2-v-z)\cdot\left(\triangle^{N_3}_{3}\varphi^{(3)}\right)_{r_3}(z)\,dz.
\end{align*}
Since that  $|v|\simeq|x_2-c_{I_2}|$, $|y_2-v-z-c_{I_2}|\leq\frac{\ell_2}{2}$ and the cone structure gives that $|x_2-y_2|\leq r_2+r_3\leq\frac{|x_2-c_{I_2}|}{2}$, we then have
\begin{align*}
    |z|&\geq|x_2-c_{I_2}|-|y_2-v-z-c_{I_2}|-|y_2-x_2|-|v|
    \geq|x_2-c_{I_2}|-\frac{\ell_2}{2}-\frac{|x_2-c_{I_2}|}{2}-\frac{6|x_2-c_{I_2}|}{25}
    \geq\frac{1}{4}|x_2-c_{I_2}|,
\end{align*}
which leads to $|z|\simeq |x_2-c_{I_2}|$. Then the Poisson type bound gives that
\begin{align*}
     &\left|  \triangle^{N_3}_{twist}b_{R}*_3\varphi^{(3)}_{r_3}\right|(y_1,y_2-v)
     \lesssim\frac{r^{M_3-2N_3}_3}{\left(r_3+|x_2-c_{I_2}|\right)^{m+M_3}}\int_{\mathbb{R}^m}\left|b_{R}(y_1-z,y_2-v-z)\right|\,dz,
\end{align*}
 where $M_3$ is a positive integer, which further implies that the term $II(\mathfrak{C}42)$ is dominated by
 \begin{align*}
      &(2^i\ell_1)^{-4N_1}\int_{|x_2-c_{I_2}|\simeq2^j\ell_2}\int_{ \mathbb{R}^{2m}}\int^{\frac{|x_2-c_{I_2}|}{4}}_{\ell_2}\int^{\frac{|x_2-c_{I_2}|}{4} }_{\ell_2}\frac{r^{2M_2-4N_2}_2}{\left(r_2+|x_2-c_{I_2}|\right)^{2m+2M_2}}\cdot\frac{r^{2M_3-4N_3}_3}{\left(r_3+|x_2-c_{I_2}|\right)^{2m+2M_3}}\\
&\times\left[\int_{|v|\simeq|x_2-c_{I_2}|} \int_{|z|\simeq|x_2-c_{I_2}|} \left| b_{R}(y_1-z,y_2-v-z)\right|\,dz\,dv\right]^2\cdot\left(\chi_{B_{r_2}(0)}*\chi_{B_{r_3}(0)}\right)(x_2-y_2)\,\frac {  
    dr_2dr_3}{r^{m+1}_2r^{m+1}_3}\,d\mathbf{y}\,dx_2.
\end{align*}
To continue, by Minkowski's inequality and H\"older's inequality, one has
\begin{align*}
    &\int_{\mathbb{R}^m}\left[ \int_{|v|\simeq|x_2-c_{I_2}|} \int_{|z|\simeq|x_2-c_{I_2}|} \left| b_{R}(y_1-z,y_2-v-z)\right|\,dz\,dv\right]^2\,dy_1\\
    &\leq\left[ \int_{|z|\simeq|x_2-c_{I_2}|} \int_{|v|\simeq|x_2-c_{I_2}|}\left(\int_{\mathbb{R}^m}\left| b_{R}(y_1-z,y_2-v-z)\right|^2\,dy_1\right)^{\frac{1}{2}}\,dv\,dz\right]^2\\
     &\lesssim\left[ \int_{|z|\simeq|x_2-c_{I_2}|} \left(\int_{\mathbb{R}^m}\int_{\mathbb{R}^m}\left|b_{R}(y_1-z,y_2-v-z)\right|^2\,dy_1\,dv\right)^{\frac{1}{2}}\,dz\right]^2\cdot|x_2-c_{I_2}|^{m}\\
       &\simeq |x_2-c_{I_2}|^{3m}\cdot\left\|b_{R}\right\|^2_{L^2(\mathbb{R}^{2m})}.
\end{align*}
Therefore, if $M_2-2N_2<0$ and $M_3-2N_3<0$, the term $II(\mathfrak{C}42)$ can be controlled by
 \begin{align*}
&(2^i\ell_1)^{-4N_1}\int_{|x_2-c_{I_2}|\simeq2^j\ell_2}\int_{ \mathbb{R}^{m}}\int^{\frac{|x_2-c_{I_2}|}{4}}_{\ell_2}\int^{\frac{|x_2-c_{I_2}|}{4} }_{\ell_2}\frac{r^{2M_2-4N_2}_2}{\left(r_2+|x_2-c_{I_2}|\right)^{2m+2M_2}}\cdot\frac{r^{2M_3-4N_3}_3}{\left(r_3+|x_2-c_{I_2}|\right)^{2m+2M_3}}\\
&\quad\times|x_2-c_{I_2}|^{3m}\cdot\left\|b_{R}\right\|^2_{L^2(\mathbb{R}^{2m})}\cdot\left(\chi_{B_{r_2}(0)}*\chi_{B_{r_3}(0)}\right)(x_2-y_2)\,\frac {  
    dr_2dr_3}{r^{m+1}_2r^{m+1}_3}\,dy_2\,dx_2\\
&\leq(2^i\ell_1)^{-4N_1}\int_{|x_2-c_{I_2}|\simeq2^j\ell_2}\int^{\frac{|x_2-c_{I_2}|}{4}}_{\ell_2}\int^{\frac{|x_2-c_{I_2}|}{4} }_{\ell_2}\frac{r^{2M_2-4N_2}_2}{\left(r_2+|x_2-c_{I_2}|\right)^{2m+2M_2}}\cdot\frac{r^{2M_3-4N_3}_3}{\left(r_3+|x_2-c_{I_2}|\right)^{2m+2M_3}}\\
&\quad\times|x_2-c_{I_2}|^{3m}\cdot\left\| b_{R}\right\|^2_{L^2(\mathbb{R}^{2m})}\,\frac {  dr_2dr_3}{r_2r_3}\,dx_2\\
&\simeq(2^i\ell_1)^{-4N_1}\left(2^j\ell_2\right)^{4m}\left\|b_{R}\right\|^2_{L^2(\mathbb{R}^{2m})}\times\Bigg[\int^{\frac{2^j\ell_2}{4}}_{\ell_2} \frac{r^{2M_2-4N_2}_2}{\left(r_2+2^j\ell_2\right)^{2m+2M_2}} \,\frac{dr_2}{r_2}\times\int^{\frac{2^j\ell_2}{4} }_{\ell_2}\frac{r^{2M_3-4N_3}_3}{\left(r_3+2^j\ell_2\right)^{2m+2M_3}}\frac{dr_3}{r_3}\Bigg]\\
&\simeq(2^i)^{-4N_1}\cdot(2^j)^{-2M_2-2M_3}\cdot\ell_1^{-4N_1}\ell^{-4N_2-4N_3}_2\left\|b_{R}\right\|^2_{L^2(\mathbb{R}^{2m})}.
\end{align*}
Finally, if we choose $\frac{m}{2}<2N_1$ and $\frac{m}{2}<M_2+M_3$, then we have
\begin{align*}
       &|R|^{\frac{1}{2}}\sum^\infty_{i=6}\sum^\infty_{j=j_0+l}2^{\frac{(i+j)m}{2}}\cdot\left(II(\mathfrak{C}42)\right)^{\frac{1}{2}}
       \lesssim |R|^{\frac{1}{2}}\cdot\ell_1^{-2N_1}\ell^{-2N_2-2N_3}_2\left\|b_{R}\right\|_{L^2(\mathbb{R}^{2m})}\cdot \bigg( {\ell(\widehat{I_2})\over \ell(I_2)}\bigg)^{(\frac{m}{2}-M_2-M_3)}.
   \end{align*}
\smallskip

\begin{itemize}
     \item {\it Estimate on $(\mathfrak{C}43)$}:
 \end{itemize}
 Now, we estimate the term $(\mathfrak{C}43)$. Decompose this term as the following
   \begin{align*}&\int_{\substack{|x_1-x_2-c_{I_1}|\simeq2^i\ell_1\\|x_2-c_{I_2}|\simeq2^j\ell_2}}\int_{ \mathbb{R}^{2m}}\int^{\frac{|x_1-x_2-c_{I_1}|}{4}}_{\ell_1}\int^\infty_
     {\frac{|x_2-c_{I_2}|}{4}}
     \Bigg(\int^{\ell_1}_{0}+\int^{\frac{|x_1-x_2-c_{I_1}|}{8}}_{\ell_1}+\int^\infty_{\frac{|x_1-x_2-c_{I_1}|}{8}}\Bigg)\\
     &\qquad|a_R* \varphi_{\mathbf{r} } |^2(\mathbf{y})\cdot\chi_{\mathbf{r}}(\mathbf{x}-\mathbf{y})\,\frac { dr_2 
    dr_3dr_1}{r_2r_3r_1}\,d\mathbf{y}\,d\mathbf{x}\\
    &=:{\underset{\text{molecule~decomposition}}{I(\mathfrak{C}43)+II(\mathfrak{C}43)}}+III(\mathfrak{C}43).
 \end{align*}
The term $III(\mathfrak{C}43)$ can be easily obtained by following the similar method used in estimating the term 
$III(\mathfrak{C}42)$. For the estimate on the terms $I(\mathfrak{C}43)$ and $II(\mathfrak{C}43)$, we only demonstrate the estimate on $II(\mathfrak{C}43)$ in the subsection $\ref{Molecule}$; the estimate for the term $I(\mathfrak{C}43)$ would be simpler.

 \begin{itemize}
     \item {\it Estimate on $(\mathfrak{C}44)$}
 \end{itemize}
 
We consider the following decomposition
 \begin{align*}
(\mathfrak{C}44)&=\int_{\substack{|x_1-x_2-c_{I_1}|\simeq2^i\ell_1\\|x_2-c_{I_2}|\simeq2^j\ell_2}}\int_{ \mathbb{R}^{2m}}\int^{\frac{|x_1-x_2-c_{I_1}|}{8}}_{\ell_1}\int^
     {\frac{|x_2-c_{I_2}|}{4}}_{\ell_2} \bigg(\int^{\ell_1}_0 +\int^{\max\{\frac{|x_1-x_2-c_{I_1}|}{8},\frac{|x_2-c_{I_2}|}{4} \}}_{\ell_1}\\
     &\qquad+\int^\infty_{\max\{\frac{|x_1-x_2-c_{I_1}|}{8},\frac{|x_2-c_{I_2}|}{4} \}} \bigg)\,|a_R* \varphi_{\mathbf{r} } |^2(\mathbf{y})\cdot\chi_\mathbf{r}(\mathbf{x}-\mathbf{y})\,\frac { dr_2 
    dr_3dr_1}{r_2r_3r_1}\,d\mathbf{y}\,d\mathbf{x}\\&=:I(\mathfrak{C}44)+II(\mathfrak{C}44)+III(\mathfrak{C}44).
 \end{align*}

 For the term $I(\mathfrak{C}44)$, observe that
 \begin{align*}
     I(\mathfrak{C}44)&=\int_{\substack{|x_1-x_2-c_{I_1}|\simeq2^i\ell_1\\|x_2-c_{I_2}|\simeq2^j\ell_2}}\int_{ \mathbb{R}^{2m}}\int^{\frac{|x_1-x_2-c_{I_1}|}{8}}_{\ell_1}\int^
     {\frac{|x_2-c_{I_2}|}{4}}_{\ell_2} \int^{\ell_1}_0 \left|\int_{\mathbb{R}^{{2m}}}\left(a_R*_3\varphi^{(3)}_{r_3}\right)(\mathbf{w})\left(\varphi^{(1)}_{r_1}\otimes\varphi^{(2)}_{r_2}\right) (\mathbf{y}-\mathbf{w}) \,d\mathbf{w}\right|^2\\
     &\qquad\times\left(\chi_{B_{r_1}(0)}\otimes\chi_{B_{r_2}(0)}\right)*\chi_{B_{r_3}(0)}(\mathbf{x}-\mathbf{y})\,\frac { dr_2 
    dr_3dr_1}{r^{m+1}_2r^{m+1}_3r^{m+1}_1}\,d\mathbf{y}\,d\mathbf{x}.
 \end{align*}
 If we investigate the support condition given by the first variable, then we have
\begin{align*}
\frac{|x_1-x_2-c_{I_1}|}{8}\geq|y_1-w_1|&\geq|\left(x_1-x_2-c_{I_1}\right)+\left(x_2-c_{I_2}\right)|- |x_1-y_1|-|w_1-w_2-c_{I_1}|-|w_2-c_{I_2}|,
\end{align*}
and hence by triangle inequality,
\begin{align}\label{55599}
    \frac{126}{49}|x_1-x_2-c_{I_1}|\geq|x_2-c_{I_2}|\geq\frac{74}{151}|x_1-x_2-c_{I_1}|;
\end{align}
On the other hand, if we investigate the support condition given by the second variable. Then
by the cone structure, we have
$|x_2-y_2|\leq \frac{|x_2-c_{I_2}|}{4}+\ell_1$; moreover,  $|w_2-c_{I_2}|\leq \frac{\ell_2}{2}+\frac{|x_2-c_{I_2}|}{4}$. Therefore, we have
\begin{align*}
\ell_1\geq|y_2-w_2|&\geq|x_2-c_{I_2}|- |x_2-y_2|-|w_2-c_{I_2}|,
\end{align*}
which leads to \begin{align*}
  |x_1-x_2-c_{I_1}|\geq\frac{49}{3}|x_2-c_{I_2}|,
\end{align*}
which is a contradiction to (\ref{55599}). So, the term $I(\mathfrak{C}44)$ vanishes. 

For the term $III(\mathfrak{C}44)$, we first interchange the integral to integrate against the variable $\mathbf{x}$ and apply the Littlewood--Paley inequality to eliminate $\varphi^{(1)}_{r_1}$ and $\varphi^{(3)}_{r_3}$, then
\begin{align*}
    III(\mathfrak{C}44)\lesssim \int^\infty_{\max\{\frac{2^i\ell_1}{8},\frac{2^j\ell_2}{4}\}} \left\|a_R*_2\varphi^{(2)}_{{r}_2 } \right\|^2_{L^2(\mathbb{R}^{2m})}\frac{dr_2}{r_2},
\end{align*}
and hence from the cancellation property of the atom, we have
\begin{align*}
III(\mathfrak{C}44)\lesssim \max\Big\{\frac{2^i\ell_1}{8},\frac{2^j\ell_2}{4}\Big\}^{-4N_2}\left\|\triangle^{N_1}_1\triangle^{N_3}_{twist}b_R\right\|^2_{L^2(\mathbb{R}^{2m})}.
\end{align*}
To wrap up, take $2N_2>m$, we have
\begin{align*}
       &|R|^{\frac{1}{2}}\sum^\infty_{i=6}\sum^\infty_{j=j_0+l}2^{\frac{(i+j)m}{2}}\cdot\left(III(\mathfrak{C}44)\right)^{\frac{1}{2}}
   \lesssim|R|^{\frac{1}{2}}\cdot\left(\ell_1\ell_2\right)^{-N_2}\left\|\triangle^{N_1}_1\triangle^{N_3}_{twist}b_R\right\|_{L^2(\mathbb{R}^{2m})}\cdot\bigg( {\ell(\widehat{I_2})\over \ell(I_2)}\bigg)^{-(N_2-\frac{m}{2})}.
   \end{align*}

Finally, for the term $II(\mathfrak{C}44)$, without loss of generality, we assume that  \[
\frac{|x_1-x_2-c_{I_1}|}{8}\leq\frac{|x_2-c_{I_2}|}{4} ,
\]
and observe that in this case $II(\mathfrak{C}44)$ is equal to
\begin{align*}
&\int_{\substack{|x_1-x_2-c_{I_1}|\simeq2^i\ell_1\\|x_2-c_{I_2}|\simeq2^j\ell_2\\ \frac{|x_1-x_2-c_{I_1}|}{8}\leq\frac{|x_2-c_{I_2}|}{4}}}\int_{\mathbb{R}^{2m}}\int^{\frac{|x_1-x_2-c_{I_1}|}{8}}_{\ell_1}\int^
     {\frac{|x_2-c_{I_2}|}{4}}_{\ell_2} \int^{\frac{|x_2-c_{I_2}|}{4}}_{\ell_1}|a_R* \varphi_{\mathbf{r} } |^2(\mathbf{y}) \\
     &\quad\times\left(\chi_{B_{r_1}(0)}\otimes\chi_{B_{r_2}(0)}\right)*\chi_{B_{r_3}(0)}(\mathbf{x}-\mathbf{y})\,\frac { dr_2 
    dr_3dr_1}{r^{m+1}_2r^{m+1}_3r^{m+1}_1}\,d\mathbf{y}\,d\mathbf{x}.
\end{align*}
We need to dig out the pointwise estimate from the function $\varphi$. Remark that
\begin{align*}
    |a_R* \varphi_{\mathbf{r} } |(\mathbf{y})&=r^{-2N_2}_2r^{-2N_3}_3\left|\triangle^{N_1}_1b_R* \left(\varphi^{(1)}_{r_1}\otimes\left(\triangle^{N_2}_2\varphi^{(2)}\right)_{r_2}\right)*_3(\triangle^{N_3}_3\varphi^{(3)})_{{r_3}} \right|(\mathbf{y})\\&=r^{-2N_2}_2r^{-2N_3}_3\left|\int_{\mathbb{R}^{{2m}}}\left(\triangle^{N_1}_1b_R*_3(\triangle^{N_3}_3\varphi^{(3)})_{{r_3}} \right)(\mathbf{w})\left(\varphi^{(1)}_{r_1}\otimes\left(\triangle^{N_2}_2\varphi^{(2)}\right)_{r_2}\right) (y_1-w_1,y_2-w_2) \,d\mathbf{w}\right|.
\end{align*}
  We investigate the support condition given by the second variable.
By the cone structure, we have
$|x_2-y_2|\leq \frac{|x_2-c_{I_2}|}{2}$; moreover, the centralized condition gives that $|w_2-c_{I_2}|\leq \frac{\ell_2}{2}+\frac{|x_2-c_{I_2}|}{4}$. Therefore, we have
\begin{align*}
\frac{|x_2-c_{I_2}|}{4}\geq|y_2-w_2|&\geq|x_2-c_{I_2}|- |x_2-y_2|-|w_2-c_{I_2}|
   \geq\frac{6|x_2-c_{I_2}|}{25},
\end{align*} 
that is $|y_2-w_2|\simeq |x_2-c_{I_2}|$
on the other hand, the support condition given by the first variable gives us that
$
|y_1-w_1|\geq|x_1-x_2-c_{I_1}+x_2-c_{I_2}|- |x_1-y_1|-|w_1-w_2-c_{I_1}|-|w_2-c_{I_2}|,
$
by triangle inequality, this implies that 
\[\frac{18}{7}|x_1-x_2-c_{I_1}|\geq|x_2-c_{I_2}|\geq\frac{1}{2}|x_1-x_2-c_{I_1}|,\]
that is 
$|x_2-c_{I_2}|\simeq|x_1-x_2-c_{I_1}|.$
Next, since that
\begin{align*}
\left(\triangle^{N_1}_1b_R*_3(\triangle^{N_3}_3\varphi^{(3)})_{{r_3}} \right)(\mathbf{w})=\int_{\mathbb{R}^m} \triangle^{N_1}_1b_R(w_1-z,w_2-z)\cdot(\triangle^{N_3}_3\varphi^{(3)})_{{r_3}}(z)\,dz,
\end{align*}
then by investigating the support condition and the fact that
$\frac{|x_2-c_{I_2}|}{4}\geq |z|\geq|x_2-c_{I_2}|-|z+c_{I_2}-w_2|-|w_2-y_2|-|y_2-x_2|$, one can deduce that
\begin{align*}
   \frac{|x_2-c_{I_2}|}{4}\geq  |z|\geq\frac{6|x_2-c_{I_2}|}{25},
\end{align*}
which leads to $|z|\simeq|x_1-x_2-c_{I_1}|\simeq|x_2-c_{I_2}|$.
Then by the pointwise estimate of Poisson type bound, $|a_R* \varphi_{\mathbf{r} } |(\mathbf{y})$ can be dominated by
\begin{align*}
&\int_{|w_1-y_1|\leq\frac{|x_1-x_2-c_{I_1}|}{8}}\int_{\substack{|w_2-y_2|\simeq|x_2-c_{I_2}|\\|z|\simeq|x_2-c_{I_2}|}}\left|\triangle^{N_1}_1b_R(w_1-z,w_2-z)\right|\,dw_2\,\left|\varphi^{(1)}_{r_1}(y_1-w_1) \right|\,dz\,dw_1\\
&\qquad\times\frac{r^{M_2-2N_2}_2}{(r_2+|x_2-c_{I_2}|)^{m+M_2}}\cdot\frac{r^{M_3-2N_3}_3}{(r_3+|x_2-c_{I_2}|)^{m+M_3}}\\
&\lesssim\int_{|z|\simeq|x_2-c_{I_2}|}\mathcal{M}_1\Bigg[\int_{{|w_2-y_2|\simeq|x_2-c_{I_2}|}}\left|\triangle^{N_1}_1b_R(\cdot,w_2-z)\right|\,dw_2\Bigg](y_1-z)\,dz\\
&\qquad\times\frac{r^{M_2-2N_2}_2}{(r_2+|x_2-c_{I_2}|)^{m+M_2}}\cdot\frac{r^{M_3-2N_3}_3}{(r_3+|x_2-c_{I_2}|)^{m+M_3}},
\end{align*}
in which $\mathcal{M}_1$ denotes the Hardy--Littlewood maximal function in the first coordinate. Then, by applying H\"older's inequality, we have $II(\mathfrak{C}44)$ is dominated by
\begin{align*}
&\int_{\substack{|x_1-x_2-c_{I_1}|\simeq2^i\ell_1\\|x_2-c_{I_2}|\simeq2^j\ell_2\\ \frac{|x_1-x_2-c_{I_1}|}{8}\leq\frac{|x_2-c_{I_2}|}{4}}}\int_{\mathbb{R}^{m}}\int^{\frac{|x_1-x_2-c_{I_1}|}{8}}_{\ell_1}\int^
     {\frac{|x_2-c_{I_2}|}{4}}_{\ell_2} \int^{\frac{|x_2-c_{I_2}|}{4}}_{\ell_2}|x_2-c_{I_2}|^m\cdot\frac{r^{2M_2-4N_2}_2}{(r_2+|x_2-c_{I_2}|)^{2m+2M_2}} \\
    &\quad\times\frac{r^{2M_3-4N_3}_3}{(r_3+|x_2-c_{I_2}|)^{2m+2M_3}}\int_{|z|\simeq|x_2-c_{I_2}|}\left(\mathcal{M}_1\Bigg[\int_{\mathbb{R}^m}\left|\triangle^{N_1}_1b_R(\cdot,w_2)\right|\,dw_2\Bigg](y_1-z)\right)^2\,dz\,\frac {  
    dr_2dr_3dr_1}{r_2 r_3r^{m+1}_1}\,dy_1\,d\mathbf{x}.
\end{align*}
To continue, by interchange of integral and $L^2$-boundedness of the Hardy--Littlewood maximal function, we have the term $II(\mathfrak{C}44)$ is bounded above by
\begin{align*}
&\|\triangle^{N_1}_1b_R\|^2_{L^2(\mathbb{R}^{2m})}\cdot\ell^m_2\cdot\int_{\substack{|x_1-x_2-c_{I_1}|\simeq2^i\ell_1\\|x_2-c_{I_2}|\simeq2^j\ell_2\\\frac{|x_1-x_2-c_{I_1}|}{8}\leq\frac{|x_2-c_{I_2}|}{4}}}\int^{\frac{|x_1-x_2-c_{I_1}|}{8}}_{\ell_1}\int^
     {\frac{|x_2-c_{I_2}|}{4}}_{\ell_2} \int^{\frac{|x_2-c_{I_2}|}{4}}_{\ell_1}\frac{r^{2M_2-4N_2}_2}{(r_2+|x_2-c_{I_2}|)^{2m+2M_2}}\\
     &\qquad\times\frac{r^{2M_3-4N_3}_3}{(r_3+|x_2-c_{I_2}|)^{2m+2M_3}}\cdot |x_2-c_{I_2}|^{2m}\,\frac {  
    dr_2dr_3dr_1}{r_2r_3r^{m+1}_1}\,d\mathbf{x} \\&\leq\|\triangle^{N_1}_1b_R\|^2_{L^2(\mathbb{R}^{2m})}\cdot\ell^m_2\cdot\int_{\substack{|x_1-x_2-c_{I_1}|\simeq2^i\ell_1\\|x_2-c_{I_2}|\simeq2^j\ell_2\\\frac{|x_1-x_2-c_{I_1}|}{8}\leq\frac{|x_2-c_{I_2}|}{4}}}\int^{\frac{|x_1-x_2-c_{I_1}|}{8}}_{\ell_1}\int^
     {\frac{|x_2-c_{I_2}|}{4}}_{\ell_2} \int^{\frac{|x_2-c_{I_2}|}{4}}_{\ell_1}
     \frac{r^{2M_2-4N_2}_2}{(r_2+|x_2-c_{I_2}|)^{2M_2}}\\
     &\qquad\times\frac{r^{2M_3-4N_3}_3}{(r_3+|x_2-c_{I_2}|)^{2m+2M_3}}\,\frac{
    dr_2dr_3dr_1}{r_2r_3r^{m+1}_1}\,d\mathbf{x}\\
&\simeq\ell_2^{-4N_2}\ell^{-4N_3}_1\|\triangle^{N_1}_1b_R\|^2_{L^2(\mathbb{R}^{2m})}\cdot2^{i(m-2M_3)}2^{-j(m+2M_2)},  
\end{align*}
provided that $2M_2-4N_2, 2M_3-4N_3<0$. To wrap up, we take $M_3>m$, then we have
\begin{align*}
       &|R|^{\frac{1}{2}}\sum^\infty_{i=6}\sum^\infty_{j=j_0+l}2^{\frac{(i+j)m}{2}}\cdot\left(II(\mathfrak{C}44)\right)^{\frac{1}{2}}
       \lesssim |R|^{\frac{1}{2}}\cdot\ell_2^{-2N_2}\ell^{-2N_3}_1\|\triangle^{N_1}_1b_R\|_{L^2(\mathbb{R}^{2m})}\cdot\bigg( {\ell(\widehat{I_2})\over \ell(I_2)}\bigg)^{-M_2}.
   \end{align*}
   As for the case $({\mathfrak{D}})$, one can combine the idea of support estimate presented in the estimate in $({\mathfrak{C}})$ and the estimate of $({\mathcal{D}})$ to argue it.
   Thus, from the cancellation property of the atom, we complete the proof for the case of atom being of type ${\rm I\!I}$ and the proof is complete.  \end{proof}

\subsection{Molecule decomposition technique}\label{Molecule}
We demonstrate the molecule decomposition technique on dominating the term $I(\mathfrak{C}22), II(\mathfrak{C}22), I(\mathfrak{C}43)$ and $II(\mathfrak{C}43)$ with the emphasis on the term $II(\mathfrak{C}43)$.

\subsubsection{Rough idea and a few remarks}
The main obstacle in estimating these four terms is that the convolution 
$a_R*\varphi_{\mathbf{r}}$ has neither a support condition nor a suitable decay rate. To conquer this difficulty, one needs to perform a suitable decomposition to force the decay rate to appear.

Recall that our goal is to estimate 
\begin{align}\label{sum}
     &|R|^{\frac{1}{2}}\sum^\infty_{i=6}\sum^\infty_{j=j_0+l}2^{\frac{(i+j)m}{2}}\cdot\left(II(\mathfrak{C}43)\right)^{\frac{1}{2}}.
\end{align}
For all $i\geq6$, let $U(i):=2^i\left({I_1}\times_t \widehat{I_2}\right)-2^{i-1}\left({I_1}\times_t \widehat{I_2}\right)$ be the slant annulus and $A(i):=\left((100{I_1})^c\times_t (100\widehat{I_2})^c\right)\cap U(i)$.

\begin{figure}[htbp]
    \centering
    \includegraphics[width=0.4\textwidth]{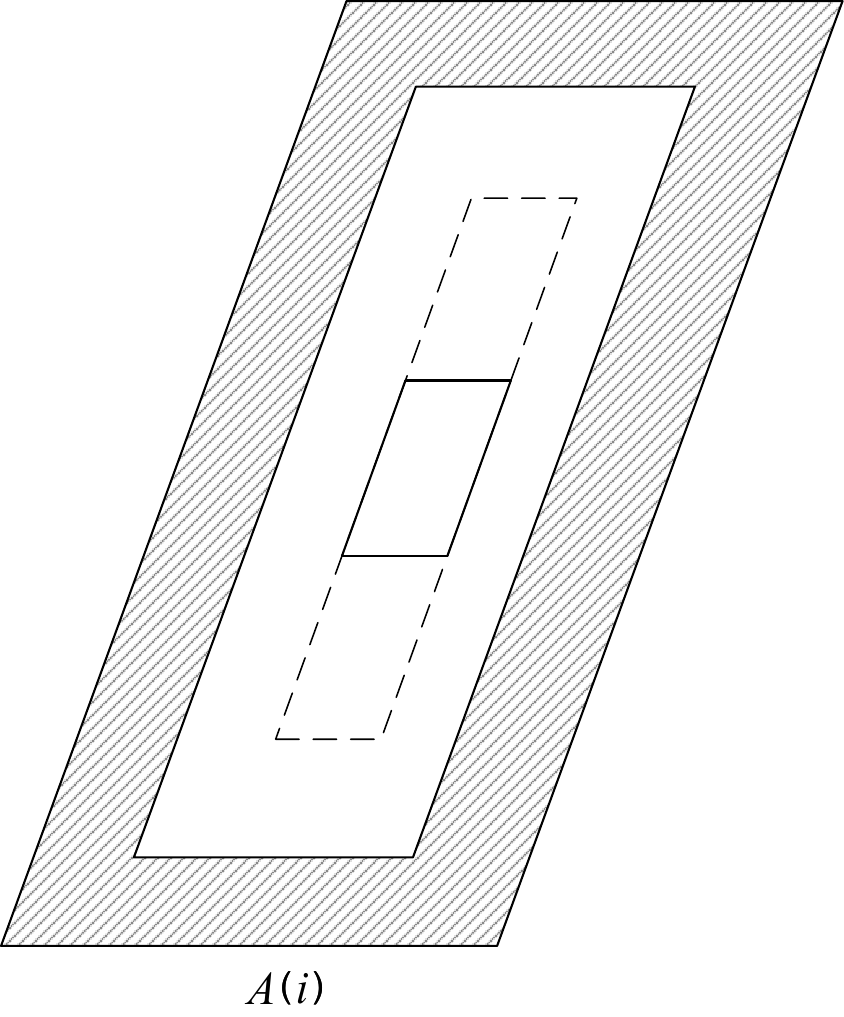}
    \label{2:pdf}
\end{figure}

It turns out that, by the Cauchy-Schwartz inequality,  the sum (\ref{sum}) can be further dominated by
\begin{align}\label{sum2}
 |R|^{\frac{1}{2}}\sum^\infty_{i=6}2^{\frac{(2i+l+j_0)m}{2}}\cdot\left(\widetilde{II(\mathfrak{C}43)}\right)^{\frac{1}{2}},
\end{align}
where $j_0\in\mathbb{N}$ is such that $2^{j_0m}|(I_2)_l|\simeq{|\widehat{I_2}|}$ and 
\begin{align*}
\widetilde{II(\mathfrak{C}43)}=:\int_{A(i)}\int_{ \mathbb{R}^{2m}}\int^{\frac{|x_1-x_2-c_{I_1}|}{4}}_{\ell_1}\int^\infty_
     {\frac{|x_2-c_{I_2}|}{4}}
     \int^{\frac{|x_1-x_2-c_{I_1}|}{8}}_{\ell_1} |a_R* \varphi_{\mathbf{r} } |^2(\mathbf{y})\cdot\chi_{\mathbf{r} }(\mathbf{x}-\mathbf{y})\,\frac { dr_2 
    dr_3dr_1}{r_2r_3r_1}\,d\mathbf{y}\,d\mathbf{x}.
\end{align*}
\smallskip

Now, we further decompose $\widetilde{II(\mathfrak{C}43)}$. For all $i\geq6$, if we let \[
{A(i)}^{\mathcal{H}}:=\left\{\mathbf{x}\in A(i):\,|x_1-x_2-c_{I_1}|\simeq 2^i\ell_1,\,50\widehat{\ell_2}\leq |x_2-c_{I_2}|\leq2^{i-1}\widehat{\ell_2}\right\}
\]
and
\[
{A(i)}^{\mathcal{V}}:=\left\{\mathbf{x}\in A(i):\,50\ell_1\leq|x_1-x_2-c_{I_1}|\leq 2^{i-1}\ell_1,\, |x_2-c_{I_2}|\simeq2^{i}\widehat{\ell_2}\right\}
\]
and
\[
{A(i)}^{\mathcal{D}}:=\left\{\mathbf{x}\in A(i):\,|x_1-x_2-c_{I_1}|\simeq2^i\ell_1,\, |x_2-c_{I_2}|\simeq2^{i}\widehat{\ell_2}\right\},
\]
then we have
\begin{align*}
\widetilde{II(\mathfrak{C}43)}&=\sum^{i-3}_{i_1=0}\sum^{i-4}_{i_2=0}\int_{A(i)^{\mathcal{H}}\cup A(i)^{\mathcal{V}}\cup A(i)^{\mathcal{D}}}\int_{ \mathbb{R}^{2m}}\int^{2^{i_1+1}\ell_1}_{2^{i_1}\ell_1}\int^\infty_
     {\frac{|x_2-c_{I_2}|}{4}}
    \int^{2^{i_2+1}\ell_1}_{2^{i_2}\ell_1}\\
    &\qquad\left|a_R* \varphi_{\mathbf{r} }\right|^2(\mathbf{y})\cdot\chi_{\mathbf{r} }(\mathbf{x}-\mathbf{y})\,\frac { dr_2 
    dr_3dr_1}{r_2r_3r_1}\,d\mathbf{y}\,d\mathbf{x}\\
&=:\widetilde{II(\mathfrak{C}43)}^{\mathcal{H}}+\widetilde{II(\mathfrak{C}43)}^{\mathcal{V}}+\widetilde{II(\mathfrak{C}43)}^{\mathcal{D}}.
\end{align*}
From the decay rate of $|x_2-c_{I_2}|$, it suffices to estimate 
\begin{align}\label{sumHori}
 |R|^{\frac{1}{2}}\sum^\infty_{i=6}2^{\frac{(2i+l+j_0)m}{2}}\cdot\left(\widetilde{II(\mathfrak{C}43)}^{\mathcal{H}}\right)^{\frac{1}{2}}.
\end{align}
To get the decay for the summation, we will decompose the space $\mathbb{R}^{2m}_\mathbf{y}$ to extract the decay. For each $i\geq6$ and for all $k\geq1$, define the slant sets
\[
S_0=S_0(i):=\left\{\mathbf{y}\in\mathbb{R}^{2m}:|y_1-y_2-c_{I_1}|\leq\frac{|x_1-x_2-c_{I_1}|}{2},\,\,|y_2-c_{I_2}|\leq2^i\ell_2\right\}
\]
and
\[
 S_k=S_k(i):=\left\{\mathbf{y}\in\mathbb{R}^{2m}:|y_1-y_2-c_{I_1}|\leq\frac{|x_1-x_2-c_{I_1}|}{2},\,\,|y_2-c_{I_2}|\simeq2^{k+i}\ell_2\right\},
\]
then we have \[
\bigsqcup^\infty_{k=0}S_k=\left\{\mathbf{y}\in\mathbb{R}^{2m}:\,|y_1-y_2-c_{I_1}|\leq\frac{|x_1-x_2-c_{I_1}|}{2}\right\}.
\]
Then by the support condition of $a_R$, the range of $\mathbf{r}$ and the cancellation property of the atom, we have
\begin{align*}
    II(\mathfrak{C}43)^{\mathcal{H}} &=\sum^{i-3}_{i_1=0}\sum^{i-4}_{i_2=0}\sum^\infty_{k=0}\int_{{A(i)^{\mathcal{H}}}}\int_{ S_k}\int^{2^{i_1+1}\ell_1}_{2^{i_1}\ell_1}\int^\infty_
     {\frac{|x_2-c_{I_2}|}{4}}
    \int^{2^{i_2+1}\ell_1}_{2^{i_2}\ell_1}\\
    &\qquad\left|\left(\triangle^{N_3}_{twist}b_R\right)*_3\varphi^{(3)}_{r_3}*\left(\left[\left(\triangle^{N_1}_1\varphi^{(1)}\right)_{r_1}\otimes\left(\triangle^{N_2}_2\varphi^{(2)}\right)_{r_2}\right]\right)\right|^2(\mathbf{y})\\
    &\qquad\chi_{\mathbf{r} }(\mathbf{x}-\mathbf{y})\,\frac {  
   dr_2}{r^{4N_2+1}_2}\,\frac{dr_3}{r_3}\,\frac {  
    dr_1}{r^{4N_1+1}_1}\,d\mathbf{y}\,d\mathbf{x}
\end{align*}
and
\begin{align*}
&\left|\left(\triangle^{N_3}_{twist}b_R\right)*_3\varphi^{(3)}_{r_3}*\left(\left[\left(\triangle^{N_1}_1\varphi^{(1)}\right)_{r_1}\otimes\left(\triangle^{N_2}_2\varphi^{(2)}\right)_{r_2}\right]\right)\right|^2(\mathbf{y})\\
&=\left|\left[\sum^\infty_{n=0}\chi_{S_n}\cdot\left(\left(\triangle^{N_3}_{twist}b_R\right)*_3\varphi^{(3)}_{r_3}\right)\right]*\left(  \triangle^{N_1}_1\varphi^{(1)})_{{r_1}}\otimes (\triangle^{N_2}_2\varphi^{(2)})_{{r_2}} \right) \right|^2(\mathbf{y}).
\end{align*}
Then $ (II(C43)^{\mathcal{H}})^{1/2}$ is bounded by
\begin{align*}
     &\sum^{i-3}_{i_1=0}(2^{i_1}\ell_1)^{-2N_1}\sum^{i-4}_{i_2=0}(2^{i_2}\ell_1)^{-2N_2}\sum^\infty_{k=0}\left(\sum_{n\leq k-2}+\sum^{k+1}_{n=k-1}+\sum^\infty_{k=0}\sum_{n\geq k+2}\right)\Bigg[\int_{ S_k}\int^{2^{i_1+1}\ell_1}_{2^{i_1}\ell_1}\int^\infty_
     {2^{(l+j_0)}\ell_2}
    \\&\int^{2^{i_2+1}\ell_1}_{2^{i_2}\ell_1}\left|\left[\chi_{S_n}\cdot\left(\left(\triangle^{N_3}_{twist}b_R\right)*_3\varphi^{(3)}_{r_3}\right)\right]*\left(  \triangle^{N_1}_1\varphi^{(1)})_{{r_1}}\otimes (\triangle^{N_2}_2\varphi^{(2)})_{{r_2}} \right) \right|^2(\mathbf{y})\,\frac {  dr_2
    dr_3dr_1}{r_2r_3r_1}\,d\mathbf{y}\Bigg]^{1/2}\\
&=:\widetilde{(\mathbb{A})}+{\widetilde{(\mathbb{B})}}+\widetilde{(\mathbb{C})}.
\end{align*}
By symmetry, it suffices to estimate the term ${\widetilde{(\mathbb{B})}}$ and $\widetilde{(\mathbb{C})}$. 
\smallskip

$\blacksquare$ {\it Estimate on the term $\widetilde{(\mathbb{C})}$}:

Note that
for all $n\geq k+2$, $\mathbf{y}\in S_k$,
\begin{align*}
&\left[\chi_{S_n}\cdot\left(\left(\triangle^{N_3}_{twist}b_R\right)*_3\varphi^{(3)}_{r_3}\right)\right]*\left(  \triangle^{N_1}_1\varphi^{(1)})_{{r_1}}\otimes (\triangle^{N_2}_2\varphi^{(2)})_{{r_2}} \right) (\mathbf{y})\\
&=\int_{S_n}\left(\left(\triangle^{N_3}_{twist}b_R\right)*_3\varphi^{(3)}_{r_3}\right)(\mathbf{u})\left(  \triangle^{N_1}_1\varphi^{(1)})_{{r_1}}\otimes (\triangle^{N_2}_2\varphi^{(2)})_{{r_2}} \right)(\mathbf{y}-\mathbf{u})\,d\mathbf{u}
\end{align*}
and thus we have that 
\begin{align*}
    |y_2-u_2|\geq|u_2-c_{I_2}|-|y_2-c_{I_2}|\simeq\left(2^{n+i}-2^{k+i}\right)\ell_2\end{align*}
    and
    \begin{align*}
   |y_1-u_1|
   &\geq |y_2-u_2|-|(y_1-y_2-c_{I_1})-(u_1-u_2-c_{I_1})|
   \geq|y_2-u_2|-|x_1-x_2-c_{I_1}|
   \simeq |y_2-u_2|,
\end{align*}
where the last inequality follows from the fact that the atom is of type ${\rm I\!I}$, which leads to
\begin{align*}
    |\varphi^{(1)}_{r_1}(y_1-u_1)|\lesssim\frac{r^{N_1}_1}{(r_1+|y_2-u_2|)^{m+N_1}}\quad\text{and}\quad  |\varphi^{(2)}_{r_2}(y_2-u_2)|\lesssim\frac{r^{N_2}_2}{(r_2+|y_2-u_2|)^{m+N_2}}.
\end{align*}
Therefore, for all $n\geq k+2$, we get
\begin{align}\label{MO1}
&\left[\chi_{S_n}\cdot\left(\left(\triangle^{N_3}_{twist}b_R\right)*_3\varphi^{(3)}_{r_3}\right)\right]*\left(  \triangle^{N_1}_1\varphi^{(1)})_{{r_1}}\otimes (\triangle^{N_2}_2\varphi^{(2)})_{{r_2}} \right) (\mathbf{y})\notag\\
&\leq \frac{r^{N_1}_1}{(r_1+(2^{n+i}-2^{k+i})\ell_2)^{m+N_1}}\frac{r^{N_2}_2}{(r_2+(2^{n+i}-2^{k+i})\ell_2)^{m+N_2}}
\ \int_{S_n}\left|\left(\triangle^{N_3}_{twist}b_R\right)*_3\varphi^{(3)}_{r_3}\right|(\mathbf{u})\,d\mathbf{u}\notag\\
&=\frac{r^{N_1}_1}{(r_1+(2^{n+i}-2^{k+i})\ell_2)^{m+N_1}}\frac{r^{N_2}_2}{(r_2+(2^{n+i}-2^{k+i})\ell_2)^{m+N_2}}\notag\\
&\qquad\times r^{-2N_3}_3\int_{S_n}\left|b_R*_3\left(\triangle^{N_3}_3\varphi^{(3)}\right)_{r_3}\right|(\mathbf{u})\,d\mathbf{u}.
\end{align}
We can dig out the pointwise estimate from $\left(\triangle^{N_3}_3\varphi^{(3)}\right)_{r_3}$ as well. Note that
\begin{align*}
b_R*_3\left(\triangle^{N_3}_3\varphi^{(3)}\right)_{r_3}(\mathbf{u}):=\int_{\mathbb{R}^{2m}}b_R(u_1-z,u_2-z)\cdot\left(\triangle^{N_3}_3\varphi^{(3)}\right)_{r_3}(z)\,dz,\quad\mathbf{u}\in S_n,
\end{align*}
then from the support condition that $\mathbf{u}-(z,z)\in R$, we must have $|z|\simeq2^{n+i}\ell_2$, and hence
\begin{align}\label{point}
\left|b_R*_3\left(\triangle^{N_3}_3\varphi^{(3)}\right)_{r_3}(\mathbf{u})\right|\lesssim\frac{r^{N_3}_3}{(r_3+2^{n+i}\ell_2)^{m+N_3}}\int_{|z|\simeq2^{n+i}\ell_2}\left|b_R(u_1-z,u_2-z)\right|\,dz.
\end{align}
Thus, combines (\ref{MO1}) and (\ref{point}) with H\"older's inequality, we have
\begin{align*}
&\left[\chi_{S_n}\cdot\left(\left(\triangle^{N_3}_{twist}b_R\right)*_3\varphi^{(3)}_{r_3}\right)\right]*\left(  \triangle^{N_1}_1\varphi^{(1)})_{{r_1}}\otimes (\triangle^{N_2}_2\varphi^{(2)})_{{r_2}} \right) (\mathbf{y})\notag\\
&\lesssim \frac{r^{N_1}_1}{(r_1+(2^{n+i}-2^{k+i})\ell_2)^{m+N_1}}\frac{r^{N_2}_2}{(r_2+(2^{n+i}-2^{k+i})\ell_2)^{m+N_2}}\notag\\
&\qquad\times r^{-2N_3}_3\frac{r^{N_3}_3}{(r_3+2^{n+i}\ell_2)^{m+N_3}}\left(2^{n+i}\ell_2\right)^m|S_n|^{1/2}\left\|b_R\right\|_{L^2(\mathbb{R}^{2m})}.
\end{align*}
Therefore, one can conclude that
\begin{align*}
\widetilde{(\mathbb{C})} &\lesssim\left\|b_R\right\|_{L^2(\mathbb{R}^{2m})}\sum^{i-3}_{i_1=0}(2^{i_1}\ell_1)^{-2N_1}\sum^{i-4}_{i_2=0}(2^{i_2}\ell_1)^{-2N_2}\sum^\infty_{k=0}\sum_{n\geq k+2}\Bigg[\int^\infty_
     {2^{(l+j_0)}\ell_2}r^{-4N_3}_3\frac{r^{2N_3}_3}{(r_3+2^{n+i}\ell_2)^{2m+2N_3}}\,\frac {  
    dr_3}{r_3}\Bigg]^{1/2}\\
    &\qquad\frac{(2^{i_1}\ell_1)^{N_1}}{(2^{i_1}\ell_1+(2^{n+i}-2^{k+i})\ell_2)^{m+N_1}}\frac{(2^{i_2}\ell
    _1)^{N_2}}{(2^{i_2}\ell
    _1+(2^{n+i}-2^{k+i})\ell_2)^{m+N_2}}\cdot \left(2^{n+i}\ell_2\right)^{m}\ell^m_1\ell^m_2 \cdot2^{im}2^{(2i+k+n)m/2}\\
&\lesssim\left[\ell_1^{-2(N_1+N_2)}\ell_2^{-2N_3}\|b_R\|_{L^2(\mathbb{R}^{2m})}\right]\sum^{i-3}_{i_1=0}2^{-2N_1i_1}\sum^{i-4}_{i_2=0}2^{-2N_2i_2}\sum^\infty_{k=0}\sum_{n\geq k+2}(2^{n+i})^{-m-N_3}\left(2^{l+j_0}\right)^{-N_3}\\
    &\qquad\frac{(2^{i_1}\ell_1)^{N_1}}{(2^{i_1}\ell_1+2^{n+i}\ell_2)^{m+N_1}}\frac{(2^{i_2}\ell_1)^{N_2}}{(2^{i_2}\ell_1+2^{n+i}\ell_2)^{m+N_2}}\cdot \left(2^{n+i}\right)^{m}\ell^m_1\ell^m_2 \cdot2^{im}2^{(i+n)m}\\
&\lesssim\left(2^{l+j_0}\right)^{-N_3}2^{-i(N_3-m)}\cdot\ell_1^{-2(N_1+N_2)}\ell_2^{-2N_3}\|b_R\|_{L^2(\mathbb{R}^{2m})},
\end{align*}
which leads to that if we take $N_3=3m$, then
\begin{align*}
    |R|^{\frac{1}{2}}\sum^\infty_{i=6}2^{\frac{(2i+l+j_0)m}{2}}\cdot\widetilde{(\mathbb{C})}
\lesssim|R|^{\frac{1}{2}}\cdot\ell_1^{-2(N_1+N_2)}\ell_2^{-2N_3}\|b_R\|_{L^2(\mathbb{R}^{2m})}\cdot\bigg( {\ell(\widehat{I_2})\over \ell(I_2)}\bigg)^{-\frac{5m}{2}}.
\end{align*}
\smallskip

$\blacksquare$ {\it Estimate on the term $\widetilde{(\mathbb{B})}$}:

Next, we estimate $\widetilde{(\mathbb{B})}$.
By the Littlewood--Paley inequality, we have
\begin{align*}
  \widetilde{(\mathbb{B})}  &\leq\sum^{i-3}_{i_1=0}(2^{i_1}\ell_1)^{-2N_1}\sum^{i-4}_{i_2=0}(2^{i_2}\ell_1)^{-2N_2}\sum^\infty_{k=0}\sum^{k+1}_{n=k-1}\Bigg[\int^{2^{i_1+1}\ell_1}_{2^{i_1}\ell_1}\int^\infty_
     {2^{(l+j_0)}\ell_2}
    \int^{2^{i_2+1}\ell_1}_{2^{i_2}\ell_1}\\
    &\qquad\left\|\left[\chi_{S_n}\cdot\left(\left(\triangle^{N_3}_{twist}b_R\right)*_3\varphi^{(3)}_{r_3}\right)\right]*\left(  \triangle^{N_1}_1\varphi^{(1)})_{{r_1}}\otimes (\triangle^{N_2}_2\varphi^{(2)})_{{r_2}} \right)\right\|^2_{L^2(\mathbb{R}^{2m})}\,\frac {  
    dr_2dr_3dr_1}{r_2r_3r_1}\Bigg]^{1/2}\\
    &\leq\sum^{i-3}_{i_1=0}(2^{i_1}\ell_1)^{-2N_1}\sum^{i-4}_{i_2=0}(2^{i_2}\ell_1)^{-2N_2}\sum^\infty_{k=0}\sum^{k+1}_{n=k-1}\Bigg[\int^\infty_
     {2^{(l+j_0)}\ell_2}
\left\|\left(\triangle^{N_3}_{twist}b_R\right)*_3\varphi^{(3)}_{r_3}\right\|^2_{L^2(S_n)}\,\frac {  
    dr_3}{r_3}\Bigg]^{1/2}.
\end{align*}

Expand $b_R*_3\left(\triangle^{N_3}_3\varphi^{(3)}\right)_{r_3}(\mathbf{u})$ into
\begin{align*}
\int_{\mathbb{R}^{2m}}b_R(u_1-z,u_2-z)\cdot\left(\triangle^{N_3}_3\varphi^{(3)}\right)_{r_3}(z)\,dz,\quad\mathbf{u}\in S_n,
\end{align*}
then from the support condition that $\mathbf{u}-(z,z)\in R$, we must have $|z|\simeq2^{n+i}\ell_2$, and hence
\begin{align}\label{point2}
\left|b_R*_3\left(\triangle^{N_3}_3\varphi^{(3)}\right)_{r_3}(\mathbf{u})\right|\lesssim\frac{r^{N_3}_3}{(r_3+2^{n+i}\ell_2)^{m+N_3}}\int_{|z|\simeq2^{n+i}\ell_2}\left|b_R(u_1-z,u_2-z)\right|\,dz,\quad\mathbf{u}\in S_n.
\end{align}
Therefore, by (\ref{point2}) and Minkowski's integral inequality, one has
\begin{align*}
\left\|\left(\triangle^{N_3}_{twist}b_R\right)*_3\varphi^{(3)}_{r_3}\right\|^2_{L^2(S_n)}
&=r^{-4N_3}_3\int_{S_n}\left|b_R*_3\left(\triangle^{N_3}_3\varphi^{(3)}\right)_{r_3}(\mathbf{u})\right|^2\,d\mathbf{u}\\
&\lesssim r^{-4N_3}_3\frac{r^{2N_3}_3}{(r_3+2^{n+i}\ell_2)^{2m+2N_3}}\cdot\|b_R\|^2_{L^2(\mathbb{R}^{2m})}(2^{n+i}\ell_2)^{2m},
\end{align*}
which implies that
\begin{align*}
   \widetilde{(\mathbb{B})}
    &\lesssim\sum^{i-3}_{i_1=0}(2^{i_1}\ell_1)^{-2N_1}\sum^{i-4}_{i_2=0}(2^{i_2}\ell_1)^{-2N_2}\sum^\infty_{k=0}\Bigg[\int^\infty_
     {2^{(l+j_0)}\ell_2}
\frac{1}{(r_3+2^{k+i}\ell_2)^{2m+2N_3}}\,\frac {  
    dr_3}{r^{2N_3+1}_3}\Bigg]^{1/2}\|b_R\|_{L^2(\mathbb{R}^{2m})}(2^{k+i}\ell_2)^{m}\\
      &\leq\sum^{i-3}_{i_1=0}(2^{i_1}\ell_1)^{-2N_1}\sum^{i-4}_{i_2=0}(2^{i_2}\ell_1)^{-2N_2}\sum^\infty_{k=0}\Bigg[\int^\infty_
     {2^{(l+j_0)}\ell_2}
\,\frac {  
    dr_3}{r^{2N_3+1}_3}\Bigg]^{1/2}\times\|b_R\|_{L^2(\mathbb{R}^{2m})}(2^{k+i}\ell_2)^{-N_3}\\
            &\lesssim2^{-N_3(l+j_0+i)}\cdot\ell_1^{-2(N_1+N_2)}\ell_2^{-2N_3}\|b_R\|_{L^2(\mathbb{R}^{2m})}.
\end{align*}
If we take $N_3=3m$, then
\begin{align*}
    |R|^{\frac{1}{2}}\sum^\infty_{i=6}2^{\frac{(2i+l+j_0)m}{2}}\cdot\widetilde{(\mathbb{B})}
\lesssim|R|^{\frac{1}{2}}\cdot\ell_1^{-2(N_1+N_2)}\ell_2^{-2N_3}\|b_R\|_{L^2(\mathbb{R}^{2m})}\cdot \bigg( {\ell(\widehat{I_2})\over \ell(I_2)}\bigg)^{-\frac{5m}{2}}.
\end{align*}
Therefore, we obtain the estimate (\ref{sumHori}).

  
\smallskip
  
\section{Applications: a new phase-shift converter via the twisted singular integrals}

In this section, we apply the theory of the twisted multiparameter singular integral and the real-variable methods established in previous chapters to analyze the spectral filtering characteristics of the two-dimensional {\it Twisted Hilbert Transform} (THT) $H_{tw}$. Specifically, we examine the operator $H_{tw}$ defined by the Fourier multiplier:
\begin{equation}
    \mathcal{F}(H_{tw} f)(\xi_1, \xi_2) = m(\xi_1, \xi_2) \widehat{f}(\xi_1, \xi_2),
\end{equation}
where the multiplier is defined as:
\begin{equation}
    m(\xi_1, \xi_2) := -i \operatorname{sgn}(\xi_1) \operatorname{sgn}(\xi_2) \operatorname{sgn}(\xi_1+\xi_2).
\end{equation}
This multiplier arises naturally in the study of bilinear singular integrals and captures the interaction between the coordinate axes singularities and the diagonal singularity $\xi_1 + \xi_2 = 0$. We refer to \cite{MRS,GP,GMS} for Fourier multipliers in different settings.

\subsection{Isometric phase modulation}

A fundamental property of the operator $H_{tw}$ is its behavior as a unimodular Fourier multiplier. We observe that for almost every $(\xi_1, \xi_2) \in \mathbb{R}^2$:
\begin{equation}
    |m(\xi_1, \xi_2)| = |-i| \cdot |\operatorname{sgn}(\xi_1)| \cdot |\operatorname{sgn}(\xi_2)| \cdot |\operatorname{sgn}(\xi_1+\xi_2)| = 1.
\end{equation}
Consequently, by Plancherel's theorem, $H_{tw}$ is an isometry on $L^2(\mathbb{R}^2)$. In the context of signal processing, this classifies $H_{tw}$ as an \textbf{all-pass filter}: it preserves the energy of the input signal exactly, modifying only the phase spectrum.

The phase shift introduced by $m$ is discrete. Since $-i = e^{-i\pi/2}$ and the product of the signum functions takes values in $\{ \pm 1 \}$, the total phase shift $\theta(\xi_1, \xi_2)$ satisfies:
\begin{equation}
    e^{i\theta} \in \{ \pm i \} \implies \theta \in \left\{ -\frac{\pi}{2}, \frac{\pi}{2} \right\} \pmod{2\pi}.
\end{equation}
Thus, the operator acts as a direction-dependent $\pi/2$ phase shifter.

\subsection{Anisotropic frequency partition}

The geometric structure of the multiplier is determined by the intersection of the nodal lines $\xi_1 = 0$, $\xi_2 = 0$, and $\xi_1 + \xi_2 = 0$. These lines partition the frequency plane into six open cones (sectors), denoted as $\Omega_{\mathrm{I}}, \dots, \Omega_{\mathrm{VI}}$. The behavior of $m$ within these regions is strictly alternating, exhibiting a ``checkerboard'' pattern of phase leads and lags.

We categorize the regions based on the sign of the product $\sigma = \operatorname{sgn}(\xi_1)\operatorname{sgn}(\xi_2)\operatorname{sgn}(\xi_1+\xi_2)$:

\begin{enumerate}
    \item \textbf{Phase lag regions} ($m = -i$): These correspond to regions where $\sigma = +1$.
    \begin{itemize}
        \item \textit{Region {\rm I}} (first quadrant): $\xi_1 > 0, \xi_2 > 0 \implies \sigma = (+)(+)(+) = +1$.
        \item \textit{Region {\rm III}} (second quadrant, below diagonal): $\xi_1 < 0, \xi_2 > 0,\xi_1+\xi_2 < 0 \implies \sigma = (-)(+)(-) = +1$.
        \item \textit{Region {\rm V}} (fourth quadrant, below diagonal): $\xi_1 > 0, \xi_2 < 0, \xi_1+\xi_2 < 0 \implies \sigma = (+)(-)(-) = +1$.
    \end{itemize}

    \item \textbf{Phase lead regions} ($m = i$): These correspond to regions where $\sigma = -1$.
    \begin{itemize}
        \item \textit{Region {\rm II}} (second quadrant, above diagonal): $\xi_1 < 0,\xi_2 > 0, \xi_1+\xi_2 > 0 \implies \sigma = (-)(+)(+) = -1$.
        \item \textit{Region {\rm IV}} (third quadrant): $\xi_1 < 0, \xi_2 < 0 \implies \sigma = (-)(-)(-) = -1$.
        \item \textit{Region {\rm VI}} (fourth quadrant, above diagonal): $\xi_1 > 0, \xi_2 < 0, \xi_1+\xi_2 > 0 \implies \sigma = (+)(-)(+) = -1$.
    \end{itemize}
\end{enumerate}

\begin{figure}[h]
    \centering
    \begin{tikzpicture}[scale=1.5]
        \definecolor{lagcolor}{RGB}{220, 240, 255} 
        \definecolor{leadcolor}{RGB}{255, 230, 230} 

        \clip (-3,-3) rectangle (3,3);
        
        \fill[lagcolor] (0,0) rectangle (3,3);
        \node at (1.5, 1.5) {\large \textbf{I}};
        \node at (2.2, 2.2) {\tiny $m=-i$};

        \fill[leadcolor] (0,0) -- (0,3) -- (-3,3) -- cycle;
        \node at (-0.7, 2) {\large \textbf{II}};
        \node at (-1.2, 2.5) {\tiny $m=i$};

        \fill[lagcolor] (0,0) -- (-3,3) -- (-3,0) -- cycle;
        \node at (-2, 0.7) {\large \textbf{III}};
        \node at (-2.5, 1.2) {\tiny $m=-i$};

        \fill[leadcolor] (0,0) rectangle (-3,-3);
        \node at (-1.5, -1.5) {\large \textbf{IV}};
        \node at (-2.2, -2.2) {\tiny $m=i$};

        \fill[lagcolor] (0,0) -- (0,-3) -- (3,-3) -- cycle;
        \node at (1.5, -2.5) {\large \textbf{V}}; 
        \node at (2.2, -2.8) {\tiny $m=-i$};

        \fill[leadcolor] (0,0) -- (3,-3) -- (3,0) -- cycle;
        \node at (2, -1) {\large \textbf{VI}};
        \node at (2.5, -1.5) {\tiny $m=i$};

        \draw[->, thick] (-3.2,0) -- (3.2,0) node[right] {$\xi_1$};
        \draw[->, thick] (0,-3.2) -- (0,3.2) node[above] {$\xi_2$};

        \draw[dashed, thick] (-3,3) -- (3,-3) node[right] {$\xi_1+\xi_2=0$};

        \node[blue] at (0.5, 2.5) {$+++$};
        \node[red] at (-0.5, 2.8) {$-++$};
        \node[blue] at (-2.6, 0.5) {$-+-$};
        \node[red] at (-2.5, -0.5) {$---$};
        \node[blue] at (0.5, -2.8) {$+--$};
        \node[red] at (2.4, -0.5) {$+-+$};

    \end{tikzpicture}
    \caption{Frequency domain partition for the multiplier $m(\xi_1, \xi_2)$. Regions shaded in blue correspond to a phase lag ($m=-i$), while regions in red correspond to a phase lead ($m=i$). The dashed line represents the singularity $\xi_1+\xi_2=0$.}
    \label{fig:mht_partition}
\end{figure}
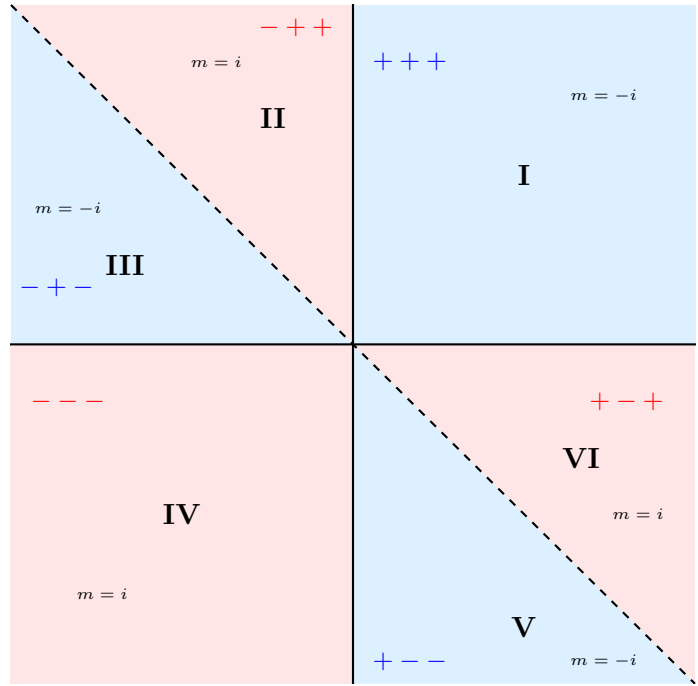

This partition demonstrates that the THT distinguishes frequency components not just by quadrant, but by their proximity to the anti-diagonal singularity. This results in a highly anisotropic filtering effect that is sensitive to the geometric orientation of the input signal's frequency support.

\subsection{Comparative analysis}

We conclude by positioning the THT relative to classical phase-shifting operators.

\begin{enumerate}
    \item \textbf{Comparison with the classical Hilbert transform:}
    \begin{itemize}
        \item \textit{Fundamental Consistency:} Both operators share the same core mechanism: they realize \textbf{isometric phase modulation} via Fourier multipliers. In both cases, the amplitude spectrum remains invariant.
        \item \textit{Increased Complexity:} The distinction lies in the geometric structure. While the 1D transform depends only on the sign of a single variable, the THT is governed by the \textbf{six-region partition} derived in the previous section. This reflects the complex \textbf{coupling between directionality and frequency} inherent in two-dimensional signals (e.g., images), capturing features that simpler 1D tensor products may miss.
    \end{itemize}

    \item \textbf{Comparison with the classical Riesz transform:}
    Standard high-dimensional generalizations of the Hilbert transform are typically achieved via Riesz transforms, characterized by multipliers of the form:
    \begin{equation}
        R_j(\xi) = -i\frac{\xi_j}{|\xi|}.
    \end{equation}
    The Riesz transforms are fundamentally \textbf{isotropic} (rotationally invariant). In contrast, the THT multiplier $m(\xi_1, \xi_2)$ exhibits explicit dependence on the coordinate axes and the diagonal direction. Consequently, the THT is inherently \textbf{direction-sensitive} and \textbf{anisotropic}, isolating specific directional frequency components rather than treating all directions uniformly.
\end{enumerate}
\bigskip
\bigskip

\noindent {\bf Acknowledgement:} J. Li is supported by ARC DP 260100485. C.-W. Liang is supported by MQ Cotutelle PhD scholarhsip.   W. Wang is supported by the National Natural Science Foundation of China (No. 12371082). Q. Wu is supported by the National Natural Science Foundation of China (No. 12171221) and Taishan Scholars Program for Young Experts of Shandong Province (tsqn202507265).

\bigskip
\bigskip
\bigskip

\noindent (Z. Fu)  ICT School, The University of Suwon, Hwaseong-si, 18323, South Korea\\
{\it E-mail}: \texttt{zwfu@suwon.ac.kr}\\

\noindent (J. Li) School of Mathematical and Physical Sciences, Macquarie University, NSW, 2109, Australia\\ 
{\it E-mail}: \texttt{ji.li@mq.edu.au}\\

\noindent (C.-W. Liang) School of Mathematical and Physical Sciences, Macquarie University, NSW, 2109, Australia\\ 
{\it E-mail}: \texttt{chongweiliang1228@gmail.com}\\

\noindent (W. Wang)  School of Mathematical Science, Zhejiang University (Zijingang campus), Zhejiang 310058, China\\
 {\it E-mail}: \texttt{wwang@zju.edu.cn}\\

\noindent  (Q. Wu)  School of Mathematics and Statistics, Linyi University, Linyi 276000, China\\
{\it E-mail}: \texttt{wuqingyan@lyu.edu.cn}\\

\end{document}